\newtheorem{thm}{Theorem}[section]
\newtheorem{thmx}{Theorem}
\newtheorem{corx}[thmx]{Corollary}
\newtheorem{lem}[thm]{Lemma}
\newtheorem{prop}[thm]{Proposition}
\newtheorem{defn}[thm]{Definition}
\newtheorem{rmk}[thm]{Remark}
\newtheorem{ex}[thm]{Example}
\newcommand{\set}[1]{\left\{#1\right\}}
\newcommand{\tuple}[1]{\left(#1\right)}
\newcommand{\abs}[1]{\left|#1\right|}
\newcommand{\norm}[1]{\left\|#1\right\|}
\newcommand{\sprod}[1]{\left<#1\right>}
\newcommand{\ol}[1]{\overline{#1}}
\newcommand{\wh}[1]{\widehat{#1}}
\newcommand{\wt}[1]{\widetilde{#1}}
\newcommand{\afrak}{\mathfrak{a}}
\newcommand{\kfrak}{\mathfrak{k}}
\newcommand{\mfrak}{\mathfrak{m}}
\newcommand{\tfrak}{\mathfrak{t}}
\newcommand{\Cbb}{\mathbb{C}}
\newcommand{\Nbb}{\mathbb{N}}
\newcommand{\Pbb}{\mathbb{P}}
\newcommand{\Qbb}{\mathbb{Q}}
\newcommand{\Rbb}{\mathbb{R}}
\newcommand{\Sbb}{\mathbb{S}}
\newcommand{\Zbb}{\mathbb{Z}}
\newcommand{\Ccal}{\mathcal{C}}
\newcommand{\Dcal}{\mathcal{D}}
\newcommand{\Fcal}{\mathcal{F}}
\newcommand{\Ical}{\mathcal{I}}
\newcommand{\Kcal}{\mathcal{K}}
\newcommand{\Lcal}{\mathcal{L}}
\newcommand{\Mcal}{\mathcal{M}}
\newcommand{\Ocal}{\mathcal{O}}
\newcommand{\Rcal}{\mathcal{R}}
\newcommand{\Vcal}{\mathcal{V}}
\newcommand{\Xcal}{\mathcal{X}}
\newcommand{\Ycal}{\mathcal{Y}}
\newcommand{\Ncal}{\mathcal{N}}
\renewcommand{\phi}{\varphi}
\newcommand{\del}{\partial}
\newcommand{\delb}{\overline{\partial}}
\newcommand{\Aut}{\operatorname{Aut}}
\newcommand{\Sp}{\operatorname{Sp}}
\newcommand{\SO}{\operatorname{SO}}
\newcommand{\Fut}{\operatorname{Fut}}
\newcommand{\SL}{\operatorname{SL}}
\newcommand{\vol}{\operatorname{vol}}
\title[K-stable valuations and Calabi-Yau metrics]{K-stable valuations and Calabi-Yau metrics on affine spherical varieties}
\author{Tran-Trung Nghiem}
\keywords{Calabi-Yau metrics, asymptotic cone, K-stability, valuations.}
\subjclass{53C25, 53C55, 32Q25, 14M27}
\address{Tran-Trung Nghiem, IMAG, Univ Montpellier, CNRS, Montpellier, France}
\email{tran-trung.nghiem@umontpellier.fr}
\begin{document}
\maketitle
\begin{abstract}
After providing an explicit K-stability condition for a \( \Qbb\)-Gorenstein log spherical cone, we prove the existence and uniqueness of an equivariant K-stable degeneration of the cone, and deduce uniqueness of the asymptotic cone of a given complete \( K \)-invariant Calabi-Yau metric in the trivial class of an affine \( G\)-spherical manifold, \( K \) being the maximal compact subgroup of \( G \). 

Next, we prove that the valuation induced by \( K \)-invariant Calabi-Yau metrics on affine \( G\)-spherical manifolds is in fact \( G\)-invariant. As an application, we point out an affine smoothing of a Calabi-Yau cone that does not admit any \( K\)-invariant Calabi-Yau metrics asymptotic to the cone. Another corollary is that on $\Cbb^3$, there are no other complete Calabi-Yau metrics with maximal volume growth and spherical symmetry other than the standard flat metric and the Li-Conlon-Rochon-Székelyhidi metrics with horospherical asymptotic cone. This answers the question whether there is a nontrivial asymptotic cone with smooth cross section on \( \Cbb^{3} \) raised by Conlon-Rochon when the symmetry is spherical. 
\end{abstract}

\section{Introduction}

\subsection{Background}
The Yau-Tian-Donaldson correspondence establishes an equivalence between the existence of canonical metrics and an algebro-geometric stability condition. Large progress has recently been made for Ricci-flat Kähler cone metrics (also called conical Calabi-Yau metrics) on a \textit{Fano cone}, which is basically an affine cone with respect to a polarization over a log Fano base, hence comes with an effective complex torus action. 

In base-independent terms, given a complex algebraic torus \( T \), a Fano cone \( Y \) is a \( \Qbb\)-Gorenstein klt \( T \)-affine variety with an effective \( T\)-action and a unique fixed point under \( T \) \cite{LWX}. The \textit{Reeb cone} of \( Y\) consists of elements \( \xi \) in the compact Lie algebra of \( T \) acting with positive weights on non-zero elements of \( \Cbb[Y] \). 

A \textit{conical Calabi-Yau metric} on \( (Y,J_Y) \) is a \( \del_{J_Y}  \delb_{J_Y}\)-exact (weak) Ricci-flat metric \( \omega \) with potential \( r^2 \), compatible with the weak complex structure \( J_Y \), and homogeneous under the scaling vector field generated by \( r \), i.e.
\[ \Lcal_{r \del_{r}} \omega = 2 \omega. \] 
In particular, the \( \xi = - J_Y (r \del_{r}) \) is a Reeb vector generating a holomorphic isometric action of a compact torus \( T_{\xi,c} \) on \( Y \) \cite[Lemma 2.17]{DS17}. 

Fano cones offer very rich geometry as they contain contact geometric structures, as well as underlying Fano orbifold structures. They serve as asymptotic models for Calabi-Yau metrics on affine manifolds in \cite{Li19} \cite{CR21} \cite{Sze19} \cite{BD19} \cite{Ngh24}, but also as local tangent cones to Kähler-Einstein metrics \cite{HS17}. 

Through the pioneering works of \cite{CS18}, \cite{CS19}, \cite{HL23}, \cite{Li21}, it is now established that a Fano cone has a Ricci-flat Kähler cone metric if and only if it is K-stable. More precisely, when the cone has a unique singularity, K-stability of a polarized cone \( (Y,\xi) \) is shown in \cite{CS19} to be equivalent to a K-stability condition that extends the Fano orbifold stability of Ross-Thomas \cite{RT11}. 

The general \( \Qbb\)-Gorenstein case was solved by C. Li \cite{Li21} by using the equivalence between (weak) Ricci-flat Kähler cone metrics on \( (Y,\xi) \) and certain \( g\)-solitons over quasi-regular Fano orbifold quotients of \( Y \). The \( g \)-soliton equations have moreover the same form when passing from one Reeb vector to another while keeping the underlying CR structure. The K-stability of the Fano cone is then equivalent to weighted K-stability of all quasi-regular quotients of \( Y \). 

Varieties with low complexity have been known to provide concrete examples to test K-stability criteria \cite{Del20}, \cite{IS17}. The complexity of a variety with a regular action of a reductive group \( G \) is basically the codimension of a generic Borel orbit. Normal varieties with complexity zero are called \textit{spherical varieties}. Equivalently, a \( G\)-variety is spherical if and only if it has a open dense orbit under the action of a Borel subgroup of \( G \). 

A simple \( G\)-spherical affine variety \( Y \) is said to be a \textit{\( G\)-spherical cone} if its unique closed orbit is the fixed point of \( G \). In fact, a \( \Qbb\)-Gorenstein \( G\)-spherical cone is always a Fano cone with respect to the action of a torus compatible with \( G \) \cite{Ngh22b}. 

\subsection{K-stable degeneration and K-stability of spherical cones} 
Let \( Y \) be an \( n\)-dimensional \( \Qbb\)-Gorenstein conical embedding of a spherical space \( G/H \) with colored cone \( (\Ccal_Y, \Dcal_Y) \) and set of \( G\)-invariant divisors \( \Vcal_Y \) (identified with their \( G\)-invariant valuations). Let \( T_H = \Aut_G(Y)^0 \simeq (N_G(H)/H)^0  \) be the connected component of the automorphism group of \( Y \) compatible with \( G \). 

%Let us first recall the description of the canonical divisor by \cite{Bri97a} \cite{GH15a}. 
%Since \( Y \) is \( \Qbb\)-Gorenstein and simple, there exists a canonical element \( \varpi \in \Mcal_{\Qbb} := \Mcal \otimes \Qbb\) such that
%\[ - K_Y = \sum_{\nu \in \Vcal_Y } D_{\nu} + \sum_{d \in \Dcal_Y} a_d d, \quad 1 = \sprod{\varpi, \nu}, \; a_d = \sprod{\varpi, \rho(d)}. \]
%Moreover, \( \varpi = 2 \rho_S - 2 \rho_{S^p} \) where \( 2 \rho \) is the sum of the positive roots of \( G \) and \( 2 \rho_{S^p} \) the sum of the positive roots in \( P^u \), where \( P \) is the left-stabilizer of the open Borel orbit. 

Our first goal is to extend the main result on existence of log Calabi-Yau metrics on toric cones \textit{with an isolated singularity} of de Borbon-Legendre \cite{dBL22} to the spherical context with more general singularities. 

Define 
\[ D := \sum_{\nu \in \Vcal_Y} (1 - \gamma_{\nu}) D_{\nu} \]
to be a \( G \times T_H \)-invariant divisor (which has simple normal crossing support by construction) with \( \gamma = (\gamma_{\nu})_{\nu \in \Vcal_Y} \) satisfying \( 0 < \gamma_{\nu} \leq 1 \) such that the naturally \( G \times T_H \)-linearized divisor
\( -L := K_Y + D \) is \( \Rbb \)-\textit{Cartier}. The latter is equivalent to the existence of \( \varpi_{\gamma} \in \text{int}(\Ccal_Y^{\vee}) \) such that 
\[ \sprod{\varpi_{\gamma}, \nu} = \gamma_v, \; \forall D_{\nu} \in \Vcal_Y, \quad \sprod{\varpi_{\gamma}, \rho(d))} = a_d, \forall d \in \Dcal_Y. \] 
The set of such elements \( \varpi_{\gamma} \) are called \textit{angles}. The pair \( (Y,D) \) is said to be a \textit{spherical log cone} and \( (Y,D, \xi) \) is a \textit{polarized spherical log cone}. Moreover, \( D \) as a closed subvariety is also a \( G\)-spherical cone. 

Given any Reeb vector \( \xi \), one can build a (weak) cone metric \( \omega_{\xi} = \sqrt{-1} \del \delb r_{\xi}^2 \) following \cite{HS16}. We say that a cone metric \( \omega_{\xi} \) on \( Y \) is a \textit{log Calabi-Yau metric with Reeb vector \( \xi \) } if 
\[ \text{Ric}(\omega_{\xi}) = D, \]
which is equivalent to 
\begin{equation}
(\sqrt{-1} \del \delb r_{\xi}^2) = \frac{dV_Y}{ \prod \abs{s_\nu}^{2(1-\gamma_{\nu}) }},
\end{equation}
where \( s_{\nu} \) is the canonical \( G\)-equivariant section of \( D_{\nu} \). 
In particular, \( \omega_{\xi} \) restricts to a bona fide (singular) Ricci-flat Kähler metrics on \( Y \backslash \text{Supp}(D) \). We also expect that \( \omega_{\xi} \) has \textit{conic singularities} of angles \( 2 \pi \gamma _{\nu} \) along \( D_{\nu} \) in the log smooth locus of \( Y \) (conditionally on an analogue of Guenancia-Paun's result \cite{GP16}). 

\begin{thmx}[Prop. \ref{theorem_kstability_spherical_cone}] \label{maintheorem_kstability}
Let \( (Y,D,\xi) \) be a polarized spherical log cone with angles \( \gamma = (\gamma_{\nu})_{\nu \in \Vcal_Y} \), \( 0 < \gamma_{\nu} \leq 1 \), such that \( (Y, D) \) has klt singularities. Then the following are equivalent
\begin{itemize} 
\item \( Y \) has log Calabi-Yau metrics with Reeb vector \( \xi \). 
\item \( (Y,D,\xi) \) is K-stable.
\item \( \text{bar}_{DH}(\Delta_{\xi}) - \varpi_{\gamma} \in (-\Vcal)^{\vee}, \quad \Delta_{\xi} := \set{p \in \Ccal_Y^{\vee}, \sprod{p,\xi} = n} \).
\end{itemize}
%In this case, \( \varpi_{\gamma} \in \Delta_{\xi} \), and the (unique) K-stable Reeb vector can be interpreted as the minimizer of \( \vol_{DH} \). 
\end{thmx}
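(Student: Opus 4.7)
The equivalence factors naturally into the Yau-Tian-Donaldson part $(1) \Leftrightarrow (2)$ and the combinatorial criterion $(2) \Leftrightarrow (3)$, the latter being the main content of the statement. For the YTD direction, the plan is to apply C.~Li's framework \cite{Li21} in the logarithmic cone setting: after quasi-regularizing $\xi$ by rational Reeb vectors, cone log K-stability of $(Y,D,\xi)$ translates into weighted log K-stability of its log Fano orbifold quotients. The implication $(1) \Rightarrow (2)$ then follows from the standard non-negativity of the log Donaldson-Futaki invariant in the presence of a canonical metric, while $(2) \Rightarrow (1)$ uses Li's existence theorem adapted to the $G \times T_H$-invariant boundary $D$ with simple normal crossing support, the klt hypothesis ensuring the requisite analytic estimates on the quotient.

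For the combinatorial criterion, the plan is to use equivariant K-stability to reduce the testing to $G \times T_H$-equivariant special test configurations of the polarized log cone. By Luna-Vust theory, these are classified by rational elements $\nu$ of the valuation cone $\Vcal$: each $\nu$ produces a $G$-equivariant degeneration whose central fibre is again a spherical log cone with colored cone enlarged by the ray $\Rbb_{\geq 0}\nu$ and with transformed log boundary. A Brion-type integration of the equivariant Hilbert polynomial on the moment slice $\Delta_\xi$, using the $G$-weighted Duistermaat-Heckman density (essentially the product of the positive spherical roots of $G/H$) and incorporating the angle contribution $\varpi_\gamma$ coming from $-K_Y - D$, yields, up to a positive constant,
\[
\mathrm{DF}(\nu) \;=\; \bigl\langle \nu, \; \varpi_\gamma - \mathrm{bar}_{DH}(\Delta_\xi)\bigr\rangle.
\]
K-stability --- i.e.\ $\mathrm{DF}(\nu) \geq 0$ for every $\nu \in \Vcal$, with equality only along the $T_H$-lineality --- is then exactly the dual-cone condition $\mathrm{bar}_{DH}(\Delta_\xi) - \varpi_\gamma \in (-\Vcal)^\vee$.

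The main technical obstacle is the log DF computation: one must carefully linearize the boundary $D = \sum (1 - \gamma_\nu) D_\nu$ along the Luna-Vust degeneration in order to recover the angle vector $\varpi_\gamma$ via the identities $\langle \varpi_\gamma, \nu \rangle = \gamma_\nu$ on $\Vcal_Y$ and $\langle \varpi_\gamma, \rho(d)\rangle = a_d$ on colors, and to check that the $G$-weighted DH measure on $\Delta_\xi$ transforms correctly under the degeneration associated to $\nu$. Granted the prior reduction to $G \times T_H$-equivariant special test configurations (which should follow from a Hisamoto-type approximation adapted to the log cone setting), the translation of DF-nonnegativity to the barycenter condition is then immediate from the linearity of $\mathrm{DF}$ in $\nu$.
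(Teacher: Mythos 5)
Your proposal is correct in outline and, for the analytic half, coincides with the paper's: Theorem \ref{theorem_ricci_flat_equivalent_k_stability} proves the metric/K-stability equivalence exactly via C.~Li's reduction to \( g \)-solitons on quasi-regular quotients together with Han--Li's weighted Ding-stability theorem, and the restriction to \( G \)-equivariant special test configurations is already part of that package (\cite[Theorem 7.3]{HL23}, \cite[Theorem 1.15]{Li21}), so no separate ``Hisamoto-type approximation'' is needed. Where you genuinely diverge is the Futaki computation: you propose to integrate the equivariant index/Duistermaat--Heckman data directly on an arbitrary spherical central fiber, while the paper inserts an intermediate step --- any special central fiber degenerates further (Popov) to a \emph{horospherical} cone, and since the Collins--Székelyhidi invariant depends only on the \( T_H \)-index character, hence only on the moment cone, which is preserved under degeneration by Knop's \cite[Satz 5.4]{Kno90}, the Futaki invariant is unchanged (Lemma \ref{lemma_futaki_invariant_constant_along_fibers}); the explicit computation (Lemma \ref{lemma_futaki_invariant_horospherical}) is then done only for horospherical cones, where \( \varpi_{\gamma} \) is transparently the \( B \)-weight of the equivariant section of \( -m(K_Y+D) \) and \( A_{(Y,D)}(w_{\xi}) = \sprod{\varpi_{\gamma},\xi} \). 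Your direct route can be made to work precisely because of this moment-cone invariance, but as written it has three soft spots: (a) the DH density on \( \Delta_{\xi} \) is \( \prod_{\alpha \in \Phi_{P^u}} \sprod{\alpha, p} \), a product over ordinary roots of the unipotent radical of the associated parabolic --- \emph{not} over the spherical roots of \( G/H \), which generate \( \Sigma = (-\Vcal)^{\vee} \) and would give the wrong measure; your hedge ``essentially'' hides exactly the point the horospherical reduction settles. (b) The clean formula \( \mathrm{DF}(\nu) = \sprod{\varpi_{\gamma} - \mathrm{bar}_{DH}(\Delta_{\xi}), \nu} \) requires the normalization \( \sprod{\varpi_{\gamma}, \xi} = n \) (Prop.~\ref{prop_ricci_flat_reeb_normalized}), implicit in the definition of \( \Delta_{\xi} \), since the unnormalized invariant is \( \sprod{\varpi_{\gamma},\nu} + \frac{\sprod{\varpi_{\gamma},\xi}}{n} \, d_{\xi}\vol_Y(\nu)/\vol_Y(\xi) \). (c) For K-\emph{stability} (vanishing of the invariant only on \( \mathrm{lin}\,\Vcal \), which by Theorem \ref{theorem_test_configurations_spherical_cone} is exactly the triviality locus) the precise condition is \( \mathrm{bar}_{DH}(\Delta_{\xi}) - \varpi_{\gamma} \in \mathrm{RelInt}\tuple{(-\Vcal)^{\vee}} \), as in Theorem \ref{theorem_kstability_spherical_cone}; membership in the closed dual cone only characterizes semistability, so your final sentence conflates the two. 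None of these is fatal, but each must be repaired for the direct computation to reproduce the paper's criterion.
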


The K-stability criterion generalizes largely our previous work on horospherical cones \cite{Ngh22b}, which is based on solving an explicit real Monge-Ampère equation through variational approach. Here we explore the algebro-geometric method by constructing explicit \( G\)-equivariant test configurations of a polarized cone via description of \( G\)-equivariant degenerations in \cite{BP87}, \cite{Del20}. Any central fiber of such configuration admits a further equivariant degeneration to a horospherical central fiber \cite{Pop86}, and the Futaki invariant remains constant throughout (Lemma \ref{lemma_futaki_invariant_constant_along_fibers}). 

We then conclude based on an explicit computation of the Futaki invariant of a horospherical cone in Lemma \ref{lemma_futaki_invariant_horospherical}, and the fact that \( G\)-equivariant K-stability over special test configurations is equivalent to K-stability, see Theorem \ref{theorem_ricci_flat_equivalent_k_stability}.

\begin{rmk}
One can compute the generalized \( \delta \)-invariant for spherical log cones, then use the valuative criterion for K-stability in Kai Huang's PhD thesis \cite{Huang22}. Our approach is more geometric in nature and independent of the works in \cite{LLW22} \cite{Y24}.  
\end{rmk}

\begin{rmk}
Based on Pasquier's result on horospherical pairs \cite{Pas16}, we expect that any spherical log pair \( (Y,D) \) defined as above with \( 0 < \gamma_{\nu} < 1 \) has automatically klt singularities. 
\end{rmk}

Our next main result is the following. 

\begin{thmx}[Prop. \ref{prop_stable_degeneration_equals_equivariant_stable_degeneration}] \label{maintheorem_kstabledegeneration}
Any K-semistable spherical log cone \( (W, D, \xi) \) admits an equivariant degeneration to a K-stable spherical log cone \( (C, D_0, \xi) \), unique up to equivariant isomorphisms. 
\end{thmx}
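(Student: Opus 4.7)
The plan is to combine the combinatorial K-stability criterion from Theorem \ref{maintheorem_kstability} with the Brion-Pauer/Delcroix classification of $G$-equivariant degenerations of spherical varieties (already exploited in the proof of Theorem A) to algorithmically produce a K-stable degeneration, and then to use rigidity of spherical combinatorial data together with the horospherical reduction of \cite{Pop86} for uniqueness.

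\textbf{Construction.} By Theorem \ref{maintheorem_kstability}, K-semistability of $(W, D, \xi)$ translates into
\[ p_W := \text{bar}_{DH}(\Delta_\xi) - \varpi_\gamma \in \overline{(-\Vcal_W)^{\vee}}, \]
while K-stability requires $p_W$ to sit in the relative interior. If $p_W$ is already interior, take $(C, D_0, \xi) = (W, D, \xi)$. Otherwise $p_W$ lies on a proper face cut out by boundary valuations $\nu \in \partial \Vcal_W$, and I would construct an equivariant special test configuration $\mathcal{W} \to \mathbb{A}^1$ using the same Brion-Pauer/Delcroix toolkit underlying Theorem \ref{maintheorem_kstability}, arranged so that the central fiber $(W_0, D_0, \xi)$ is a spherical log cone whose colored data enlarges $\Vcal_W$ exactly along those facet valuations, giving $\Vcal_{W_0} \supsetneq \Vcal_W$. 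By Lemma \ref{lemma_futaki_invariant_constant_along_fibers}, $\Delta_\xi$ and $\varpi_\gamma$ are preserved across the family, so $p_{W_0} = p_W$ sits strictly deeper in $\overline{(-\Vcal_{W_0})^{\vee}}$. Iterating terminates, since $\Vcal$ can be enlarged only finitely many times before saturating the full Cartan subspace --- the horospherical limit of \cite{Pop86} --- at which point $(-\Vcal)^{\vee}$ collapses to $\{0\}$, forcing $p_W = 0$ and hence K-stability directly from K-semistability by the explicit Futaki formula of Lemma \ref{lemma_futaki_invariant_horospherical}.

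\textbf{Uniqueness.} Given two K-stable equivariant degenerations $(C_1, D_1, \xi)$ and $(C_2, D_2, \xi)$, each admits by \cite{Pop86} a further $G$-equivariant degeneration to a horospherical log cone $(C_i^{\mathrm{hor}}, D_i^{\mathrm{hor}}, \xi)$. By Lemma \ref{lemma_futaki_invariant_constant_along_fibers}, both horospherical limits share the same $(\Delta_\xi, \varpi_\gamma)$, and the explicit description of horospherical cones underlying Lemma \ref{lemma_futaki_invariant_horospherical} makes them $G$-equivariantly isomorphic. The strict inclusion in Theorem \ref{maintheorem_kstability} then pins down the colored cone of any K-stable spherical log cone sharing these moment data and horospherical limit, so $C_1 \simeq C_2$ equivariantly.

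\textbf{Main obstacle.} The main technical difficulty lies in the construction step: producing, for each boundary facet of $(-\Vcal_W)^{\vee}$ containing $p_W$, an explicit $G \times T_H$-equivariant test configuration whose central fiber is a spherical log cone realizing exactly the predicted enlargement of $\Vcal_W$, while simultaneously preserving the polarization, the log divisor $D$ and the Reeb vector $\xi$. The uniqueness step also rests on the rigidity claim that two K-stable spherical log cones with identical $(\Delta_\xi, \varpi_\gamma)$ and common horospherical degeneration must have coinciding colored data; this should follow from the strict form of Theorem \ref{maintheorem_kstability}, but will require careful bookkeeping of the degenerated divisors $D_\nu$ and their compatibility with $\varpi_\gamma$.
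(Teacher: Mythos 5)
Your construction half is essentially the paper's argument, reorganized as an iteration that the paper shows is unnecessary. The machinery you flag as the ``main obstacle'' --- an equivariant special test configuration realizing a prescribed enlargement of the valuation cone while preserving \( D \), \( \xi \) and the moment data --- is already in place: Theorem \ref{theorem_test_configurations_spherical_cone} attaches to every \( \nu \in \Vcal_W \) a \( G \)-equivariant special test configuration, and Remark \ref{remark_central_fiber_data} computes the central fiber's valuation cone as \( \Vcal_{W'} = \Rbb \nu \oplus \pi(\Vcal_W) \), i.e.\ degenerating along \( \nu \) adds the line \( \Rbb\nu \) to the linear part. The paper then concludes in a single step: let \( F \) be the vanishing locus of \( \Fut_{\xi} \) on \( \Vcal_W \) (a face containing \( \text{lin}\,\Vcal_W \)) and degenerate along one \( \nu \in \text{RelInt}(F) \); since \( \pi(F) \) is then a vector subspace, the new vanishing locus \( \Rbb\nu \oplus \pi(F) \) lies inside \( \text{lin}\,\Vcal_{W'} \), so by Theorem \ref{theorem_test_configurations_spherical_cone}(4) every Futaki-zero configuration of the central fiber is trivial and \( (W',\xi) \) is K-stable immediately --- no facet-by-facet iteration, no appeal to the horospherical endpoint. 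Your iteration can be made to terminate (each step strictly enlarges the linear part), but you must re-verify K-semistability of each intermediate fiber, which holds exactly because \( \sprod{p_W,\nu} = 0 \) for \( \nu \) in the vanishing face; and your horospherical endpoint where \( \Sigma \) collapses to \( \set{0} \), forcing \( p_W = 0 \), is only the degenerate extreme --- generically the process stops earlier, once \( p_W \) becomes relatively interior.

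The genuine gap is in your uniqueness step. You run both K-stable limits \( C_1, C_2 \) down to horospherical cones and assert that the common data \( (\Delta_{\xi}, \varpi_{\gamma}) \) together with the common horospherical limit ``pins down the colored cone''; but this rigidity claim is precisely what needs proof, and it does not follow from the strict inequality in Theorem \ref{maintheorem_kstability}: that criterion constrains the position of \( p \) relative to each cone's \emph{own} valuation cone, not the valuation cone itself, and two varieties having isomorphic further degenerations does not make them isomorphic --- degeneration loses information. To close this combinatorially you would need something like Losev's theorem (a spherical cone is determined by its weight monoid and spherical roots) combined with an argument that any equivariant special degeneration of \( W \) with K-stable central fiber is forced to be along \( \text{RelInt}(F) \), hence produces the valuation cone \( \Vcal_W + \text{span}(F) \) independently of choices; none of this appears in your sketch, and you yourself flag it as unresolved bookkeeping. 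The paper sidesteps the issue entirely: by \cite{LWX}, any two K-stable central fibers of special degenerations of the same cone are isomorphic as affine varieties, and the \( G \)-action is then transported through this isomorphism. So your construction is sound modulo reorganization, but the uniqueness argument as written does not constitute a proof.
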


This result might be of independent interest in K-stability theory. In fact, the existence and uniqueness of the \textit{G-equivariant} K-stable degeneration is known in the Fano case \cite{Zh21}, but a proof for log Fano cones is still lacking, since the argument in \cite{Zh21} supposes the existence of a good moduli space for K-(semi)stable Fano varieties, which has not yet been shown to exist for Fano cones in all dimension. 

By the time this article is being prepared, Xu-Zhuang has proved the boundedness property for K-semistable log Fano cones \cite{XZ24}, which is a crucial ingredient for the construction of the moduli spaces.  However, our proof is rather direct and solely based on the combinatorial information in the spherical cone.

\subsection{K-semistable valuations and Calabi-Yau metrics}

Let \( (M,\omega) \) be a \( n\)-dimensional complete Calabi-Yau manifold with \textit{maximal volume growth}, i.e. for every ball \( B_r(p) \) of radius \( r > 0 \) centered as \( p \), there is \( \kappa > 0 \) satisfying 
\[ \text{vol}(B_r(p)) \geq \kappa r^{2n}. \] 
A \textit{metric cone} \( C := C(Z) \) over some compact metric space \( (Z,d_Z) \) is the metric completion of \( ]0,+\infty[ \times Z \) with respect to the metric
\[ d((r_1,z_1),(r_2,z_2)) = \sqrt{r_1^2 + r_2^2 - 2r_1 r_2 \cos \max \set{d_Z(z_1,z_2), \pi}}. \] 
The seminal work of Cheeger-Colding \cite{CC97} shows that, given a sequence \( (M_i, \omega_i) = (M, \lambda_i \omega) \) with \( \lambda_i \to 0 \), after passing to a subsequence we obtain a metric cone \( C \), called the \textit{asymptotic cone} (or \textit{tangent cone at infinity}) of \( M \). 

In the Kähler context, consider the set  \( \Kcal(n,\kappa) \) of complete \( n\)-dimensional polarized Kähler manifolds \( (X,L,\omega,p) \), where \( L \) is a Hermitian holomorphic line bundle over \( X \) with curvature \(- i\omega \) and \( p \) a chosen base point, such that \( (X,\omega) \) is Einstein with Euclidean volume growth as in \cite{DS17}.  

As remarked by Székelyhidi \cite{Sze20}, if we suppose that \( \omega = \sqrt{-1} \del \delb \phi \) for some smooth psh function \( \phi \) on \( M \), then we can readapt the powerful theory of Donaldson-Sun in the noncompact setting by choosing \((L_i, h_i) \) as trivial line bundles over \( M_i \) with Hermitian metric \( h_i = e^{-\lambda_i \phi} \) so that the sequence \( (M_i, L_i, \omega_i) \) lies in the class \( \Kcal(n, \kappa) \). The same arguments in  \cite[Section 3.4]{DS17} then show that the tangent cone at infinity is independent of the chosen subsequence, has a \textit{complex normal affine cone} structure \((C,J_0) \) with \textit{\( \Qbb\)-Gorenstein klt singularities}, and the \textit{metric singular set} of \( C \) (in the sense of Cheeger-Colding) in fact coincides with the algebraic singular set of \( C \) \cite{DS17}.  Moreover, \( (C,J_0) \) has a (weak) \textit{conical Calabi-Yau structure} (so \( C \) is in particular K-stable). 

%This is a \( \del_{J_C}  \delb_{J_C}\)-exact (weak) Ricci-flat metric with potential \( r^2 \), compatible with the complex structure \( J_C \), and homogeneous under the scaling vector field generated by \( r \), i.e. 
%\[ \Lcal_{r \del_{r}} \omega_C = 2 \omega_C. \] 
%In particular, the Reeb vector field \( \xi = - J_C (r \del_{r}) \) associated to \( \omega_C \) generates a holomorphic isometric action of a compact torus \( T_c \) on \( C \) \cite[Lemma 2.17]{DS17}. 

%Note that collapsing behavior may occur without the volume growth assumption, while an equivariant \( \Cbb^{*}\)-degeneration always preserves dimension since the central fiber and generic fibers have the same Hilbert polynomial.The maximal volume growth condition allows us to ignore collapsed Gromov-Hausdorff limits, and the non-collapsed ones correspond to the 2-step degeneration of the Calabi-Yau manifold. 
%The proof will be provided in Prop. \ref{prop_stable_degeneration_equals_equivariant_stable_degeneration}. 

It is generally very hard to classify all the tangent cone at infinity of a given Calabi-Yau affine manifold, even under the maximal volume growth condition. From Donaldson-Sun theory, at least we know that such cone can  be obtained from \( (M,\omega) \) via a \textit{2-step degeneration} as follows.  

First, a complete \( \del \delb\)-exact Calabi-Yau metric \( \omega \) on \( M \) induces a negative valuation \( \nu_{\omega} \) on the ring \( R(M) \) of holomorphic functions with polynomial growth of \( M \) (see Section \ref{section_semistable_valuations_classification} for the precise definition). This valuation moreover induces a filtration on the ring \( R(M) \) and a degeneration of \(  M\) to a K-semistable Fano cone \( (W, \xi) \) with the K-semistable Reeb valuation \( \nu_{\xi} \) induced by \( \nu_{\omega} \) in a natural way. The K-semistable cone \( (W,\xi)\) then degenerates to the K-stable cone \( (C,\xi) \) via a further test configuration. It was recently shown by Sun-Zhang that when \( C \) has smooth link, then \( (M,\omega) \) degenerates to \( (C,\xi) \) in a single step and is moreover \textit{asymptotically conical} in the sense of Conlon-Hein \cite{CH13} \cite{SZ22}. 

\begin{defn} 
We say that the valuation \( \nu_{\omega} \) is \emph{K-stable} (resp. \emph{K-semistable}) if the graded ring of \( R(M) \) by \( \nu_{\omega} \) is finitely generated and defines a \emph{K-stable} (resp. \emph{K-semistable}) Fano cone with the K-stable (resp. K-semistable) Reeb valuation induced by \( \nu_{\omega} \).
\end{defn}

%If \( M \) is quasiprojective, then in general the ring \( R(M) \) does \textit{not} coincide with the coordinate ring of \( M \), but the coordinate ring of an \textit{crepant affine blowdown} of \( M \), cf. \cite{Liu21}. To minimize technical issues, we assume that \( M \) is an affine manifold and that \( \omega \) is \textit{compatible} in the following sense. 

%\begin{defn}
%The metric \( \omega \) on an affine manifold \( M \) is said to be \emph{compatible} if \( R(M) \) is exactly the coordinate ring of \( M \) as an affine variety.
%\end{defn}

In \cite{SZ22}, the authors propose a four-steps scheme to classify complete Calabi-Yau metrics with Euclidean growth in the trivial Kähler class on noncompact manifolds. The scheme consists of 
\begin{enumerate}
    \item Given an affine manifold \( M \), classifying all K-(semi)stable valuations on \( M \). More precisely, given a complete \( \del \delb\)-exact Calabi-Yau metric with maximal volume growth \( \omega \) on \( M \), determine the space of all possible K-(semi)stable valuations on \( M \). 
    \item Given a K-stable valuation \( \nu \) on \( M \), determining the space \( \Mcal_{\nu} \) of all compatible Calabi-Yau metric \( \omega \) on \( M \) such that \( \nu_{\omega} = \nu \). 
    \item  For any \( \omega_1, \omega_2 \in \Mcal_{\nu} \), finding a constant \( c > 0 \) such that 
    \( c^{-1} \omega_2 \leq \omega_1 \leq c\omega_2 \). 
    \item Let \( \Ncal_{\nu} \) be the space of conical Calabi-Yau metrics on the asymptotic cone \( C_{\nu} \). The natural map \( \Mcal_{\nu} \to \Ncal_{\nu} \), defined by taking the rescaled limit of the Kähler form under the weighted cone construction, is bijective. 
\end{enumerate}

Our philosophy, which is rather natural, is that if we impose a large symmetry on the metric, the scheme should be considerably simplified. We thereby achieve Step \( (1) \) for semistable valuations of a Calabi-Yau \textit{spherical manifold}.

\begin{thmx}[Prop. \ref{prop_kstable_valuation_invariant} and \ref{prop_asymptotic_cone_spherical}] \label{maintheorem_kstable_valuation_sphericalmanifold}
If \( M \) is a \( G\)-spherical affine manifold and \( \omega \) is \( K\)-invariant complete Calabi-Yau metric with maximal volume growth in the trivial Kähler class of \( M \), then 
\begin{itemize} 
\item the asymptotic cone \( (C,\xi) \) of \( M \) is a \( G\)-spherical cone and unique up to an isomorphism preserving the K-stable Reeb vector \( \xi \).
\item the negative valuation \( \nu_{\omega} \) is \( G\)-invariant and restricts to the K-stable valuation \( -\nu_{\xi} \) in the Cartan algebra of \( M \) and \( C \). In particular,  there can be only finitely many \( G\)-invariant K-stable valuations on a \( G\)-spherical affine manifold.
\end{itemize}
\end{thmx}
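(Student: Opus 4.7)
The plan is to promote the $K$-invariance of $\omega$ to full $G$-invariance of the induced negative valuation $\nu_\omega$, and then apply Theorem~\ref{maintheorem_kstabledegeneration} to the resulting $G$-equivariant two-step degeneration of $M$. Since $M$ is $G$-spherical and affine, its ring $R(M)$ of polynomial-growth holomorphic functions decomposes multiplicity-freely as $R(M) = \bigoplus_{\lambda} V_\lambda$ into finite-dimensional irreducible $G$-modules. The valuation $\nu_\omega$ is defined analytically through growth orders against the Calabi-Yau potential, and hence induces a filtration $F^c V_\lambda = \set{f \in V_\lambda : -\nu_\omega(f) \leq c}$ on each isotypic component; because $\omega$ and its induced distance function are $K$-invariant, each $F^c V_\lambda$ is a $K$-submodule. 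Since $K$ is Zariski-dense in the connected reductive complex group $G$, every $K$-submodule of $V_\lambda$ is automatically a $G$-submodule, so the filtration, and hence $\nu_\omega$ itself, is $G$-invariant.

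With $G$-invariance in hand, the K-semistable Fano cone $W = \Spec(\operatorname{gr}_{\nu_\omega} R(M))$ produced by the first step of the Donaldson-Sun degeneration is automatically $G$-equivariant, and since its coordinate ring is $G$-module isomorphic to $R(M)$, it remains multiplicity-free; thus $W$ is a $G$-spherical K-semistable cone. Theorem~\ref{maintheorem_kstabledegeneration} then supplies a unique (up to $G$-equivariant isomorphism preserving the Reeb vector) K-stable $G$-spherical degeneration $(C, D_0, \xi)$ of $W$, which is the asymptotic cone of $M$; this yields the first bullet of the statement.

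For the second bullet, I would use the fact that on a $G$-spherical variety a $G$-invariant valuation on the function field is determined by its restriction to $B$-semi-invariant functions, landing inside the rational Cartan algebra of the open spherical orbit shared by $M$ and $C$. The Reeb vector $\xi$, being central in the torus commuting with $G$, defines a $G$-invariant K-stable Reeb valuation $\nu_\xi$ on $\Cbb[C]$ in the same Cartan algebra. The equality $\nu_\omega = -\nu_\xi$ on the Cartan algebra then follows from the identification $\operatorname{gr}_{\nu_\omega} R(M) \simeq \Cbb[W]$, since both the $\nu_\omega$-filtration and the $\xi$-weight grading are $G$-invariant and must agree on each isotypic line by construction. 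Finiteness reduces to finiteness of the combinatorial data of equivariant degenerations of $M$ into $G$-spherical cones in the sense of Brion-Pauer, combined with the fact that the K-stable Reeb vector in each such degeneration is pinned down uniquely by the barycenter condition of Theorem~\ref{maintheorem_kstability}.

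The main technical obstacle I anticipate is verifying that the analytic filtration $F^c$ on isotypic components and the algebraic weight filtration induced by $\xi$ on $\operatorname{gr}_{\nu_\omega} R(M) \simeq \Cbb[W]$ coincide after the Donaldson-Sun rescaling — equivalently, that the metric-theoretic Reeb vector truly equals the element of the Cartan algebra defined by $\nu_\omega$. This should follow from $G$-invariance together with the multiplicity-free property, which forces the two filtrations to agree up to scaling on each isotypic line, but a clean verification requires care in translating between the analytic degeneration of potentials and the algebraic grading.
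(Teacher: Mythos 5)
Your proposal is correct in substance, and its route to the crucial \( G \)-invariance of \( \nu_{\omega} \) is genuinely different from the paper's. The paper (Prop.~\ref{prop_kstable_valuation_invariant}) first establishes quasimonomiality of \( \nu_{\omega} \) via Piltant's theorem, then passes to a \( G \)-equivariant smooth projective compactification, shows the center \( Z \) of \( \nu_{\omega} \) is \( K \)-invariant, invokes the Huckleberry--Wurzbacher Equivalence Theorem~\ref{theorem_equivalence} and Restriction Lemma~\ref{lemma_restriction} to upgrade \( Z \) to a \( G \)-spherical center, and finally writes \( \nu_{\omega} \) as \( \sum_i \alpha_i \operatorname{ord}_{E_i} \) along \( G \)-invariant normal crossing divisors on an equivariant modification. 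You instead argue purely at the level of the ring: the sublevel sets of \( d_{\omega} \) are finite-dimensional \( K \)-stable subspaces, and Zariski density of \( K \) in the connected group \( G \) promotes them to \( G \)-submodules. This is essentially the argument the paper itself sketches in Remark~\ref{remark_general_obstruction} for general affine \( G \)-manifolds: it is more elementary (no compactification, no symplectic spherical-space theory, no equivariant resolution) and does not even use sphericality for the invariance step; what it does not give you is quasimonomiality of \( \nu_{\omega} \), which the paper's route yields as a byproduct but which is not needed for Theorem~\ref{maintheorem_kstable_valuation_sphericalmanifold} itself. Your remaining steps --- \( W \) is spherical because \( \operatorname{gr}_{\nu_{\omega}} R(M) \) stays multiplicity-free, uniqueness of \( (C,\xi) \) via Theorem~\ref{maintheorem_kstabledegeneration}, the identification \( \nu_{\omega} = -\nu_{\xi} \) on the Cartan algebra through \( B \)-eigenvectors and one-dimensional isotypic lines (the paper's Remark~\ref{remark_bmodule_tmodule}), and finiteness from the finitely many faces of \( \Vcal \) together with uniqueness of the K-stable Reeb vector --- coincide with the paper's Props.~\ref{prop_kstable_valuation_invariant} and~\ref{prop_asymptotic_cone_spherical}.

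Three small points to shore up. First, before decomposing \( R(M) = \bigoplus_{\lambda} V_{\lambda} \) you must know \( R(M) \) is \( G \)-stable at all; apply the density argument directly to the finite-dimensional \( K \)-stable pieces \( I_k \) (for a continuous functional \( \ell \) vanishing on \( I_k \), the holomorphic map \( g \mapsto \ell(g \cdot f) \) vanishes on the real form \( K \), hence on connected \( G \)), rather than inside a pre-existing \( G \)-module \( V_{\lambda} \). Second, to pass from \( G \)-stability of \( F^c V_{\lambda} \) to invariance of \( \nu_{\omega} \) on all of \( R(M) \) you need \( d_{\omega}(f) = \max_{\lambda} d_{\omega}(f_{\lambda}) \): this holds because each isotypic projection \( f_{\lambda} \) is a limit of averages of \( K \)-translates of \( f \), which remain in the closed finite-dimensional subspace \( \set{ d_{\omega} \leq d_{\omega}(f) } \) --- or avoid the issue by working with the full filtration pieces \( I_k \) throughout. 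Third, the ``technical obstacle'' you flag at the end is not actually open: the compatibility of the analytic filtration with the \( \xi \)-weight grading is precisely the Donaldson--Sun statement that the map \( \Rcal \to R(W) \) is valuation-preserving and that \( \nu_{\xi} \) is the valuation induced by \( \nu_{\omega} \) on \( R(W) \); the paper imports this as a black box and you may do the same. (Also, the boundary \( D_0 \) in your ``\( (C,D_0,\xi) \)'' is spurious here --- there is no log structure in Theorem~\ref{maintheorem_kstable_valuation_sphericalmanifold}.)
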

Here are some remarks on this theorem. 
\begin{itemize}
\item An immediate corollary is that the only Calabi-Yau metrics with maximal volume growth and horospherical symmetry are the conical Calabi-Yau metrics on horospherical cones. 
\item The valuation doesn't uniquely determine the Calabi-Yau metric, but only up to a family. An explicit example of a \( 2 \)-parameters family of Calabi-Yau metrics on \( \Cbb^3 \) with asymptotic cone \( \Cbb^2 / \Zbb_3 \times \Cbb \) was constructed by Chiu \cite{C22}. The fourth step in Sun-Zhang classification scheme predicts a family of Calabi-Yau metrics depending on as many parameters as the automorphisms group of the asymptotic cone.
\item As for uniqueness of the asymptotic cone, an approach independent of K-stability theory is to use the equivariant Hilbert scheme constructed by Alexeev-Brion in \cite{AB04a}, generalizing the Haiman-Sturmfels' Hilbert scheme for diagonalizable group \cite{HS04}, used in \cite[Section 3.3]{DS17}. 

\item The \( G\)-invariance of \( \nu_{\omega} \) and uniqueness of \( C \) as an \( G\)-affine cone hold for any \( K\)-invariant Calabi-Yau metric in our context (cf. Remark \ref{remark_general_obstruction}), but it is not clear how to compare \( \nu_{\omega} \) and \( \nu_{\xi} \) as in the spherical case. 
\end{itemize} 

Since every \(  K\)-invariant Calabi-Yau metric on non Hermitian symmetric spaces is necessarily \( \del \delb\)-exact of maximal volume growth, and that the K-stable valuation induced by such metric lies outside the Weyl chamber in the \( G_2 \) case by explicit computations in \cite{Ngh24}, we obtain directly the following non-existence result as announced therein. 

\begin{corx}
There is no complete \(  K\)-invariant Calabi-Yau metric with horospherical asymptotic cone on the symmetric spaces of type \( G_2 \). 

In particular, it follows from \cite{BD19} that there can only be a unique possible \( G\)-spherical asymptotic cone and there exists complete \( K\)-invariant Calabi-Yau metric on \( G_2 \) with this asymptotic cone.
\end{corx}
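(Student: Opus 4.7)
The plan is to combine the $G$-invariance of the K-stable valuation provided by Theorem \ref{maintheorem_kstable_valuation_sphericalmanifold} with the explicit computation of the K-stable valuation in the $G_2$ case already carried out in \cite{Ngh24}. The argument splits into the non-existence claim (first sentence) and the uniqueness/existence claim (second sentence).

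For the non-existence part, I proceed as follows. Since the symmetric spaces of type $G_2$ are non-Hermitian, the paragraph preceding the corollary guarantees that any $K$-invariant Calabi-Yau metric $\omega$ on such a space is automatically $\partial\bar{\partial}$-exact with maximal Euclidean volume growth, so Theorem \ref{maintheorem_kstable_valuation_sphericalmanifold} applies. Suppose toward a contradiction that the asymptotic cone $(C,\xi)$ of $\omega$ were horospherical. By the second bullet of that theorem, the valuation $\nu_\omega$ is $G$-invariant and agrees with $-\nu_\xi$ on the Cartan algebra of $M$, providing a direct identification of $\nu_\omega$ with the K-stable Reeb valuation viewed inside the valuation cone $-\mathcal{V}_M$ of $M$. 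Horosphericality of $C$ translates, through the combinatorics of spherical degenerations, into the requirement that $\nu_\omega$ lie in the open Weyl chamber portion of $-\mathcal{V}_M$ corresponding to horospherical contractions of $M$. I then invoke the explicit computation from \cite{Ngh24}, which shows that for the $G_2$ symmetric space the K-stable valuation lies strictly outside that Weyl chamber, producing the desired contradiction.

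For the second sentence, uniqueness of the $G$-spherical asymptotic cone is an immediate consequence of the first bullet of Theorem \ref{maintheorem_kstable_valuation_sphericalmanifold}, applied to any $K$-invariant Calabi-Yau metric on the $G_2$ symmetric space. Existence of such a metric is supplied by the Biquard-Delcroix construction \cite{BD19} in the setting of Stein manifolds with large reductive symmetry; by the uniqueness clause just invoked, the asymptotic cone of that metric is forced to be the unique $G$-spherical cone distinguished above.

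The main obstacle is the combinatorial dictionary translating horosphericality of the asymptotic cone into a precise location of $\nu_\omega$ within $-\mathcal{V}_M$. Once Theorem \ref{maintheorem_kstable_valuation_sphericalmanifold} bridges the analytic metric data and the combinatorial valuation data, the problem reduces to checking the concrete position of a single vector in the Cartan algebra, which is exactly the content of the computations carried out in \cite{Ngh24}.
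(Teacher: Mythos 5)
Your non-existence argument is essentially the paper's own: non-Hermitian symmetry gives \( \del\delb \)-exactness and maximal volume growth, Theorem \ref{maintheorem_kstable_valuation_sphericalmanifold} makes \( \nu_{\omega} \) a \( G \)-invariant valuation restricting to \( -\nu_{\xi} \) on the Cartan algebra, horosphericality of the asymptotic cone forces \( \nu_{\omega} \in \text{int}(\Vcal) \) and hence the Reeb vector into the positive restricted Weyl chamber, and the explicit barycenter computation of \cite{Ngh24} (reproduced in Prop.\ \ref{proposition_ranktwo_symmetric_space_valuations}: the relevant polynomial equations in \( t \) have all positive coefficients, hence no positive root) yields the contradiction. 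That half is correct and follows the paper's route.

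The gap is in your treatment of the second sentence. The first bullet of Theorem \ref{maintheorem_kstable_valuation_sphericalmanifold} gives uniqueness of the asymptotic cone of a \emph{fixed} metric \( \omega \); it says nothing about two different \( K \)-invariant Calabi-Yau metrics sharing the same cone, so ``unique possible asymptotic cone'' is not an immediate consequence of it. What is actually needed is the classification of the \( G \)-invariant K-stable valuations: by Theorem \ref{maintheorem_kstable_valuation_sphericalmanifold} they lie in \( \Vcal \), which for the \( G_2 \) symmetric spaces is a strictly convex rank-two cone (the anti-dominant restricted Weyl chamber, with trivial linear part); your first half excludes its interior, but this still leaves \emph{two} boundary rays, each corresponding to a quasi-regular degeneration to a horosymmetric cone \( C(D_i^{\vee}) \), \( i = 1,2 \), over a boundary divisor of the canonical compactification. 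The step you are missing --- and the actual reason the corollary says ``it follows from \cite{BD19}'' --- is the K-stability computation of \cite{BD19}, recalled in the paper as the proposition that \( C(D_1^{\vee}) \) is K-unstable while \( C(D_2^{\vee}) \) is K-stable, which eliminates one of the two rays. You invoke \cite{BD19} only for the existence of a metric, and your phrase ``the unique \( G \)-spherical cone distinguished above'' is circular: nothing in your argument has singled out one cone among the two boundary candidates. Once that proposition is inserted, the remainder of your outline (existence from \cite{BD19}, with the constructed metric necessarily asymptotic to \( C(D_2^{\vee}) \)) goes through.
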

\begin{itemize} 
\item This provides the first example of a non-rigid singular Calabi-Yau cone that cannot be realized as the tangent cone at infinity of a given \textit{equivariant affine smoothing}. On the other hand, the existence of a AC Calabi-Yau metric on an affine smoothing of a \textit{smooth Calabi-Yau cone} is \textit{always guaranteed} because any affine manifold is Kähler \cite[Theorem A]{CH22}.  
\item Since the \( G_2 \)-cones in the case of multiplicity \( 2 \) have canonical singularities, the non-existence result also suggests that a general existence theorem à la Conlon-Hein \cite[Theorem A,B]{CH22} should involve finer properties of the cone's singularities.

\item Given a Calabi-Yau cone \( C \), it is expected that there are only two ways to obtain Calabi-Yau manifolds: either by smoothing \( C \) or crepantly resolving \( C  \). This turns out to be the case for smooth Calabi-Yau cones \cite{CH22}. If both ways work, one can shrink the exceptional divisor on the Calabi-Yau crepant resolution \( \check{X} \) to \( C  \), then smoothly deforming \( C \) to a Calabi-Yau manifold \( \wh{X} \) with a different complex structure. This phenomenon is called \textit{geometric transition} which is of interest to physicists \cite{CdlO90} \cite{Ros06}. In our context, no \textit{equivariant geometric transition} phenomenon can occur through this cone, since there is no equivariant crepant resolution of the \( G_2 \)-asymptotic cone in the first place (cf. Lemma \ref{lemma_nocrepantresolution}).

\item Ronan Conlon pointed out to me that there is not yet any counterexample when the asymptotic cone has smooth link. It would be interesting to ask whether there exists at all any equivariant Calabi-Yau smoothing of the \( G_2 \)-horospherical asymptotic cones. 
\end{itemize}

Finally, another motivation of our work comes from the author's remark that many known examples of Calabi-Yau manifolds of  maximal volume growth with singular tangent cones so far are in fact \textit{affine spherical manifolds} with respect to the complexified action of the given isometry on the metric. This includes the Li-Conlon-Rochon-Székelyhidi (LCRS) metrics on \( \Cbb^{n+1}, n \geq 2 \) with asymptotic cone \( \Cbb \times A_1 \) \cite{Li19} \cite{CR21} \cite{Sze19}, Biquard-Delcroix-Gauduchon's metrics with horosymmetric tangent cones \cite{BD19}, \cite{BG96}, and the metrics with horospherical tangent cones constructed by the author in \cite{Ngh24}. Note however that on \( \Cbb^{n+1} \), there exist also metrics with non-spherical symmetry \cite{Sze19} \cite{CR21}.

Every \( G \)-affine spherical manifold \( M \) is \(  G\)-isomorphic to \( G \times_H V \), where \( H \) is a reductive connected spherical subgroup of \( G \) (in particular \( G/H \) is an affine spherical space), and \( V \) is a spherical \( H \)-module \cite[Corollary 2.2]{KvS06}. 

\begin{ex}
The complex symmetric spaces \( G/H \) are all affine spherical manifolds. On the other hand, \( \Cbb^{n+1} \) is a rank two \( \SO_{n+1} \times \Cbb^{*} \)-nonsemisimple symmetric cone with open orbit \( \SO_{n+1}/\SO_{n} \times \Cbb^{*} \). 
\end{ex}

The LCRS metrics with spherical symmetry on \( \Cbb^{3} \) are invariant by the maximal compact subgroup \( K = \SO_{n+1}(\Rbb) \times \Sbb^1 \) and of horospherical tangent cones at infinity \( A_1 \times \Cbb \). By Székelyhidi's uniqueness theorem \cite{Sze20}, any complete Calabi-Yau metric on \( \Cbb^{3}\) asymptotic to the cone is unique up to scalings and biholomorphisms. Note that any \(  K\)-invariant metric on the symmetric cone \( \Cbb^3 \) is \( \del \delb\)-exact and has maximal volume growth (cf. \cite{Del20b}). This fact combined with Székelyhidi's uniqueness and Theorem \ref{maintheorem_kstable_valuation_sphericalmanifold} implies the following. 

\begin{corx}[Prop. \ref{proposition_calabiyaumetrics_c3}] \label{main_corollary_calabi_yau_c3}
%The tangent cones at infinity of a quasiprojective \( G\)-spherical variety \((M, \omega) \) are either 
%\begin{itemize}
%    \item the horospherical cone corresponding to the degeneration of \( M \) along the unique K-stable valuation in the interior of \( \Vcal \).   
%    \item or the cones corresponding to the degeneration of \( M \) along the K-semistable valuations lying on \(  \del \Vcal \) (after possibly a further degenerations).  
%\end{itemize}
%It turns out that the horospherical K-stable valuation may not always exist, for example on \( G_2 \)-symmetric spaces. However on symmetric spaces up to rank two, a K-stable valuation always exists. We expect that there is always at least one K-stable valuation on a quasiprojective spherical manifold, and if we admit the no semistability picture of Sun-Zhang, then the K-semistable valuations on \( \del \Vcal \) (if any) are actually K-stable. 
The only possible asymptotic cones of complete Calabi-Yau metrics with spherical symmetry on \( \Cbb^{3} \) are 
\begin{itemize} 
\item the horospherical asymptotic cone \( A_1 \times \Cbb \) of the LCRS metrics,
\item and the asymptotic cone \( \Cbb^{3} \) itself of the standard flat metric. 
\end{itemize}
In particular, there are only two distinct families of complete Calabi-Yau metrics with spherical symmetry of \( \Cbb^{3} \). 
\end{corx}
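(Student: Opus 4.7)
The strategy is to apply Theorem \ref{maintheorem_kstable_valuation_sphericalmanifold} to \( M = \Cbb^{3} \) viewed as a \( G \)-affine spherical manifold for \( G = \SO_{3}(\Cbb) \times \Cbb^{*} \), with maximal compact \( K = \SO_{3}(\Rbb) \times \Sbb^{1} \). The preceding remarks already verify the hypotheses: by \cite{Del20b}, any \( K \)-invariant Kähler metric on the Hermitian symmetric cone \( \Cbb^{3} \) is \( \del\delb \)-exact and of maximal volume growth. Hence Theorem C produces, for every complete \( K \)-invariant Calabi-Yau metric \( \omega \) on \( \Cbb^{3} \), a \( G \)-spherical asymptotic cone \( (C, \xi) \) together with a \( G \)-invariant K-stable valuation \( \nu_{\omega} \) that restricts to \( -\nu_{\xi} \) on the Cartan subalgebra. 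In particular, the set of possible asymptotic cones is parameterised by the finite set of \( G \)-invariant K-stable valuations on \( \Cbb^{3} \).

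The core step is to enumerate these valuations explicitly. Using the rank-two spherical structure of the open orbit \( (\SO_{3} \times \Cbb^{*})/H \) with \( H \supset \SO_{2}(\Cbb) \), I would write down the Cartan lattice, the valuation cone \( \Vcal \), and the unique color of \( \Cbb^{3} \) (the restriction to the open orbit of the null quadric \( \set{v \cdot v = 0} \)). Each \( G \)-invariant valuation corresponds to a ray in \( -\Vcal \) and gives rise to a candidate polarized spherical degeneration \( (C_{\nu}, \xi_{\nu}) \) of \( \Cbb^{3} \); by the K-stability criterion of Theorem \ref{maintheorem_kstability}, applied with \( D = 0 \) so that \( \gamma_{\nu} = 1 \), this degeneration is K-stable exactly when the Duistermaat-Heckman barycenter of its moment polytope satisfies \( \mathrm{bar}_{DH}(\Delta_{\xi_{\nu}}) - \varpi \in (-\Vcal)^{\vee} \), with \( \varpi \) the anticanonical weight.

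A direct computation in this rank-two setting then shows that only two candidates survive: the trivial degeneration, whose colored cone is that of \( \Cbb^{3} \) itself and whose asymptotic cone is flat \( \Cbb^{3} \); and the horospherical one, obtained by removing the unique color, whose asymptotic cone is the affine quadric \( A_{1} \times \Cbb \) appearing in the LCRS construction. Any other colored face either fails to define a \( \Qbb \)-Gorenstein spherical cone or violates the barycenter inequality, and can be discarded by a short check.

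To finish, I would invoke Székelyhidi's uniqueness theorem \cite{Sze20} for each of the two admissible asymptotic cones: a complete Calabi-Yau metric on \( \Cbb^{3} \) asymptotic to flat \( \Cbb^{3} \) is flat up to scaling and biholomorphism, while one asymptotic to \( A_{1} \times \Cbb \) lies in the LCRS family. Combining these two uniqueness statements with the classification above yields exactly two distinct families of complete \( K \)-invariant Calabi-Yau metrics on \( \Cbb^{3} \). The main obstacle is the rank-two K-stability computation of the second and third paragraphs: although the underlying combinatorics is finite, one must correctly identify all faces of the colored cone, translate them into candidate Reeb vectors, and evaluate the barycenter criterion with the correct anticanonical data, in order to rigorously exclude any spurious \( G \)-spherical cone from the list of possible asymptotic models.
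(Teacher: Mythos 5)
Your overall strategy---Theorem \ref{maintheorem_kstable_valuation_sphericalmanifold} applied to \( G = \SO_3 \times \Cbb^{*} \), the spherical combinatorics of \( \Cbb^3 \), then uniqueness theorems---is the paper's route, but your middle step mis-parameterizes the problem and proposes a computation the paper never needs. Equivariant degenerations of \( \Cbb^3 \) are indexed by valuations \( \nu \in \Vcal \) (not by ``colored faces,'' and not by rays in \( -\Vcal \): it is the Reeb vector, not the valuation, that lives in \( -\Vcal \)), and here \( \Vcal = \Rbb_{\leq 0}\,\alpha^{\vee} \times \Rbb \chi \) is a half-plane whose boundary \emph{coincides with its linear part}. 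So Proposition \ref{proposition_kstable_valuations_ranktwo} gives the dichotomy outright: either \( \nu_{\omega} \in \text{lin}\,\Vcal \), the degeneration is trivial and the cone is flat \( \Cbb^3 \); or \( \nu_{\omega} \in \text{int}(\Vcal) \), and then \emph{every} interior ray produces one and the same horospherical central fiber, identified with \( A_1 \times \Cbb \) via the explicit degeneration \( f = z_1^2 + z_2^2 + z_3^2 \). There is no list of spurious non-horospherical candidates to exclude by barycenter inequalities, because \( \del \Vcal \backslash \text{lin}\,\Vcal = \emptyset \); and the K-stable Reeb vector \( \xi = \alpha^{\vee}/2 \) (hence \( \nu_{\omega} = (-\alpha^{\vee}/2, -\chi) \)) is imported from the explicit computation in \cite{Ngh24} rather than re-derived from Theorem \ref{maintheorem_kstability}. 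Your barycenter plan would not be wrong---the asymptotic cone is automatically K-stable by Donaldson--Sun, and running the criterion on the unique nontrivial candidate just recovers the volume-minimizing Reeb vector---but the ``rank-two case analysis over all faces'' you anticipate as the main obstacle does not exist.

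There is one genuine omission: the corollary claims the classification for \emph{all} spherical symmetry on \( \Cbb^3 \), whereas you fix \( G = \SO_3 \times \Cbb^{*} \) at the outset. The paper's proof includes a lemma, based on the classification of multiplicity-free modules \cite{Kac80} \cite{BR96} \cite{Lea98}, showing that the spherical structures on \( \Cbb^3 \) are exactly \( (\SL_3, V(\lambda)) \), \( (\SL_3 \otimes \Cbb^{*}, V(\lambda)) \), \( (\SO_3 \otimes \Cbb^{*}, V(2\omega)) \) and \( (S^2 \SL_2 \otimes \Cbb^{*}, S^2 V(\omega)) \); the first two are horospherical (so Theorem \ref{maintheorem_kstable_valuation_sphericalmanifold} forces any invariant Calabi--Yau metric to be conical, hence flat), and the last two are isomorphic to each other. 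Without this enumeration your argument proves only the \( \SO_3(\Rbb) \times \Sbb^1 \)-invariant case. A minor point in the same spirit: rigidity for the flat asymptotic cone \( \Cbb^3 \) is the Bishop--Gromov equality case, not Székelyhidi's theorem \cite{Sze20}, which the paper invokes only for the cone \( A_1 \times \Cbb \).
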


%Every \(  K\)-invariant Calabi-Yau metric on a spherical manifold, if exists, can thus be interpreted as an equivariant smoothing of a singular Calabi-Yau spherical cone (possibly followed by a crepant resolution). 

\subsection{Organization.}
The paper is organized as follows. In Section \ref{section_test_configuration_futaki_invariant}, we describe the test configurations and compute the Futaki invariant of spherical cones. Main Theorems \ref{maintheorem_kstability} and \ref{maintheorem_kstabledegeneration} are proved in Section \ref{section_proof_kstability_maintheorems}. 

Section \ref{section_semistable_valuations_classification} contains a summary of Donaldson-Sun theory. The proof of Theorem \ref{maintheorem_kstable_valuation_sphericalmanifold} is given in Section \ref{section_proof_kstable_valuations_maintheorem}. Examples of explicit K-stable valuations on spherical Calabi-Yau manifolds and proof of Corollary \ref{main_corollary_calabi_yau_c3} are given in Section \ref{section_examples_kstable_valuations}. 

\textbf{Acknowledgement.}
This paper is part of a thesis prepared under the supervision of Thibaut Delcroix and Marc Herzlich. I am thankful to Thibaut Delcroix for a lot of helpful exchanges and his careful reading of the paper. Thanks also go to Sébastien Boucksom for patiently listening to me and kindly sharing his insights, as well as Ronan Conlon and Eveline Legendre for comments related to various parts of the paper. The author is partially supported by ANR-21-CE40-0011 JCJC project MARGE.

\section{Test configurations and Futaki invariant of spherical cones} \label{section_test_configuration_futaki_invariant}

\subsection{Generalities on spherical cones}
Main references for this section are \cite{Bri97}, \cite{Kno91}. 
A spherical space is a homogeneous space \( G/H \) containing a Zariski-open orbit under the action of a Borel subgroup \( B \subset G \). A \( G \)-spherical variety \( X \) is a \( G\)-equivariant embedding of a spherical space. A spherical variety is said to be \textit{simple} if it contains a unique closed \( G\)-orbit. Each simple spherical variety contains an open \( B\)-stable affine subset \( X_B \) that intersects the closed orbit along an open \( B \)-stable orbit. Every spherical variety can be covered by simple spherical varieties. 

Let \( \Mcal(G/H) \) be the lattice of characters of \( \Cbb(G/H) \) as a \( B\)-representation, and \( \Ncal(G/H) \) be its dual lattice. Denote by \( \Vcal(G/H) \) the set of \( G\)-invariant valuations on \( \Cbb(G/H)^{*} \). When the spherical space is clear from the context, we will just denote them by \( \Mcal, \Ncal, \Vcal \).  

\begin{thm} \cite{BLV}
Let \( Q \) be the parabolic subgroup of \( G \) that stabilizes the open \( B\)-orbit \( BH \) (or equivalently, stabilizes all the colors in \( \Dcal \)). 

There is a choice of a Lévi subgroup \( L \subset Q \) and of a maximal torus \( T \subset L \) (this is also the maximal torus of \( G \)) such that one can identify \( \Mcal \) and \( \Ncal \) with the character lattice of the \emph{adapted torus} \( T / T \cap H \) and its dual. The dimension of this torus is the \emph{rank} of \( G/H \). 
\end{thm}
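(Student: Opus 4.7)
The plan is to apply the Brion--Luna--Vust local structure theorem to the open $B$-orbit $\Omega := BH/H \subset G/H$. Since $\Omega$ is Zariski-dense in $G/H$, restriction identifies $\Mcal$ with the weight lattice of $B$-semi-invariant regular functions on $\Omega$, so the task reduces to analyzing $B$-eigenfunctions on this single orbit.

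I would then fix the auxiliary data carefully: choose a maximal torus $T \subset B$ so that $T \cap H$ is a maximal torus of $B \cap H$, and let $L$ be the unique Levi of $Q$ containing $T$. The local structure theorem produces a closed $L$-stable affine subvariety $\Sigma \subset \Omega$ together with an $L$-equivariant isomorphism $R_u(Q) \times \Sigma \xrightarrow{\sim} \Omega$ on which $[L,L]$ acts trivially. Consequently $\Sigma$ becomes a homogeneous space for the torus $L/[L,L]$, and I would identify it with $T/(T \cap H)$ as a $T$-variety by comparing stabilizers at the base point $eH$.

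The bijection with $\Mcal$ then follows formally: since every character of $B$ factors through $B/R_u(B) = T$ and $R_u(Q) \subset R_u(B)$, any $B$-eigenfunction $f$ on $\Omega$ is constant along $R_u(Q)$-orbits and hence descends to a $T$-eigenfunction on $\Sigma$; conversely, any $T$-eigenfunction on $\Sigma$ extends uniquely to an $R_u(Q)$-invariant function on $\Omega$ of the same weight. This yields $\Mcal \simeq X^{*}(T/(T \cap H))$, and dualization gives $\Ncal \simeq X_{*}(T/(T \cap H))$. The rank statement is then a dimension count via $\dim \Omega = \dim R_u(Q) + \dim \Sigma$.

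The main obstacle is proving that $\Sigma$ is truly isomorphic to $T/(T \cap H)$ as a $T$-space -- that is, that the $T$-stabilizer of the base point coincides with $T \cap H$ rather than with some larger subgroup containing contributions from elements of $B \cap H$ with nontrivial unipotent part. This is precisely where the freedom of choice emphasized in the theorem's statement becomes essential; the initial choice of $T$ must be made so that $T \cap H$ is a maximal torus of $B \cap H$, ensuring that projection $B \cap H \to T$ has kernel contained in $R_u(B) \cap H$ and the resulting stabilizer on $\Sigma$ is exactly $T \cap H$.
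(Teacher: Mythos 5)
The paper itself gives no proof of this statement---it is quoted directly from \cite{BLV}---so the relevant comparison is with the original Brion--Luna--Vust argument, whose skeleton (local structure theorem applied to the open $B$-orbit $\Omega = BH/H$) you correctly reproduce. However, your proposal has a genuine gap at precisely the step that constitutes the actual content of the theorem: the assertion that the local structure theorem produces a slice $\Sigma$ with $R_u(Q)\times \Sigma \simeq \Omega$ \emph{on which $[L,L]$ acts trivially}. The general local structure theorem only yields an $L$-stable slice; here $\Sigma \simeq \Omega/R_u(Q) \simeq L/\bar{H}$ is an $L$-homogeneous space, where $\bar{H}$ denotes the image of $Q\cap H$ in $Q/R_u(Q)\simeq L$, and there is no a priori reason for $[L,L]$ to act trivially on it. Triviality is equivalent to the containment $\bar{H} \supseteq [L,L]$, and proving this is the heart of BLV's theorem: one uses that $B$ is \emph{transitive} (not merely dense) on $\Omega$, so that $L = B_L\bar{H}$ with $B_L$ a Borel of $L$, i.e.\ $\bar{H}$ acts transitively on the full flag variety of $L$, which by a nontrivial argument forces $\bar{H}\supseteq [L,L]$; one then lifts $[L,L]$ into $Q\cap H$ via a Levi--Mostow decomposition of $Q\cap H$ and conjugates the Levi of $Q$ accordingly. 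If instead you intended to invoke the version of the local structure theorem for spherical homogeneous spaces that already includes this triviality, then you are citing the very statement to be proved, and the argument is circular.

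Relatedly, your diagnosis of the ``main obstacle'' is misplaced: the essential choice emphasized in the statement is of the Levi $L$ (conjugated inside $Q$ so that $(L,L)\subseteq H$), not of the maximal torus $T$. Choosing $T$ so that $T\cap H$ is a maximal torus of $B\cap H$ only normalizes the toral part of the base-point stabilizer (and note the kernel of $B\cap H \to T$ equals $R_u(B)\cap H$ for \emph{any} choice of $T$, so this choice buys less than you claim); it cannot prevent $\Sigma$ from being a genuinely nonabelian homogeneous space $L/(L\cap H)$ when the Levi is badly positioned---nothing in your setup rules out a slice with positive-dimensional semisimple symmetry, whereas the danger you guard against (unipotent contributions to the stabilizer) is the minor issue. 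Once the correct Levi is in hand, the remaining steps you give---descent of $B$-eigenfunctions along $R_u(Q)$-orbits, extension of characters of $T/(T\cap H)$ to rational $B$-eigenfunctions via the open embedding $\Omega \subset G/H$, and the dimension count identifying $\dim \Sigma$ with the rank of $\Mcal$---are correct and agree with the standard argument.
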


For every valuation \( \nu \in \Vcal \), there exists an injective natural map \( \rho : \Vcal \to \Ncal_{\Rbb} \), such that \( \rho(\nu)(f_{\chi}) = \sprod{\chi, \nu} \) where \( f_{\chi} \in \Cbb(G/H) \) is an eigenvector of \( B\) with character \( \chi \). 

\begin{defn}
The set of reduced and irreducible \( B\)-stable divisors in \( G/H \) is called the \emph{colors of \( G/H \)}, denoted by \( \Dcal \). 

A \emph{color of a \( G/H \)-spherical embedding \( X \)} is an element of \( \Dcal \) whose closure in \( X \) contains a closed orbit. The set of colors of a spherical embedding \( X \) is denoted by \( \Dcal_X \). The natural map \( \rho \) sends \( \Dcal \) to a subset of \( \Ncal \), but \( \rho \) is not injective on \( \Dcal \) in general.

Let \( \Vcal_X \) be the set of \( G\)-invariant divisors of \( X \). The injective map \( \rho : \Vcal_X \to \Ncal_{\Rbb} \) that sends a divisor to its valuation identifies \( \Vcal_X \) with a finite subset in \( \Vcal \). 
\end{defn}

To each simple embedding \( X \), we can associate a pair \( (\Ccal_X, \Dcal_X) \), where \( \Ccal_X \) is the strictly convex cone generated by \( \Vcal_X \cup \rho(\Dcal_X) \), called a \textit{colored cone} in the following sense. 

\begin{defn}
A \emph{colored cone} \( (\Ccal, \Fcal) \) is the data of \( \Ccal \subset \Ncal_{\Rbb} \) and \( \Fcal \subset \Dcal \), where \( 0 \notin \rho(\Fcal) \),  \( \Ccal \) is a strictly convex cone generated by \( \rho(\Fcal) \) and a finite number of elements of \( \Vcal \), and \( \Fcal \) is called the set of \emph{colors} of \( (\Ccal, \Fcal) \). 
\end{defn}

\begin{thm}[Luna-Vust]
The map \( X \to (\Ccal_X, \Dcal_X) \) is a bijection between the set of isomorphism classes of simple \( G/H \)-embeddings and the set of colored cones. 
\end{thm}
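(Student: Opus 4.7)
The plan is to construct an inverse to the map $X \mapsto (\Ccal_X, \Dcal_X)$ assigning to each colored cone $(\Ccal, \Fcal)$ a canonical simple $G/H$-embedding $X_{(\Ccal, \Fcal)}$. Well-definedness of the forward map is fairly immediate: $\Vcal_X$ is finite because a simple embedding has finitely many $G$-orbits, and strict convexity of the cone generated by $\Vcal_X \cup \rho(\Dcal_X)$ follows from Brion's result that $\Vcal$ itself is a convex cone, together with the local structure near the closed orbit.

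For the inverse, given a colored cone $(\Ccal, \Fcal)$, I would first produce a $B$-stable affine chart $X_B := \Spec A_{(\Ccal, \Fcal)}$, where $A_{(\Ccal, \Fcal)}$ is the sub-$\Cbb$-algebra of $\Cbb(G/H)$ consisting of rational functions regular on $(G/H) \setminus \bigcup_{D \in \Dcal \setminus \Fcal} D$ that satisfy $\nu(f) \geq 0$ for every $\nu \in \Vcal \cap \Ccal$. The embedding $X_{(\Ccal, \Fcal)}$ is then obtained by gluing the $G$-translates of $X_B$ along the open orbit $G/H$.

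Finite generation of $A_{(\Ccal, \Fcal)}$ follows from Gordan's lemma applied to the dual cone $\Ccal^{\vee}$, once one observes that its $B$-semi-invariants form exactly $\bigoplus_{\chi \in \Ccal^{\vee} \cap \Mcal} \Cbb f_{\chi}$. Normality is immediate, since $A_{(\Ccal, \Fcal)}$ is an intersection of discrete valuation rings inside $\Cbb(G/H)$. That $X_{(\Ccal, \Fcal)}$ is simple with the prescribed colored cone relies on the Brion--Luna--Vust local structure theorem, which identifies $X_B$ with a product $R_u(Q) \times Z$ for a suitable affine $L$-spherical variety $Z$; the unique closed $G$-orbit then corresponds to the maximal ideal spanned by those $f_{\chi}$ with $\chi$ lying outside the linear span of $\Ccal^{\vee} \cap (-\Ccal^{\vee})$. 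Conversely, Sumihiro's theorem applied to a given simple embedding $X$ produces a canonical $B$-stable affine chart $X_B$, obtained by removing from $X$ all $B$-stable prime divisors not containing the closed orbit $Y$, and a direct identification then yields $\Cbb[X_B] = A_{(\Ccal_X, \Dcal_X)}$.

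The main obstacle is the bookkeeping around colors: since $\rho : \Dcal \to \Ncal_{\Rbb}$ is typically not injective, one must retain the full set $\Fcal$ rather than its image $\rho(\Fcal)$, and verify that the colors of $X_{(\Ccal, \Fcal)}$ whose closures contain the closed orbit are exactly those listed in $\Fcal$. This is an essentially combinatorial check, using the correspondence between $B$-orbits of $X_B$ and faces of $\Ccal^{\vee}$ supplied by the local structure theorem.
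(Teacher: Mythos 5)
The paper contains no proof of this statement: it is the classical Luna--Vust classification, quoted with attribution, and the section points to \cite{Kno91} for the theory. So the comparison can only be with the standard proof in the literature, which is exactly the route you outline: the \( B\)-chart ring cut out by valuation conditions, Gordan's lemma, the local structure theorem, gluing of \( G\)-translates, Sumihiro's theorem for the converse. Your architecture is the right one, but one step fails as written. Finite generation of \( A_{(\Ccal, \Fcal)} \) does not follow from Gordan's lemma ``once one observes that its \( B\)-semi-invariants form \( \bigoplus_{\chi \in \Ccal^{\vee} \cap \Mcal} \Cbb f_{\chi} \)'': that observation only yields finite generation of the eigenfunction subalgebra \( A^{(B)} \) (equivalently of \( A^U \), since sphericity makes \( A^U \) multiplicity-free), and \( A \) is \emph{not} spanned by its \( B\)-eigenvectors. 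Rational \( B\)-modules are not semisimple --- only the torus acts diagonalizably --- so \( A^{(B)} \) is in general a small subalgebra; already for the standard \( B \subset \SL_2 \)-action on \( \Cbb[x,y] \) the eigenvectors are the powers of a single linear form. Passing from finite generation of \( A^U \) to that of \( A \) is precisely the nontrivial lemma of Luna--Vust theory: it requires either a Grosshans-type transfer after pulling \( A \) back to a \( G \times H\)-stable algebra inside \( \Cbb[G] \), or the local structure theorem applied upstream to the open subset \( U = G/H \setminus \bigcup_{D \in \Dcal \setminus \Fcal} D \) of the homogeneous space. Note that you cannot invoke the local structure theorem for \( X_B \) itself at this stage, since that presupposes \( X_B = \Spec A_{(\Ccal,\Fcal)} \) is already a variety, i.e.\ the finite generation you are trying to prove; as laid out, the argument is circular at exactly this point.

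Two smaller gaps. First, ``gluing the \( G\)-translates of \( X_B \) along the open orbit'' silently assumes that \( \bigcup_{g \in G} g X_B \) is separated and carries an algebraic \( G\)-action; in Luna--Vust theory this is handled by the separation criterion for models of \( \Cbb(G/H) \) (each valuation dominates at most one local ring of the collection), and it deserves at least a sentence, though it is routine for a single \( B\)-chart. Second, the paper's definition of colored cone omits Knop's condition that the relative interior of \( \Ccal \) meet \( \Vcal \); without it the stated bijection is false, since for a strictly convex cone whose relative interior misses \( \Vcal \) your ring \( A_{(\Ccal,\Fcal)} \) still makes sense but the resulting simple embedding has a colored cone different from \( (\Ccal, \Fcal) \), so surjectivity onto such cones fails. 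Your final ``combinatorial check'' that the invariant divisors and colors of \( X_{(\Ccal,\Fcal)} \) reproduce \( (\Ccal, \Fcal) \) is exactly where this hypothesis is consumed, and it should be made explicit. None of this changes the verdict that your strategy is the standard and correct one; the finite generation step is the one place where the proof, as written, would not go through.
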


\begin{thm} \cite{Ngh22b}
Let \( G/H \) be a spherical space. Let \( Y \) be a simple \( G\)-equivariant embedding of \( G/H \) with colored cone \( (\Ccal_Y, \Dcal_Y) \). Then \( Y \) is a spherical cone if and only if \( \Vcal(G/H) \) has a linear part, \( \Ccal_Y \) is of maximal dimension, and \( \Dcal = \Dcal_Y \) (i.e. all the colors of \( G/H \) contains the unique closed orbit of \( Y \)).  
\end{thm}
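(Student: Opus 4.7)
The plan is to translate each of the three conditions into a geometric property of $Y$ via the Luna--Vust classification and Knop's theory of invariant valuations, and then prove the two directions separately. For the forward implication, since $Y$ is a cone it carries a nontrivial $\Cbb^{*}$-scaling action with unique attracting fixed point $y_{0}$; this action commutes with $G$ and hence factors through a one-parameter subgroup of $T_H \simeq (N_G(H)/H)^{0}$. By Knop's identification of the Lie algebra of $T_H$ with the linear part $\Vcal \cap -\Vcal$ of the valuation cone, condition (a) follows. The Luna--Vust orbit-face correspondence then says that the closed orbit corresponds to the whole colored cone $(\Ccal_Y, \Dcal_Y)$ and has rank $\operatorname{rank}(G/H) - \dim \Ccal_Y$, so the closed orbit being reduced to $\{y_0\}$ forces $\dim \Ccal_Y = \operatorname{rank}(G/H)$, giving (b). For (c), the connected torus $T_H$ permutes the finite set $\Dcal$ and therefore fixes each color setwise, so every $D \in \Dcal$ is preserved by the scaling; its closure $\overline{D} \subset Y$ is then a nonempty $\Cbb^{*}$-stable $B$-divisor, and since every orbit of the scaling limits to $y_0$, one has $y_0 \in \overline{D}$, i.e.\ $D \in \Dcal_Y$.

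For the reverse direction, condition (a) provides a $\Cbb^{*}$-subgroup in $T_H$ acting on $Y$ by $G$-automorphisms, a candidate scaling once affineness is established. Condition (b) guarantees the closed orbit has rank zero, and combined with (c) the Luna--Vust description of the stabilizer forces the closed orbit to be $G$-fixed: (c) makes the parabolic $Q$ stabilizing all colors sit inside the closed-orbit stabilizer, and the full-dimension condition accounts for the remaining torus directions. Affineness itself would follow from the Knop--Brion criterion, which for a simple spherical embedding amounts to the existence of $\varpi \in \Mcal_{\Rbb}$ in the interior of $\Ccal_Y^{\vee}$ vanishing on $\Vcal \cap -\Vcal$. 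Under (a), (b), (c) such a $\varpi$ is produced by a direct separation argument: (b) makes $\text{int}(\Ccal_Y^{\vee})$ nonempty, (a) specifies the hyperplane on which $\varpi$ is required to vanish, and (c) ensures that no color contributes a generator of $\Ccal_Y$ outside the locus on which $\varpi$ is positive.

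The main obstacle I expect is this last affineness step: it requires invoking the precise form of Brion's criterion and executing the combinatorial separation argument compatibly with the coloring data. Once $Y$ is known to be affine, the polarization $\varpi$ defines the sought-after $\Cbb^{*}$-grading on $\Cbb[Y]$ with only non-negative weights and a unique weight-zero piece, confirming $Y$ as the affine cone with vertex $y_0$; everything else is a straightforward reading of the Luna--Vust dictionary.
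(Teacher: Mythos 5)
The paper itself contains no proof of this statement; it is quoted from \cite{Ngh22b}. So I am measuring your attempt against the standard Luna--Vust/Knop argument that the citation encapsulates. Your forward direction is essentially that argument: (a) follows from Knop's result (quoted in the paper) that a fixed vertex yields a \( \Cbb^{*} \)-action commuting with \( G \), together with \( \text{Lie}(T_H) = \text{lin}\,\Vcal \); and (b) follows since the lattice of the closed orbit is \( \Mcal \cap \Ccal_Y^{\perp} \). But your proof of (c) leans on the unproven assertion that \emph{every} orbit of the scaling limits to \( y_0 \): Knop's statement provides a \( \Cbb^{*} \)-action commuting with \( G \), not, by itself, the attracting property (nonnegativity of the induced grading), so this step has a hole --- one that is repaired for free by the observation below.

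The genuine gap is the affineness step of the reverse direction, precisely the step you flag as the main obstacle, and it is not merely incomplete: your criterion is misstated and is \emph{unsatisfiable} for the varieties it is supposed to certify. Knop's criterion (Theorem 6.7 in \cite{Kno91}) asks for \( \chi \in \Mcal \) that \emph{vanishes on \( \Ccal_Y \)}, is nonpositive on \( \Vcal \), and is strictly positive on \( \rho(\Dcal \setminus \Dcal_Y) \) --- not for an element of \( \text{int}(\Ccal_Y^{\vee}) \) vanishing on \( \text{lin}\,\Vcal \). Your version fails already for every toric or horospherical cone, where \( \text{lin}\,\Vcal = \Ncal_{\Rbb} \), so only \( \varpi = 0 \) vanishes on it while \( 0 \notin \text{int}(\Ccal_Y^{\vee}) \); and even when \( \text{lin}\,\Vcal \) is proper, it meets \( \Ccal_Y \) nontrivially (e.g.\ for \( \Cbb^3 \) with open orbit \( \SO_3/\SO_2 \times \Cbb^{*} \), the paper's Figure \ref{figure_colored_cone_c3} shows \( \chi \in \text{lin}\,\Vcal \cap \Ccal_Y \)), again excluding your \( \varpi \). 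So the planned ``separation argument'' cannot exist. The correct situation is the opposite of delicate: under (b) the only \( \chi \) vanishing on the full-dimensional \( \Ccal_Y \) is \( \chi = 0 \), which is nonpositive on \( \Vcal \) trivially and makes the color condition vacuous exactly when \( \Dcal = \Dcal_Y \). Hence (b)+(c) give affineness immediately, and conversely affineness plus (b) forces (c) --- which also fixes your forward argument for (c) without the attracting-orbit claim. Two further corrections: \( \varpi \in \Mcal_{\Rbb} \) cannot ``define a \( \Cbb^{*} \)-grading'' --- a grading cocharacter must be taken on the \( \Ncal \)-side, in \( \text{lin}\,\Vcal \) --- and no grading need be exhibited at all, since the definition of spherical cone used here only requires \( Y \) affine with closed orbit the \( G \)-fixed point: (b) gives rank zero, a rank-zero orbit is a complete flag variety \( G/P \), and (c) removes all of its colors, forcing \( P = G \). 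Note finally that in this corrected reverse implication (a) is a consequence rather than an input, whereas in your sketch it is load-bearing input to a criterion that cannot be met.
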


The \( \Qbb\)-Gorenstein assumption on a spherical cone implies that is has at worst klt singularities. We refer the reader to \cite{Pas17} for a survey on singularities of spherical varieties.  Any \( \Qbb\)-Gorenstein spherical cone is in particular a Fano cone. 

%Each spherical cone \( Y \)  can be identified with its \( B\)-stable affine chart \( Y_B \). 

\begin{thm} \cite{Los09}
Let \( \Gamma_Y := \Ccal_Y^{\vee} \cap \Mcal \) be the weight monoid of the spherical cone \( Y \). Then \( Y \) is uniquely determined up to \( G\)-isomorphisms by \( (\Gamma_Y, \Sigma_Y) \). 
\end{thm}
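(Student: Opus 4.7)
The plan is to exploit the multiplicity-free decomposition of the affine coordinate ring and then prove a rigidity statement for its algebra structure. Since $Y$ is affine $G$-spherical, Vinberg--Kimelfeld gives a multiplicity-one decomposition
\[
\Cbb[Y] = \bigoplus_{\chi \in \Gamma_Y} V_\chi,
\]
so $\Gamma_Y$ already pins down $\Cbb[Y]$ as a $G$-module. Pick a highest-weight vector $f_\chi \in V_\chi$ for each $\chi \in \Gamma_Y$. Brion's description of multiplication in multiplicity-free rings forces
\[
f_\chi \cdot f_{\chi'} = c_{\chi,\chi'}\, f_{\chi+\chi'} + \sum_{\sigma} c_{\chi,\chi',\sigma}\, f_{\chi+\chi'-\sigma},
\]
where the sum runs over nonzero $\sigma \in \Nbb \Sigma_Y$ with $\chi+\chi'-\sigma \in \Gamma_Y$ and $c_{\chi,\chi'} \neq 0$. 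Thus the ``shape'' of the multiplication table is already controlled by $(\Gamma_Y, \Sigma_Y)$; what remains is to determine the structure constants up to $G$-isomorphism.

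The core step is then a rigidity result: the torus $\prod_{\chi \in \Gamma_Y}\Cbb^{*}$ acts on the tuple of highest-weight vectors and produces $G$-equivariant algebra isomorphisms, and I would argue that any two collections of structure constants compatible with $(\Gamma_Y, \Sigma_Y)$ lie in the same orbit of this torus. The approach I would take is Knop-style localization: restrict to rank-one and rank-two sub-data, where the algebra is completely explicit and known to be determined by the analogous combinatorial invariants. A gluing/induction argument along the partial order given by successive $\Sigma_Y$-shifts then propagates this rigidity to all of $\Cbb[Y]$. This is essentially the content of Losev's uniqueness theorem \cite{Los09}.

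Once the open $G$-orbit $G/H$ is reconstructed this way (its combinatorial invariants $\Mcal = \Zbb\Gamma_Y$, $\Sigma_Y$, and the Luna datum being recoverable from the same input), the embedding $Y$ itself is pinned down by its colored cone: one has $\Ccal_Y = \Gamma_Y^{\vee}$, and the preceding spherical-cone criterion gives $\Dcal_Y = \Dcal$, so the full colored cone is determined and Luna--Vust yields uniqueness of the simple embedding. The main obstacle I anticipate is the rigidity step in the middle paragraph: a priori the constants $c_{\chi,\chi',\sigma}$ could sweep out genuine moduli, and their collapse to a single orbit requires the interplay between the multiplicity-one property and the short list of possible rank-one/rank-two localizations. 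Everything else (the module decomposition, the recovery of $\Ccal_Y$, the identification of colors in the spherical-cone case, Luna--Vust) is essentially bookkeeping.
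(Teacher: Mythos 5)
The first thing to note is that the paper does not prove this statement at all: it is quoted as a known theorem of Losev \cite{Los09} (an affine spherical \( G \)-variety is determined up to \( G \)-isomorphism by its weight monoid and spherical roots), so your proposal can only be measured against Losev's actual argument, and there it has a genuine gap. The step you call the ``core rigidity result'' --- that any two systems of structure constants compatible with \( (\Gamma_Y, \Sigma_Y) \) lie in a single orbit of the rescaling torus (equivalently, that the orbit of the adjoint torus on the Alexeev--Brion moduli scheme of multiplicity-free algebra structures with weight monoid \( \Gamma_Y \) is pinned down by the spherical roots) --- \emph{is} the theorem, not a lemma on the way to it. You do not establish it: you say you ``would argue'' it via rank-one/rank-two localization and a gluing induction ``along successive \( \Sigma_Y \)-shifts,'' and then write that this ``is essentially the content of Losev's uniqueness theorem,'' which makes the argument circular. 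There is no soft induction of this kind in the literature; Losev's proof requires a substantial combinatorial analysis (reductions to Levi subgroups in the style of Knop together with a delicate case-by-case study of the possible low-rank configurations), and a priori the constants \( c_{\chi,\chi',\sigma} \) really could sweep out moduli --- ruling that out is the hard content, exactly as you yourself flag.

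There is also a technical error in your multiplication formula. The product \( f_{\chi} \cdot f_{\chi'} \) of two highest-weight (i.e.\ \( B \)-eigen-) vectors is again a \( B \)-eigenvector, of weight exactly \( \chi + \chi' \); it cannot have components \( f_{\chi + \chi' - \sigma} \) of different \( B \)-weights. The degeneration governed by the root monoid \( \Nbb \Sigma_Y \) occurs at the level of isotypic components: \( V_{\chi} \cdot V_{\chi'} \subseteq \bigoplus_{\sigma} V_{\chi + \chi' - \sigma} \) with \( \sigma \in \Nbb \Sigma_Y \), and the structure constants whose torus-orbit one must control live on these module components, not on the highest-weight vectors alone. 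Your final paragraph (recovering \( \Ccal_Y \) as the dual cone of \( \Rbb_{\geq 0} \Gamma_Y \), noting that a spherical cone carries all colors so \( \Dcal_Y = \Dcal \), and invoking Luna--Vust) is correct bookkeeping, but note that it is not even needed if the rigidity statement is applied directly to \( \Cbb[Y] \), since Losev's theorem concerns affine spherical varieties themselves rather than only their open orbits; in any case that bookkeeping cannot substitute for the unproved middle step.
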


%\textcolor{red}{One can embed a spherical cone of rank \( r \) explicitly in \( \Cbb^N \) as follows. The monoid \(  \Ccal_Y^{\vee} \cap \Mcal \) is saturated (since \( Y \) is normal) and finitely generated, and its generators can be identified with characters \( \chi_1, \dots, \chi_s \) of the adapted torus \( T / T \cap H  \). The open affine chart \( Y_B \) is then isomorphic to the affine variety with coordinate ring \( \Cbb[x_1, \dots, x_r]/ \sprod{\chi_1, \dots, \chi_s} \), which is also the coordinate ring of \( Y \).}  

Let \( T_H := \text{Aut}_G(Y)^0 \simeq (N_G(H)/H)^0 \) be the neutral component of the automorphisms of \( Y \) that commutes with \( G \). Since every \( \sigma \in \text{Aut}_G(G/H)^0 \) can be extended to a \( G\)-equivariant isomorphism of \( (Y, y)  \) to \( (Y,\sigma(y)) \), we have \( T_H \simeq \text{Aut}_G(G/H)^0 \). Moreover, \( \dim (T_H) = \dim \text{lin} \Vcal \geq 1 \), and the noncompact Lie algebra of \( \tfrak_H \) can be identified with \( \text{lin} \Vcal \), hence \( \Ncal(T_H) = \text{lin} \Vcal \cap \Ncal \). 

\begin{ex}
Every toric space (i.e. \( G = T \) and \( H = \set{1} \)) admits conical embeddings, while this is not the case for every spherical space. Indeed the symmetric space \( SL_2 /T \) does not embed into any symmetric cone, since \( N_{SL_2}(T)/T \simeq \Zbb_2 \). However, the space \( SL_2 / T \times \Cbb^{*} \) has a conical embedding. 
\end{ex}

Under the \( T_H \)-action, the coordinate ring of \( Y \) decomposes as
\[ R := \Cbb[Y] = \bigoplus_{\alpha \in \Gamma} R_{\alpha}, \]
where \( \Gamma := \set{ \alpha \in \Mcal(T_H), R_{\alpha} \neq 0} \) is a finitely generated monoid in \( \Mcal(T_H) \). The cone \( \sigma^{\vee} \) generated by \( \Gamma \) is strictly convex and of maximal dimension in \( \Mcal(T_H)_{\Rbb} \).  By duality, the dual cone \( \sigma = (\sigma^{\vee})^{\vee} \) is also a strictly convex cone of maximal dimension. 

\begin{rmk} \label{remark_bmodule_tmodule}
Since the right action of \( T_H \) commutes with \( G\), every \( B\)-module \( R^{(\alpha)} \) can be identified \emph{as a one-dimensional \( \Cbb\)-vector space} with a \( T_H \)-module \( R_{\alpha_H} \) such that \( \alpha|_{\Ncal(T_H)} = \alpha_H \). 
\end{rmk}

\begin{defn} \label{definition_Reeb_cone}
The interior of \( \sigma \) is called the \emph{(algebraic) Reeb cone} of \( Y\), denoted by \( \Ccal_R \). A couple \( (Y,\xi)\) with \( \xi \in \Ccal_R \) is said to be a \emph{polarized cone}.

An element \( \xi \in \Ncal(T_H)_{\Qbb} \) is said to be \emph{quasi-regular}, and \emph{irregular} otherwise. 

Every Reeb vector induces a monomial valuation \( \nu_{\xi} \) on \( \Cbb[Y] \), centered on the unique fixed point of \( Y \), such that 
\[ \nu_{\xi}(f) = \min_{\alpha \in \Gamma} \set{ \sprod{\alpha, \xi}, R_{\alpha} \neq 0}. \]
\end{defn}

Note that when the cone \( Y \) has smooth link, then the algebraic Reeb cone can be identified with the symplectic Reeb cone as follows. Let \( J \) be a complex structure on \( Y^{*} := Y \backslash \set{0} \). A Kähler metric \( \omega \) on \( (Y,J) \) is compatible with a Reeb element \( \xi \in \Ccal_R \) if there exists a \( \xi \)-invariant smooth psh function \( r : Y^{*} \to \Rbb_{>0} \) such that \( \omega = \frac{1}{2} i \del \delb r^2 \) and \( \xi = J(r \del r) \).  

Given a quasi-regular Reeb vector field \( \xi_0 \in \Ncal(T_H)_{\Qbb} \), it can be shown that \( Y \) always admits a \( \xi_0 \)-compatible metric and a dual 1-form \( \eta_0 \) on \( Y^{*} \) such that \( \eta_0(\xi_0) = 1 \). In this case, the \textit{symplectic Reeb cone} \[ \Ccal_{R}' := \set{\xi \in \tfrak_H, \eta_0(\xi) > 0  \; \text{on} \; Y^{*}} \]
turns out to be exactly the algebraic Reeb cone \( \Ccal_R \) (cf. \cite[Prop. 2.7]{CS18}). In particular, it is independent of the choice of \( \xi_0 \) and \( \eta_0 \).

\begin{defn} \label{definition_horospherical}
A spherical space \( G/H \) is called \emph{horospherical} if \( H \) contains a maximal unipotent subgroup of \( G \).
\end{defn} 

Let \( S \) the set of simple roots of \( G \) with respect to a Borel subgroup \( B\) and \( W \) the Weyl group of \( G \). Recall that there is a bijection between the subsets of \( S \) and (the conjugacy classes of) parabolic subgroups of \( G \) as follows. For every \( I \subset S \), let \( W_I \) the subgroup of \( W \) generated by the reflection \( s_{\alpha}, \alpha \in I \).  The parabolic subgroup \( P_I \subset G \) is defined as the group generated by \( B\) and \( W_I \).

Given a dominant weight \( \lambda\), we have \( \lambda = \sum_{\alpha \in S} x_{\alpha} \varpi_{\alpha}, x_{\alpha} \geq 0 \). Then define the parabolic subgroup \( P(\lambda) \) as \(P_I \), where \( I = \set{\alpha \in S, x_{\alpha} = 0}. \) In particular, \( P(\varpi_{\alpha}) = P_{S \backslash \set{\alpha}} \), and 
\[ \cap_{\alpha \in S \backslash I} P(\varpi_{\alpha}) = P_I. \]

\begin{prop} \cite{Pas06} \label{proposition_horospherical_space}
A \( G \) horospherical space is uniquely determined by a couple \( ( \Mcal, I) \) where \( I \subset S \), and \( \Mcal \) is a sublattice of \( \Mcal(T) \) such that for all \( \chi \in \Mcal \) and \( \alpha \in I \), \( \sprod{\chi, \alpha^{\vee}} = 0 \). 
The isotropy subgroup is then 
\[ H = \cap_{\chi \in \Mcal(P_I)} \text{ker}(\chi). \] 

Furthermore, \( P_I \) is the right-stabilizer of the open Borel orbit, and coincides with \( N_G(H) \), and \( G/H \) is an equivariant torus bundle over \( G/ P_I \) with fiber the torus \( P_I /H \). The colors \( \Dcal \) of \( G/H \) are in bijection with the roots in \( S \backslash I \) and
\[ \rho(\Dcal) = \set{\alpha^{\vee}|_{\Mcal_I}, \alpha \in S \backslash I}. \] 
\end{prop}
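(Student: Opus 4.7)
The plan follows a classical approach going back to Pasquier. I would start from the defining property that \( H \) contains a maximal unipotent subgroup, say the unipotent radical \( U \) of a chosen Borel \( B = TU \). Since \( UH = H \), the open Borel orbit \( BH/H \) coincides with \( TH/H \), which is a torus quotient. From this I would successively (i) identify \( N_G(H) \) as a standard parabolic \( P_I \), (ii) describe the abelian fibration \( G/H \to G/P_I \), (iii) encode the kernel \( H \) inside \( P_I \) by a sublattice of characters, and (iv) read off the colors.

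For step (i), let \( P \) be the subgroup of \( G \) stabilizing \( BH/H \) under left multiplication. It is standard that the stabilizer of a Borel orbit in a spherical \( G\)-variety is a parabolic containing \( B \), so \( P = P_I \) for some \( I \subseteq S \). The inclusion \( H \subseteq P \) is automatic, whence the unipotent radical \( R_u(P_I) \subseteq U \subseteq H \). Writing \( P_I = L_I \ltimes R_u(P_I) \), we get \( H = (H \cap L_I) \ltimes R_u(P_I) \), and \( H \cap L_I \) contains the maximal unipotent \( U \cap L_I \) of \( L_I \). Since \( P_I \) acts by conjugation preserving \( BH/H \), it preserves \( H \), so every \( L_I \)-conjugate of \( U \cap L_I \) lies in \( H \); these conjugates generate \( [L_I, L_I] \), hence \( H \supseteq [L_I, L_I] \cdot R_u(P_I) = [P_I, P_I] \). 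Therefore \( P_I / H \) is a quotient of the abelianization \( P_I/[P_I, P_I] \), which is a torus, and \( P_I \subseteq N_G(H) \). Conversely, \( N_G(H) \) preserves the unique open \( B\)-orbit \( BH/H \), giving \( N_G(H) \subseteq P_I \), and hence equality.

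For steps (iii) and (iv), define \( \Mcal \) as the character lattice of the torus \( P_I / H \), identified with a sublattice of \( \Mcal(T) \) via the composition \( T \emb P_I \proj P_I / H \). For each \( \alpha \in I \), the coroot image \( \alpha^{\vee}(\Cbb^{*}) \) lies in \( [L_I, L_I] \subseteq H \), so every \( \chi \in \Mcal \) satisfies \( \sprod{\chi, \alpha^{\vee}} = 0 \). Conversely any such sublattice cuts out a subgroup of \( P_I \) containing \( [P_I, P_I] \) whose quotient is a torus with that very character lattice, establishing the claimed bijection with pairs \( (\Mcal, I) \). The colors of \( G/B \) are indexed by \( S \); under the projection \( G/B \to G/P_I \) those indexed by \( \alpha \in I \) collapse, leaving the colors of \( G/P_I \) in bijection with \( S \setminus I \). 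Pulling back under the torus bundle \( G/H \to G/P_I \) then gives \( \Dcal \leftrightarrow S \setminus I \). Finally, \( \rho(D_\alpha) \) acts on a \( B\)-semi-invariant \( f_\chi \) of weight \( \chi \in \Mcal \) by its order of vanishing along \( \overline{D_\alpha} \), which reduces to a computation inside the \( \SL_2 \)-subgroup attached to \( \alpha \) and yields \( \sprod{\chi, \rho(D_\alpha)} = \sprod{\chi, \alpha^{\vee}} \).

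The main obstacle I expect is the identification \( N_G(H) = P_I \), specifically the derived-subgroup argument forcing \( H \supseteq [P_I, P_I] \), together with the final \( \SL_2 \)-reduction giving \( \rho(D_\alpha) = \alpha^{\vee}|_{\Mcal} \). Both are local and classical but require careful tracking of the Levi decomposition and of the compatibility of the fibration structure with the Bruhat stratification. Everything else is bookkeeping of characters and of the Luna-Vust dictionary.
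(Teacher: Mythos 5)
The paper offers no proof of this statement---it is quoted verbatim from Pasquier \cite{Pas06}---so your attempt must stand on its own, and it has two genuine gaps precisely at its foundation. First, the opening identification is false: if \( H \) contains the unipotent radical \( U \) of the \emph{same} Borel \( B = TU \), then \( BH/H = TH/H \simeq T/(T \cap H) \) is a \( B \)-orbit of dimension at most \( \dim T \), not the open Borel orbit, whose dimension is \( \dim G/H \). Concretely, for \( G = \SL_2 \), \( H = U \), one has \( G/H \simeq \Cbb^2 \setminus \set{0} \) while \( TH/H \simeq \Cbb^{*} \); the open \( B \)-orbit is \( Bw_0H/H \). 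For \( BH/H \) to be open one must instead take \( H \supseteq U^{-} \), the unipotent radical of the \emph{opposite} Borel---which is exactly why the proposition speaks of the \emph{right}-stabilizer \( P_I \) of the open orbit (for which \( H \subseteq P_I \) is indeed trivial, since \( BHh = BH \) for \( h \in H \)), whereas your \( P \) is the \emph{left}-stabilizer of a different, non-open orbit. For that left stabilizer, the inclusion \( H \subseteq P \) you call ``automatic'' amounts to \( HTH = TH \), i.e.\ to \( BH = TH \) being a subgroup of \( G \); but that is essentially the assertion \( BH = P_I \) which the whole proposition is about, not a freebie. You also never reconnect your \( P \) to the characterization actually claimed in the statement (right-stabilizer of the open \( B \)-orbit), so even a repaired version of your construction would leave that clause unproven.

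Second, the step ``Since \( P_I \) acts by conjugation preserving \( BH/H \), it preserves \( H \)'' is a non sequitur, and it is where the real content hides. Left-stabilizing the orbit gives \( pBH = BH \), which says nothing about \( pHp^{-1} \); the normalization \( P_I \subseteq N_G(H) \), equivalently \( [P_I, P_I] \subseteq H \), is the heart of Pasquier's classification, and your chain (conjugation preserves \( H \Rightarrow \) all \( L_I \)-conjugates of \( U \cap L_I \) lie in \( H \Rightarrow [L_I,L_I] \subseteq H \)) assumes the conclusion in order to derive it. Once these two points are granted, the rest of your outline is standard and correct in shape: the decomposition \( H = (H \cap L_I) \ltimes R_u(P_I) \), the lattice bookkeeping giving the bijection with pairs \( (\Mcal, I) \) and the torus \( P_I/H \), the indexing of colors by \( S \setminus I \) via pullback of Schubert divisors (modulo the usual \( w_0 \)-twist between \( I \) and \( -w_0(I) \) that you gloss over---compare the paper's remark that \( P_I \) is opposite to the left-stabilizer \( Q \)), and the \( \SL_2 \)-reduction yielding \( \sprod{\chi, \rho(D_\alpha)} = \sprod{\chi, \alpha^{\vee}} \). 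To close the gaps you need an honest proof that \( TH \) is a group normalizing \( H \), e.g.\ via the Bruhat decomposition, or via the embedding \( \Cbb[G/H] \subseteq \Cbb[G/U] \) and highest-weight theory as in Pasquier, Knop, or Pauer.
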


Note that \( P_I \) is the opposite parabolic subgroup of the (left-)stabilizer \( Q \). When \( Y \) is a conical embedding of a horospherical space \( G/H \) with colored cone \( (\Ccal_Y, \Dcal_Y) \), the group \( T_H \) coincides with \( P_I /H \), but since the action is reverse, the Reeb cone is exactly 
\[ \Ccal_R = - \text{int}(\Ccal_Y). \]

Horospherical cones can be obtained systematically as follows. 

\begin{prop} \label{proposition_horospherical_construction} \cite[Theorem 1]{PV72}
Let \( V(\lambda) \) be a simple \( G \)-module of highest weight \( \lambda \) and eigenvector \( v_{\lambda} \). The variety 
\[ X(\lambda) := \ol{G v_{\lambda}} \subset V(\lambda) \]
is then a rank one horospherical cone over the corresponding Grassmannian \( G/ P(\lambda) \)  in \( \Pbb(V(\lambda)) \) where \( I = \set{\alpha \in S, \sprod{\lambda, \alpha^{\vee}} = 0} \) and \( P_I = P(\lambda) \) is the stabilizer of \( [v_{\lambda}] \in \Pbb(V(\lambda)) \). Moreover, \( \Cbb[X] \simeq V(\lambda)^{*} \). 
\end{prop}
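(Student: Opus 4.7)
The plan is to reconstruct the Popov-Vinberg description by first pinning down the stabilizers of \( v_\lambda \) and of \( [v_\lambda] \), and then reading off the horospherical structure, the rank, and the graded piece of the coordinate ring.

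First I would identify the stabilizer of \( [v_\lambda] \in \Pbb(V(\lambda)) \). Since \( v_\lambda \) is a \( B \)-semi-invariant of weight \( \lambda \), the Borel \( B \) fixes \( [v_\lambda] \), so the stabilizer is a parabolic subgroup containing \( B \). A simple root subgroup \( U_{-\alpha} \) fixes \( [v_\lambda] \) if and only if \( E_{-\alpha} v_\lambda \) is proportional to \( v_\lambda \), which by weight considerations occurs exactly when \( \langle \lambda, \alpha^{\vee} \rangle = 0 \), i.e.\ \( \alpha \in I \). This gives \( \mathrm{Stab}_G([v_\lambda]) = P_I = P(\lambda) \), and \( G \cdot [v_\lambda] \cong G/P(\lambda) \) is a closed \( G \)-orbit in \( \Pbb(V(\lambda)) \) (hence projective).

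Next I would exhibit \( X(\lambda) \) as a cone over \( G/P(\lambda) \). The center of the Levi of \( P(\lambda) \) acts on \( \Cbb v_\lambda \) through the nontrivial character \( \lambda \), so \( \Cbb^{*} \cdot v_\lambda \subset G v_\lambda \), making \( G v_\lambda \subset V(\lambda) \) invariant under \( \Cbb^{*} \)-scaling. The projection \( V(\lambda) \setminus \{0\} \to \Pbb(V(\lambda)) \) restricts to a \( \Cbb^{*} \)-bundle \( G v_\lambda \to G/P(\lambda) \), and since its target is projective the image of \( \overline{G v_\lambda} \setminus \{0\} \) in \( \Pbb(V(\lambda)) \) equals \( G/P(\lambda) \). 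Hence \( X(\lambda) = G v_\lambda \sqcup \{0\} \) is the affine cone over the \( G\)-orbit \( G/P(\lambda) \subset \Pbb(V(\lambda)) \), with apex \( 0 \) the unique closed \( G \)-orbit.

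To show the horospherical / rank one structure, I would identify \( H := \mathrm{Stab}_G(v_\lambda) \). Passing from \( [v_\lambda] \) to \( v_\lambda \) amounts to intersecting \( P(\lambda) \) with the kernel of the character by which \( P(\lambda) \) acts on \( \Cbb v_\lambda \), namely \( \lambda \); so \( H = \ker(\lambda\mid_{P(\lambda)}) \). Since \( v_\lambda \) is annihilated by the unipotent radical \( U \) of \( B \), we have \( U \subset H \), so by Definition \ref{definition_horospherical} the space \( G/H \) is horospherical. The lattice \( \Mcal \) of \( B \)-weights of eigenfunctions on \( G/H \) is generated by \( \lambda \) (the only character of \( P(\lambda) \) trivial on \( H \) modulo torsion), which gives \( \mathrm{rk}(G/H) = 1 \). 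By Proposition \ref{proposition_horospherical_space} this forces \( P_I = P(\lambda) \) in the combinatorial description.

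Finally, for the coordinate ring, the inclusion \( X(\lambda) \hookrightarrow V(\lambda) \) produces a \( G \)-equivariant restriction map \( V(\lambda)^{*} = \Cbb[V(\lambda)]_1 \to \Cbb[X(\lambda)]_1 \). Because \( G v_\lambda \) spans \( V(\lambda) \) (an irreducible \( G \)-module cannot have its generating orbit lying in a proper subspace), the map is injective; as \( V(\lambda)^{*} \) is simple, it is actually an isomorphism, yielding \( \Cbb[X(\lambda)]_1 \simeq V(\lambda)^{*} \) as claimed. The only delicate point, which I would flag as the main technical obstacle, is verifying that \( \overline{G v_\lambda} \setminus G v_\lambda = \{0\} \); this follows from the properness argument above (the orbit is saturated under \( \Cbb^{*} \) and its image in \( \Pbb(V(\lambda)) \) is already closed), combined with the general fact that an orbit closure in an irreducible \( G \)-module can contain only the trivial \( G \)-fixed point \( 0 \).
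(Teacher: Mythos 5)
Note first that the paper contains no proof of this statement: it is quoted verbatim from Vinberg--Popov \cite[Theorem 1]{PV72}, so your proposal can only be measured against what that theorem asserts and against the way the paper actually uses it (Example \ref{example_horospherical}). With that benchmark, your orbit-theoretic reconstruction is correct: the computation of \( \mathrm{Stab}_G([v_{\lambda}]) = P_I = P(\lambda) \) via the simple root subgroups \( U_{-\alpha} \) and the condition \( \sprod{\lambda, \alpha^{\vee}} = 0 \), the \( \Cbb^{*} \)-saturation of \( G v_{\lambda} \) (already visible on the maximal torus, since \( \lambda(T) = \Cbb^{*} \) for \( \lambda \neq 0 \)), the properness argument giving \( \ol{G v_{\lambda}} = G v_{\lambda} \sqcup \set{0} \) with \( \set{0} \) the unique closed orbit, and the identification \( H = \ker(\lambda|_{P(\lambda)}) \supset U \) with \( \Mcal = \Zbb \lambda \), hence horosphericity and rank one consistent with Proposition \ref{proposition_horospherical_space}, are all sound.

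There are, however, two related omissions relative to the full statement. First, \emph{normality} of \( X(\lambda) \): in this paper spherical (hence horospherical) varieties are normal by definition, so calling \( \ol{Gv_{\lambda}} \) a horospherical cone requires proving it is normal; this is one of the genuine contents of \cite[Theorem 1]{PV72} and does not follow from your properness argument. Second, the \emph{coordinate ring}: read literally, \( \Cbb[X] \simeq V(\lambda)^{*} \) is false for dimension reasons, and as Example \ref{example_horospherical} makes clear (there \( \Cbb[X(2\omega)] \simeq \Cbb[x^2,xy,y^2] = \bigoplus_{n} S^{2n}V^{*} \)), the intended meaning is \( \Cbb[X] \simeq \bigoplus_{n \geq 0} V(n\lambda)^{*} \) with \( V(\lambda)^{*} \) the generating degree-one piece; you prove only the degree-one identification. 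Both gaps close with one standard argument your setup already enables: by density, restriction embeds \( \Cbb[X] \) into \( \Cbb[Gv_{\lambda}] = \Cbb[G]^{H} \); a highest-weight computation with \( H = \ker(\lambda|_{P(\lambda)}) \supset U \) shows \( \Cbb[G]^{H} = \bigoplus_{n \geq 0} V(n\lambda)^{*} \), multiplicity free; the degree-\( n \) piece of \( \Cbb[X] \) for the scaling \( \Cbb^{*} \)-action you exhibited can therefore only be \( V(n\lambda)^{*} \), and it is nonzero because the Cartan component of \( S^{n}V^{*} \) is generated by \( \ell^{n} \), with \( \ell \in V^{*} \) a highest weight covector that cannot vanish identically on the spanning orbit \( Gv_{\lambda} \). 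Hence \( \Cbb[X] = \Cbb[G]^{H} = \bigoplus_{n \geq 0} V(n\lambda)^{*} \), and normality follows since an invariant subring \( \Cbb[G]^{H} \) of the normal domain \( \Cbb[G] \) is normal. Without this half-page supplement, your proof establishes the orbit structure and the combinatorial data but not the two assertions the paper actually exploits downstream.
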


\begin{ex} \label{example_horospherical}
As an application, one can take \( G = \SO_3 \) with the unique fundamental weight \( \lambda = 2 \omega \), where \( \omega\) is the fundamental weight of \( \SL_2\). Then \( X(2 \omega) \) is isomorphic to the ordinary double point, which is the Stenzel asymptotic cone of the rank one symmetric space \( \SO_3/ \SO_2 \). Indeed, \( V(2\omega)^{*} \simeq S^2 V^{*} \simeq \Cbb[x^2, xy, y^2 ] \), which is the coordinate ring of the ordinary double point. 
On the other hand, \( X(\omega) \) is simply \( \Cbb^2 \). 
\end{ex}

\subsection{Test configurations of spherical cones}
%For simplicity, we only describe the test configuration of a klt pair \( (Y,D) \) with \( D = \varnothing \), i.e. a \( \Qbb\)-Gorenstein spherical cone \( Y \). 

Recall that by a result of Knop \cite{Kno91}, if the vertex of a Fano cone \( Y \) is fixed by a reductive group \( G \) acting effectively on \( Y \), then there is a \( \Cbb^{*} \)-action on \( Y \) commuting with \( G\).

\begin{defn} 
Let \( (Y, D, \xi) \) be any polarized log Fano cone, endowed with an effective action of a reductive group \( G \) that fixes the vertex of \( Y\), and a compatible action of a complex torus \( T \) containing \( T_{\xi} \), preserving \( D\). 

A \( G \times T \)-equivariant \textit{test configuration} of \( (Y,D,\xi) \) consists of
\begin{itemize} 
\item a \( G \times T \)-equivariant flat affine family \( \pi : (\Ycal, \Dcal) \to \Cbb \), where \( \Dcal \) is an effective divisor not containing any component of \( Y_0 = \pi^{-1}(0) \) such that each fiber away from \( 0 \) is isomorphic to \( (Y,D) \). 
\item a \( \Cbb^{*} \)-holomorphic action on \( (\Ycal, \Dcal) \) generated by \( \zeta \in \tfrak \) and commuting with the \( G \times T \)-action such that \( \pi \) is \( \Cbb^{*} \)-equivariant for this action, and that there is a \(G \times \Cbb^{*}\)-equivariant isomorphism \( (\Ycal, \Dcal) \backslash (Y_0,D_0) \simeq (Y,D) \times \Cbb^{*} \). 
\end{itemize}
The test configuration is said to be \emph{special} if \( K_{\Ycal} + \Dcal \) is \( \Rbb\)-Cartier and that the central fiber \( (Y_0, D_0) \) is a klt pair. 

Finally, the test configuration is said to be \emph{trivial} if there is a \( T \)-equivariant isomorphism \( (\Ycal,\Dcal) \simeq (Y,D) \times \Cbb \) and \( \zeta = \zeta_0 + t \del_t \) where \( \zeta_0 \) generates a \( \Cbb^{*}\)-holomorphic vector field that commutes with the action of \( \xi \), and \( t \) is an element of the compact Lie algebra of \( \Cbb \). 
\end{defn}

\begin{rmk}
It is well-known by Hironaka's lemma \cite[II.9.12]{Har77} that the configuration \( (\Ycal,\Dcal)  \) is itself a klt pair if the central fiber \( (Y_0, D_0) \) is. 
\end{rmk}

%Using flatness of \( \pi \), one can show that 
%\begin{lem} \label{lemma_coordinate_ring_invariant_along_fiber}
%For every \( t, t' \in \Cbb\), the rings \( \Cbb[Y_t] \) and \( \Cbb[Y_{t'}] \) are isomorphic as \( T \)-representations. In particular, \( Y\) and \( Y_0 \) have the same weight and Reeb cones. 
%\end{lem}

%\begin{proof}
%This seems to be well-known for experts. For lack of reference, we include a quick proof. A special test configuration of \( (Y, \xi) \) induces a flat projective family \( (\ol{\Ycal}, \Lcal) \) (which can be seen as the closure of \( \Ycal \) inside a projective space). The action of \( T \) then extends to \( \Ycal \) be Sumihiro's equivariant completion theorem. After replacing \( \Lcal \) with a multiple, one can assume that \( \Lcal \) is \( T \)-linearized. It is then well-known that \( H^0( \ol{Y}_{t}, \Lcal_t ) \) is isomorphic to \( H^0(\ol{Y}_{t'}, \Lcal_{t'}) \) as \( T  \)-representations (see e.g. \cite{Del20}). But since \( H^0(\ol{Y}_t, \Lcal_t)|_{Y_t} = \Cbb[Y_t] \) is \( T \)-stable, it follows that \( \Cbb[Y_t] \) is also isomorphic to \( \Cbb[Y_{t'}] \) as \( T \)-representations. The lemma follows. 
%\end{proof}

%In concrete terms, a \( G\)-equivariant test configuration can be constructed by embedding \( Y \) into \( \Cbb^N \) in a \( G\)-equivariant manner using the generators of the monoid \( \Ccal_Y^{\vee} \cap \Mcal \), then acting on \( \Cbb^N \) by a \( G\)-compatible \( \Cbb^{*} \)-action. 

In the spherical context, if the test configuration is special, then the central fiber \( (Y_0, D_0, \xi) \) is also a polarized \( G \)-spherical log cone that inherits an action of \( T_H \) and a new action of \( \Cbb^{*} \) that commutes with \( G \times T_H \). The action of the automorphism group \( \text{Aut}_{G \times \Cbb^{*}} (G/H \times \Cbb^{*})^0 \supset T_H \times \set{1} \) extends automatically on \( (\Ycal, \Dcal) \), hence a \( G \)-equivariant test configuration of \( (Y,D,\xi) \) is also a \( G \times T_H \)-test configuration for \( (Y,D,\xi) \). Moreover, since it suffices to check K-stability over special test configurations (cf. Theorem \ref{theorem_ricci_flat_equivalent_k_stability}), we will mainly focus on \textit{special} \( G\)-equivariant configurations. 

\begin{defn}
An \emph{elementary embedding} is a \( G\)-equivariant embedding of \( G/H \) with a unique closed orbit of codimension \( 1 \). A \emph{\( \Cbb^{*} \)-equivariant degeneration} of \( G/H \) is a \( G \times \Cbb^{*} \)-equivariant elementary embedding \( E \) of \( G/H \times \Cbb^{*} \) together with a \( \Cbb^{*}\)-equivariant morphism \( E \to \Cbb \). 
\end{defn}

Every couple \( (\lambda, m) \in \Vcal \oplus \Qbb^{*} \) determines an equivariant degeneration, and vice versa: a primitive generator of the colored cone of \( E \) is of the form \( (\lambda, m) \in \Vcal \oplus \Qbb^{*} \). 
The closed orbit of \( E \) can be identified with \( G/H_0 \), where \( H_0 \) is a spherical subgroup of \( G \). If \( \lambda \in \text{int}(\Vcal) \), then \( G/H_0 \) is horospherical. Moreover, \( G/H_0 \) has the same left-stabilizer of the open Borel-orbit as well as the same adapted Levi subgroup as \( G/H \). 

For simplicity, we only describe here the test configuration of a polarized spherical cone, as the description for a log pair follows almost word-by-word. We first need the following result on spherical morphisms. 

\begin{prop} \cite{Kno91} \label{prop_spherical_morphisms}
There exists a morphism between two \( G/H \)-embeddings \( X \) and \( X' \) if and only if for every colored cone \( (\Ccal, \Fcal) \) of \( X \), there is a colored cone \( (\Ccal', \Fcal') \) of \( X'\) such that \( \Ccal \subset \Ccal' \) and \( \Fcal \subset \Fcal' \). 
\end{prop}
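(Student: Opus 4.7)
This is Knop's morphism criterion for spherical embeddings, generalizing the toric fact that a morphism $X_{\Sigma} \to X_{\Sigma'}$ exists if and only if every cone of $\Sigma$ is contained in a cone of $\Sigma'$. Any $G$-equivariant morphism between $G/H$-embeddings is uniquely determined by its restriction to the open orbit, so one only has to decide when the identity $G/H \to G/H$ extends. My plan is to reduce to simple embeddings by locality, and for each simple piece compare the $B$-invariant affine charts, whose coordinate rings are explicitly described by the colored cone.

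For the direction $(\Leftarrow)$, fix a simple open $X_0 \subset X$ with colored cone $(\Ccal, \Fcal)$, together with a colored cone $(\Ccal', \Fcal')$ of $X'$ containing it. I would prove the dual inclusion $\Ccal'^{\vee} \cap \Mcal \subset \Ccal^{\vee} \cap \Mcal$ of weight monoids yields an inclusion of $B$-invariant coordinate rings $\Cbb[X_0'^{B}] \hookrightarrow \Cbb[X_0^{B}]$, using that each $B$-eigenspace in the coordinate ring of a simple spherical embedding is at most one-dimensional and consists of $B$-eigenvectors whose weight lies exactly in the dual of the colored cone (with the colors not in $\Fcal$ removed from the chart, so no regularity is imposed along them). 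The resulting morphism $X_0^{B} \to X_0'^{B}$ is equivariant under the parabolic $Q$ stabilizing all colors, and extends uniquely to a $G$-equivariant morphism $X_0 = G \cdot X_0^{B} \to X_0' = G \cdot X_0'^{B}$. Gluing over a cover of $X$ by simple opens is then automatic from the Luna-Vust compatibility conditions on colored fans.

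For the direction $(\Rightarrow)$, given $\phi \colon X \to X'$ and a simple open $X_0 \subset X$ with closed $G$-orbit $Y_0$ and colored cone $(\Ccal, \Fcal)$, the image $\phi(Y_0)$ is irreducible and $G$-stable, hence its closure meets a unique closed $G$-orbit $Y_0' \subset X'$ which is contained in a unique simple open $X_0' \subset X'$ with colored cone $(\Ccal', \Fcal')$. Every $G$-invariant prime divisor $D \subset X_0$ whose closure contains $Y_0$ corresponds to a ray generator of $\Ccal$; the associated valuation $\nu_D \in \Vcal$ is still defined on $\Cbb(G/H)$, and I would argue its center on $X'$ lies in $\ol{\phi(D)}$ and meets $Y_0'$, forcing $\nu_D \in \Ccal'$. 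Likewise every color $D \in \Fcal$ has closure in $X'$ containing $Y_0'$, hence $D \in \Fcal'$. Combining these containments for all ray generators yields $\Ccal \subset \Ccal'$ together with $\Fcal \subset \Fcal'$.

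\textbf{Main obstacle.} The subtle point in $(\Rightarrow)$ is that $\phi$ may contract some $G$-invariant divisor $D \subset X_0$ to a higher-codimension subvariety of $X'$; even so, $\nu_D$ remains a well-defined element of $\Vcal$ and must admit a well-defined center in $X'$ compatible with the colored fan data. Justifying this rigorously relies on the valuative criterion for spherical embeddings in Luna-Vust theory, which is the technical heart of the argument.
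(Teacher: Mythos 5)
A preliminary remark: the paper itself gives no proof of this proposition --- it is quoted from Knop \cite{Kno91} (Theorem 4.1 there) as background for the classification of test configurations --- so your proposal can only be compared with the standard Luna--Vust/Knop argument. In outline you reconstruct that argument correctly: reduction to simple embeddings via the dense orbit, comparison of \( B \)-charts for sufficiency, and the valuative criterion for necessity. Your \( (\Rightarrow) \) direction is essentially right, with one slip: \( \ol{\phi(Y_0)} \) need not meet a \emph{unique} closed orbit of \( X' \) (already for \( X = G/H \) itself, whose image is the open orbit, \( \ol{\phi(Y_0)} = X' \) can contain several closed orbits, e.g.\ the two fixed points of \( \Pbb^1 \) in the toric case); one simply \emph{chooses} a closed orbit \( Y' \subseteq \ol{\phi(Y_0)} \) and works in the simple open piece around it, after which your argument goes through --- colors in \( \Fcal \) have closures containing \( Y' \), and for a possibly contracted \( G \)-stable divisor \( D \) the center of \( \nu_D \) on \( X' \) is \( \ol{\phi(D)} \supseteq Y' \), forcing \( \nu_D \in \Ccal' \) by the criterion for centers on simple embeddings. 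Note also that, contrary to your ``main obstacle'' paragraph, this contraction issue is the routine half of Knop's proof; the technical weight sits in the other direction.

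The genuine gap is in \( (\Leftarrow) \), at the step where you claim that the inclusion of weight monoids \( \Ccal'^{\vee} \cap \Mcal \subset \Ccal^{\vee} \cap \Mcal \) ``yields an inclusion of \( B \)-invariant coordinate rings''. The coordinate ring of the \( B \)-chart is neither graded by nor generated by its \( B \)-semiinvariants: since \( B \) is solvable, \( \Cbb[X_B] \) carries only a filtration with one-dimensional \( B \)-eigenquotients, and by the local structure theorem \( X_B \simeq R_u(Q) \times Z \), whose unipotent factor contributes many regular functions that are not \( B \)-eigenvectors. So comparing the semiinvariant monoids does not by itself produce the ring homomorphism \( \Cbb[X_{0}'^{B}] \to \Cbb[X_0^{B}] \). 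The standard repair is to use normality to write \( \Cbb[X_B] \) as the functions on the open \( B \)-orbit that are regular along every \( B \)-stable prime divisor of the chart, to handle the colors in \( \Fcal \subseteq \Fcal' \) directly (they induce the same valuations of \( \Cbb(G/H) \) on both sides), and, for the \( G \)-invariant valuations \( \nu \in \Ccal \subset \Ccal' \), to invoke the key fact that for any \( f \in \Cbb(G/H) \) one has \( \nu(f) = \min \set{ \nu(h) : h \in \sprod{G \cdot f}^{(B)} } \), i.e.\ invariant valuations are computed on the \( B \)-semiinvariants of the \( G \)-module generated by \( f \). This fact (or Knop's equivalent machinery) is exactly what lets the combinatorial inclusion control non-semiinvariant functions; without it your sufficiency argument does not close.
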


\begin{thm} \label{theorem_test_configurations_spherical_cone}

Let \( (Y,\xi) \) be a polarized \( \Qbb\)-Gorenstein spherical cone. 
\begin{enumerate} 
\item To each \( G\)-equivariant special test configuration of \( (Y,\xi) \) with \( G \)-spherical central fiber \( Y_0 \), there exists \( (\nu,m) \in  \Vcal \oplus \Nbb^{*} \) and a spherical subgroup \(H_0 \subset G \) such that \( Y_0 \) is a \( G/H_0 \)-spherical embedding, and that the action of \( \Cbb^{*} \) on \( G/H_0 \) is 
\[ e^{\tau}. gH_0 = g \nu(e^{-\tau/m}) H_0. \]
\item Conversely, let \( \nu \in  \Vcal \) and \( m \in \Nbb^{*} \). Let \( G/H_0 \) be the central fiber of the equivariant degeneration induced by \( (\nu,m) \). Then there exists  a \( G\)-equivariant test configuration (and a special one after a suitable base change) such that the central fiber \( Y_0 \) is a conical embedding of \( G/H_0 \), and that the \( \Cbb^{*} \)-action can be described as above. 

In particular, every polarized \( G \)-spherical cone admits a test configuration with \( G\)-horospherical central fiber. 
\item Up to lattice isomorphisms, the lattices and weight monoids of \( Y \) and \( Y_0 \) are the same. 
\item \( (\Ycal,\xi; \nu) \) is trivial if and only if \( \nu \) belongs to the linear part of \( \Vcal \).
\end{enumerate}
\end{thm}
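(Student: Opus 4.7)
The plan is to set up a bijective dictionary between $G$-equivariant special test configurations of $(Y, \xi)$ with spherical central fibers and pairs $(\nu, m) \in \Vcal \oplus \Nbb^{*}$, by applying the Luna-Vust classification to $(G \times \Cbb^{*})$-spherical embeddings of $G/H \times \Cbb^{*}$ and specializing the Brion-Pauer description of equivariant degenerations \cite{BP87} to the conical setting, following the strategy of \cite{Del20}.

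For part (1), I would observe that given a special test configuration $\pi : \Ycal \to \Cbb$ with $G$-spherical central fiber $Y_0$, the open $B$-orbit of the generic fiber spreads out to an open $(B \times \Cbb^{*})$-orbit on $\Ycal$, so $\Ycal$ becomes a $(G \times \Cbb^{*})$-spherical embedding of $G/H \times \Cbb^{*}$. Because $Y_0$ is itself a spherical cone with a unique vertex, that point is the unique closed $G \times \Cbb^{*}$-orbit of $\Ycal$, hence $\Ycal$ is simple with a colored cone in $\Ncal \times \Zbb$. This colored cone must contain $(\Ccal_Y, \Dcal_Y)$ embedded at height $0$, together with exactly one extra primitive ray $(\nu, m)$ corresponding to the $G$-invariant divisor $Y_0$; here $\nu \in \Vcal$ and $m \in \Nbb^{*}$ since $\pi$ is nontrivial. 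The closed orbit of the elementary $(G \times \Cbb^{*})$-degeneration associated to $(\nu, m)$ is $G/H_0$ for an explicit spherical subgroup $H_0$, and the formula $e^{\tau} \cdot gH_0 = g \cdot \nu(e^{-\tau/m}) H_0$ for the $\Cbb^{*}$-action on $G/H_0$ is read off from the parametrization of the one-parameter subgroup along the ray $(\nu, m)$.

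For part (2), I would invert this construction: given $(\nu, m) \in \Vcal \times \Nbb^{*}$, form the colored cone generated by $\Ccal_Y \times \{0\}$ and $(\nu, m)$ in $\Ncal_{\Rbb} \times \Rbb$ with colors $\Dcal_Y$, and invoke Luna-Vust to obtain a simple $(G \times \Cbb^{*})$-spherical embedding $\Ycal$ of $G/H \times \Cbb^{*}$ with a projection $\Ycal \to \Abb^1$. The generic fiber is $G$-isomorphic to $Y$ by Proposition \ref{prop_spherical_morphisms}, and the central fiber is the conical $G/H_0$-embedding whose colored data coincides with that of the elementary degeneration induced by $(\nu, m)$ (and is conical because $\Ccal_Y$ has maximal dimension). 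Specialness, i.e. $\Qbb$-Gorenstein klt, is then achieved after a finite base change to clear denominators in the combinatorial $\Rbb$-Cartier relation, combined with Hironaka's lemma to propagate klt-ness from the central fiber to the total space, and the combinatorial canonical-divisor formula for spherical varieties to ensure $\Qbb$-Gorenstein-ness. Choosing $\nu \in \text{int}(\Vcal)$ makes $H_0$ horospherical by \cite{Pop86}.

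For part (3), the lattice $\Mcal$ and the cone $\Ccal_Y$ are invariants of the height-$0$ part of the colored cone of $\Ycal$, which matches the colored cone of $Y_0$ up to lattice isomorphism, giving $\Gamma_Y \simeq \Gamma_{Y_0}$; equivalently, $\Cbb[Y_0]$ is the $T_H$-graded associated graded ring of a Rees-algebra construction on $\Cbb[Y]$, so the weight decomposition is preserved. For part (4), an element $\nu \in \text{lin}\,\Vcal$ generates a one-parameter subgroup of $T_H$, so the induced test configuration is $Y \times \Cbb$ acted on by $\Cbb^{*}$ via an automorphism commuting with $G$, i.e.\ trivial by definition; conversely, triviality forces $\nu$ to be an automorphism direction, hence in $\text{lin}\,\Vcal$. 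The main obstacle I anticipate is calibrating the combinatorial conventions so that the sign and scaling in $e^{\tau} \cdot gH_0 = g \cdot \nu(e^{-\tau/m}) H_0$ come out correctly across the references, and verifying that specialness is actually preserved under the base change used in (2) rather than only reachable after normalization.
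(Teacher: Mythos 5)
Your proposal is correct and takes essentially the same route as the paper: both realize the configuration as a simple \( (G\times\Cbb^{*}) \)-spherical embedding of \( G/H\times\Cbb^{*} \) whose colored cone is spanned by \( \Ccal_Y \) at height \( 0 \) together with the ray \( (\nu,m) \), obtain \( \pi:\Ycal\to\Cbb \) from the classification of spherical morphisms (Prop.~\ref{prop_spherical_morphisms}), achieve specialness by the finite base change encoded combinatorially as the lattice refinement \( \Ncal\oplus\Zbb \) to \( \Ncal\oplus\frac{1}{k}\Zbb \), and settle parts (3)--(4) via flatness of the degeneration (your Rees-algebra remark corresponds to the paper's \( G \)-module isomorphism \( R\simeq R_0 \) plus Popov's result identifying \( R^{(B)} \) with \( R_0^{(B)} \)) and the triviality criterion from \cite{Del20}. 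The only cosmetic difference is that in part (1) you read off \( (\nu,m) \) directly from the colored cone of the total space, whereas the paper first passes to the induced \( \Cbb^{*} \)-equivariant degeneration of the open orbit \( G/H \); the underlying combinatorics from \cite{BP87} are identical, and your vaguer claim in (3) that the colored cone of \( Y_0 \) ``matches the height-\(0\) part'' is harmless since your associated-graded argument carries the statement.
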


\begin{proof}
Before going through the proof, remark that the spherical space \( G/H \times \Cbb^{*} \) has character lattices \( \Mcal(G/H \times \Cbb^{*}) = \Mcal \oplus \Zbb \) and valuation cone \( \Vcal(G/H \times \Cbb^{*}) = \Vcal \oplus \Rbb \), which clearly has non-trivial linear part. The colors of \( G/H \times \Cbb^{*} \) are exactly \( \set{d \times \Cbb^{*}, d \in \Dcal} \). 

Every test configuration induces a \( \Cbb^{*} \)-equivariant degeneration of \( G/H \), hence there exists \( \lambda \in \Vcal \) and \( m \in \Nbb^{*} \) such that the ray generated by \( (\lambda, m) \in \Vcal \oplus \Nbb^{*} \) is the colored cone of the equivariant degeneration. The \( \Cbb^{*} \)-action on \( G/H_0 \) is described in \cite{BP87}, \cite{Del20}. Moreover this action commutes with \( \xi \), so this action must lie in \( \text{lin} \Vcal \cap \Ccal_R \). 

Conversely, let \( (\nu, m) \in \Vcal \oplus \Nbb^{*} \) and consider the conical embedding defined by the generators of \( \Ccal_Y \) and \( ( \nu, m) \), with all the colors of \( G/H \times \Cbb^{*} \). This defines clearly a \( G \times \Cbb^{*} \)-spherical cone \( \Ycal \), and the projection to \( \Rbb_{\geq 0}(0,1) \) with \( 0 \in \Ncal \) in the Euclidean spaces gives an affine \( G \times \Cbb^{*}\)-equivariant morphism \( \pi : \Ycal \to \Cbb \) by classification of spherical morphisms recalled in Prop. \ref{prop_spherical_morphisms}.   

The central fiber \( Y_0 \) corresponds then to the divisor of \( \Ycal \) determined by the ray \( (\nu, m) \). The latter can also be seen as an elementary embedding of \( G/H \times \Cbb^{*} \), hence an equivariant degeneration of \( G/H \) to \( G/H_0 \). The special test configuration is obtained after changing the lattice \( \Ncal \oplus \Zbb \) to \( \Ncal \oplus \frac{1}{k}  \Zbb \) for a suitable \( k \), while keeping the colored cone of \( \Ycal \). 

Remark that the coordinate rings \( R, R_0 \) of \( Y, Y_0 \) are isomorphic as \( G\)-modules, hence \( \Mcal \simeq \Mcal_0 \). Furthermore, \( R^{(B)} \) is \( B\)-isomorphic to \( R_0^{(B)}\) \cite[Proposition 4]{Pop86}, hence \( \Gamma_Y \simeq \Gamma_{Y_0} \). 

Finally, taking \( \nu \) that projects to the interior of \( \Vcal \) then yields a test configuration with horospherical central fiber. The last statement results from \cite{Del20}. 
\end{proof}

We will denote from now on \( ( \Ycal, \xi; \nu) \) the \( G\)-equivariant test configuration of \( (Y, \xi) \) with respect to \( \nu \in \Vcal \).

\begin{rmk} \label{remark_central_fiber_data}
The embedding data of the central fiber \( Y_0 \) of \( (\Ycal, \xi; \nu) \) can be obtained as follows. The weight lattice \( \Mcal_0 \) of \( Y_0 \) can be identified with
\[ \Mcal_0 := (\nu^{\perp} \cap \Mcal) \oplus \Zbb \chi \simeq \Mcal,  \]
where \( \chi \in \Mcal \) is such that \( \sprod{\chi, \nu} = 1 \). In particular, if we let \( \pi : \Ncal \to \Ncal_0 \) be the dual map of the isomorphism \( \Mcal_0 \simeq \Mcal \), then 
\[ \Vcal_0 = \Rbb \nu \oplus \pi(\Vcal). \] 
Since the weight monoids of \( Y \) and \( Y_0 \) are the same, their colored cones have the same support, and the colors of \( Y_0 \) can be determined using \cite{GH15}. 
\end{rmk}

\subsection{Futaki invariant}

Let us recall briefly the construction of Futaki invariant by Collins-Székelyhidi \cite{CS18} through index character and the equivalent characterization of Li-Wang-Xu in terms of normalized volume and log discrepancy \cite{LWX}. 
Let \( (Y,\xi) \) be a \( n\)-dimensional polarized spherical cone and
\[ \Cbb[Y] = \oplus_{\alpha \in \Gamma}  R_{\alpha} \] 
be the decomposition of \( \Cbb[Y] \) as a \( T_H \)-representation. For any \(t \in \Cbb\) and \( \xi \in \Ncal_{\Rbb} \), the index character is defined as
\[F(t,\xi) := \sum_{\alpha \in \Gamma} e^{-t \sprod{\alpha,\xi}} \dim R_{\alpha}. \]
This is a meromorphic function on \( \Cbb\) with poles along imaginary axis, and decomposes near \( t = 0 \) as 
\[ F(t,\xi) = \frac{a_0(\xi) n! }{t^{n+1}} + \frac{a_1(\xi)(n-1)!}{t^{n}} + O(t^{1-n}). \]
where \( a_0, a_1 : \Ccal_R \to \Rbb \) are smooth functions. 

Let \( d_{\xi} f (\nu) \) be the directional derivative of a function at a point \( \xi \) along the vector \( \nu \). The Futaki invariant of the test configuration \( (\Ycal, \xi; \nu) \) is defined by 
\[ \Fut_{\xi}(\Ycal, \nu) = \frac{a_0(\xi)}{n} d_{\xi} \tuple{\frac{a_1}{a_0}}(\nu) + \frac{a_1(\xi) d_{\xi}a_0(\nu) }{n(n+1) a_0(\xi)}. \]
In particular, the Futaki invariant of a test configuration depends only on the coordinate ring of the central fiber as a representation of \( T_H \). For computational reason, we shall use the definition of the Futaki invariant by Li-Wang-Xu in terms of normalized volume and log discrepancy, but note that this is the Futaki invariant of \cite{CS18} up to a positive constant (see for example \cite{LLX20} for details).

Let \( Y \) be a klt normal variety. The \textit{log discrepancy function} of \( Y \) is a positive function \( A_Y \) over the set of valuations that admit a center on \( Y \). For practical reason, we only give the definition of the log discrepancy for a divisorial valuation and refer the reader to \cite[Theorem 2.2, Theorem 3.5]{LLX20} for the general definition for a pair \( (Y,D) \).  Let \( E \) be the exceptional divisor over a proper birational model \( \mu : Y' \to Y \), and \( w_E \) the associated valuation over \( \Cbb(Y') = \Cbb(Y) \), the log discrepancy is then  
\[ A_Y(w_E) := 1 + w_E (K_{Y'} - \mu^{*} K_Y). \]
The general discrepancy for a quasimonomial valuation is then defined in an obvious way, and for a general valuation centered on \( Y \) by using the retraction map from \( \text{Val}_Y \) to the set of quasimonomial valuations over any log smooth model of \( Y \).  
%Given a \( \Qbb\)-Gorenstein Fano cone \(Y \), the log discrepancy \( A_Y \) can be computed in terms of the canonical section. It can be moreover characterized by a linear function on the Reeb cone \( \Ccal_R \). In the horospherical case, this linear function is exactly the opposite of the linear function on \( (\Ccal_Y, \Dcal_Y) \) given by the \( \Qbb\)-Gorenstein condition, as we will show later during our computation of the Futaki invariant. 

\begin{prop} \cite{CS19}, \cite{Li18} \label{prop_ricci_flat_reeb_normalized}
Let \( (Y,D) \) be a spherical log cone with angles \( \gamma \) and \( m \) be an integer such that \( m(K_Y+D) \) is Cartier. Let \( s \) be a \( G \times T_H\)-equivariant nowhere-vanishing holomorphic section of \( -m(K_Y+D) \). Then there exists a linear function \( \varpi_{\gamma} : \Ccal_R \to \Rbb \) such that \[ \Lcal_{\xi} s = m \sprod{\varpi_{\gamma},\xi} s. \]
Moreover, the log discrepancy of \( w_{\xi} \) is exactly
\[ A_{(Y,D)}(w_{\xi}) = \sprod{\varpi_{\gamma}, \xi}. \] 
If \( (Y,D,\xi) \) has log Calabi-Yau cone metrics, then \( A_{(Y,D)}(w_{\xi}) = n \). 
\end{prop}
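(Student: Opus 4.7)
The plan is to prove the three assertions separately, each reducing to a bookkeeping step once the spherical dictionary is in place. \emph{First}, I would construct $\varpi_\gamma$ via the canonical divisor formula for spherical varieties (Brion, see e.g.\ \cite{Bri97}): $-K_Y \sim \sum_{\nu\in\Vcal_Y}D_\nu + \sum_{d\in\Dcal_Y}a_d D_d$, with explicit positive integers $a_d$. Subtracting $D = \sum(1-\gamma_\nu)D_\nu$ yields $-(K_Y+D) \sim \sum_\nu\gamma_\nu D_\nu + \sum_d a_d D_d$, and the $\Rbb$-Cartier hypothesis then translates, via the standard correspondence between $B$-Cartier divisors on a spherical embedding and linear functionals on its colored cone, into the existence of a unique $\varpi_\gamma \in \Ccal_Y^\vee$ satisfying $\sprod{\varpi_\gamma,\nu} = \gamma_\nu$ and $\sprod{\varpi_\gamma,\rho(d)} = a_d$. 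Any nowhere-vanishing $G \times T_H$-equivariant section $s$ of $-m(K_Y+D)$ is then a $B$-eigenvector, and by Remark \ref{remark_bmodule_tmodule} a $T_H$-eigenvector, of weight $m\varpi_\gamma$; the identity $\Lcal_\xi s = m\sprod{\varpi_\gamma,\xi}\,s$ is the infinitesimal form of the $T_H$-equivariance.

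\emph{Second}, for the log discrepancy formula, I would invoke the general principle of \cite{Li18}, \cite{LLX20}: for any $T$-invariant quasi-monomial valuation $v$ centered at the vertex of a $\Qbb$-Gorenstein log Fano cone, $A_{(Y,D)}(v) = \tfrac{1}{m}v(s)$ for any nowhere-vanishing $T$-equivariant section $s$ of $-m(K_Y+D)$. Since $w_\xi$ is monomial with $w_\xi|_{R_\alpha} \equiv \sprod{\alpha,\xi}$ and $s \in R_{m\varpi_\gamma}$, this directly gives $A_{(Y,D)}(w_\xi) = \sprod{\varpi_\gamma,\xi}$. As an internal sanity check, one may also compute both sides on a $G$-equivariant log resolution: $A_Y(w_\xi)$ picks up the coefficients $1$ and $a_d$ coming from Brion's formula, while $w_\xi(D) = \sum_\nu(1-\gamma_\nu)\sprod{\nu,\xi}$, and the difference again recovers $\sprod{\varpi_\gamma,\xi}$.

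\emph{Third}, assume now that $(Y,D,\xi)$ admits a log Calabi-Yau cone metric. The homogeneity $\Lcal_{r\del_r}\omega = 2\omega$ yields $\Lcal_{r\del_r}\omega^n = 2n\,\omega^n$. On the other hand, the defining Monge-Ampère equation rewrites $\omega^n$, via the eigensections $s$ and $s_\nu$, as $|s|^{-2/m}\prod|s_\nu|^{-2(1-\gamma_\nu)}$ times a $T_H$-invariant measure, and Brion's formula together with the first step guarantees that the combined scaling weight of this expression under $r\del_r$ is exactly $2\sprod{\varpi_\gamma,\xi}$. Matching the two sides forces $\sprod{\varpi_\gamma,\xi} = n$, hence $A_{(Y,D)}(w_\xi) = n$. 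The main obstacle I anticipate is the second step: one must verify that the weight-as-log-discrepancy principle of \cite{LLX20} applies verbatim in the log-spherical setting, with the correct contribution of colors through Brion's formula, but no new ideas beyond the standard spherical dictionary are required.
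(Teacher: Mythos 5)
Your proposal is correct and takes essentially the same route as the paper: the paper states this proposition by citation to \cite{CS19}, \cite{Li18}, defines \( \varpi_{\gamma} \) exactly as you do via Brion's anticanonical formula and the \( \Rbb \)-Cartier condition, and carries out the same eigenweight/log-discrepancy computation together with the homogeneity matching \( \sprod{\varpi_{\gamma},\xi} = n \) in the horospherical case inside the proof of Lemma \ref{lemma_futaki_invariant_horospherical}. One cosmetic slip in your optional sanity check: the expression \( w_{\xi}(D) = \sum_{\nu}(1-\gamma_{\nu})\sprod{\nu,\xi} \) is ill-typed since \( \nu \) and \( \xi \) both lie in \( \Ncal_{\Rbb} \) — the correct quantity pairs \( \xi \) with the \( B \)-weight of the canonical section \( s_{\nu} \) cutting out \( D_{\nu} \) — but this side remark is not load-bearing and your main argument is unaffected.
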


\begin{defn}
Let \( (Y,D) \) be a log spherical cone. Let \( \xi \) be an element in the Reeb cone \( \Ccal_R \) and \( \varpi_{\gamma} : \Ccal_R \to \Rbb \) the linear function as above. The \emph{(algebraic) volume} of \( (Y,\xi) \) is defined as
\[ \vol_{Y}(\xi) = \lim_{k \to \infty} \frac{ \dim \tuple{ \bigoplus_{ \sprod{\alpha, \xi} < k} R_{\alpha}}}{k^n / n!}. \] 
The normalized volume of a spherical log cone \( (Y,D) \) is a function that takes \( \xi \in \Ccal_R \) to 
\[ \wh{\vol}_{(Y,D)}(\xi) := A_{(Y,D)}(w_{\xi})^n \vol_Y(\xi) = \sprod{\varpi_{\gamma}, \xi}^n \vol_Y(\xi). \]
\end{defn}

\begin{rmk} 
It has been established that \( \vol \) is a continuous function, see e.g. \cite[Theorem 4.10]{CS18} for the case where \( Y \) is smooth, and \cite[Theorem 2.8]{Li21} for the general case. From the differentio-geometric point of view, \( \vol \) is the \( g\)-weighted volume of the (log) Fano base. More precisely, when \( \xi \) is quasi-regular, \( \vol_Y(\xi) \) is exactly the volume of the quasi-regular quotient with respect to the transverse Kähler form for \( \xi \). 
\end{rmk}

\begin{defn}
Let \( (\Ycal, \Dcal, \xi; \nu) \) be any special test configuration of the polarized spherical log cone \( (Y,D, \xi) \) with angles \( \gamma \) and central fiber \( (Y_0, D_0, \xi) \). Let \( A := A_{(Y_0, D_0)} \) be the log discrepancy of the central fiber. The \textit{Futaki invariant} of \( (\Ycal, \Dcal, \xi; \nu) \) is defined as
\[ \Fut_{\xi}(\Ycal, \Dcal, \nu) := \frac{d _{\xi} \wh{\vol}_{(Y_0,D_0)} (\nu)} {n A_{(Y,D)} (\xi)^{n-1} \vol_{Y_0}(\xi)} = \sprod{\varpi_{\gamma}, \nu} + \frac{\sprod{\varpi_{\gamma}, \xi}} {n} \frac{d_{\xi}\vol_Y(\nu)}{ \vol_Y(\xi)}. \]
\end{defn}

\begin{defn}
We say that a polarized spherical log cone \( (Y, D, \) \(  \xi) \) is \( G \)-equivariantly K-semistable if for every special \( G \)-equivariant test configuration defined by \( \nu \in \Vcal \),  \( \Fut_{\xi}(\Ycal, \Dcal, \nu) \geq 0 \). 

Moreover, \( (Y, D, \xi) \) is \( G\)-equivariantly K-stable (or K-polystable in \cite{LWX})  if it is K-semistable and that \( \Fut_{\xi}(\Ycal, \Dcal, \nu) = 0 \) only if \( (\Ycal, \Dcal, \xi; \nu) \) is a trivial test configuration. 
\end{defn}

The following lemma allows to prove the main theorem by reducing to the computation of the Futaki invariant of a horospherical cone. 

\begin{lem} \label{lemma_futaki_invariant_constant_along_fibers}
Let \( (\Ycal, \Dcal, \xi; \nu) \) be a degeneration with horospherical central fiber \( (Y_0,D_0) \). The Futaki invariant of \( (Y, D, \xi) \) is the same as the Futaki invariant of \( (Y_0, D_0, \xi) \). 
\end{lem}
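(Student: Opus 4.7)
The plan is to show that both factors appearing in the explicit form of the Futaki invariant,
\[ \Fut_{\xi}(\Ycal, \Dcal, \nu) = \sprod{\varpi_{\gamma}, \nu} + \frac{\sprod{\varpi_{\gamma}, \xi}}{n} \frac{d_{\xi}\vol_Y(\nu)}{\vol_Y(\xi)}, \]
namely the volume function $\vol_Y$ and the linear functional $\varpi_\gamma$, depend only on combinatorial data that is preserved by the equivariant degeneration. Once the equalities $\vol_Y = \vol_{Y_0}$ and $\varpi_\gamma = \varpi_{\gamma, 0}$ on the Reeb cone are established, the same formula read with subscripts $0$ produces the Futaki invariant of $(Y_0, D_0, \xi)$ against the same parameter $\nu$, which can be used as a test-configuration parameter for $Y_0$ as well since $G/H_0$ is horospherical and $\Vcal(G/H_0) = \Ncal_\Rbb$ contains $\nu$.

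For the volume equality, I will invoke Popov's isomorphism $R^{(B)} \simeq R_0^{(B)}$ as $B$-modules, already used in the proof of Theorem \ref{theorem_test_configurations_spherical_cone}, together with the identification of $B$-eigenspaces and $T_H$-weight spaces from Remark \ref{remark_bmodule_tmodule}. The index characters of $R$ and $R_0$ thus match term by term,
\[ \sum_{\alpha \in \Gamma} e^{-t\sprod{\alpha,\xi}} \dim R_\alpha = \sum_{\alpha \in \Gamma} e^{-t\sprod{\alpha,\xi}} \dim (R_0)_\alpha, \]
and extracting the leading Laurent coefficient at $t = 0$ yields $\vol_Y(\xi) = \vol_{Y_0}(\xi)$ on $\Ccal_R$, together with all directional derivatives $d_\xi \vol(\nu)$.

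For the equality $\varpi_\gamma = \varpi_{\gamma, 0}$, I will rely on the intrinsic characterization from Prop. \ref{prop_ricci_flat_reeb_normalized}: $m\varpi_\gamma$ is the $T_H$-weight of a $G \times T_H$-equivariant nowhere-vanishing holomorphic section $s$ of $-m(K_Y + D)$. Choosing such a section globally on $\Ycal$ in a $\Cbb^*$-equivariant fashion and restricting to the central fiber produces an analogous section $s_0$ on $(Y_0, D_0)$ whose $T_H$-weight agrees with that of $s$, since the $\Cbb^*$-flow commutes with the $T_H$-action. Equivalently and combinatorially, $\varpi_\gamma$ is determined by the linear conditions $\sprod{\varpi_\gamma, \rho(d)} = a_d$ for colors $d \in \Dcal$ and $\sprod{\varpi_\gamma, \nu_i} = \gamma_{\nu_i}$ for $G$-invariant divisors $D_{\nu_i}$, and these conditions transport through the lattice identification of Remark \ref{remark_central_fiber_data}. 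The main technical obstacle lies precisely in this combinatorial verification: tracking how the $G$-invariant divisors on $Y$ embed into those of $Y_0$ via the projection $\pi$, and checking that the discrepancy coefficients $a_d$ at colors as well as the angle parameters $\gamma_{\nu_i}$ transport consistently, so that $\varpi_\gamma$, viewed in $\Mcal_{0,\Qbb}$ under the canonical identification $\Mcal \simeq \Mcal_0$, satisfies exactly the defining conditions of $\varpi_{\gamma, 0}$.
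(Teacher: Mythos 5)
Your proposal is correct, but it takes a genuinely different (and more detailed) route than the paper. The paper's own proof is a two-line argument: the Futaki invariant in the Collins--Sz\'ekelyhidi formulation depends only on the index character, i.e.\ on the coordinate ring as a \( T_H \)-representation, and this data is unchanged along the degeneration because the moment cone is preserved by Knop's theorem \cite{Kno90}; no separate discussion of \( \varpi_{\gamma} \) appears, since in that formulation the coefficients \( a_0, a_1 \) of the index character absorb it. You instead work with the Li--Wang--Xu normalized-volume formula and verify its two ingredients separately: the volume equality via Popov's isomorphism \( R^{(B)} \simeq R_0^{(B)} \) together with Remark \ref{remark_bmodule_tmodule} (the same facts the paper invokes in the proof of Theorem \ref{theorem_test_configurations_spherical_cone}, so this half is essentially equivalent to the Knop citation), and in addition the invariance of \( \varpi_{\gamma} \), which the paper leaves implicit --- indeed it is already silently used in the paper's definition of \( \Fut_{\xi}(\Ycal,\Dcal,\nu) \), where \( A_{(Y,D)}(\xi) \) and \( \wh{\vol}_{(Y_0,D_0)} \) are mixed in the same formula. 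Your equivariant-section argument (restricting an equivariant trivialization of \( -m(K_{\Ycal}+\Dcal) \) to the central fiber, using that the \( \Cbb^{*} \)-flow commutes with \( T_H \)) is the standard mechanism behind the invariance of the log discrepancy \( A(w_{\xi}) \) along special test configurations, and it is sound; note that it renders your flagged ``main technical obstacle'' --- transporting colors and the coefficients \( a_d \), \( \gamma_{\nu} \) through the lattice identification --- unnecessary, since the section argument bypasses the color bookkeeping entirely (which is just as well: the colors and the values \( a_d \) of the horospherical central fiber genuinely differ from those of \( Y \); only the resulting linear functional \( \varpi_{\gamma} \) on the valuation cone is preserved). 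The trade-off: the paper's proof is much shorter, but stated literally for the index character of \( R(Y) \) it does not visibly account for the boundary divisor \( D \), whereas your version handles the pair explicitly and isolates exactly which two quantities (\( \vol \) and \( \varpi_{\gamma} \)) must be, and are, preserved.
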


\begin{proof}
Since the Futaki invariant as defined by Collins-Székelyhidi only depends on the moment cone of \( Y \) (that is the convex cone generated by the weights of \( T_H \)), and that the central fiber \( Y_0 \) has the same moment cone as \( Y \) by a theorem of Knop \cite[Satz 5.4]{Kno90},   the result then follows. 
\end{proof}

Let us now compute the Futaki invariant of a pair associated to a horospherical conical embedding \( G/H \subset Y \). Recall that \( G/H \) is a equivariant torus bundle over \( G/P \), where \( P := N_G(H) \) is the right-stabilizer of the open Borel orbit. Denote by \( \Phi_{P^u} \) the root system of the reductive part \( P^u \).  By Brion's description of the canonical divisor, \( K_Y \) can be represented by 
\[ - K_Y = \sum_{\nu \in \Vcal_Y} D_{\nu} + \sum_{d \in \Dcal_Y}  a_d d. \]
where \( \Vcal_Y \) is the set of \( G\)-stable divisors of \( Y \) and \( \Dcal_Y \) the set of colors of \( Y \), and \( a_d \) are coefficients that depend only on \( G/H \). 

\begin{lem} \label{lemma_futaki_invariant_horospherical}
Let \( (Y, D, \xi) \) be a polarized horospherical log cone with angles \( \gamma \), colored cone \( \Ccal_Y \) and Reeb cone \( \Ccal_R := - \text{int}(\Ccal_Y) \). Let \( \Delta_{\xi} = \set{ \sprod{., \xi} = n} \cap \Ccal_Y^{\vee} \) and \( \text{bar}_{DH}(\Delta_{\xi}) \) be the barycenter of \( \Delta_{\xi} \) with respect to the Duistermaat-Heckman measure
\[ P(p) d \lambda(p) := \prod_{\alpha \in \Phi_{P^u}}  \sprod{\alpha,p} d \lambda(p) \]
For every \( \xi \in \Ccal_R \) and \( \nu \in \Vcal \), the Futaki invariant of \( (Y,D,\xi) \) can be written as
\[ \text{Fut}_{\xi}(Y, D, \nu) =   \sprod{ - \frac{\sprod{\varpi_{\gamma},\xi}}{n} \text{bar}_{DH}(\Delta_{\xi}) + \varpi_{\gamma}, \nu}, \]
where \( \varpi_{\gamma} \) can be interpreted as the \( B\)-weight of the canonical section of the Cartier divisor \( -m (K_Y + D) \). 
\end{lem}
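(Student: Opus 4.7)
The plan is to apply the Li-Wang-Xu formula recalled just above the lemma, which expresses \( \text{Fut}_{\xi}(Y,D,\nu) = \sprod{\varpi_{\gamma},\nu} + \frac{\sprod{\varpi_{\gamma},\xi}}{n} \frac{d_{\xi}\vol_Y(\nu)}{\vol_Y(\xi)} \), and so it suffices to show that \( \frac{d_{\xi}\vol_Y(\nu)}{\vol_Y(\xi)} = -\sprod{\text{bar}_{DH}(\Delta_{\xi}),\nu} \). The key input that makes the horospherical case concrete is that by Prop. \ref{proposition_horospherical_space} and Prop. \ref{proposition_horospherical_construction}, the coordinate ring decomposes \( \Cbb[Y] = \bigoplus_{\alpha \in \Gamma} V(\alpha)^{*} \) as a \( G \)-module, where \( \Gamma = \Ccal_Y^{\vee} \cap \Mcal \) and each \( V(\alpha) \) is an irreducible representation of highest weight in a sublattice of \( \Mcal(T) \) annihilated by all coroots of the Levi. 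Since Weyl's dimension formula forces the factors indexed by roots of the Levi to be constant, only the factors indexed by roots in \( \Phi_{P^u} \) vary with \( \alpha \), so \( \dim R_{\alpha} \) is a polynomial in \( \alpha \) of top-degree piece proportional to \( P(\alpha) = \prod_{\alpha' \in \Phi_{P^u}} \sprod{\alpha',\alpha} \), with \( \deg P = \dim G/P =: d \).

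Writing \( r = \dim T_H \), we have \( n = r+d \), and a standard Riemann sum/rescaling argument approximates the defining limit of \( \vol_Y(\xi) \) by the integral \( \vol_Y(\xi) = C \int_{\Ccal_Y^{\vee} \cap \{\sprod{p,\xi}\le 1\}} P(p)\, dp \) with a fixed normalization constant \( C > 0 \) depending on the lattice and on \( \prod \sprod{\alpha',\rho} \); the Reeb cone sign convention \( \Ccal_R = -\mathrm{int}(\Ccal_Y) \) recalled before the lemma ensures \( \sprod{\alpha,\xi} > 0 \) on the weight monoid. Parametrizing the region by \( p = tq \) with \( q \in \{\sprod{\cdot,\xi} = 1\} \cap \Ccal_Y^{\vee} \) and \( t \in [0,1] \), the homogeneity \( P(tq) = t^d P(q) \) together with the Leray form \( d\lambda_{\xi} \) on the affine slice gives the identity \( \vol_Y(\xi) = \frac{C}{n}\int_{\Delta_{\xi}^{1}} P(q)\, d\lambda_{\xi}(q) \), where \( \Delta_{\xi}^{1} = \frac{1}{n}\Delta_{\xi} \).

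I next compute the derivative by viewing \( \vol_Y(\xi) \) as the integral of \( P\,dp \) against the indicator of the half-space \( \{\sprod{p,\xi}\le 1\} \). Perturbing \( \xi \to \xi + s\nu \) moves the bounding hyperplane by a normal speed proportional to \( -\sprod{p,\nu} \) at the point \( p \), giving the boundary-variation formula \( d_{\xi}\vol_Y(\nu) = -C \int_{\Delta_{\xi}^{1}} \sprod{p,\nu} P(p)\, d\lambda_{\xi}(p) \). Dividing by \( \vol_Y(\xi) \) produces exactly \( -n\sprod{\text{bar}_{DH}(\Delta_{\xi}^{1}),\nu} = -\sprod{\text{bar}_{DH}(\Delta_{\xi}),\nu} \), using that \( \Delta_{\xi} = n\Delta_{\xi}^{1} \) and that \( d\lambda_{\xi} \) is the Duistermaat--Heckman measure once weighted by \( P \). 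Substituting into the Li-Wang-Xu formula gives precisely the claimed expression for \( \text{Fut}_{\xi}(Y,D,\nu) \).

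Finally, the interpretation of \( \varpi_{\gamma} \) as the \( B \)-weight of the canonical section of \( -m(K_Y+D) \) is a direct consequence of Brion's description of \( K_Y \) recalled just before the lemma: the canonical section is (up to a nonzero scalar) \( \prod_{\nu \in \Vcal_Y} s_{\nu}^{m\gamma_{\nu}} \prod_{d \in \Dcal_Y} s_d^{m a_d} \), so its \( B \)-weight, read through the natural pairing of \( B \)-characters of \( s_{\nu} \) and \( s_d \) with \( \nu \in \Vcal_Y \) and \( \rho(d) \in \Dcal_Y \), is exactly \( m\varpi_{\gamma} \) by the defining equations \( \sprod{\varpi_{\gamma},\nu}=\gamma_{\nu} \) and \( \sprod{\varpi_{\gamma},\rho(d)}=a_d \); this also matches the Lie-derivative eigenvalue in Prop. \ref{prop_ricci_flat_reeb_normalized}. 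The main technical point to keep track of is the various normalizations—the Weyl-dimension leading constant, the Leray/\(d\lambda_\xi\) factor, and the distinction between \( \Delta_{\xi}^{1} \) and \( \Delta_{\xi} \)—since these constants must cancel cleanly in the ratio \( d_{\xi}\vol_Y(\nu)/\vol_Y(\xi) \) for the barycenter formula to come out with coefficient exactly one.
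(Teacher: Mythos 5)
Your proof is correct, and while its skeleton (Li--Wang--Xu formula, volume computation, barycenter via logarithmic derivative) matches the paper, the two technical ingredients are obtained by a genuinely different route. The paper reduces to a \emph{quasi-regular} Reeb vector by continuity of \( \vol \), identifies \( \vol_Y(\xi) \) with the Duistermaat--Heckman volume of the moment polytope of the GIT quotient \( Y /\!/ \sprod{\xi} \) (citing the known identification of the DH measure with \( P\,d\lambda \)), and then differentiates via the Gamma-function/Fubini trick, rewriting the volume as the Laplace transform \( \int_{\Ccal_R^{\vee}} e^{-\sprod{p,\xi}} P(p)\, d\lambda(p) \), whose \( \xi \)-derivative immediately produces the barycenter. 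You instead derive the density \( P \) directly from the Weyl dimension formula applied to the multiplicity-free decomposition \( \Cbb[Y] = \bigoplus_{\alpha \in \Gamma} V(\alpha)^{*} \) (the Levi factors being constant on \( \Mcal \) is exactly right), pass to the integral by a lattice-point Riemann sum, and compute \( d_{\xi}\vol_Y(\nu) \) by boundary variation of the truncated-cone integral \( \int_{\Ccal_Y^{\vee} \cap \{\sprod{p,\xi} \leq 1\}} P\,dp \), with the homogeneity \( P(tq) = t^{d}P(q) \) handling the slice normalizations; your bookkeeping of the constant \( C \), the Leray measure, and the rescaling \( \Delta_{\xi} = n\Delta_{\xi}^{1} \) is consistent, and all constants cancel in the ratio as claimed. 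What your route buys is self-containedness: it avoids the quotient construction and the continuity-in-\( \xi \) appeal, treating irregular \( \xi \) uniformly (modulo the standard remark that the bounding hyperplane has Lebesgue-measure-zero boundary, so the Riemann sum converges for irrational \( \xi \) too); what the paper's route buys is compatibility with the weighted-volume/quasi-regular-quotient picture used in its proof of Theorem \ref{theorem_ricci_flat_equivalent_k_stability}, and the DH identification for free from the literature. Your closing identification of \( m\varpi_{\gamma} \) as the \( B \)-weight of the canonical section of \( -m(K_Y+D) \) via \( \sprod{\varpi_{\gamma},\nu} = \gamma_{\nu} \), \( \sprod{\varpi_{\gamma},\rho(d)} = a_d \) is the same as the paper's, which phrases it through the Gorenstein linear function \( l = -\varpi_{\gamma} \) and Prop. \ref{prop_ricci_flat_reeb_normalized}. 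One convention to keep straight (the paper is itself loose here): the pairing \( \sprod{\alpha,\xi} > 0 \) on the weight monoid for \( \xi \in \Ccal_R = -\mathrm{int}(\Ccal_Y) \) rests on the sign-reversed identification of the \( T_H = P_I/H \)-action weights, which you invoke implicitly exactly as the paper does, so no new gap arises.
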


\begin{proof}
Let us first work with an usual cone \( Y \). 
A horospherical cone is \( \Qbb\)-Gorenstein if and only if there exists a linear function \( l \in \Mcal_{\Qbb} \) on \( \Ccal_Y \) such that 
\[ \sprod{l,\nu} = 1, \; \sprod{l, \rho(d)} = a_d. \]
This linear function is exactly the \( B\)-weight \( -\varpi \) of the canonical section of \( K_Y \)
\[ -\varpi = \sum_{\alpha \in \Phi_{P^u }} \alpha. \] 
Moreover, one can show as in \cite{Ngh22b} that the unique \( T_H\)-equivariant holomorphic section \( s \) of the Cartier divisor \( -m K_Y \) satisfies 
\[ \Lcal_{\xi} s = -m \sprod{l,\xi} s. \]
It follows from the description of the log discrepancy in terms of \( s \) that \( A_{Y}(w_{\xi}) = -\sprod{l, \xi} = \sprod{\varpi, \xi} \) for every \( \xi \in \Ccal_R \). The case of \( (Y,D) \) follows by replacing (\( -K_Y\), \( \varpi \)) with (\( -(K_Y +D) \), \(\varpi_{\gamma} \)).

We now compute the volume of \( (Y,\xi) \). By continuity of the volume, it suffices to compute \( \text{vol}_Y(\xi) \) for a quasiregular Reeb vector \( \xi \in (\Ccal_R)_{\Qbb} \). Let \( X := Y// \sprod{\xi} \) be the GIT orbifold quotient of \( Y \). It is naturally a log Fano spherical variety endowed with a Hamiltonian action of the torus \( T_H / \sprod{\xi} \), and the moment polytope for this action after normalizing is exactly \( \Delta_{\xi} \). The Duistermaat-Heckman measure on this polytope coincides with \( P d \lambda \). This measure is moreover independent of the choice of \( \xi \), cf. \cite{Li21}). 

In particular, for a horospherical cone \( Y \) polarized by a quasi-regular Reeb element \( \xi\), 
\[ \vol_Y(\xi) = n! \int_{\Delta_{\xi}} P(p) d \lambda(p). \] 
%Moreover, there exists an orbifold line bundle \( L \) over \( X \) such that
%\[H^0(X,kL) = \bigoplus_{\alpha, \sprod{\alpha, \xi} = k} R_{\alpha}, \quad \Cbb[Y] = \bigoplus_k H^0(X, kL). \]
%Set \( d_j := \dim H^0(X,jL) \). We have by Toën's Riemann-Roch theorem for orbifolds,  
%\[ \lim_{k \to \infty} \frac{d_k}{k^{n-1}}  = a_0 = \vol(X,\xi) =  \int_{\Delta_{\xi}} P(p) d \lambda(p), \]
%where \( \vol(X,\xi) \) is the volume of \( X \) with respect to the volume form \( \omega_{\xi} \). 
%Now let \( u_k := \sum_{j=0}^{k-1} (d_j - d_k) \) and \( v_k := k^{n} \). Remark that \( (v_k) \) is strictly increasing, \( \lim v_k = +\infty \) and
%\[ \lim_{k \to \infty} \frac{u_{k+1}-u_k}{v_{k+1} - v_k} = 0,  \]
%hence by the Stolz-Cesàro theorem,
%\[ \lim_{k \to +\infty} \frac{u_k}{v_k} = \lim_{k \to +\infty} \frac{\sum_{j=0}^{k-1} d_j}{k^{n}} - \frac{d_k}{k^{n-1}} = 0. \] 
%In other words,  
%\[ \vol_Y(\xi) = \lim_{k \to \infty} \frac{\bigoplus_{j=0}^{k-1} \dim H^0(X,jL) }{k^{n}/n!} = \lim_{k \to \infty} \frac{\dim H^0(X,kL)}{k^{n-1}/n!} = n!  a_0.  \]
%It follows that
%\[ \text{vol}_Y(\xi) = n! \int_{\Delta_{\xi}} P(p) d \lambda(p), \]
Using the definition of the Gamma function
\[ \Gamma(n+1) = n! = \int_{s > 0} s^{n} e^{-s} ds, \]
and a Fubini argument, we obtain 
\[ \text{vol}_Y(\xi) = \int_{s > 0} \int_{\sprod{.,\xi} = s} e^{- \sprod{p,\xi}} \sprod{p,\xi}^n P(p) d \lambda(p) ds =  \int_{\Ccal_R^{\vee}} e^{-\sprod{p,\xi}} P(p) d \lambda(p). \]
Finally, a direct computation yields 
\[ d_{\xi}( \log \vol_Y )(\nu) = -\sprod{\text{bar}_{DH}(\Delta_{\xi}), \nu}. \] 
The lemma then follows from the definition of the Futaki invariant in terms of normalized volume.
\end{proof}

%\begin{rmk}
%The measure \( DH_T := \sprod{.,\xi}^n P d \lambda \wedge d \sprod{.,\xi} \) is exactly the Duistermaat-Heckman measure on the cone \( \Ccal_R^{\vee} \). In other words, in the smooth case, if \( \omega_{\xi} \) is a Kähler cone metric compatible with \( \xi\) and \( \mu \) the corresponding symplectic map for the \( T_H \) action, then \( \mu_{*} \omega_{\xi}^n = DH_T \). 
%\end{rmk}

 \section{Proof of Theorem \ref{maintheorem_kstability} and \ref{maintheorem_kstabledegeneration}} \label{section_proof_kstability_maintheorems}

\subsection{Proof of Theorem \ref{maintheorem_kstability}}
\begin{thm} \label{theorem_ricci_flat_equivalent_k_stability}
The following conditions are equivalent.
\begin{itemize}
\item A polarized log Fano cone \( (Y,D,\xi) \) admits log Calabi-Yau cone metric with Reeb vector \( \xi \). 
%\item \( (Y,\xi) \) is Ding-stable. 
\item \( (Y,D,\xi) \) is K-stable. 
\end{itemize}
Moreover, it suffices to test these stability conditions over \( G\)-equivariant special test configurations, where \( G\) is a reductive group acting effectively and holomorphically on \( (Y,D,\xi) \). In particular, a \( G\)-spherical cone \( (Y,D,\xi) \) admits \( K\)-invariant log Calabi-Yau cone metrics iff \( (Y,D,\xi) \) is \( G\)-equivariantly K-stable.  
\end{thm}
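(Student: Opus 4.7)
The statement combines an analytic-to-algebraic correspondence with an equivariance reduction; I would prove the two parts independently and then specialize to the spherical setting.

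For the first equivalence, the plan is to invoke the Yau--Tian--Donaldson correspondence for polarized log Fano cones. In the smooth-link case, this is the combination of Collins--Székelyhidi \cite{CS19} and Hisamoto--Li \cite{HL23} via a Sasakian continuity/variational method. In the general \( \Qbb\)-Gorenstein log setting, the plan is to apply Li \cite{Li21}: a Ricci-flat Kähler cone metric with Reeb vector \( \xi \) on \( (Y,D,\xi) \) corresponds to a \( g\)-soliton on any quasi-regular log Fano orbifold slice, and K-stability of the cone translates into weighted K-stability of all such slices, which combined with the CR-invariance of the soliton equation under a change of Reeb vector yields the equivalence for irregular \( \xi \) as well. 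The klt condition on \( (Y,D) \) provides the regularity needed for the potential \( r_{\xi}^2 \).

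For the reduction to \( G\)-equivariant special test configurations, I would adapt the equivariant K-stability argument of Zhuang \cite{Zh21} to the cone setting. One direction is immediate: any \( G\)-equivariant special test configuration is in particular a special test configuration. For the converse, if \( (Y,D,\xi) \) is \( G\)-equivariantly K-semistable but is destabilized by a non-trivial special test configuration \( (\Ycal, \Dcal, \xi; \zeta) \) with \( \Fut_{\xi}(\Ycal, \Dcal, \zeta) \leq 0 \), the strategy is to push \( \zeta \) to its \( K\)-average through the compact form \( K \subset G \), or alternatively to use the \( G\)-equivariant Hilbert scheme of Alexeev--Brion \cite{AB04a} to produce a \( G\)-equivariant degeneration with a comparable central fiber. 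Convexity of the weighted volume on the Reeb cone established in \cite{Li21} together with linearity of \( \Fut_{\xi} \) in the destabilizing direction \( \zeta \) ensures that the averaged configuration has Futaki invariant at most \( \Fut_{\xi}(\Ycal, \Dcal, \zeta) \); triviality of this equivariant destabilizer then forces \( \zeta \) to lie in the linear part of \( \Vcal \), contradicting the assumption. The final statement for \( G\)-spherical cones follows by combining the two reductions, noting that for a \( G\)-spherical cone the complexified action extends any \( K\)-invariant structure.

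The main obstacle, I expect, is the rigorous averaging of special test configurations for log Fano cones: in the Fano case the argument goes cleanly through the good moduli space of K-semistable Fano varieties, whereas for log Fano cones such a moduli space is only being developed via the boundedness results of Xu--Zhuang \cite{XZ24}. A combinatorial workaround tailored to the spherical situation, relying on Theorem \ref{theorem_test_configurations_spherical_cone} and the explicit control of \( T_H \)-actions on \( Y \), should be sufficient for the applications in the present paper.
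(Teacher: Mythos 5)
Your first equivalence follows the paper's route: the proof there is exactly the reduction via C.~Li's \( g\)-soliton correspondence \cite{Li21} (with \cite{HL23} — note this is Han--Li, not Hisamoto--Li), passing to a quasi-regular quotient \( (X, D_X) = (Y,D)//\sprod{v_{\xi_0}} \), rewriting the log Calabi-Yau cone equation as a \( g\)-soliton equation on the quotient, and matching the Ding invariant of a cone test configuration with the weighted Ding invariant of the quotient test configuration. Up to the misattribution, this half of your plan is the paper's argument.

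The gap is in your equivariance reduction. First, ``pushing \( \zeta \) to its \( K\)-average'' is not a defined operation: special test configurations of a log Fano cone do not form a linear or convex space, and there is no averaging construction for which your claimed monotonicity of the Futaki invariant holds. The linearity of \( \Fut_{\xi} \) in the degeneration direction is available only for the combinatorially parametrized \emph{equivariant} configurations \( (\Ycal, \Dcal, \xi; \nu) \), \( \nu \in \Vcal \), of Theorem \ref{theorem_test_configurations_spherical_cone} — precisely the ones you may not assume when proving that equivariant K-stability implies K-stability. Second, the rigorous version of your strategy, Zhuang's argument \cite{Zh21}, runs through the good moduli space of K-semistable Fanos, and (as the paper itself points out in the discussion of Theorem B) such moduli are not yet available for log Fano cones; the boundedness of \cite{XZ24} is only a first step. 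You flag this obstacle yourself, but your proposed fallback does not fill it: a ``combinatorial workaround relying on Theorem \ref{theorem_test_configurations_spherical_cone}'' only sees \( G\)-equivariant degenerations, whereas the hard direction of the reduction is precisely about arbitrary non-equivariant ones, so no computation inside \( \Vcal \) can reach them. The paper sidesteps the issue entirely: it never proves the equivariant reduction at the cone level, but inherits it from the weighted Fano setting, where the sufficiency of \( G\)-equivariant special test configurations for \( g\)-weighted Ding stability of the quasi-regular log Fano quotient is already established (\cite[Theorem 7.3]{HL23}, \cite[Theorem 1.15]{Li21}); combined with the identification of Ding and Futaki invariants for special configurations, this closes the proof. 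Replacing your Zhuang-style averaging with that citation chain is what your argument is missing.
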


\begin{proof}
This was essentially proved in \cite{Li21}, see \cite[Theorem 2.9]{Li21}, also \cite[Theorem 1.7]{HL23}. For the reader's convenience, we provide a sketch of proof. 

Let \( \eta \) be the (weak) contact form associated to \( \xi \). The log Calabi-Yau cone equation on \( (Y,D, \xi) \) can be shown (cf. Equation (78) \cite{Li21}) to be equivalent to an equation of the form 
\[ g(\eta) (d \eta)^{n-1} \wedge \eta = d V_Y^{\xi}, \]
where \( g \) is a positive smooth function on the link \( \set{r^2_{\xi} = 1} \). 
Now let \( \xi_0 := \xi - \xi' \) be any other quasi-regular Reeb vector field, and \( \eta_0 = \eta/ \eta(\xi_0)  \) be the contact form with respect to \( \xi_0 \). The Reeb vector \( \xi_0 \) generates a \( \Cbb^{*}\)-action and we identify the Fano orbifold quotient \( (Y,D)// \sprod{v_{\xi_0}} \) with a log Fano variety \( (X,D_X ) \), where \( D_X \) takes into account the ramified divisor. If \( (Y,D) \) is \( G \times \Cbb^{*} \)-equivariant, then \( (X,D_X) \) is \( G\)-equivariant. Translating the above equation in terms of \( \eta_0 \), \( \xi_0 \), we obtain
\[ g(\eta_0)(d \eta_0)^{n-1} \wedge \eta_0 = dV_Y, \]
which is a \( g\)-soliton equation on the quotient \( (Y,D)// \sprod{v_{\xi_0}} = (X,D_X) \) (cf. Equation (104) \cite{Li21}). In particular, \( (Y,D,\xi) \) admits a weak log Calabi-Yau cone metric if and only if any quasi-regular quotient admits a \( g\)-soliton. 

Let \( \zeta := \xi_0 + t \del_t \), where \( t \del_t \) is the holomorphic vector field generating the \( \Cbb^{*} \)-action. The quotient \( (\Ycal, \Dcal) / \sprod{v_{\zeta}} = ( \Xcal, \Dcal_{\Xcal}, -(K_{\Xcal} + \Dcal_{\Xcal})) \) is a test configuration of \( (X,D_X, -(K_X +D_X) ) \). Here, the Cartier divisor \( -(K_{\Xcal} + \Dcal_{\Xcal}) \) is the multiple of the polarizing orbifold line bundle \( \Lcal \) (viewed as a \(\Qbb\)-Cartier divisor) such that \( \Lcal^{*} \backslash \Xcal \simeq \Ycal \backslash \set{0}. \) 

Conversely, any test configuration of \( (X,D_X) \) induces a test configuration of \( (Y,D) \) (by taking the fiberwise cones over \( X \) with respect to the polarization \(-(K_X + D_X) \)). Moreover, the correspondence sends special test configurations to special test configurations, and \( G\)-equivariant test configurations of \( Y \) to \( G\)-equivariant test configurations of \( X \) (if the action of \( \xi_0 \) is compatible with \( G \)).  

Next, can show that the Ding invariant of \( \Ycal \) is exactly the weighted Ding invariant of any quotient test configuration \( (\Xcal, \Dcal, \Lcal) \). The work of Han-Li \cite{HL23} establishes that \( (X,D_X) \) admits a \( g \)-soliton if and only if it is \( g\)-weighted Ding stable. It follows that \( (Y,D,\xi) \) is Ding-stable iff \( (Y,D,\xi) \) admits a weak log Calabi-Yau cone metric, iff any quasi-regular quotient is \( g\)-weighted Ding-stable. 

Finally, since it is enough to check \( g\)-weighted Ding stability of a quasi-regular quotient over \( G\)-equivariant special test configurations \cite[Theorem 7.3]{HL23}, \cite[Theorem 1.15]{Li21}, the polarized cone \( (Y,D,\xi) \) is Ding-stable iff it is Ding-stable over all \( G\)-equivariant special test configurations for a given \( G \). Finally for a special test configuration, the Ding invariant of the polarized cone \( (Y,D,\xi) \) coincides with the Futaki invariant, and the theorem follows. 
\end{proof}

\begin{thm} \label{theorem_kstability_spherical_cone}
Recall that \( \Sigma := (-\Vcal)^{\vee} \). A  polarized spherical log cone \( (Y,D,\xi) \) with angles \( \gamma \) is \( K \)-stable if and only if 
\[ \text{bar}_{DH}(\Delta_{\xi}) - \varpi_{\gamma} \in \emph{RelInt}(\Sigma). \]
\end{thm}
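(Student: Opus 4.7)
The plan is to assemble the structural results established in the previous sections into a Futaki invariant computation on an arbitrary $G$-equivariant special test configuration, then translate the resulting inequality into the stated support function condition.

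First, I invoke Theorem \ref{theorem_ricci_flat_equivalent_k_stability} to reduce K-stability of $(Y,D,\xi)$ to nonnegativity of the Futaki invariant on all $G$-equivariant special test configurations, with equality only for trivial ones. By Theorem \ref{theorem_test_configurations_spherical_cone}, such test configurations are in bijection (up to rescaling) with vectors $\nu \in \Vcal$, and the trivial ones correspond precisely to $\nu \in \mathrm{lin}(\Vcal)$. Thus K-stability is equivalent to:
\[
\mathrm{Fut}_\xi(\Ycal, \Dcal, \nu) \geq 0 \ \ \text{for every } \nu \in \Vcal, \text{ with equality iff } \nu \in \mathrm{lin}(\Vcal).
\]

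Next, I compute $\mathrm{Fut}_\xi(\Ycal, \Dcal, \nu)$ explicitly as a linear functional of $\nu$. Given $\nu \in \Vcal$, Theorem \ref{theorem_test_configurations_spherical_cone}(2) shows the associated test configuration can be further degenerated to one with horospherical central fiber, and Lemma \ref{lemma_futaki_invariant_constant_along_fibers} ensures the Futaki invariant is preserved under this further degeneration (since both the moment cone and the $T_H$-weight decomposition are preserved by Knop's theorem, and the linear functional $\varpi_\gamma$ encoding the log discrepancy structure is determined by data in the common character lattice). Applying Lemma \ref{lemma_futaki_invariant_horospherical} to the horospherical model, together with the Ricci-flat normalization $\langle \varpi_\gamma, \xi \rangle = n$ from Proposition \ref{prop_ricci_flat_reeb_normalized}, yields
\[
\mathrm{Fut}_\xi(\Ycal, \Dcal, \nu) = \langle \varpi_\gamma - \mathrm{bar}_{DH}(\Delta_\xi),\, \nu \rangle.
\]

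Finally, the K-stability condition translates verbatim into a linear condition on the vector $w := \varpi_\gamma - \mathrm{bar}_{DH}(\Delta_\xi) \in \Mcal_\Rbb$. Namely, $\langle w, \nu\rangle \geq 0$ for all $\nu \in \Vcal$ with equality exactly on $\mathrm{lin}(\Vcal)$ is the definition of $w \in \mathrm{RelInt}(\Vcal^\vee)$, equivalently $-w = \mathrm{bar}_{DH}(\Delta_\xi) - \varpi_\gamma \in \mathrm{RelInt}((-\Vcal)^\vee) = \mathrm{RelInt}(\Sigma)$. The main technical point requiring care is verifying that the horospherical Futaki formula genuinely computes the Futaki invariant of the original cone, which reduces to checking that the combinatorial data $(\varpi_\gamma, \Delta_\xi, \mathrm{bar}_{DH})$ entering the formula are unchanged by the further degeneration to the horospherical model; this is precisely the content of Lemma \ref{lemma_futaki_invariant_constant_along_fibers} combined with the invariance of the moment cone and the fact that $\varpi_\gamma$ is intrinsically defined through the log discrepancy of the Reeb valuation.
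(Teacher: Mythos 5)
Your proposal is correct and follows essentially the same route as the paper's proof: reduction to $G$-equivariant special test configurations via Theorem \ref{theorem_ricci_flat_equivalent_k_stability}, the combinatorial classification of Theorem \ref{theorem_test_configurations_spherical_cone}, further degeneration to a horospherical central fiber with Futaki invariance (Lemma \ref{lemma_futaki_invariant_constant_along_fibers}), the explicit formula of Lemma \ref{lemma_futaki_invariant_horospherical} with the normalization $\sprod{\varpi_\gamma,\xi}=n$, and the final dualization against $\Vcal$. The only differences are cosmetic: you keep the parameterization $\nu\in\Vcal$ throughout and phrase the converse as one equivalence chain, where the paper switches to $\nu\in(-\Vcal)$ and argues the converse by contradiction.
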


\begin{proof}
This follows from Theorems \ref{theorem_test_configurations_spherical_cone}, \ref{theorem_ricci_flat_equivalent_k_stability}, and our computation of a horospherical cone's Futaki invariant. For simplicity, we work with a \( \Qbb\)-Gorenstein \( G\)-spherical cone \( Y \). Given any \( G\)-equivariant special test configuration \( (\Ycal, \xi; \nu) \), \( \nu \in (-\Vcal) \) of \( (Y,\xi) \) with central fiber \( Y_0 \), we can construct another test configuration of \( Y_0 \) with horospherical central fiber \( Y_0' \). The Futaki invariant of \( (Y_0',\xi) \) is the same as \( (Y_0,\xi) \) by Lemma \ref{lemma_futaki_invariant_constant_along_fibers}, hence the K-semistability condition is equivalent to 
\[ \frac{\sprod{\varpi,\xi}}{n} \sprod{\text{bar}_{DH}(\Delta_{\xi}), \nu} \geq \sprod{\varpi, \nu}, \; \forall \nu \in (-\Vcal). \]
The fact that a Ricci-flat Kähler cone \( (Y,\xi) \) satisfies \( \sprod{\varpi, \xi} = n \) (cf. Prop. \ref{prop_ricci_flat_reeb_normalized}) simplifies further this condition to 
\[ \text{Fut}_{\xi}(Y_0', \nu) = \sprod{\text{bar}_{DH}(\Delta_{\xi}) - \varpi, \nu} \geq 0, \; \forall \nu \in (-\Vcal). \] 
Recall following fact:
\[ \text{RelInt}(\Sigma) = \set{ \sigma, \sprod{\sigma, \nu} > 0, \forall \nu \in (-\Vcal) \backslash \text{lin}(-\Vcal)}. \] 
The combinatorial condition in the statement
\[ \text{bar}_{DH}(\Delta_{\xi}) - \varpi \in \text{RelInt}(\Sigma) \] 
holds if and only if \( \Fut_{\xi}(Y,\nu) > 0, \forall \nu \in (-\Vcal) \backslash \text{lin}(-\Vcal) \). Under this condition, \( (Y,\xi) \) is clearly K-semistable, and the vanishing of \( \text{Fut}_{\xi}(Y,\nu) \) implies that \( \nu \in \text{lin}(\Vcal) \), hence the test configuration defined by \( \nu \) is a trivial test configuration by Theorem \ref{theorem_test_configurations_spherical_cone}.  Conversely, suppose that \( (Y,\xi) \) is K-stable and \( \text{bar}_{DH}(\Delta_{\xi}) - \varpi \notin \text{RelInt}(\Sigma) \). Then there is \( \nu \notin \text{lin} \Vcal \) such that \( \text{Fut}_{\xi}(Y,\nu) = 0 \), i.e. there is a non-trivial test configuration with vanishing Futaki invariant, a contradiction. The theorem is then proved.  By replacing \( \varpi \) with \( \varpi_{\gamma} \), one obtains directly the K-stability criterion for a log pair. 
\end{proof}

\subsection{Proof of Theorem \ref{maintheorem_kstabledegeneration}}

\begin{prop} \label{prop_stable_degeneration_equals_equivariant_stable_degeneration} 
Let \( (W, \xi) \) be any strictly \( K\)-semistable \( G\)-spherical cone. Then there is a \( G\)-equivariant special degeneration of \((W,\xi) \) with K-stable central fiber. Any other such degeneration has \( G\)-isomorphic central fiber. The analogue holds for a strictly K-semistable \( G\)-spherical log pair \( (W, D, \xi) \).
\end{prop}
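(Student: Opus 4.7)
The plan is to combine the combinatorial K-stability criterion from Theorem \ref{theorem_kstability_spherical_cone} with the explicit parametrization of $G$-equivariant special test configurations in Theorem \ref{theorem_test_configurations_spherical_cone}. The $K$-stability of $(W, D, \xi)$ is detected by a single vector $b := \text{bar}_{DH}(\Delta_\xi) - \varpi_\gamma \in \Ncal_\Rbb$: it is $K$-semistable iff $b \in \Sigma := (-\Vcal)^\vee$ and $K$-stable iff $b$ lies in the relative interior of $\Sigma$. Everything below specializes to the log pair case via the substitutions $\varpi \mapsto \varpi_\gamma$ and $K_W \mapsto K_W + D$.

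For existence, strict $K$-semistability places $b$ in the relative interior of some proper face $F \subsetneq \Sigma$, whose dual face $F^* := F^\perp \cap (-\Vcal)$ strictly contains $\text{lin}(\Vcal)$. I would choose a lattice point $\nu_0$ in the relative interior of $F^*$ and form the associated $G$-equivariant special test configuration $(\Wcal, \xi; -\nu_0)$, whose central fiber $W_0$ is again a $G$-spherical cone. By Remark \ref{remark_central_fiber_data}, the weight monoid is preserved while $\dim \text{lin}(\Vcal_{W_0}) > \dim \text{lin}(\Vcal)$, since $\nu_0$ becomes a torus direction in $W_0$. Knop's theorem on the invariance of moment cones (as used in Lemma \ref{lemma_futaki_invariant_constant_along_fibers}) guarantees that the Duistermaat-Heckman data, and hence $b$, remain unchanged on $W_0$. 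Iterating this procedure strictly increases $\dim \text{lin}(\Vcal)$ at every step, so it terminates in at most $\dim \Ncal_\Rbb$ steps at a K-stable $G$-spherical cone; in the worst case the iteration terminates at a horospherical cone, where $\Vcal = \Ncal_\Rbb$ forces $b = 0$ and upgrades $K$-semistability to $K$-stability automatically.

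For uniqueness, let $C$ be any $K$-stable $G$-equivariant degeneration of $(W, \xi)$. By Theorem \ref{theorem_test_configurations_spherical_cone}(3) and Knop's theorem, $\Gamma_C = \Gamma_W$, $\Ccal_C^\vee = \Ccal_W^\vee$ (under the canonical identification of lattices), and $b$ is the same vector computed on $C$. The $K$-semistability of $C$ forces $\langle b, \cdot \rangle$ to vanish on $\text{lin}(\Vcal_C)$, while the construction of $C$ as an iterated degeneration shows $\text{lin}(\Vcal_C) \supset F^*$. These two conditions pin $\text{lin}(\Vcal_C)$ down to exactly $F^*$: any strictly larger linear subspace would yield a valuation direction with nonzero Futaki invariant of either sign, contradicting K-semistability. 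The colored cone of $C$ is thereby determined by $(\Gamma_W, F^*)$ together with the colors inherited from the common ambient spherical space, and Losev's classification theorem identifies $C$ up to $G$-isomorphism.

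The main obstacle is the uniqueness half, and specifically the claim that $\text{lin}(\Vcal_C)$ is exactly $F^*$ rather than some a priori larger subspace—the delicate point being that $\Vcal_C$ need not sit inside $\Vcal$ under a canonical identification, so one has to trace carefully through the description in Remark \ref{remark_central_fiber_data} to exhibit any putative extra linear direction as a lift of an element of $-\Vcal$ outside $F^*$, where the Futaki functional is strictly positive and thus rules out linearity. Once this combinatorial rigidity is in place, the remainder of the colored-cone data is forced by the shared weight monoid and color structure via the standard Luna-Vust correspondence.
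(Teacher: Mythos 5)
Your existence step begins exactly where the paper's proof does: your dual face \( F^{*} = F^{\perp} \cap (-\Vcal) \) is precisely the Futaki-vanishing face that the paper calls \( F \), and your first move --- degenerating along \( \nu_{0} \in \text{RelInt}(F^{*}) \) --- is the paper's entire construction. The gap is that you then iterate instead of checking that a single step already lands on a K-stable cone, and the iteration is not free: the proposition asks for \emph{one} \( G \)-equivariant special degeneration of \( (W,\xi) \), and a chain \( W \rightsquigarrow W_{1} \rightsquigarrow \cdots \rightsquigarrow W_{k} \) is not a priori the central fiber of a single test configuration of \( W \); you never compose the chain. In fact no iteration is needed: by Remark \ref{remark_central_fiber_data} the new valuation cone is \( \Vcal_{W_{1}} = \Rbb \nu_{0} \oplus \pi(\Vcal_{W}) \) with \( \pi \) the projection along \( \nu_{0} \), and since \( \nu_{0} \) lies in the relative interior of \( F^{*} \), the image \( \pi(F^{*}) \) is a \emph{linear subspace}; hence the new Futaki-vanishing locus \( \Rbb \nu_{0} \oplus \pi(F^{*}) \) is contained in \( \text{lin}\, \Vcal_{W_{1}} \), so by Theorem \ref{theorem_test_configurations_spherical_cone}(4) the Futaki invariant vanishes only on trivial configurations and \( W_{1} \) is already K-stable. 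This is exactly the paper's one-step argument; recording only that \( \dim \text{lin}(\Vcal) \) grows understates what happens and is what creates your composition problem in the first place.

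The uniqueness half is where you genuinely diverge from the paper, and where the real gap lies. First, your premise that ``the construction of \( C \) as an iterated degeneration shows \( \text{lin}(\Vcal_{C}) \supset F^{*} \)'' is unavailable: uniqueness must cover an \emph{arbitrary} \( G \)-equivariant K-stable degeneration, which by Theorem \ref{theorem_test_configurations_spherical_cone}(1) is given by some \( \nu \in \Vcal_{W} \) bearing no relation to your procedure; you would first have to prove that K-semistability of the central fiber forces \( \sprod{b,\nu} = 0 \) and that K-stability forces \( \nu \in \text{RelInt}(F^{*}) \) modulo \( \text{lin}\, \Vcal_{W} \). Second, even granting this, Losev's theorem as stated in the paper determines a spherical cone by the weight monoid \emph{together with the spherical root datum of its open orbit}, not by \( (\Gamma_{W}, F^{*}) \) plus colors: you would still need that the open orbits \( G/H_{\nu} \) of the central fibers agree for all \( \nu \in \text{RelInt}(F^{*}) \), i.e., that the homogeneous degeneration depends only on the face --- precisely the point you yourself flag as ``the main obstacle'' and then assert rather than establish. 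The paper sidesteps all of this with a two-line argument: by \cite{LWX} the K-stable degeneration is unique in the non-equivariant category, so two K-stable central fibers are isomorphic as polarized affine cones, and the \( G \)-action is then transported through that isomorphism. Your combinatorial route, if completed, would yield a self-contained equivariant proof independent of \cite{LWX}, which has some appeal; but as written the rigidity claim on \( \text{lin}(\Vcal_{C}) \) and the reduction to Losev's data are both open, so the uniqueness argument is incomplete.
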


\begin{proof}

Let \( F \) be the vanishing locus of \( \Fut_{\xi} \) on \( \Vcal_W \), which is a face of \( \Vcal_W \) containing the linear part  \( \text{lin} \Vcal_W  \). We degenerate \( (W, \xi) \) along a valuation \( \nu \in \text{RelInt}(F) \) (cf. Figure \ref{figure_kstable_degeneration}). The resulting central fiber \( (W', \xi) \) then remains K-semistable (cf. Lemma \ref{lemma_futaki_invariant_constant_along_fibers}) with vanishing locus of \( \Fut_{\xi} \) contained in \( \text{lin} \Vcal_{W'} \), hence  K-stable.
    
Indeed, \( \Vcal_{W'} \) can be identified with
\[ \Vcal_{W'} := \Rbb \nu \oplus \pi( \Vcal_W), \]
where \( \pi \) is the quotient map \( \Ncal_{W, \Rbb} = \Ncal_{W', \Rbb} \to (\Ncal_{W} / \Zbb \nu)_{\Rbb} \)  (cf. Remark \ref{remark_central_fiber_data}). Since \( \nu \in \text{RelInt}(F) \), \( \pi(F) \) is a vector space in \( (\Ncal_W / \Zbb \nu)_{\Rbb} \), and the new Futaki vanishing locus \( \Rbb \nu \oplus \pi(F) \) is contained in \( \text{lin} \Vcal_{W'} \).

Uniqueness of the K-stable degeneration follows from \cite{LWX}: two K-stable central fibers are isomorphic as affine varieties, hence if any one of them is \( G\)-invariant, the other can be endowed with the \( G\)-action through the isomorphism. 
\end{proof}

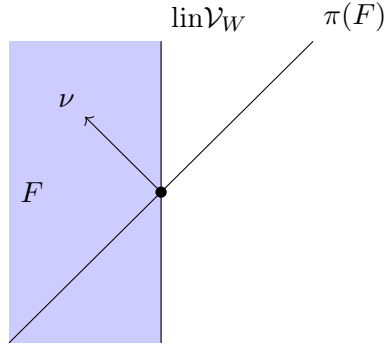
\begin{figure}
\begin{tikzpicture}
\pgfmathsetmacro\ax{2}
\pgfmathsetmacro\ay{0}
\pgfmathsetmacro\bx{2 * cos(120)}
\pgfmathsetmacro\by{2 * sin(120)}
\pgfmathsetmacro\lax{2*\ax/3 + \bx/3}
\pgfmathsetmacro\lay{2*\ay/3 + \by/3}
\pgfmathsetmacro\lbx{\ax/3 + 2*\bx/3}
\pgfmathsetmacro\lby{\ay/3 + 2*\by/3}

\tikzstyle{couleur_pl}=[circle,draw=black!50,fill=blue!20,thick, inner sep = 0pt, minimum size = 2mm]

%couleur
\fill [blue!20] (0,-2)--(0,2)--(-2,2)--(-2,-2)--cycle;
%\fill [black!20] (0,0)--(\ax,\ay)--(\bx,1) -- cycle;
%\node at (\ax, \ay) [couleur_pl] {};
\node at (0,0) [shape=circle, inner sep=1.5pt, fill] {};
%\node at (1,0) [circle, inner sep=1.5pt, fill] {};
%\node at (1,0) [below right] {\( \frac{\alpha^{\vee}}{2} \)};
%\node at (0,1) [circle, inner sep=1.5pt, fill] {};
%\node at (0,1) [below right] {\( \chi \)};
%\node at (-1,0) [circle, inner sep=1.5pt, fill] {};
%\draw[->] (0,0) -- (\ax,\ay) node[below right] {\( \rho(D_{\alpha}) = \alpha^{\vee} \)};
%\draw[->] (0,0) -- (\bx, 1) node[circle, inner sep=1.5pt, fill] {};
\draw[->] (0,0) -- (-1,1) node[above left] { \( \nu \)};
\draw[] (-2,-2) -- (2,2) node[above right]{ \( \pi(F) \)};
%\draw (-2,0)--(0,0);
\draw (0,-2)--(0,2) node[above right]{ \( \text{lin} \Vcal_{W} \)};
\node at (-1.7,0) { \( F \) };
\end{tikzpicture}
\caption{Degeneration of \( (W,\xi) \) along a valuation \( \nu \) in the Futaki vanishing locus \( F\) on \( \Vcal_W \).}
\label{figure_kstable_degeneration}
\end{figure}

\subsection{Examples} \label{section_examples_noncy_cones_with_cone_metrics}

\subsubsection{Horosymmetric cones of rank one} \label{subsubsection_horosymmetric_cones}

%Let \( G/H \) be a symmetric space of rank two with \( \alpha_1, \alpha_2 \) being the short and long roots respectively. Let \( \wh{\alpha}_2, \wh{\alpha}_2' \) be the simple roots of \( G \) restricting to \( \alpha_2 \) in the Satake diagram of \( G/H \). Let \( P \) be the parabolic subgroup of \( G \) containing \( T \) such that \( \wh{\alpha}_2, \wh{\alpha}_2' \) are the only simple roots of \( G \) which are not roots of \( P \). 

Let \( G/H \) be a semisimple horosymmetric space, i.e. an equivariant fibration \( G/H \to G/P \) over a flag manifold with semisimple symmetric fiber \( L/ L \cap H \) (cf. \cite{Del20b} for more details). Supopse that \( G/H \) admits a \( \Qbb\)-Fano embedding. For simplicity, we suppose that \( \text{rank}(G/H) = 1 \), but our arguments extend easily to any rank. 

Let \( \Phi \) be the root system of \( G \) and \( \Phi_L \) be the root system of \( L \) with involution \( \sigma \). Let \( \alpha, 2 \alpha \) be the simple restricted roots with multiplicites \( n_1, n_2 \) induced by \( (\Phi_L, \sigma) \) (where \( n_2 = 0 \) if \( 2 \alpha \) is not a restricted root). Denote by  \( \Phi_s^{+} := \Phi_L^{+} \backslash \Phi^{\sigma} \) and \( \Phi_{Q^u} := \Phi^{+} \backslash \Phi_L \). 

Choosing the horosymmetric subgroup \( H \) such that \( L \cap H = N_G(H) \), we have
\[ \Mcal(G/H) = \Zbb \alpha, \quad \Ncal(G/H) = \Zbb (\alpha^{\vee}/2). \]
Let \( \kappa \) be the Killing form such that \( \sprod{\alpha, \beta^{\vee}} = 2\frac{\kappa(\alpha, \beta)}{\kappa(\beta, \beta)} \). 

Let \( X \) be the \( \Qbb\)-Fano compactification of \( G/H \) (with all the colors) associated to the \( \Qbb\)-reflexive polytope \( Q_X \) \cite{GH15a}. Let \( m > 0 \) be the minimal integer such that \( mK_X \) is Cartier. Take \( Y \) as the Fano cone over \( X \), obtained by contracting the canonical line bundle \( m K_X\) along \( X \).

By construction, \( Y \) is a \( G\times \Cbb^{*}\)-spherical cone with open orbit isomorphic to \( G/H \times \Cbb^{*} \). Here the \( \Cbb^{*} \)-action on \( Y \) comes from the natural \( \Cbb^{*} \)-action on \( mK_X \). For simplicity, we can suppose that \( m = 1\) (so that \( K_X \) is Cartier). 

We endow \( \Mcal(G/H \times \Cbb^{*}) \) with the basis \( (\alpha, \eta) \), where \( \eta \) is the weight of the \( \Cbb^{*}\)-action on \( K_X \). Let \( \Ncal(G/H \times \Cbb^{*}) \) be the dual lattice. The valuation cone of \( G/H \times \Cbb^{*} \) can be identified with the half-space 
\[ \Vcal := \set{(x,y) \in \Ncal(G/H \times \Cbb^{*})_{\Rbb}, \quad x \leq 0 }. \]
and the cone of spherical roots with
\[ \Sigma = (-\Vcal)^{\vee} = \Rbb_{ \geq 0} (\alpha^{\vee}/2). \]
Let \( \varpi \) be the weight of the canonical section of  \( -K_X \), which writes
\[ \varpi :=  \sum_{\wh{\alpha} \in \Phi^{+}_s \cup \Phi_{Q^u} } \wh{\alpha}. \]
The divisor \( -K_X = \sum_{\nu \in \Vcal_X} D_{\nu} + \sum_{d \in \Dcal_X} a_d d \) defines a polytope in \( \Mcal(G/H)_{\Rbb} \)
\[ Q_{X}^{*} := \set{\chi \in \Mcal(G/H)_{\Rbb}, \sprod{\chi, \nu} + 1 \geq 0, \; \sprod{\chi, \rho(d)} \geq -a_d}, \]
which is the dual polytope of \( Q_{X} \) \cite{GH15a}. The moment polytope \( \Delta_X \) of \( -K_X \) is then \( \Delta_X = Q_X^{*} + \varpi \) \cite{GH15a} and we can identify the colored cone of \( Y \) with 
\[ \Ccal_Y = \text{Cone}( Q_X \times \set{1}) = \text{Cone}(Q_X^{*} \times \set{1})^{\vee}, \quad \Dcal_Y = \set{\ol{d \times \Cbb^{*}}, d \in \Dcal_X}. \]
(cf. Figure \ref{figure_colored_cone_horosymmetric_rank_one} for an example). Note that \( \rho( d \times \Cbb^{*}) = (\rho(d), a_d) \), where \( a_d \) is the coefficient of \( d \) in \( -K_X \). The linear function \( (0, 1) \) then defines a linear function on \( \Ccal_Y \) making \( K_Y \) a Gorenstein divisor.

\begin{figure}
\begin{tikzpicture}
\pgfmathsetmacro\ax{2}
\pgfmathsetmacro\ay{0}
\pgfmathsetmacro\bx{2 * cos(120)}
\pgfmathsetmacro\by{2 * sin(120)}
\pgfmathsetmacro\lax{2*\ax/3 + \bx/3}
\pgfmathsetmacro\lay{2*\ay/3 + \by/3}
\pgfmathsetmacro\lbx{\ax/3 + 2*\bx/3}
\pgfmathsetmacro\lby{\ay/3 + 2*\by/3}

\tikzstyle{couleur_pl}=[circle,draw=black!50,fill=blue!20,thick, inner sep = 0pt, minimum size = 2mm]

%couleur
\fill [blue!20] (0,-2)--(0,2)--(-2,2)--(-2,-2)--cycle;
\fill [black!20] (0,0)--(1,1)--(-1,1) -- cycle;
\node at (\ax, \ay) [couleur_pl] {};
\node at (0,0) [shape=circle, inner sep=1.5pt, fill] {};
\node at (1,0) [circle, inner sep=1.5pt, fill] {};
\node at (1,0) [below right] {\( \frac{\alpha^{\vee}}{2} \)};
\node at (0,1) [circle, inner sep=1.5pt, fill] {};
\node at (0,1) [above right] {\( \eta \)};
\node at (-1,0) [circle, inner sep=1.5pt, fill] {};
\draw[->] (0,0) -- (\ax,\ay) node[below right] {\( \rho(d_{\alpha}) = \alpha^{\vee} \)};
%\draw[->] (0,0) -- (\bx, 1) node[circle, inner sep=1.5pt, fill] {};
%\node at (-1,1) [above left] { \( \chi - \frac{\alpha^{\vee}}{2} \)};

\draw (-2,0)--(2,0);
\draw (0,-2)--(0,2);

\end{tikzpicture}
\caption{Colored cone \( (\Ccal_Y, \Dcal_Y) \) of a symmetric rank one conical embedding of \( G/H \times \Cbb^{*} \) where \( G = \SL_2 \), \( H = N_{SL_2}(T) \). Here the lattice of \( G/H \) is generated by the unique restricted root \( \alpha \) (\(= 2 \wh{\alpha} \), \(\wh{\alpha}\) being the unique root of \( \SL_2 \)).  There is a unique color \( d_{\alpha} \) of \( G/H \) with \( \rho(d_{\alpha}) = \alpha^{\vee} \). The polytope \( Q_X \) is then \( \set{ t \alpha^{\vee}2, \abs{t} \leq 1} \), and \( \Ccal_Y^{\vee}\) is the cone over \( Q_X \) in \( \Zbb( \alpha, \eta)_{\Rbb}  \). Note also that \( \varpi = \alpha \) and \( \Delta_X = \set{t \alpha, 0 \leq t \leq 2} \). } 
\label{figure_colored_cone_horosymmetric_rank_one}
\end{figure}

Since the equivariant automorphism group of \( G/H \) is discrete, as \( \Vcal(G/H) \) is only a half-line and \( \dim \Aut_G(G/H) = \dim \text{lin} \Vcal \), the Reeb cone \( \Ccal_R \) of \( Y \) is one-dimensional and can be identified with the positive half-line \( \Rbb_{\geq 0} \eta \). Thus the K-stable Reeb vector of \( Y \), if exists, is unique, so the unique polarization of \( Y \) is given by the polytope \( Q_X \). 

Setting \( 2 \chi := \sum_{\beta \in \Phi_{Q^u}} (\beta + \sigma(\beta)) \),  the Duistermaat-Heckmann polynomial of \( Y \) is defined by 
\[ P_{DH}(p) := p^{n_1+n_2} \prod_{\beta \in \Phi_{Q^u}} \kappa( \beta, 2\chi - p \alpha). \]  

\begin{prop}
The cone \( Y \) is K-stable if and only if
\[ \sprod{\text{bar}_{DH}(Q_X^{*}), \alpha^{\vee}/2} = \sprod{\text{bar}_{DH}(\Delta_X ) - \varpi, \alpha^{\vee}/2 } > 0, \]
i.e. iff \( X \) is K-stable as a \( \Qbb\)-Fano variety. 
\end{prop}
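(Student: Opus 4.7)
The plan is to unpack the K-stability criterion of Theorem \ref{theorem_kstability_spherical_cone} in the rank-one horosymmetric setting and reduce it to Delcroix's K-stability criterion for the underlying Fano variety $X$.

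First I would analyze the cone $\Sigma = (-\Vcal)^{\vee}$. From $\Vcal = \set{(x,y) \in \Ncal(G/H \times \Cbb^*)_\Rbb : x \leq 0}$, whose linear part is $\Rbb\,\eta^*$, a direct computation gives $\Sigma = \Rbb_{\geq 0}\,\alpha \subset \Mcal(G/H \times \Cbb^*)_\Rbb$ and $\text{RelInt}(\Sigma) = \Rbb_{>0}\,\alpha$. Hence the condition $\text{bar}_{DH}(\Delta_\xi) - \varpi \in \text{RelInt}(\Sigma)$ splits into two independent requirements: the $\eta$-component of $\text{bar}_{DH}(\Delta_\xi) - \varpi$ must vanish, and its $\alpha$-component must be strictly positive.

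Next, since the Reeb cone $\Ccal_R = \Rbb_{>0}\,\eta^*$ is one-dimensional, one necessarily has $\xi = s\eta^*$, with $s$ fixed by the normalization $\sprod{\varpi, \xi} = n$ of Prop \ref{prop_ricci_flat_reeb_normalized}. Using $\Ccal_Y^{\vee} = \text{Cone}(Q_X^* \times \set{1})$ and pairing $(q,1)$ with $\xi$, one finds $\Delta_\xi = (n/s)(Q_X^* \times \set{1})$, lying at constant $\eta$-height $n/s$. Since the $\eta$-component of $\varpi$ also equals $n/s$ by the same normalization, the first, $\eta$-component, part of the criterion is automatically satisfied.

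For the second part, the projection of $\text{bar}_{DH}(\Delta_\xi)$ to $\Mcal(G/H)_\Rbb \cong \Rbb\,\alpha$ is a positive multiple of $\text{bar}_{DH}(Q_X^*)$ once the scaling factor $n/s$ is absorbed (only the sign of the pairing with $\alpha^{\vee}/2$ matters here). Writing $Q_X^* = \Delta_X - \varpi$ and transporting the DH density from $\Delta_X$ to $Q_X^*$ by the same translation, the translation invariance of the weighted barycenter yields the asserted equality $\sprod{\text{bar}_{DH}(Q_X^*), \alpha^\vee/2} = \sprod{\text{bar}_{DH}(\Delta_X) - \varpi, \alpha^\vee/2}$. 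Its strict positivity is precisely the combinatorial K-stability criterion for the $\Qbb$-Fano horosymmetric variety $X$ of rank one established by Delcroix \cite{Del20}, giving the equivalence with K-stability of $X$.

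The main technical care lies in tracking the interplay between the scaling factor $n/s$ between $\Delta_\xi$ and $Q_X^*$, the matching $\eta$-component of $\varpi$ (coming from Gorenstein-ness plus the normalization $\sprod{\varpi,\xi}=n$), and the transformation of the density $P_{DH}$ under the translation relating $\Delta_X$ and $Q_X^*$; these conspire to give the clean two-sided statement.
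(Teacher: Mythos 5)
Your proposal is correct and takes exactly the route the paper intends: the paper states this proposition without a written proof, as an immediate consequence of Theorem \ref{theorem_kstability_spherical_cone} and the combinatorial setup directly preceding it (one-dimensional Reeb cone, \( \Ccal_Y^{\vee} = \operatorname{Cone}(Q_X^{*} \times \set{1}) \), the Gorenstein linear function \( (0,1) \), and \( \Delta_X = Q_X^{*} + \varpi \)). Your write-up merely supplies the omitted verification along precisely these lines — the scaling \( \Delta_{\xi} = (n/s)(Q_X^{*} \times \set{1}) \), the automatic vanishing of the \( \eta \)-component of \( \text{bar}_{DH}(\Delta_{\xi}) - \varpi \), and the translation identity between the barycenters of \( Q_X^{*} \) and \( \Delta_X \), concluding via Delcroix's criterion for the \( \Qbb \)-Fano variety \( X \) — and I see no gaps.
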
 
Note that if \( Y \) is K-stable then any Fano cone over \( X \) obtained by taking a root (or power) of \( mK_X \) and contracting along \( X \) is also K-stable. Repeating the arguments for any rank, we recover in particular the K-stability criterion for \( \Qbb\)-Fano semisimple horosymmetric varieties.

\begin{ex}
Consider the rank one symmetric space \( G = \SL_2 \), \( H = N_{\SL_2}(T) \) and the Fano embedding \( X \) with \( Q_X = \set{ t \alpha^{\vee}/2, \abs{t} \leq 1} \). Then \( P_{DH}(p) = p \) and 
\[ \sprod{\text{bar}_{DH}(Q_X^{*}), \alpha^{\vee}/2} = \frac{\int_{-1}^1 p^2 dp}{ \int_{-1}^1 pdp} = \frac{1}{3} > 0. \] 
\end{ex}

\subsubsection{Horosymmetric cones over boundary divisors of canonical compactifications}

Let us recover state the K-stability result in \cite{BD19} in terms of cone. Consider a rank two semisimple symmetric space \( O \) of rank two, with restricted root system \( R \) generated by long and short simple roots \( \alpha_1, \alpha_2 \) of multiplicities \( m_1, m_2, m_3 \) with \( m_3 \) being the multiplicity of \( 2 \alpha_2 \) which is \( 0 \) if \( 2 \alpha_2 \notin R^{+} \). Let \( P(p) := \prod_{\alpha \in R^{+}} \kappa(\alpha, p) \). 

Let \( D \) be a reduced prime divisor in the boundary of the canonical compactification of a rank two semisimple symmetric space \( O \). The divisor \(D \) is in fact always a rank one horosymmetric variety (but not Fano) \cite{Del20b}.  Consider the Fano blowdown \( D^{\vee} \) of \( D \) along its unique closed orbit with moment polytope \( \Delta \), and take \( \alpha, 2 \alpha \) be the unique restricted positive roots with multiplicities \( n_1, n_2 \). 

\begin{prop} 
Let \( C(D^{\vee}) \) be a Fano cone over \( D^{\vee} \). Then \( C(D^{\vee}) \) has a conical Calabi-Yau metric iff \( \kappa(\text{bar}_{P}(\Delta) - \varpi, \alpha) > 0  \) iff \( D^{\vee} \) is K-stable. 
\end{prop}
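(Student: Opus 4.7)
The plan is to derive this proposition as a direct specialization of the general K-stability criterion of Theorem~\ref{theorem_kstability_spherical_cone}, in parallel with the rank one horosymmetric computation already carried out in subsection~\ref{subsubsection_horosymmetric_cones}. By Theorem~\ref{theorem_ricci_flat_equivalent_k_stability}, existence of a conical Calabi--Yau metric on $C(D^{\vee})$ is equivalent to its K-stability, so it suffices to translate the combinatorial inequality $\text{bar}_{DH}(\Delta_{\xi}) - \varpi \in \text{RelInt}(\Sigma)$ into the stated inequality and then match it with K-stability of $D^{\vee}$ itself.

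First, I would set up the spherical data exactly as in subsection~\ref{subsubsection_horosymmetric_cones}: since $D^{\vee}$ is a rank one semisimple horosymmetric Fano variety, its $G$-equivariant automorphism group is discrete (the valuation cone is a half-line), so in $\Vcal(G/H \times \Cbb^{*})$ the linear part is exactly the $\Cbb^{*}$-direction. Consequently $T_H \simeq \Cbb^{*}$, the algebraic Reeb cone is the one-dimensional ray $\Rbb_{>0}\eta$, and the Reeb vector $\xi$ polarizing $C(D^{\vee})$ is unique up to scaling. With the basis $(\alpha,\eta)$ of $\Mcal$, the same computation as in the previous subsection yields $\Sigma = (-\Vcal)^{\vee} = \Rbb_{\geq 0}(\alpha^{\vee}/2)$, so the membership condition $\text{bar}_{DH}(\Delta_{\xi}) - \varpi \in \text{RelInt}(\Sigma)$ collapses to the single scalar inequality
\[ \sprod{\text{bar}_{DH}(\Delta_{\xi}) - \varpi,\ \alpha^{\vee}/2} > 0. \]

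Next, I would identify this quantity with $\kappa(\text{bar}_{P}(\Delta) - \varpi,\alpha)$ up to a positive constant. The polytope $\Delta_{\xi} = \{p \in \Ccal_Y^{\vee}, \sprod{p,\xi} = n\}$ is an affine slice of the cone $\Ccal_Y^{\vee} = \text{Cone}(\Delta \times \{1\}) - \{0\}\times\varpi$; projecting out the $\eta$-coordinate identifies it, after normalization by $\sprod{\varpi,\xi} = n$ (Prop.~\ref{prop_ricci_flat_reeb_normalized}), with an affine copy of $\Delta - \varpi$ in $\Mcal(D^{\vee})_{\Rbb}$. Under this projection the Duistermaat--Heckman density $P_{DH}$ on $\Delta_{\xi}$, which by Lemma~\ref{lemma_futaki_invariant_horospherical} is computed against $\Phi_{Q^u}$ of the horospherical degeneration of $C(D^{\vee})$, reduces to the weighted measure $P\,d\lambda$ on $\Delta$, since the rank one symmetric fiber of $D^{\vee}$ contributes exactly the factor $p^{n_1+n_2}$ appearing in the rank-one DH polynomial of subsection~\ref{subsubsection_horosymmetric_cones}. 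The relation $\sprod{\chi,\alpha^{\vee}} = 2\kappa(\chi,\alpha)/\kappa(\alpha,\alpha)$ then converts the lattice pairing into the Killing pairing, giving the claimed equivalence with $\kappa(\text{bar}_P(\Delta) - \varpi,\alpha) > 0$.

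Finally, I would observe that this last inequality is also the K-stability criterion for $D^{\vee}$ as a rank one horosymmetric $\Qbb$-Fano variety, as derived in \cite{BD19} (and recovered at the end of subsection~\ref{subsubsection_horosymmetric_cones}): since $D^{\vee}$ itself has one-dimensional space of $G$-equivariant nontrivial test configurations determined by the ray $\alpha^{\vee}/2$, the same Futaki-positivity condition characterizes its K-stability. The main obstacle is the careful bookkeeping in the second step, namely verifying that the Duistermaat--Heckman measure on the slice $\Delta_{\xi}$ of the cone genuinely restricts to $P\,d\lambda$ on $\Delta$ rather than a multiple depending on $\xi$; this is handled by noting that any constant factor drops out of the barycenter, and that the root contributions from $\Phi_{Q^u}$ are identical for $C(D^{\vee})$ and for $D^{\vee}$ (only the multiplicity factor from the $\Cbb^{*}$-direction differs, and it is absorbed into the affine slicing).
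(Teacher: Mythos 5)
Your proposal is correct and follows essentially the same route as the paper: the paper likewise reduces to the rank one horosymmetric cone criterion of the preceding subsection (since the valuation cone of \( G/H \) is a half-line, the Reeb cone of \( C(D^{\vee}) \) is one-dimensional, so K-stability of \( D^{\vee} \) and of its cone coincide and the polarization is unique), then converts the barycenter condition into the Killing-form inequality along the segment \( \Delta \). The only difference is in the bookkeeping you flag as the main obstacle: the paper makes the Duistermaat--Heckman identification fully explicit via the substitution \( P_{DH}(p) = P(2\varpi - (n_1 + 2n_2 + p)\alpha_1) \), the description \( \Delta = \chi + [0,\lambda]\alpha_1 \) with \( \lambda = \lambda_2 - \lambda_1 \) from the Weyl-wall intersections, and the orthogonality \( \kappa(\alpha_1,\chi) = 0 \), where you argue the restriction of the measure somewhat more heuristically (and your formula expressing \( \Ccal_Y^{\vee} \) as a translated cone should read \( \mathrm{Cone}((\Delta - \varpi)\times\{1\}) \) rather than a translation after coning, though this does not affect the barycenter argument).
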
 

\begin{proof}

The blowdown \( D \to D^{\vee} \) can be seen as the decoloration map, and the colored cone of \( D^{\vee} \) is obtained by adding to the colored cone of \( D \) all the remaining colors. From description of the data of \( D \) and \( \Qbb\)-Fano spherical variety \cite{GH15a} \cite{GH15}, the blowdown \( D^{\vee} \) is then a Fano horosymmetric variety of rank one. 

The combinatorial data of \( D^{\vee} \) can then be deduced from the combinatorial data of the rank two symmetric space \( O \) following \cite[Section 3.2]{BD19}. With the same notation as above, we can take \( \alpha\) to be the restricted root, say, \( \alpha_1 \), and the weight lattice of \( G/H \times \Cbb^{*} \) can be identified with \( \Zbb(\alpha_1,  \eta  = \omega + \lambda_1 \alpha_1 ) \) (cf. \eqref{equation_weyl_intersection}), while the valuation cone of \( G/H \) is a half-line, so the Reeb cone of \( C(D^{\vee}) \) is one-dimensional, hence K-stability of \( D^{\vee} \) is equivalent to that of \( C(D^{\vee}) \). 

The multiplicities of \( \alpha, 2 \alpha \) in \( G/H \) corresponds to their multiplicities as restricted roots in \( O \), namely \( n_1 = m_1, n_2 = 0 \) (\( n_1 = m_2, n_2 = m_3 \) if taking \( \alpha = \alpha_2 \)). 

The anticanonical weight \( \varpi \) of \( D^{\vee} \) then restricts to \( \afrak \) as
\[ 2 \varpi = \sum_{\alpha \in R^{+}} m_{\alpha} \alpha. \] 
Moreover, \( 2 \varpi = (n_1 + 2 n_2) \alpha_1 + 2 \chi \) and 
\[ P_{DH}(p) = P( 2 \varpi - (n_1 + 2 n_2 + p) \alpha_1). \]
The polytope \( \Delta \) is the segment \( \chi + [0, \lambda] \alpha_1 \) where \( \lambda := \lambda_2 - \lambda_1 \) and \( \lambda_{1,2} \) are the intersections of the line \( \varpi + t \alpha_1 \) with the walls of the Weyl chamber
\begin{equation} \label{equation_weyl_intersection}
\lambda_1 := -\frac{\kappa( \varpi, \alpha_2)}{ \kappa(\alpha_1, \alpha_2)}, \quad \lambda_2 := -\frac{\kappa( \varpi, \alpha_1)}{ \kappa(\alpha_1, \alpha_1)}. 
\end{equation}

Remark that \( \kappa(\alpha_1, \chi) = 0 \), hence \( \chi \) is a multiple of the generator of the Weyl chamber. The K-stability criterion of \( C(D^{\vee}) \) can finally be translated in terms of combinatorial data of \( O \) as
\begin{align*} 
\kappa(\text{bar}_{P}(\Delta) - \varpi, \alpha_1)  &= \kappa(\text{bar}_{DH}([0,\lambda]) - (n_1/2 + n_2) \alpha_1, \alpha_1) \\
&=  \frac{\int_{0}^{\lambda} p P_{DH} (p) dp}{ \int_0^{\lambda} P_{DH} (p)dp } - (n_1/2  + n_2) > 0.  
\end{align*} 
\end{proof}

As a corollary, we have 

\begin{prop} \cite[Section 3.3.3]{BD19}
Let  \( \alpha_1, \alpha_2 \) be the long and short root of a rank two symmetric space with restricted root system \( G_2 \) and \( D_1, D_2 \) the divisors in the canonical compactification with restricted root system generated by \( \alpha_1, \alpha_2 \) respectively. The Fano cones \( C(D_1^{\vee}) \), \( C(D_2^{\vee}) \) are respectively K-unstable and K-stable.
\end{prop}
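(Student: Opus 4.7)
The plan is to apply the K-stability criterion from the preceding proposition to each of the Fano cones $C(D_i^\vee)$, reducing the question to the sign of an explicit one-dimensional integral. By that criterion, $C(D_i^\vee)$ is K-stable precisely when
\[ \kappa(\text{bar}_P(\Delta_i) - \varpi,\alpha_i) > 0, \]
so the task is purely combinatorial once the data attached to the rank-two symmetric space of restricted type $G_2$ are assembled.

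First I would collect the root-theoretic data of $G_2$: the six positive restricted roots, split into three long roots (containing $\alpha_1$) and three short roots (containing $\alpha_2$), together with the multiplicities $m_\alpha$ attached to each restricted root for the symmetric space under consideration (as tabulated in the classification used in \cite{BD19}; these are uniform on the length, so there are really only two multiplicities $m_\ell,m_s$). From this I compute $2\varpi = \sum_{\alpha \in R^+} m_\alpha \alpha$ and the polynomial $P(p) = \prod_{\alpha \in R^+}\kappa(\alpha,p)$, which is of total degree six.

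Next, for each $i=1,2$ I extract the combinatorial data of the rank-one horosymmetric variety $D_i^\vee$ following \cite[Section 3.2]{BD19}, as was done in the proof of the previous proposition. Since $G_2$ is reduced, the multiplicity $n_2$ of $2\alpha_i$ is zero, and $n_1$ is the multiplicity $m_\alpha$ of the corresponding length. The endpoints $\lambda_1,\lambda_2$ of the segment $\Delta_i = \chi_i + [0,\lambda_i]\alpha_i$ are given by intersecting the line $\varpi+t\alpha_i$ with the two walls of the Weyl chamber as in \eqref{equation_weyl_intersection}, and the Duistermaat--Heckman polynomial is
\[ P_{DH}(p) = P(2\varpi - (n_1+p)\alpha_i), \]
a polynomial in $p$ of degree $6$ on $[0,\lambda_i]$. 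The K-stability criterion then reduces to the sign of
\[ \frac{\int_0^{\lambda_i} p\, P_{DH}(p)\,dp}{\int_0^{\lambda_i} P_{DH}(p)\,dp} \;-\; \frac{n_1}{2}. \]

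Finally I would carry out these one-variable polynomial integrations separately for $i=1$ (long root, so $\alpha=\alpha_1$) and $i=2$ (short root, so $\alpha=\alpha_2$); the main obstacle is bookkeeping the products of the six linear factors $\kappa(\alpha,\cdot)$ and their integration against $p$ over the correct interval, but the computation is mechanical. One reads off that the resulting scalar is strictly negative in the long-root case, so $C(D_1^\vee)$ is K-unstable, and strictly positive in the short-root case, so $C(D_2^\vee)$ is K-stable. These sign computations are exactly those performed in \cite[Section 3.3.3]{BD19} for the K-stability of the Fano horosymmetric bases $D_i^\vee$ themselves, and since by the previous proposition the K-stability of $D_i^\vee$ and that of the cone $C(D_i^\vee)$ are equivalent (the Reeb cone being one-dimensional), the result follows.
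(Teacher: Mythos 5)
Your proposal is correct and follows essentially the same route as the paper: the paper states this proposition as an immediate corollary of the preceding one (K-stability of \( C(D_i^{\vee}) \) being equivalent to that of \( D_i^{\vee} \) since the Reeb cone is one-dimensional), deferring the sign of the barycenter integral \( \int_0^{\lambda} p\, P_{DH}(p)\,dp \) to the explicit computations of \cite[Section 3.3.3]{BD19}, exactly as you do. Your write-up merely fleshes out the bookkeeping (the six positive restricted roots, uniform multiplicities, \( n_2=0 \), the endpoints from \eqref{equation_weyl_intersection}) that the paper leaves implicit in the citation.
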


In fact the choices in Section 3.3 of \cite{BD19} should read \( ``\alpha_2 = \alpha, \alpha_2 = \beta \)'' with \( \alpha, \beta \) being their long and short roots.

\section{Valuations and asymptotic cones of Calabi-Yau manifolds} \label{section_semistable_valuations_classification}

\subsection{Donaldson-Sun theory}
Let \( (M, \omega) \) be a \( \del \delb\)-exact complete Calabi-Yau manifold of complex dimension \( n\) with maximal growth and asymptotic cone \( (C,\xi) \), with \( \xi \) being the K-stable Reeb vector. By \cite[Appendix]{DS17}, we also have the Bando-Mabuchi-Matsushima theorem for cones.  

\begin{prop} \cite[Propositions 4.8, 4.9]{DS17}
Let \( G_{\xi} := \Aut_{\xi}(C) \) be the group of holomorphic transformations of \( C \) that preserves \( \xi \). If there exists a Ricci-flat Kähler cone metric on \( C \) with Reeb vector \( \xi \), then \( G_{\xi} \) is reductive, i.e. there is a maximal compact subgroup \( K_{\xi} \) such that 
\[ G_{\xi} = K_{\xi}^{\Cbb}, \] 
and the metric is unique up to the action of the identity component of \( G_{\xi} \). 
\end{prop}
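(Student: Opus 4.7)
\medskip

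\noindent\textbf{Proof proposal.} The plan is to adapt the classical Matsushima and Bando--Mabuchi arguments to the cone setting, passing through the Sasaki--Einstein structure on the link. Write $L := \{r = 1\} \subset C^{*}$, so that $(L, g_L)$ is a Sasaki--Einstein space of dimension $2n-1$, the restriction of $\xi$ to $L$ generating an isometric Reeb flow. The Lie algebra $\mathfrak{g}_\xi$ of $G_\xi$ can be identified with the space of holomorphic vector fields $V$ on $C^{*}$ commuting with $r\partial_r$ and $\xi$; equivalently, with the $\xi$-invariant sections of the CR structure on $L$. Let $\mathfrak{k}_\xi \subset \mathfrak{g}_\xi$ denote the subalgebra of those $V$ whose real part is Killing for the cone metric $\omega = \tfrac12 i\partial\bar\partial r^2$.

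For reductivity, I would mimic Matsushima's Hodge-theoretic argument on $L$. Any $V \in \mathfrak{g}_\xi$ is a transverse holomorphic vector field for the foliation generated by $\xi$; on the Ricci-flat cone, the vanishing of the transverse Ricci form on basic $(1,1)$ classes implies that the dual $1$-form $\eta_V := g(V, \cdot)^{0,1}$ on $C^{*}$ is $\bar\partial$-closed with respect to the cone complex structure and $\xi$-invariant. A Hodge decomposition for the transverse Dolbeault complex on the Sasaki--Einstein link yields a $\xi$-invariant complex-valued function $f_V$ with $V = \nabla^{1,0} f_V$, and the Lichnerowicz/Bochner identity for transverse holomorphic potentials forces the spectrum of the basic Laplacian acting on $f_V$ to split so that $\operatorname{Re}(f_V)$ and $\operatorname{Im}(f_V)$ generate Killing vector fields; this gives the decomposition $\mathfrak{g}_\xi = \mathfrak{k}_\xi \oplus i\mathfrak{k}_\xi$ and hence reductivity. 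The singular locus of $C$ requires care: one restricts to the regular locus $C^{\mathrm{reg}}$, uses the fact that automorphisms in $G_\xi$ preserve it, and extends potentials across the singular set by the pluripotential estimates in the Donaldson--Sun setup.

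For uniqueness, let $\omega_0, \omega_1$ be two Ricci-flat cone metrics on $C$ with the same Reeb vector $\xi$. Their squared radial potentials $r_0^2, r_1^2$ differ by a $\xi$-invariant function whose restriction to $L$ gives two transverse Kähler potentials $\phi_0, \phi_1$ on the Sasaki--Einstein link, representing the same basic Kähler class. Connect them by a weak geodesic $\phi_t$ in the space of $\xi$-invariant transverse Kähler potentials, solving the corresponding homogeneous complex Monge--Ampère equation on $L \times \{0 < \operatorname{Re}(\tau) < 1\}$. The Ding-type functional for Sasaki--Einstein metrics is convex along such geodesics and is minimized precisely by Ricci-flat cone metrics; since both endpoints minimize, the functional is affine along $\phi_t$, and the equality case (as in Berndtsson's subharmonicity argument) forces $\phi_t$ to come from a one-parameter family in $G_\xi^{0}$ relating $\omega_0$ and $\omega_1$.

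The main obstacle will be the regularity and convergence of the geodesic $\phi_t$ and the Berndtsson-type equality discussion in the presence of the klt singularities of $C$: one needs the geodesic to be $C^{1,1}$ in the basic sense, and one must ensure that the vector field produced by the equality case extends to a global element of $\mathfrak{g}_\xi$, not just to a vector field on $C^{\mathrm{reg}}$. This is handled in the Donaldson--Sun appendix using the specific structure of metric tangent cones (normality, finite generation of the coordinate ring) to control potentials near the singular set and to extend holomorphic vector fields across it via Hartogs-type arguments.
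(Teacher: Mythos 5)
You should know at the outset that the paper does not prove this proposition: it is imported verbatim from the appendix of \cite{DS17} (``we also have the Bando--Mabuchi--Matsushima theorem for cones''), so there is no internal argument to compare yours against. At the level of strategy, your plan --- a transverse Matsushima splitting for reductivity, plus a geodesic/Berndtsson convexity argument for uniqueness --- is the expected adaptation and matches the spirit of the cited source.

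That said, there are two concrete problems. The first is an actual error: for a Ricci-flat K\"ahler cone the transverse K\"ahler structure along the Reeb foliation is \emph{not} transversally Ricci-flat; it is transversally K\"ahler--Einstein with \emph{positive} constant, \( \rho^T = 2n\,\omega^T \) (equivalently the link is Sasaki--Einstein with \( \operatorname{Ric} = (2n-2)\,g_L \)). Your appeal to ``the vanishing of the transverse Ricci form'' is therefore false, and the error is not cosmetic: the Matsushima/Lichnerowicz splitting \( \mathfrak{g}_\xi = \mathfrak{k}_\xi \oplus i\,\mathfrak{k}_\xi \) is precisely the transversally \emph{Fano} argument (as in Futaki--Ono--Wang), where positivity of the transverse Ricci is what kills the basic cohomology \( H^{0,1}_B \) (so that the holomorphy potential \( f_V \) exists at all) and pins down the relevant eigenvalue of the basic Laplacian; with vanishing transverse Ricci the same Bochner identity would yield transversally parallel fields, not the needed splitting. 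The second problem is circularity: your closing paragraph delegates exactly the hard points --- basic \( C^{1,1} \) regularity of the geodesic on a cone with klt singularities, the equality case of Berndtsson's convexity there, and the extension of the resulting vector field across the singular set --- to ``the Donaldson--Sun appendix'', i.e.\ to the very result being proved. Note also that the link is itself singular and \( \xi \) may be irregular, so there is no smooth Sasaki--Einstein manifold (nor quotient orbifold) on which transverse Hodge theory is available off the shelf; the workable substitute is to phrase everything directly on the cone in terms of homogeneous psh potentials and the finitely generated ring \( R(C) \) of polynomial-growth holomorphic functions, whose graded pieces give finite-dimensional Hilbert spaces to which Berndtsson-type positivity can be applied, and across whose singular set holomorphic vector fields extend for algebraic reasons (normality) rather than by metric estimates.
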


Following \cite{DS17}, the ring of holomorphic functions with polynomial growth \( R(C) \) (with respect to \( \omega_C \)) on \( C \) can be identified with its coordinate ring, and decomposes under the complexified \( T_c \)-action of the Reeb vector as
\[ R(C) = \bigoplus_{\alpha \in \Gamma^{*}} R_{\alpha}, \]
where \( \alpha \) are the \( T := T_c^{\Cbb} \)-action weights. In order to embed \( C \) into \( \Cbb^N \) as an affine subvariety, one can use the local holomorphic embedding \( F_{\infty} \) at the unique fixed point \( O \), and extend it globally to \( C \) using homogeneity under the \( T \)-action.  

\begin{prop} \cite{DS17}
If \( x_1, \dots, x_N \) are local holomorphic functions such that \( F_{\infty} = (x_1, \dots, x_N) \) is the local embedding near \( O \), then the affine cone \( C \) agrees globally with the affine variety generated by \( x_1, \dots, x_N \), i.e. there is a finitely generated ideal \( I_C \) defined by algebraic relations between \( x_1, \dots, x_N \) such that \( C = \Cbb[x_1, \dots, x_N] / I_C \). Under such embedding, the Reeb vector has an extension to \( \Cbb^N \) of the form \( \xi = \Re(i \sum_{a=1}^N w_a z_a \del_{z_a}) \), where \( w_a > 0 \) for all \( a \). 
\end{prop}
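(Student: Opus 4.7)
The plan is to realize $C$ as an affine variety by choosing the local coordinates $x_1,\dots,x_N$ as $T$-weight vectors and exploiting homogeneity under the scaling flow. Three things must be verified: that each $x_a$ extends to a global element of $R(C)$ with positive Reeb weight, that the resulting map $F = (x_1,\dots,x_N) : C \to \Cbb^N$ is a closed embedding, and that its image is cut out by finitely many polynomial relations.

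For the first step, I would invoke the reductive torus $T$ acting holomorphically on $C$ and fixing the vertex $O$. Its induced action on the finite-dimensional space of holomorphic germs at $O$ of bounded weight is fully reducible, so after a linear change one may assume each $x_a$ is a $T$-eigenvector of some weight $\alpha_a$. By the Donaldson-Sun identification of $R(C)$ with the algebraic coordinate ring, each finite-dimensional weight space $R_{\alpha_a}$ consists of polynomial-growth holomorphic functions defined on all of $C$, so each $x_a$ extends globally. The Reeb weight $w_a := \sprod{\alpha_a,\xi}$ is strictly positive by definition of the Reeb cone, which guarantees $\sprod{\alpha,\xi} > 0$ for every $\alpha \in \Gamma^{*}$ appearing in the decomposition of $R(C)$.

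For the second step, $F$ is equivariant with respect to the $\Rbb_{>0}$-scaling flow generated by $r\del_r$ on $C$ and the weighted action $\lambda \cdot (z_1,\dots,z_N) = (\lambda^{w_1}z_1,\dots,\lambda^{w_N}z_N)$ on $\Cbb^N$. Injectivity follows by a contracting-flow argument: if $F(p) = F(q)$, then $F(\lambda p) = F(\lambda q)$ for all $\lambda \in (0,1]$, and as $\lambda \to 0^{+}$ both points enter the neighborhood of $O$ on which $F_\infty$ is a local embedding, forcing $\lambda p = \lambda q$ and hence $p = q$. The immersion property propagates from a neighborhood of $O$ to all of $C$ by the same equivariance. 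Properness follows from compactness of the link $\set{r = 1}$: $F$ cannot vanish on the link (otherwise some $x_a$ would vanish identically by homogeneity, contradicting local injectivity at $O$), and so by homogeneity $\abs{F(p)} \to \infty$ as $r(p) \to \infty$.

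For the third step, $F(C) \subset \Cbb^N$ is a closed analytic subvariety invariant under a $\Cbb^{*}$-action with strictly positive weights. A standard argument, compactifying by adjoining the weighted projective space at infinity and applying Chow's theorem, shows that such a subvariety is algebraic. Its defining ideal $I_C$ is $T$-stable, decomposes into finite-dimensional weight spaces, and by Noetherianity of the weighted polynomial ring is generated by finitely many homogeneous polynomials. The linear vector field $\Re(i \sum_a w_a z_a \del_{z_a})$ on $\Cbb^N$ preserves $I_C$ by $T$-invariance and restricts to $\xi$ on $F(C) \simeq C$, yielding the desired extension. The main obstacle is the second step: promoting the \emph{local} embedding hypothesis on $F_\infty$ to a \emph{global} closed embedding. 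Injectivity between points far from $O$ does not follow tautologically, and requires the contracting-flow trick together with the choice of weight-vector coordinates in the first step. Algebraicity in the third step is technical but essentially formal once the embedding is established.
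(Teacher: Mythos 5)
The paper does not actually prove this proposition: it is quoted verbatim from Donaldson--Sun \cite{DS17}, with only the one-line indication preceding it (use the local embedding \( F_{\infty} \) at \( O \) and extend it globally by homogeneity under the \( T \)-action). Your reconstruction follows exactly that route --- eigenvector coordinates, scaling-equivariant propagation of the local embedding, algebraicity of the invariant image --- and your contracting-flow injectivity argument, the properness argument via positivity of \( \max_a \abs{x_a} \) on the link, and the identification of the extended Reeb field as the diagonal linear field are all sound and are, in substance, the argument of \cite{DS17}.

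Two steps need repair as written. First, in your third step you invoke ``a \( \Cbb^{*} \)-action with strictly positive weights,'' but when \( \xi \) is irregular the weights \( w_a \) are irrational and the flow of \( r\del_r - i\xi \) does not close up into a \( \Cbb^{*} \)-action on \( C \) at all; as literally stated the compactification-plus-Chow step has nothing to act with. The fix is standard: the closure of the Reeb flow is a compact torus \( T_c \) (this is the \cite[Lemma 2.17]{DS17} quoted in the paper's introduction), \( F(C) \) is invariant under the complexified torus \( T = T_c^{\Cbb} \subset (\Cbb^{*})^{N} \), and one runs your weighted-projective argument with a quasi-regular one-parameter subgroup of \( T \), i.e.\ positive \emph{integer} weights; \( T \)-invariance of \( I_C \) then also justifies the weight-space decomposition you use for finite generation. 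Second, your first step asserts complete reducibility on ``the finite-dimensional space of holomorphic germs at \( O \) of bounded weight,'' but germs at \( O \) form an infinite-dimensional space and it is not clear a priori that a given germ lies in a finite-dimensional invariant subspace, nor that its weight components converge to it in a useful sense. The clean version works in the Noetherian analytic local ring \( \Ocal_{C,O} \): \( T \) acts on each finite-dimensional quotient \( \mfrak/\mfrak^{k} \), linear reductivity lets you lift an eigenbasis of the Zariski cotangent space \( \mfrak/\mfrak^{2} \) to eigen-germs generating \( \mfrak \), and these finitely many eigenvectors give the local embedding; their weights \( w_a = \sprod{\alpha_a,\xi} \) are automatically positive, since a germ vanishing at \( O \) with weight \( \leq 0 \) is forced to vanish identically by the contracting flow. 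With these two substitutions your proof is complete and coincides with the one the paper delegates to \cite{DS17}.
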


For each \( \alpha \in \Gamma^{*} \), the map sending \( \alpha \) to \( \sprod{\alpha, \xi} \) is injective, so we can in fact redecompose \( R(C) \) as
\[ R(C) = \bigoplus_{k} R_{d_k}, \; R_{d_k} := \set{ f_{\alpha_k}, \sprod{\alpha_k, \xi} = d_k}. \]

\begin{defn} The set \( \set{0 = d_0 < d_1 < d_2 < \dots} \) is called the \emph{holomorphic spectrum} of \( C \), denoted by \( \mathcal{S} \). 
\end{defn} 

\begin{prop} \cite[Theorem 3.3]{DS17}
The set \( \mathcal{S} \subset \Rbb_{\geq 0} \) consists of algebraic numbers and is independent of the converging subsequence of \( (M_i, \omega_i) \). In particular, \( \mathcal{S} \) is a finitely generated semigroup. 
\end{prop}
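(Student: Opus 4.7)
The plan is to derive all three assertions from the affine algebraic structure of the asymptotic cone \( C \) combined with the K-stability theory recalled in Section \ref{section_test_configuration_futaki_invariant}. For the finitely generated semigroup property, I would use that \( C \) is a normal affine variety of finite type, so \( R(C) = \Cbb[x_{1},\dots,x_{N}]/I_{C} \) is a finitely generated \( \Cbb\)-algebra, and equivalently the weight monoid \( \Gamma^{*} \) of the \( T \)-action is finitely generated. Picking a finite set of \( T \)-homogeneous generators with Reeb weights \( d_{i_{1}},\dots,d_{i_{m}} \), any element \( d_{k} \in \mathcal{S} \) appears as the weight of some nonzero \( T \)-homogeneous polynomial in these generators, hence is a nonnegative integer combination of \( d_{i_{1}},\dots,d_{i_{m}} \). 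Therefore \( \mathcal{S} \) is generated as a semigroup by this finite set.

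For algebraicity, every element of \( \mathcal{S} \) takes the form \( \sprod{\alpha,\xi} \) for \( \alpha \) in the integer weight lattice of \( T \), so it suffices to show that the coordinates of \( \xi \) in any rational basis of \( \Ncal(T)_{\Rbb} \) are algebraic over \( \Qbb \). Since \( (C,\xi) \) is a K-stable Fano cone, \( \xi \) is the unique minimizer of the normalized volume functional \( \wh{\vol}_{C} \) on the algebraic Reeb cone \( \Ccal_{R} \), by the volume minimization principle of Martelli--Sparks--Yau as extended by C.~Li. Moreover, \( \wh{\vol}_{C} \) is a rational function of \( \xi \) with rational coefficients: this follows either from the rationality of the Hilbert series of \( R(C) \) as a \( T \)-graded module, or from the polyhedral integral formula \( \vol_{C}(\xi) = \int e^{-\sprod{p,\xi}} d\mu(p) \) against a rational discrete measure on the weight monoid, as written out in Section \ref{section_test_configuration_futaki_invariant} in the horospherical case and extended to the general Fano cone. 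The minimizer of such a function is cut out by a system of polynomial equations over \( \Qbb \), hence has algebraic coordinates.

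For independence of the convergent subsequence, two different subsequences yield two asymptotic cones \( (C,\xi) \) and \( (C',\xi') \), both admitting weak conical Calabi-Yau metrics and hence both K-stable Fano cones. Each arises as the K-stable degeneration of the same K-semistable Fano cone \( (W,\xi_{W}) \) determined by the valuation \( \nu_{\omega} \) attached to \( \omega \), via Donaldson-Sun's two-step degeneration. By the uniqueness part of the K-stable degeneration theorem of Li-Wang-Xu, recalled in Theorem \ref{maintheorem_kstabledegeneration}, \( (C,\xi) \simeq (C',\xi') \) as polarized Fano cones, so their holomorphic spectra coincide. The step I expect to be the main obstacle is the algebraicity claim, which requires justifying rationality of \( \wh{\vol}_{C} \) together with rational coefficients; this rests on the rationality of the equivariant Hilbert series of \( R(C) \) and an elimination-theoretic characterization of the critical points of a rational function on a rational cone.
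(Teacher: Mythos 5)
Your finite generation argument (the weights of a finite set of \(T\)-homogeneous generators of \(R(C)\) generate \(\mathcal{S}\) as a semigroup) and your algebraicity argument (every degree is \(\sprod{\alpha,\xi}\) with \(\alpha\) an integral weight, and \(\xi\) is the unique minimizer of a normalized volume functional which is a rational function with rational coefficients, so its coordinates are algebraic) are both correct, and they are essentially the arguments of \cite{DS17}, to which the paper simply defers for these two claims.

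The independence-of-subsequence step, however, is circular in this development. You derive it from the two-step degeneration: both subsequential cones are K-stable degenerations of the same K-semistable cone \((W,\xi_W)\) attached to \(\nu_{\omega}\), hence isomorphic by uniqueness of K-stable degenerations. But in \cite{DS17} --- and in the order this paper presents the material --- the present proposition is a prerequisite for that machinery, not a consequence of it. The growth rate \(d_{\omega}(f)\) is only shown to exist as a limit, and to take values in \(\mathcal{S}\), via a three-annulus argument that requires uniform spectral gaps across \emph{all} subsequential cones; that is exactly the content of the independence statement (it is how \cite[Corollary 3.8]{DS17} is obtained). Without it, \(\nu_{\omega}\) is not yet known to be a well-defined valuation with discrete value group, the filtration \(I_k\) and the graded ring \(R(W)\) are not defined, and the assertion ``each subsequential cone is the K-stable degeneration of the same \((W,\xi_W)\)'' has no meaning. (A smaller point: Theorem~\ref{maintheorem_kstabledegeneration} of this paper concerns spherical cones, whereas the proposition holds for general \(M\); one would cite \cite{LWX} instead --- but this does not repair the circularity.) The paper's actual proof is much softer and uses no degeneration theory at all: fix \(\lambda>1\), let \(\Ccal_{\infty}\) be the set of all sequential Gromov--Hausdorff limits of the rescalings \((M_i,\omega_i)\); then \(\Ccal_{\infty}\) is compact and connected (since \(\Kcal(n,\kappa)\) is compact Hausdorff), for \(D\) in a dense subset \(\Ical \subset \Rbb^{+}\) the dimension \(N_D = \dim \bigoplus_{0<d<D} R_d\) is the same for every \(C \in \Ccal_{\infty}\), and the map \(\iota_D\) recording the ordered list of degrees below \(D\) is continuous on \(\Ccal_{\infty}\); connectedness forces \(\iota_D\) to be constant, which is the desired independence. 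If you want to keep your K-stability route, you would first have to establish the well-definedness of \(\nu_{\omega}\) and the finite generation of its graded ring by means independent of the spectrum --- which is precisely the hard content this soft connectedness argument is designed to supply.
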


\begin{proof}
The result in \cite{DS17} is stated in the context of \textit{local} tangent cone at a point, but the proof can be adapted almost verbatim for tangent cone at infinity. Fix \( \lambda > 1 \) and let \( (M_i, \omega_i) \) be the rescaling of \( (M,\omega) \) be a factor \( \lambda^{-i} \). Denote by \( \Ccal_{\infty} \) the set of all sequential Gromov-Hausdorff limits of \( (M_i, \omega_i) \) as \( i \to +\infty \). The main ingredients of the proof are the following facts. 
\begin{enumerate}
    \item \( \Ccal_{\infty} \) is compact connected, cf. \cite[Lemma 3.2]{DS17} for a proof which relies on the fact that \( \Kcal(n, \kappa) \) is compact Hausdorff (this is still true for \( \del \delb\)-exact Calabi-Yau metrics).  
    \item From \cite[Lemma 3.5]{DS17}, there is a dense subset \( \Ical \) of \( \Rbb^{+} \) such that if \( D \in \Ical \), then \( N_D := \dim \oplus_{0 < d < D} R_d \) is independent of \( C \in \Ccal_{\infty} \). 
     \item For any \( C \in \Ccal_{\infty} \), we may arrange \( \mathcal{S} \cap (0,D) \) with multiplicities in the increasing order as \( d_1 \leq \dots \leq d_{N_{D}} \), and the map 
     \[ \iota_{D}: \Ccal_{\infty} \to ( \Rbb^{+})^{N_D} \] 
     sending \( C \) to the vector \( (d_1, \dots, d_{N_D}) \) is in fact continuous. 
\end{enumerate}
Since \( \Ccal_{\infty} \) is connected, the image of \( \iota_{D} \) must be a single point for all \( D \in \Ical \), hence \( \mathcal{S} \) is independent of \( C \in \Ccal_{\infty} \).  
\end{proof}

Given a point \( p \in M \), \( \lambda > 0 \), \( B_i := B(p, \lambda^{2i}) \), let \( f \) be a holomorphic function on \( M \), and \( \norm{f}_{i} \) be the \( L^2 \)-norm of \( f|_{B_i} \) with respect to the normalized metric \( \omega_i := \lambda^{-2i} \omega \) restricted to \( B_i \). The \textit{growth rate} of \( f  \) on \( M \) with respect to \(  \omega \) is defined by 
\[ d_{\omega}(f) := \lim_{i \to +\infty} (\log \lambda)^{-1} \frac{\log \norm{f}_{i+1}}{ \log \norm{f}_i}. \] 
\begin{prop} \cite[Corollary 3.8]{DS17}
For every holomorphic function \( f \) on \(  M\), the rate \( d_{\omega}(f) \) is either \( +\infty \) or belongs to \( \mathcal{S} \), and does not depend on the choice of \( p \).  
\end{prop}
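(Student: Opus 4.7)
The plan is to use the asymptotic cone structure: normalize $f$ on each rescaled ball, pass to a Gromov--Hausdorff limit, and identify the limiting object with a nonzero homogeneous holomorphic function on $C$, whose degree must lie in $\mathcal{S}$.

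First, suppose $d := d_{\omega}(f) < +\infty$. Set $\tilde{f}_i := f/\|f\|_i$, so that $\tilde{f}_i$ has unit $L^2$-norm on $B_i := B(p, \lambda^{2i})$ with respect to the rescaled metric $\omega_i = \lambda^{-2i}\omega$. Since $(M_i,\omega_i,p) \to (C, \omega_C, O)$ in the pointed Gromov--Hausdorff topology, and since $\tilde{f}_i$ is holomorphic with controlled $L^2$-norm, standard Cheeger--Colding theory combined with uniform $C^0$-estimates for holomorphic functions with bounded $L^2$-norm (the Moser iteration for $|\tilde{f}_i|^2$, which is plurisubharmonic) yields, after passing to a subsequence, a nonzero holomorphic limit $f_\infty$ on $C$. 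The growth rate hypothesis together with a three-circles--type comparison for $\log\|\tilde{f}_i\|_{L^2(B_{i+k})}$ forces $f_\infty$ to have polynomial growth of rate at most $d$ on $C$.

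Next, decompose $f_\infty = \sum_{d_k \in \mathcal{S}} (f_\infty)_{d_k}$ into its Reeb-weight components under the $T$-action on $R(C)$; this is a finite sum because $f_\infty$ has polynomial growth. One then shows that $d$ coincides with the \emph{top} degree $d_*$ appearing in this expansion. The inequality $d_* \leq d$ is immediate from the growth bound on $f_\infty$. For the reverse inequality, one argues by contradiction: if $d_* < d$, then replacing $f$ with the difference $f - (\text{lift of the top component})$ would strictly decrease the rate, but the same rescaling procedure applied to this lower-rate function would again produce a nonzero limit, contradicting the definition of $d$ via $\|f\|_i$. This step, which is essentially the matching between the analytic growth rate and the algebraic weight spectrum, is the real heart of the argument and the main obstacle; it requires the fact that $\|f\|_{i+1}/\|f\|_i \to \lambda^d$ forces $\tilde{f}_i$ to not ``leak'' into components of higher weight on $C$, and here one uses the uniform $L^2$-convergence together with $T$-equivariance of the limit embedding of $C$ described in Donaldson--Sun.

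Finally, independence of the base point $p$: if $p, p' \in M$ then for $i$ large one has $B(p',\lambda^{2i}) \subset B(p, c\lambda^{2i})$ for some fixed $c$, and conversely. By Cauchy--Schwarz and the maximal volume growth hypothesis, the $L^2$-norms of $f$ on these balls differ by at most a multiplicative constant independent of $i$; taking logarithms and dividing by $i\log\lambda$, this bounded ratio disappears in the limit defining $d_\omega(f)$. Combined with the fact that the holomorphic spectrum $\mathcal{S}$ is itself independent of the subsequence and of $p$, this gives the statement.
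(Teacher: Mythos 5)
Your overall strategy (normalize on rescaled balls, extract a holomorphic limit on \( C \), read the degree off the weight decomposition) is the right family of ideas, and note that the paper offers no independent proof here: it cites \cite[Corollary 3.8]{DS17} and merely remarks that the local-tangent-cone argument specializes to tangent cones at infinity, so the comparison is really with the Donaldson--Sun mechanism. Your proposal has a genuine gap at exactly the step you yourself flag as the heart of the matter: ``replacing \( f \) with the difference \( f - (\text{lift of the top component}) \)'' is not an available operation. The component \( (f_\infty)_{d_*} \) is a holomorphic function on the cone \( C \), while \( f \) lives on \( M \), and at this stage of the theory there is no map from \( R(C) \) (or \( R(W) \)) back to holomorphic functions on \( M \): the comparison between \( R(M) \), \( R(W) \) and \( R(C) \) is precisely what the two-step degeneration produces \emph{after} this proposition, using the filtration defined by \( d_{\omega} \) --- whose good behavior is what you are trying to prove. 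So the subtraction step is undefined as written, and any attempt to repair it via the graded correspondence would be circular.

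There are two correct ways to fill the hole. The first is the actual Donaldson--Sun route: set \( d_k := (\log\lambda)^{-1}\log\bigl(\norm{f}_{k+1}/\norm{f}_k\bigr) \); since the rescaled annuli are Gromov--Hausdorff close to cones in the compact connected family \( \Ccal_{\infty} \), whose holomorphic spectrum is the \emph{fixed} set \( \mathcal{S} \) with uniform gaps below any threshold \( D \) (this is what the paper's sketch of \cite[Theorem 3.3]{DS17} supplies), a three-annulus convexity estimate shows the sequence \( d_k \) can neither linger in a spectral gap nor decrease appreciably; boundedness, i.e.\ \( d_{\omega}(f) < \infty \), then forces \( d_k \to d \in \mathcal{S} \). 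The second is a repair closer to your limit-function idea: aim not at ``top degree equals \( d \)'' but at \emph{homogeneity}. Convergence of the ratios gives the exact scaling \( \norm{f_\infty}_{L^2(B(O,\lambda^k))} = \lambda^{k(d+n)} \) (unit normalization) for every \( k \in \Zbb \), of \emph{both} signs, and distinct Reeb weights are \( L^2 \)-orthogonal on balls centered at the apex; a genuine mixture of weights would produce different growth exponents as \( k \to +\infty \) (top weight) and \( k \to -\infty \) (bottom weight), so \( f_\infty \) is homogeneous of degree exactly \( d \), whence \( d \in \mathcal{S} \). Two smaller repairs: nonvanishing of \( f_\infty \) is not automatic (mass can concentrate near \( \del B_i \)) and is ruled out by the same bounded-ratio/three-annulus control, not by elliptic estimates alone; and your base-point argument only yields one inequality from ball containment --- the two-sided norm comparison again needs bounded ratios, although the characterization \( d_{\omega}(f) = \lim_{r \to \infty} \sup_{B(p,r)} \log\abs{f} / \log r \) recorded in the paper makes independence of \( p \) immediate.
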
 

This is stated in the context of local tangent cones, but can be specialized to the case of infinity tangent cones.  We also have the following equivalent characterization:
\[ d_{\omega}(f) = \lim_{r \to \infty} \frac{\sup_{B(p,r)} \log \abs{f}}{\log r}. \]
Hence \( d_{\omega}(f) \) can be seen as the vanishing order at infinity of \( f \), measured with respect to the Calabi-Yau metric \( \omega \). 
Let \( R(M) \) be the ring of holomorphic functions \( f \) with polynomial growth on \( M \), i.e. \( d_{\omega}(f) < +\infty \).

\begin{prop} \cite{DS17} \cite{Liu21}
The ring \( R(M) \) is finitely generated, and 
\[ \wt{M} := \text{Spec}(R(M)) \]
has the structure of an affine variety with isolated singularities. Moreover, there is a map \( \pi_M: M \to \wt{M} \) which is a crepant resolution of singularities. 
\end{prop}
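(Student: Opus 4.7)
The plan is to follow the strategy of Donaldson--Sun \cite{DS17}, adapted to the non-compact \( \del\delb\)-exact setting as in Liu's work. First, I would use the growth rate \( d_{\omega} \) to define a decreasing filtration
\[ F^{\lambda} R(M) := \set{ f \in R(M) : d_{\omega}(f) \geq \lambda }, \quad \lambda \in \mathcal{S}, \]
and identify the associated graded ring \( \mathrm{gr}_{F} R(M) = \bigoplus_{d \in \mathcal{S}} F^{d} R(M) / F^{>d} R(M) \) with the coordinate ring of the K-semistable intermediate cone \( W \) arising in the two-step degeneration from \( M \) to \( C \). This identification is the non-compact analogue of Donaldson--Sun's weight-decomposition of peak sections: each graded piece is constructed by separating contributions of holomorphic functions with distinct asymptotic rates, using Hörmander's \( L^{2} \)-estimates to produce the required sections from the ones living on the cone.

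From this, finite generation of \( R(M) \) follows from finite generation of \( R(W) \), which in turn holds because \( W \) is an affine Fano cone (and further degenerates to \( C \)). A standard Nakayama-type lifting argument — lift a finite set of homogeneous generators of \( R(W) \) to \( R(M) \) via Hörmander extensions, and check that they generate the filtered pieces by induction on \( \lambda \in \mathcal{S} \) — gives a finite set of generators \( f_{1}, \dots, f_{N} \) of \( R(M) \). The tautological map \( \pi_{M} : M \to \widetilde{M} \subset \Cbb^{N} \), \( x \mapsto (f_{1}(x), \dots, f_{N}(x)) \), then identifies \( \widetilde{M} = \Spec(R(M)) \) with the affine variety cut out by the algebraic relations between the \( f_{i} \). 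To see that \( \pi_{M} \) is a resolution of isolated singularities, one argues pointwise: at every smooth base point \( p \in M \), local Hörmander peak sections of polynomial growth centered at \( p \) separate points and directions near \( p \), so \( \pi_{M} \) is a local biholomorphism away from a discrete set, and birational onto its image by comparing Euclidean volumes of metric balls on \( M \) and on \( \widetilde{M} \) via the common tangent cone \( C \). The crepant condition \( \pi_{M}^{*} K_{\widetilde{M}} = K_{M} \) then follows by noticing that the nowhere-vanishing Calabi-Yau volume form on \( M \) pushes forward to a nowhere-vanishing section of \( K_{\widetilde{M}^{\mathrm{sm}}} \) (using Hartogs extension across the isolated singularities together with the maximal volume growth bound).

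The main obstacle lies in the identification \( \mathrm{gr}_{F} R(M) \simeq R(W) \). In the compact Donaldson--Sun setting this uses Bergman kernel asymptotics, which are unavailable here; instead, one must carefully quantify the Hörmander construction to show that the dimensions of the graded pieces are uniformly bounded and match those of \( R(W) \), relying on the Cheeger--Colding--style continuity of the holomorphic spectrum \( \mathcal{S} \) along the family \( (M, \lambda_{i}^{2} \omega) \) established earlier in this section. A secondary technical point is checking that the set of singularities of \( \widetilde{M} \) is genuinely discrete and not merely zero-dimensional in some weaker sense; this reduces to showing that two distinct orbits of the asymptotic \( T \)-action on \( M \) cannot collapse to the same point of \( \widetilde{M} \), which again follows from the peak-section separation argument applied globally.
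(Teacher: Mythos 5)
Note first that the paper does not prove this proposition: it is imported wholesale from \cite{DS17} and \cite{Liu21}, so your proposal can only be judged against the strategy of those sources, which it partly echoes but with two genuine structural errors.

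The first is a circularity around \( W \). In the two-step degeneration, \( W \) is \emph{defined} as \( \Spec \) of the associated graded ring \( \bigoplus_k I_{k+1}/I_k \) of \( R(M) \); it has no independent construction. Your argument deduces finite generation of \( R(M) \) from finite generation of \( R(W) \), ``which in turn holds because \( W \) is an affine Fano cone'' --- but that \( W \) is an affine variety at all is precisely equivalent to the finite generation you are trying to establish. The same conflation recurs when you propose to show that the graded dimensions ``match those of \( R(W) \)'': they match tautologically. The object with a priori existence is the metric tangent cone \( C \), which exists by Cheeger--Colding theory and carries a normal affine structure with torus action by Donaldson--Sun theory; the Hörmander transplanting and dimension-matching must be run against \( R(C) \), not \( R(W) \) (compare the statement quoted in the paper from \cite{DS17} that \( R(W) \) has the same grading and Hilbert function as \( R(C) \)). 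Your filtered-to-graded lifting step (induction over the discrete spectrum \( \mathcal{S} \)) is sound once finite generation of the graded ring is known, but the grounds you give for that finite generation are circular.

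The second error would, if correct, make the statement vacuous: you claim polynomial-growth peak sections ``separate points and directions'' near every point of \( M \), so that \( \pi_M \) is a local biholomorphism away from a discrete set. If that held, \( \wt{M} \) would be smooth and there would be nothing to resolve. In fact, holomorphic functions of polynomial growth are constant on compact analytic subvarieties of \( M \), so \( \pi_M \) necessarily contracts them; this contracted locus is exactly the source of the singularities of \( \wt{M} \), and the substantive work in \cite{Liu21} is to show it is a compact analytic set mapped to finitely many normal points --- whence ``isolated singularities'' and ``resolution.'' Relatedly, your proposed repair via ``distinct orbits of the asymptotic \( T \)-action on \( M \)'' is incoherent: the torus acts on \( W \) and on \( C \), not on \( M \) itself. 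Your crepancy paragraph is the closest to the standard argument (push the volume form to the smooth locus of \( \wt{M} \), extend by normality, conclude vanishing discrepancy), but you should justify the existence of a global nowhere-vanishing holomorphic volume form on \( M \) rather than assume it, since Ricci-flatness only makes \( K_M \) Hermitian-flat, not holomorphically trivial.
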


One can easily check that 
\[ \nu_{\omega} := -d_{\omega} \]
extends to a nonpositive (hence never centered) valuation on the quotient field \( \Kcal(M) \) of \( R(M) \), namely
\begin{itemize} 
\item \( \nu_{\omega}(\Cbb^{*}) = 0, \; \nu_{\omega}(0) = +\infty \),
\item \( \nu_{\omega}(fg) = \nu_{\omega}(f) + \nu_{\omega}(g) \), 
\item \( \nu_{\omega}(f+g) \geq \min \set{\nu_{\omega}(f),\nu_{\omega}(g)}. \)
\end{itemize}

\begin{prop} \cite{DS17} 
The possible finite growth rates \( 0 = d_0 < d_1 < \dots \) on \( M \) coincide with \( \mathcal{S} \) and \(\nu_{\omega} \) is a valuation on \( \Kcal(M) \) whose value group \( \nu(\Kcal(M)^{*}) \) is \( \mathcal{S} \cup (-\mathcal{S}) \cup \set{0} \). 
\end{prop}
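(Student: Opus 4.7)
My plan is to establish three claims separately: (a) the set of finite growth rates of nonzero elements of \( R(M) \) coincides with the holomorphic spectrum \( \mathcal{S} \) of the asymptotic cone \( C \); (b) the map \( \nu_{\omega} = -d_{\omega} \) satisfies the valuation axioms and extends to \( \Kcal(M)^{*} \); (c) its image is \( \mathcal{S} \cup (-\mathcal{S}) \cup \set{0} \).

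For (a), I would adapt the Donaldson-Sun rescaling machinery to the tangent cone at infinity. Fix \( \lambda > 1 \) and the rescalings \( (M_i, \omega_i) := (M, \lambda^{-2i} \omega) \). For \( f \in R(M) \) with finite growth rate \( d = d_{\omega}(f) \), the normalized sequence \( f_i := \lambda^{-id} f \) has \( L^2 \)-norm over \( B_i \) both bounded above and bounded away from zero. The compactness in \( \Kcal(n, \kappa) \) combined with \( L^2 \)-estimates for \( \delb \), as in \cite{DS17}, produces a subsequential locally smooth limit \( f_{\infty} \), nonzero and holomorphic on \( C \), homogeneous of degree \( d \) under the Reeb action, so \( d \in \mathcal{S} \). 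Conversely, each homogeneous \( \xi \)-eigenfunction on \( C \) of degree \( d \in \mathcal{S} \) can be lifted back to \( M \) via the local embedding \( F_{\infty} \) followed by a cut-off Hörmander \( \delb \)-correction, yielding \( f \in R(M) \) with \( d_{\omega}(f) = d \). This matches the inclusions in both directions.

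For (b), the subadditivity \( \nu_{\omega}(f+g) \geq \min \set{ \nu_{\omega}(f), \nu_{\omega}(g) } \) is immediate from \( \abs{f+g} \leq \abs{f} + \abs{g} \) together with the sup-norm characterization of \( d_{\omega} \). Multiplicativity \( \nu_{\omega}(fg) = \nu_{\omega}(f) + \nu_{\omega}(g) \) reduces to checking that the normalized product \( \lambda^{-i(d_{\omega}(f) + d_{\omega}(g))} fg \) converges to \( f_{\infty} g_{\infty} \), which is nonzero because \( C \) is an irreducible affine variety so \( R(C) \) is an integral domain. Then \( \nu_{\omega} \) extends to \( \Kcal(M)^{*} \) by \( \nu_{\omega}(f/g) := \nu_{\omega}(f) - \nu_{\omega}(g) \). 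For (c), by (a) the image of \( R(M) \setminus \set{0} \) under \( \nu_{\omega} \) is \( -\mathcal{S} \cup \set{0} \); passing to the fraction field yields the set of differences, which is exactly \( \mathcal{S} \cup (-\mathcal{S}) \cup \set{0} \) interpreted as the group generated by \( \pm \mathcal{S} \) inside \( \Rbb \).

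The main obstacle is multiplicativity. The easy bound \( d_{\omega}(fg) \leq d_{\omega}(f) + d_{\omega}(g) \) follows from the sup-norm inequality, but the reverse requires the subsequential limit \( (fg)_{\infty} \) to be nonzero. This uses the full strength of Donaldson-Sun theory: uniqueness of the tangent cone, the global algebraic embedding via \( F_{\infty} \), and the identification of the degree-\(d\) eigenspace of \( R(C) \) with normalized subsequential limits of growth-\(d\) holomorphic functions on \( M \). Together these ensure that \( f \mapsto f_{\infty} \) descends to a multiplicative map from finite-growth functions on \( M \) into the integral domain \( R(C) \) that does not kill nonzero elements.
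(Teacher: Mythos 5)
Your proof is correct in outline and follows essentially the same route as the source: the paper itself gives no argument for this statement (it is quoted from \cite{DS17}, with the remark that the valuation axioms are ``easily checked''), and your two key points --- that finite growth rates are degrees of nonzero subsequential limits in \( R(C) \), and that multiplicativity \( \nu_{\omega}(fg) = \nu_{\omega}(f) + \nu_{\omega}(g) \) follows because \( f_{\infty} g_{\infty} \neq 0 \) in the integral domain \( R(C) \) --- are precisely the Donaldson--Sun mechanism. One technical slip worth fixing: the definition of \( d_{\omega}(f) = d \) only gives \( \norm{f}_i = \lambda^{id + o(i)} \), so the naively normalized sequence \( \lambda^{-id} f \) need not have norms bounded above and below; one should instead normalize by \( \norm{f}_i \) itself and invoke the convexity (three-annulus) lemma of \cite{DS17} to guarantee a nonzero limit of degree exactly \( d \), which is the standard fix and leaves the rest of your argument, including the identification of the value group with the subgroup of \( \Rbb \) generated by \( \mathcal{S} \), intact.
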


The degree function \( d_{\omega} \) induces a filtration 
\[ 0 = I_0 \subset I_1 \subset \dots \subset R(M) \]
on \( M \), where \( I_k = \set{f \in R(M), d_{\omega}(f) \leq d_k} \). Moreover, we have \( \dim I_k = \dim \bigoplus_{j \leq k}  R_{d_j} \). 

%\begin{rmk}
%In general, \( \wt{M} \) is in general not isomorphic to \( M \) as a quasiprojective, or even affine variety. The following counterexample in Freudenberg-Moser-Jauslin shows that this already fails in dimension \( 2 \): the surfaces in \( \Cbb^3 \) given by \( x^2 z - y^2  - a \) and \( x^2 y - (1+x)y^2 - a \) are not algebraically isomorphic, but analytically isomorphic. 
%\end{rmk}

Algebraically, \( C \) can be constructed by a \textit{2-step degeneration} as follows. The graded ring
\[ R(W) := \bigoplus I_{k+1}/ I_k \] 
is finitely generated, and can be seen as the central fiber of the filtration induced by the valuation \( \nu_{\omega} \). The affine variety \( W = \text{Spec}(R(W)) \) is the central fiber of a test configuration induced by \( \nu_{\omega} \) with generic fiber isomorphic to \( \wt{M}\). The cone \( W \) is in fact a \textit{weighted tangent cone} at infinity of \( \wt{M} \). 

\begin{prop} \cite{DS17} \cite{SZ22}
Let \( B = B(O,1) \) the unit ball of \( C \) at the fixed point \( O \), embedded in \( \Cbb^N \) using \( F_{\infty}  \), and \( B_i = B(p,2^i) \subset (M, \omega) \) the unit ball on \( (M_i,\omega_i) \). Let \( \Lambda: \Cbb^N \to \Cbb^N \) be linear transformation on \( \Cbb^N \) defined by
\[ \Lambda(z_1, \dots, z_N) = ( (1/ \sqrt{2})^i z_1, \dots, (1/ \sqrt{2})^i z_N), \]
which induces an action on \( F_i \) by 
\[ (\Lambda. F_i) = \Lambda (x_1^i, \dots,x_N^i). \] 
Then there are holomorphic embeddings \( F_i : \wt{M} \to \Cbb^N \) and \( G_i := \Lambda + \tau_i \in G_{\xi} \) for linear maps \( \tau_i \to 0 \), such that 
\begin{itemize} 
\item \( F_{i+1} = G_i \circ F_i \)
\item For any subsequence \( i \to + \infty \), passing to a further subsequence we have \( F_i(\pi_M (B_i)) \to h. F_{\infty}(B) \) in the Hausdorff sense in \( \Cbb^N \) for some \( h \in K_{\xi} \). 
\end{itemize}
Moreover, if  \( M_i := F_i(\wt{M}) \) and \( W_i \) is the weighted tangent cone at infinity of \( M_i\), then \( M_i \simeq M_j \) and \( W_i \simeq W_j \) for all \( i, j \) in the sequence. The elements \( (M_i)_{i \in \Nbb} \) are generic fibers in the special test configuration with central fiber \( W \). 
\end{prop}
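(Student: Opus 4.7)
The plan is to reconstruct the Donaldson--Sun $2$-step degeneration theorem by a scale-by-scale Hörmander $L^2$-approximation. The starting point is that on each rescaled ball $B_i \subset (M,\omega_i)$ the Hilbert space $H_i$ of holomorphic functions in $R(M)$ with bounded $L^2$-norm against $e^{-\phi_i}\omega_i^n/n!$ (where $\omega = i\del\delb \phi$) is finite-dimensional once one truncates by the growth rate $d_\omega$. After passing to a subsequence, $(M_i,\omega_i,p)$ Gromov--Hausdorff converges to $(C,\omega_C,O)$, and by the Cheeger--Colding--Donaldson--Sun regularity together with Hörmander extension, the truncation $H_i^{\leq D} := \{ f \in H_i, d_\omega(f) \leq D\}$ converges with multiplicity to the space $\bigoplus_{d_k \leq D} R_{d_k}$ of holomorphic functions on $C$ of weight $\leq D$.

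Having fixed an orthonormal basis $(s_1^\infty,\dots,s_N^\infty)$ of $\bigoplus_{d_k \leq D} R_{d_k}$ aligned with the $\xi$-grading and realizing the embedding $F_\infty$ of $C$ in $\Cbb^N$, at each scale $i$ I would choose an orthonormal basis $(s_1^i,\dots,s_N^i)$ of $H_i^{\leq D}$ compatible with the $\xi$-grading; these $N$ functions define an embedding $F_i : \wt{M} \to \Cbb^N$. Because rescaling $\omega_i \mapsto \omega_{i+1} = \tfrac{1}{2}\omega_i$ multiplies the $L^2$-norm of a weight-$w$ eigenvector by $(1/\sqrt 2)^w$, the change of basis from $(s_a^i)$ to $(s_a^{i+1})$ is, on the subspace of $H_i$ already split into pure weight spaces, \emph{exactly} the dilation $\Lambda$; the correction $\tau_i := G_i - \Lambda$ measures the failure of $H_i$ to split into pure weight spaces at finite scale, and the quantitative Hörmander estimates of Donaldson--Sun force $\|\tau_i\| \to 0$. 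This yields $F_{i+1} = G_i \circ F_i$ with $G_i = \Lambda + \tau_i \in G_\xi$.

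For the second bullet, $F_i(\pi_M(B_i)) \to h \cdot F_\infty(B)$ in the Hausdorff sense along a subsequence is a direct application of Donaldson--Sun's main approximation theorem: orthonormal peak sections approximate Gromov--Hausdorff convergence, and the residual ambiguity in the choice of basis is an element of the compact group $K_\xi$, since any two orthonormal bases of $\bigoplus_{d_k \leq D} R_{d_k}$ respecting the $\xi$-grading differ by an element of $K_\xi$ acting linearly on $\Cbb^N$. The identifications $M_i \simeq M_j$ and $W_i \simeq W_j$ then follow at once from $F_{i+1} = G_i \circ F_i$ with $G_i$ invertible and $\xi$-equivariant. Finally, to present $(M_i)$ as generic fibers of a test configuration with central fiber $W$, I would use the Rees algebra of the filtration $I_\bullet$ on $R(M)$ induced by $\nu_\omega$: its total space is flat over $\Abb^1$, isomorphic to $\wt{M}$ over $t \neq 0$ and to $\Spec R(W)$ at $t=0$, with the natural $\Cbb^*$-action on the total space matching the dilations $G_i$ at integer rescaling parameters.

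The main obstacle is the quantitative bound $\|\tau_i\| \to 0$: this requires sharp Hörmander $L^2$-convergence estimates with error rates depending on the Gromov--Hausdorff distance between $(M_i,\omega_i,p)$ and $(C,\omega_C,O)$, together with the Cheeger--Colding regularity of the singular set in codimension two that ensures the $L^2$-inner products on $H_i$ actually converge to those on $R(C)_{\leq D}$. Granted this analytic input, all remaining steps are formal consequences of the graded algebra structure of $R(M)$ and of the Bando--Mabuchi--Matsushima reductivity of $G_\xi$ recalled at the start of the section.
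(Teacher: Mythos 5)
First, a point of comparison: the paper offers no proof of this proposition --- it is imported wholesale from \cite{DS17} (transplanted from local tangent cones to tangent cones at infinity, as in \cite{Sze20}) and \cite{SZ22}. So your proposal must be measured against the original Donaldson--Sun argument, whose overall scheme --- scale-by-scale \( L^2 \)-orthonormal embeddings, transition maps close to a fixed dilation, Rees algebra of the filtration --- you have identified correctly.

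There are, however, two genuine gaps. The first is structural: on \( M \) (equivalently \( \wt{M} \)) there is no torus action, so \( H_i^{\leq D} \) carries no \( \xi \)-grading --- only the filtration \( I_0 \subset I_1 \subset \dots \) by \( d_{\omega} \); the grading lives only on the associated graded ring \( R(W) \) and on \( R(C) \). Your key step, choosing orthonormal bases of \( H_i^{\leq D} \) ``compatible with the \( \xi \)-grading'' so that the transition is ``exactly \( \Lambda \)'' on pure weight spaces, therefore does not parse at finite scale. What one actually has are filtration-compatible bases whose transition matrices are block-triangular with respect to \( I_{\bullet} \), with diagonal blocks near the model dilation; the decay of the off-diagonal part is precisely the assertion \( \tau_i \to 0 \), so your ``correction term'' is not bookkeeping but the entire analytic content of the proposition.

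Second, \( \norm{\tau_i} \to 0 \) does not follow from Hörmander estimates alone, and you defer it explicitly (``granted this analytic input''). In \cite{DS17} the decay hinges on the constancy of the holomorphic spectrum \( \mathcal{S} \) across the connected compact family \( \Ccal_{\infty} \) of all subsequential cones --- exactly the proposition proved just before this one in the paper --- together with convergence with multiplicity of the degree-\( \leq D \) spaces and a normalization fixing the \( K_{\xi} \)-ambiguity of the basis at each scale; without this spectral rigidity the weights could drift between scales and \( \Lambda \) would not even be the correct model map. (Note also that the formula for \( \Lambda \) in the statement, with a common factor \( (1/\sqrt{2})^i \) on all coordinates, should be read with weight-dependent exponents, as your heuristic \( (1/\sqrt{2})^w \) for a weight-\( w \) function correctly does.) The remaining steps you give are sound and agree with the cited sources: the \( K_{\xi} \)-ambiguity of graded orthonormal bases on \( C \), the deductions \( M_{i+1} = G_i(M_i) \) and \( W_{i+1} = G_i(W_i) \) from \( G_i \in G_{\xi} \) being weight-preserving, and the Rees-algebra degeneration over \( \Abb^1 \) with central fiber \( \Spec R(W) \), which uses the finite generation of \( R(W) \) recorded earlier in the section.
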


We often identify \( (\wt{M}, W) \) with \( (F_1(\wt{M}), W_1) \). Geometrically, \( W \) can be realized by firstly embedding \( \wt{M} \) as an affine variety into \( \Cbb^N \) using holomorphic functions \( F_1 =(x_1, \dots, x_N) \), while diagonally linearizing the \( T_c \)-action on \( \Cbb^N \) with weight \( w = (w_1, \dots, w_N) \in (\Rbb_{> 0})^N \). Define the weight of a monomial \( x_1^{a_1} \dots x_N^{a_N} \) in \( \Cbb^N \) as \( a_1 w_1 + \dots a_N w_N \). Let \(  I\) be the polynomial ideal in \( \Cbb^N\) generating \( \wt{M} \), which is of finite type. For each generator \( f \) of \( I \) (in the Gröbner basis of \( I \) with respect to the ordering induced by \( w \) for example), keep only the term \( f_w \), which consists of monomials  with highest weight. The ideal \( I_w \) generated by all the \( f_w \) then corresponds to \( W \) and \( \Rcal = \Cbb[x_1, \dots, x_N] / I_{w} \). Then \( \Rcal \) admits a natural gradation by \( w \) as
\[ \Rcal = \bigoplus \Fcal_{d_k} / \Fcal_{d_{k+1}}, \]
where \( \Fcal_{d_k} = \set{f \in \Rcal, w(f)  \leq d_k} \). 

\begin{prop} \cite{DS17}
The natural map \( \Rcal \to R(W) \) is an isomorphism and valuation-preserving, namely every element in \( \Fcal_{d_{k+1}} / \Fcal_{d_k} \) is sent to an element in \(I_{k+1} / I_k \). 
\end{prop}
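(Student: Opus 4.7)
The plan is to construct the natural map at the level of polynomial rings and then descend to the graded quotients, relying on the fact that the filtration by $d_\omega$ on $R(M)$ matches the $w$-filtration induced by the Gröbner degeneration. Concretely, the holomorphic embedding $F_1$ identifies $R(M) \simeq R(\wt M) = \Cbb[x_1,\dots,x_N]/I$, so the restriction map $\Psi \colon \Cbb[x_1,\dots,x_N] \to R(M)$ is a surjection with kernel $I$. I would then show $\Psi$ is filtration-compatible (each $p$ of pure weight $d_k$ lands in $I_k$), and pass to the associated graded: the initial ideal $I_w$ is killed on the graded level, producing a well-defined map $\widetilde\Phi \colon \Rcal = \Cbb[x_1,\dots,x_N]/I_w \to \mathrm{gr}\, R(M) = R(W)$, and the remaining work is to check it is bijective and weight-preserving.

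The first technical input I would establish is the growth–weight identity $d_\omega(x_a) = w_a$ for every coordinate function. This is built into the Donaldson--Sun setup: $x_a$ is a weight-$w_a$ eigenfunction for the Reeb torus on $C$, and the quantitative Gromov--Hausdorff convergence of $(M_i,\omega_i)$ to $(C,\omega_C)$ forces its growth on $M$ to equal the torus weight on $C$. Because $\nu_\omega = -d_\omega$ is a valuation, the multiplicative property gives $d_\omega(x^\alpha) = w(x^\alpha)$ for every monomial, and the max-inequality gives $d_\omega(\Psi(p)) \le w(p)$ for every polynomial $p$.

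Well-definedness of $\widetilde\Phi$ is then straightforward: if $p \in I_w$ is of pure $w$-weight $d_k$, then, using a Gröbner basis of $I$ with respect to $w$, one can write $p = q - r$ with $q \in I$ and $w(r) < d_k$; since $\Psi(q)=0$, the preceding inequality gives $d_\omega(\Psi(p)) = d_\omega(\Psi(r)) \le w(r) < d_k$, so $\Psi(p) \in I_{k-1}$ and its class in $I_k/I_{k-1}$ is zero. Surjectivity then follows because every $f \in R(M)$ is $\Psi$ of some polynomial $\tilde f$, and reducing $\tilde f$ modulo $I$ to a representative of minimal $w$-weight exhibits an element of $\Rcal$ mapping to the class of $f$. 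For injectivity, the most economical route is a Hilbert function count: the Gröbner degeneration is flat over $\Cbb$, so $\dim_\Cbb \Fcal_{d_{k+1}}/\Fcal_{d_k} = \dim_\Cbb I_{k+1}/I_k$ for every $k$, and a well-defined surjection between finite-dimensional pieces of equal dimension is an isomorphism. By construction $\widetilde\Phi$ sends $\Fcal_{d_{k+1}}/\Fcal_{d_k}$ into $I_{k+1}/I_k$, so it is valuation-preserving.

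The main obstacle in this scheme is the matching of the $d_\omega$-filtration with the $w$-filtration on $R(\wt M)$, i.e.\ the statement $d_\omega(\Psi(p)) = w(p)$ whenever the leading $w$-component of $p$ survives in $\Rcal$. The inequality $\le$ is soft, but the reverse inequality is precisely the assertion that the $\nu_\omega$-induced test configuration of $\wt M$ coincides with the Gröbner degeneration; this is where the geometric analysis of the Cheeger--Colding/Donaldson--Sun convergence does the heavy lifting, via the fact that any drop in growth rate would be detected in a limit of $L^2$ norms on the rescaled balls $B_i$ and would contradict the identification of the holomorphic spectrum $\mathcal S$ with the torus weights on $C$.
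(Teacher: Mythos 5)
A preliminary remark: the paper does not actually prove this proposition — it is quoted from \cite{DS17} with no argument supplied — so your attempt can only be measured against the original Donaldson–Sun proof. Measured that way, your architecture is a faithful reconstruction of it: identify \( R(M) \simeq \Cbb[x_1,\dots,x_N]/I \) via the embedding, get the soft inequality \( d_{\omega}(\Psi(p)) \leq w(p) \) from \( d_{\omega}(x_a) = w_a \) together with the valuation axioms for \( \nu_{\omega} = -d_{\omega} \) (which the paper records just before the statement), check well-definedness on the graded level by the Gröbner rewriting \( p = q - r \) with \( q \in I \) and \( w(r) < w(p) \), and then prove surjectivity and injectivity. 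You also correctly isolate the crux: the reverse inequality \( d_{\omega}(\Psi(p)) = w(p) \) whenever the top-weight part \( p_w \) survives in \( \Rcal \), which in \cite{DS17} is established by the convergence analysis (suitably rescaled, \( \Psi(p) \) converges in \( L^2 \) on the balls \( B_i \) to \( p_w \) restricted to the limit cone, which is nonzero when \( p_w \notin I_w \), so no drop in growth rate can occur). Deferring that step to the convergence theory is, in effect, what the paper itself does by citing \cite{DS17}.

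One step as written is circular, however. Flatness of the Gröbner degeneration equates the Hilbert function of \( \Rcal \) with that of \( R(\wt{M}) \) filtered by the \emph{image of the \( w \)-filtration} under \( \Psi \), not by \( d_{\omega} \); the identity \( \dim_{\Cbb} \Fcal_{d_{k+1}}/\Fcal_{d_k} = \dim_{\Cbb} I_{k+1}/I_k \) is therefore \emph{equivalent} to the assertion that the two filtrations on \( R(M) \) coincide, which is precisely the hard matching you postponed. So the dimension count cannot serve as an independent route to injectivity. The repair is cheap: once the hard equality is in hand, injectivity is immediate without any counting — if \( p \) is \( w \)-homogeneous of weight \( d_{k+1} \) with \( p \notin I_w \), then \( d_{\omega}(\Psi(p)) = d_{k+1} > d_k \), so \( \Psi(p) \notin I_k \) and its class in \( I_{k+1}/I_k \) is nonzero. (Alternatively one may justify the count by the fact, quoted in the paper from \cite[Prop. 3.26]{DS17}, that \( R(W) \) carries the same Hilbert function as \( R(C) \) — but that is an output of the same spectral analysis, not of flatness.) With that substitution your outline matches the source argument.
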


\begin{rmk}
We often identify the weighted valuation \( w \) on \( \Rcal \) with the valuation \( \nu_{\xi} \) on \( R(W) \).   
\end{rmk}

By \cite[Prop 3.26]{DS17}, \( R(W) \) has the same grading as \( R(C) \), hence admits an action of \( T_c \) with the same Hilbert function as \( C \).

\begin{prop} \cite{DS17} 
There is a special test configuration with generic fiber isomorphic to \( W \) and central fiber \( C \). The varieties \( W_i \) are in fact generic fibers in the test configuration.  

Moreover, since \( (C,\xi) \) is K-stable, \( (W,\xi) \) is K-semistable by \cite[Prop. 5.5]{LLX20} and the K-semistable valuation \( \nu_{\xi} \) coincides with the valuation induced by \( \nu_{\omega} \) on \( R(W) \). 
\end{prop}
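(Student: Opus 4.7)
The proof would naturally split into four parts, all anchored in the Donaldson–Sun machinery already built up above.

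First, to construct the special test configuration with generic fiber \( W \) and central fiber \( C \), I would exploit the fact (Prop.~3.26 of \cite{DS17}) that \( R(W) \) and \( R(C) \) have the same \( T_c \)-grading and Hilbert function, so both cones embed as \( T_c \)-invariant affine subvarieties of the same \( \Cbb^N \) with the same weights \( w = (w_a) \) of \( \xi \). The convergence \( F_i(\pi_M(B_i)) \to h \cdot F_\infty(B) \) together with \( F_{i+1} = G_i \circ F_i \), \( G_i = \Lambda + \tau_i \), \( \tau_i \to 0 \), produces a one-parameter subgroup \( \lambda(t) \subset \GL(N, \Cbb) \) commuting with \( T_c \) whose limit as \( t \to 0 \) implements a Gröbner-type degeneration of the ideal \( I_W \subset \Cbb[z_1,\dots,z_N] \) to \( I_C \). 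The closure of the orbit \( \{ \lambda(t) \cdot W : t \in \Cbb^{*} \} \) inside \( \Cbb^N \times \Abb^1 \) yields a flat \( T_c \)-equivariant family \( \Ycal \to \Abb^1 \) with \( \Ycal_t \simeq W \) for \( t \neq 0 \) and \( \Ycal_0 = C \). Specialness (\( K_{\Ycal} \) \( \Qbb \)-Cartier, central fiber klt) follows from \( (C,\xi) \) already being a K-stable log Fano cone with klt singularities.

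Second, the varieties \( W_i \) appear directly as generic fibers of this family: the cocycle \( F_{i+1} = G_i \circ F_i \) places each \( W_i \) at a parameter \( t_i \in \Cbb^{*} \) with \( t_i \to 0 \), so the \( W_i \) are precisely the fibers of \( \Ycal \) over a null-sequence in \( \Cbb^{*} \), and in particular are all isomorphic to \( W \).

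Third, K-semistability of \( (W,\xi) \) follows from \cite[Prop.~5.5]{LLX20}: any \( G \)-equivariant special test configuration of \( (W,\xi) \) with central fiber \( W' \) can be concatenated with the test configuration \( W \rightsquigarrow C \) constructed above to produce a special test configuration of \( (C, \xi) \) with the same central fiber, and the Futaki invariant is additive under this concatenation. Since \( (C,\xi) \) is K-stable, the resulting Futaki invariant is \( \geq 0 \), forcing \( \Fut_{\xi}(W, W') \geq 0 \) for every admissible \( W' \), hence K-semistability of \( (W,\xi) \).

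Finally, for the identification of valuations on \( R(W) \): the filtration \( I_k = \{ f \in R(M) : d_{\omega}(f) \leq d_k \} \) induced by \( \nu_{\omega} \) has associated graded ring precisely \( R(W) = \bigoplus_k I_{k+1}/I_k \); an element of \( I_{k+1}/I_k \) is \( T_c \)-homogeneous of \( \xi \)-weight \( d_k \), so its image under the induced valuation on \( R(W) \) equals \( d_k \), which is also by definition its value under \( \nu_{\xi} \). The main obstacle lies entirely in the first step: upgrading the metric Hausdorff convergence of \( F_i(\pi_M(B_i)) \) to an honest algebraic test configuration requires working inside a truncated Hilbert scheme of \( \Cbb^N \) (or equivalently, with polynomial ideals of bounded degree) and verifying that the limiting ideal is polynomial and cuts out a flat family—this is the delicate but standard algebraization step of \cite{DS17}, which I would invoke rather than redo.
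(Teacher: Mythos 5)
The paper offers no proof of this proposition at all: it is a summary statement, attributed wholesale to \cite{DS17} for the test configuration and to \cite[Prop.~5.5]{LLX20} for the K-semistability of \( (W,\xi) \). Your reconstruction follows exactly the Donaldson--Sun route being cited, and its overall architecture is sound: same ambient \( \Cbb^N \) with the same \( T_c \)-weights (via the matching Hilbert functions of \( R(W) \) and \( R(C) \)), degeneration of \( I_W \) to \( I_C \) by a one-parameter subgroup commuting with the torus, specialness from the klt Fano cone structure of \( C \), and the valuation identification read off from the grading \( R(W) = \bigoplus_k I_{k+1}/I_k \) together with the valuation-preserving isomorphism \( \Rcal \to R(W) \) stated earlier in the section.

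Two soft spots are worth flagging. First, the one-parameter subgroup \( \lambda(t) \) is not ``produced'' by the cocycle \( G_i = \Lambda + \tau_i \): the \( G_i \) are not a one-parameter subgroup (the perturbations \( \tau_i \) spoil this), and what the convergence \( F_i(\pi_M(B_i)) \to h\cdot F_\infty(B) \) actually gives is that \( [C] \) lies in the closure of the orbit of \( [W] \) under the group of linear transformations commuting with \( T_c \), inside a multigraded Hilbert scheme; extracting a genuine \( \Cbb^{*} \)-degeneration from orbit-closure membership is a separate Hilbert--Mumford/Kempf-type step. You do acknowledge this at the end as the step you would invoke rather than redo, which is fair, but the earlier phrasing overstates what the cocycle alone delivers. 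Second, and more substantively, your gloss of \cite[Prop.~5.5]{LLX20} via ``concatenation with additive Futaki invariant'' is not how that result works and would fail as stated: a special test configuration of \( W \) with central fiber \( W' \) cannot simply be composed with the degeneration \( W \rightsquigarrow C \) to yield a test configuration of \( C \) with central fiber \( W' \) (the degenerations go in incompatible directions), and there is no additivity of Futaki invariants under such an operation. The actual mechanism is the semicontinuity of the normalized volume (equivalently, that test configurations of the central fiber can be used to estimate the invariants of the generic fiber), which is why both the paper and you are right to cite the result rather than reprove it --- just do not rely on the concatenation heuristic. Finally, a trivial indexing slip: an element of \( I_{k+1}/I_k \) has \( \xi \)-weight \( d_{k+1} \), not \( d_k \); the identification \( \nu_\xi = \) (valuation induced by \( d_\omega = -\nu_\omega \)) is unaffected.
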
 

%\begin{ex}
%Let \( M \) be the rank one symmetric space \( \SO(n+1) / \SO(n) \) with \( n > 2 \). 
%\end{ex}

\section{Proof of Theorem \ref{maintheorem_kstable_valuation_sphericalmanifold}} \label{section_proof_kstable_valuations_maintheorem}
%Since the properties that we need to prove depend only on the affine crepant blowdown \( \wt{M} \) of \( M \), throughout this section, we work only on smooth affine \( G\)-spherical varieties \( M \) endowed with the \( \del \delb \)-exact and \textit{compatible} \(  K\)-invariant Calabi-Yau metric \( \omega \), i.e. \( M = \wt{M} = \Spec(R(M)) \). 

Before stating key propositions in this section, we make a brief digression to symplectic aspects of spherical varieties. Let \( (X,\omega) \) be a  Kähler manifold with \( K \) acting by holomorphic isometries. A vector field \( \mathbf{X} \) on \( X\) is said to be \textit{locally hamiltonian} if \( \Lcal_{\mathbf{X}} \omega =  0 \). The set \( \text{Ham}_{\text{loc}}(X) \) of locally hamiltonian vector fields on \( X \) is then naturally a Lie algebra. Every smooth function \( H \) on \( X \) defines a locally hamiltonian vector field \( \mathbf{X}_H \) by \( dH = i_{\mathbf{X_H}} \omega \), and there is also a Lie algebra structure on \( C^{\infty}(X) \), called the Poisson structure. The morphism \( \nu : C^{\infty}(X) \to \text{Ham}_{\text{loc}}(X), H \to \mathbf{X}_H \) is in fact a Lie algebra morphism. 

The action of \( K \) is said to be \textit{Poisson} if there is a Lie algebra morphism \( \lambda : \kfrak \to C^{\infty}(X) \), called a \textit{lifting}, such that the morphism \( \nu \circ \lambda \) is exactly the natural Lie algebra morphism \( \kfrak \to \text{Ham}_{\text{loc}}(X) \). Such a lifting exists iff \( K \) acts trivially on the Albanese variety of \( X\) \cite[Proposition 1]{HW}. In particular, on a \( G = K^{\Cbb}\)-projective manifold, \( \text{Alb}(X) \) is trivial (since \( b_1(X) = 0 \)), hence the holomorphic-isometric action of \(  K\) is always Poisson. 

A compact connected Kähler manifold \( (X,\omega) \) with a Poisson \( K\)-action  is said to be a \textit{spherical \( K \)-space} if the Lie subalgebra \( C^{\infty}(X)^K \) is an abelian Lie algebra. 

\begin{thm} \cite[Equivalence Theorem]{HW} \label{theorem_equivalence}
A compact connected Kähler manifold \( (X,\omega) \) with a Poisson \( K\)-action is a \( K\)-spherical space iff it is a projective \( G = K^{\Cbb} \)-spherical manifold. The result is moreover independent of the Kähler structure. 
\end{thm}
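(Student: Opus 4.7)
The plan is to exploit the equivalence between symplectic and algebraic notions of ``multiplicity-freeness'' via the moment map and geometric quantization. I would organize the proof through the intermediate notion of a \emph{multiplicity-free} Hamiltonian $K$-action, namely: the symplectic reduction $\mu^{-1}(\Ocal)/K_{\Ocal}$ at every coadjoint orbit $\Ocal \subset \mu(X)$ is either empty or a point, where $\mu : X \to \kfrak^*$ is the (unique up to a constant, once normalized) moment map associated to the lifting $\lambda$.

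First I would establish the symplectic half: $C^{\infty}(X)^{K}$ is Poisson-commutative if and only if the $K$-action is symplectically multiplicity-free. The forward direction uses that the Poisson bracket of two $K$-invariant functions descends to the reduced spaces $\mu^{-1}(\Ocal)/K_{\Ocal}$, and a generic reduced space is symplectic of dimension $2\dim C^{\infty}(X)^{K}/\{C^{\infty}(X)^{K},C^{\infty}(X)^{K}\}$, hence vanishes iff the bracket is trivial. The converse follows from local normal form of moment maps (Guillemin--Sternberg) near a point of principal orbit type.

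Next I would bridge to the algebraic side via quantization. Since $X$ is compact Kähler, some positive power $L$ of the Hodge line bundle is $G$-linearized and very ample, giving $X \hookrightarrow \Pbb H^0(X,L^{\otimes k})^*$ for large $k$. By Guillemin--Sternberg's ``quantization commutes with reduction'' (or Brion's direct algebraic argument \cite{Bri97}), the symplectic multiplicity-free condition is equivalent to the algebraic statement that the graded ring $\bigoplus_k H^0(X,L^{\otimes k})$ decomposes without multiplicities as a $G$-module: each irreducible $V_{\lambda}$ occurs at most once. Finally this algebraic multiplicity-free condition is equivalent to $X$ being $G$-spherical, by Vinberg--Kimelfeld: the Borel subgroup $B$ has an open orbit iff $\Cbb[\tilde{X}]^{(B)}$ (with $\tilde{X}$ the affine cone) is multiplicity-free under $T$, which is precisely the graded-ring statement.

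The independence of the Kähler structure follows once the equivalence is in place, since $G$-sphericity is purely algebro-geometric: any two $K$-invariant Kähler forms give rise to the same underlying $G$-variety, hence the same $B$-orbit structure. The main obstacle is the quantization step: one must control the multiplicities in $H^0(X,L^{\otimes k})$ as $k \to \infty$ asymptotically in terms of the image of $\mu$, which requires either a semiclassical peak-section analysis or, more efficiently, Brion's algebraic reformulation of sphericity through the weight monoid. Once that bridge is built, the symplectic side and the algebraic side of the theorem fit together cleanly.
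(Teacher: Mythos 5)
First, a point of order: the paper does not prove this statement at all --- it is quoted verbatim, with citation, from Huckleberry--Wurzbacher \cite{HW}, so your attempt can only be compared against their proof, not against anything in this paper. Your outline follows the standard Guillemin--Sternberg/Brion circle of ideas, and the symplectic half is essentially sound: the equivalence between Poisson-commutativity of \( C^{\infty}(X)^K \) and the multiplicity-free condition (reduced spaces \( \mu^{-1}(\Ocal)/K_{\Ocal} \) are points or empty) is exactly Guillemin--Sternberg's notion, and the local normal form argument for the converse is the right tool, although your dimension count in the forward direction is garbled as written --- the quantity \( 2\dim C^{\infty}(X)^{K}/\{C^{\infty}(X)^{K},C^{\infty}(X)^{K}\} \) is not a meaningful dimension of a reduced space; what one actually compares is the generic rank of the Poisson structure restricted to invariants with the dimension of the generic reduced space.

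The genuine gap is at your quantization bridge. You write ``since \( X \) is compact Kähler, some positive power \( L \) of the Hodge line bundle is \( G \)-linearized and very ample'' --- but the hypothesis is only that \( X \) is compact \emph{Kähler}, and projectivity of \( X \) is part of the \emph{conclusion} of the Equivalence Theorem, not an input. A compact Kähler manifold need not carry any rational Kähler class (so there is no Hodge line bundle to take powers of), and you cannot perturb \( \omega \) to a rational class without leaving the hypotheses, since the Poisson-commutativity assumption is tied to the given form. This is precisely the hard content of Huckleberry--Wurzbacher's theorem: they must first derive projectivity (and the extension of the \( K \)-action to a holomorphic \( K^{\Cbb} \)-action with open \( B \)-orbit) from the spherical hypothesis itself, via the coisotropicity of generic \( K \)-orbits and the structure of the moment map, before any multiplicity-free decomposition of section spaces can even be formulated. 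As it stands, your argument proves only the weaker statement for projective \( X \) polarized by a rational \( K \)-invariant class --- where the chain ``multiplicity-free reductions \( \Leftrightarrow \) multiplicity-free \( \bigoplus_k H^0(X,L^{\otimes k}) \) \( \Leftrightarrow \) spherical (Vinberg--Kimelfeld)'' does work --- and the claimed independence of the Kähler structure inherits the same gap, since it quantifies over all \( K \)-invariant Kähler forms, rational or not.
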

The following lemma will be useful to us. 
\begin{lem} \cite[Restriction Lemma]{HW} \label{lemma_restriction}
Let \( X \) be a compact Kähler manifold with a Poisson action of a connected compact group \( K \). If \( X \) is a spherical \( K\)-space, then every closed \( K\)-invariant subvariety of \( X \) is also a spherical \(  K\)-space. 
\end{lem}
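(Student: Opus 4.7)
The plan is to exploit the Equivalence Theorem \ref{theorem_equivalence} to convert the statement into an algebro-geometric one about spherical varieties, where closure properties are well understood. A key preliminary observation is that the Equivalence Theorem explicitly says that being a $K$-spherical space is independent of the Kähler form, and in particular is equivalent to the purely algebraic property that a Borel subgroup $B$ of $G = K^{\Cbb}$ acts on $X$ with an open dense orbit. By the Brion--Vinberg finiteness theorem, this is also equivalent to the fact that $B$ acts on $X$ with only \emph{finitely many orbits}.

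Next, I would argue that any closed $K$-invariant (complex-analytic) subvariety $Y \subset X$ is automatically $G$-invariant, hence a closed algebraic subvariety of $X$. This follows from the standard fact that $K$ is Zariski-dense in $G$, so a $K$-stable closed analytic subset of a $G$-variety is $G$-stable; alternatively, it is a direct consequence of the Hamiltonian setup used in the Equivalence Theorem, where $K$-orbits Zariski-dense in $G$-orbits. Since $G$ is connected, the irreducible components of $Y$ are themselves $G$-stable, and we may replace $Y$ by any of them to assume $Y$ is irreducible.

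Now the argument concludes quickly: $B$ acts on $X$ with finitely many orbits, hence $B$ acts on the closed subvariety $Y$ with finitely many orbits as well. Since $Y$ is irreducible and these finitely many locally closed $B$-orbits cover $Y$, one of them must be open and dense. Thus $Y$ is a $G$-spherical variety. Invoking the Equivalence Theorem in the reverse direction (for any $K$-invariant Kähler structure on $Y$, or on an equivariant resolution), we deduce that $Y$ is again a spherical $K$-space.

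The main technical nuisance I would expect is that $Y$ may be singular while the definition of \emph{spherical $K$-space} in the excerpt is stated on a compact Kähler \emph{manifold}. Resolving this requires either extending the definition to normal Kähler analytic spaces, or passing to a $K$-equivariant resolution $\widetilde{Y} \to Y$ and checking that sphericity descends; the Huckleberry--Wurzbacher proof handles this by working at the level of the Poisson algebra of $K$-invariant smooth functions on the regular locus of $Y$, which is open and $K$-stable, and observing that its commutativity is inherited from that of $C^{\infty}(X)^{K}$ via restriction. This compatibility of restriction with the Poisson bracket on the smooth part is the step that would demand the most care to write rigorously.
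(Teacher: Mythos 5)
The paper offers no proof of this lemma: it is quoted as a black box from Huckleberry--Wurzbacher \cite{HW}. So your proposal can only be measured against the cited source and against its internal coherence within this paper. Your route --- Equivalence Theorem to pass to the algebraic side, Zariski density of \( K \) in \( G = K^{\Cbb} \) (plus Chow, since Theorem \ref{theorem_equivalence} gives projectivity of \( X \)) to upgrade \( K \)-invariance of \( Y \) to \( G \)-invariance, Brion--Vinberg finiteness of \( B \)-orbits to get a dense \( B \)-orbit on each irreducible component, then the Equivalence Theorem in reverse --- is correct as far as it goes, and it is a genuinely different, more algebraic argument than the symplectic one in \cite{HW}. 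It is also consistent with how the present paper uses the lemma: in Proposition \ref{prop_kstable_valuation_invariant} the author runs the chain in the opposite direction (Restriction Lemma plus Equivalence Theorem to conclude the center \( Z \) is \( G \)-spherical, hence in particular \( G \)-stable), whereas you get \( G \)-stability for free from density. One caveat: in \cite{HW} the Restriction Lemma is part of the toolkit surrounding the Equivalence Theorem, so as a reconstruction of the original proof your derivation risks circularity; as a standalone derivation it is fine only if the Equivalence Theorem is granted in a form applicable to \( Y \) --- which is exactly where the problem lies.

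The genuine gap is the step you flagged and then waved through: \( Y \) may be singular, while Theorem \ref{theorem_equivalence} is stated for compact Kähler \emph{manifolds}, and your proposed mechanism --- commutativity of \( C^{\infty}(Y_{\mathrm{reg}})^K \) ``inherited via restriction'' --- is false as stated. The regular locus \( Y_{\mathrm{reg}} \) is a complex, hence \emph{symplectic}, submanifold of \( (X,\omega) \), and restriction to a symplectic submanifold is not a Poisson morphism: for \( f \in C^{\infty}(X)^K \), the Hamiltonian field of \( f|_Y \) with respect to \( \omega|_Y \) is the \( \omega \)-orthogonal projection of \( X_f \) to \( TY \), and \( X_f \) has no reason to be tangent to \( Y \) (that would force the normal derivative of \( f \) to vanish along \( Y \)); hence \( \{f|_Y, g|_Y\}_{\omega|_Y} \) need not equal \( \{f,g\}|_Y \), and vanishing of ambient brackets does not naively descend. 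A repair along your resolution idea does work and should be spelled out: take a \( K \)-equivariant projective resolution \( \mu : \widetilde{Y} \to Y \); then \( \widetilde{Y} \) is \( G \)-spherical (sphericity is a birational \( G \)-invariant), hence rational with \( b_1(\widetilde{Y}) = 0 \), so every \( K \)-invariant Kähler form on \( \widetilde{Y} \) admits a Poisson lifting; the Equivalence Theorem --- crucially its ``independent of the Kähler structure'' clause --- gives commutativity of \( C^{\infty}(\widetilde{Y})^K \) for each form \( \mu^{*}\omega + \epsilon \eta \) with \( \eta \) an invariant Kähler form, and letting \( \epsilon \to 0 \) over the locus where \( \mu \) is an isomorphism and \( \mu^{*}\omega \) is nondegenerate yields the vanishing of all brackets on \( (Y_{\mathrm{reg}}, \omega|_{Y_{\mathrm{reg}}}) \). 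Without some such argument the final step of your proposal is an assertion, not a proof --- and the singular case is not a corner case here, since the paper applies the lemma to the center of a valuation, which is typically singular.
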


%Now let \( X \) be \textit{any} connected Kähler manifold of dimension \( n \) and \( K \) a connected group of holomorphic isometries. Let \( L \) be a \( K \)-linearized holomorphic line bundle over \( X \). We say that \( R(L) := \oplus_{k \in \Zbb} H^0(X, L^k) \) is of \textit{maximal rank} if there are meromorphic functions \( f_1, \dots, f_n \) on \( X\) represented as quotients of sections in \( R(L) \) such that \( df_1 \wedge \dots \wedge df_n \not\equiv 0 \).  If \( R(L) \) is \(  K\)-multiplicity free and of maximal rank, then \( X\) is a \( K \)-spherical space \cite[Theorem 8]{HW}. 

Let us now make a brief recall of valuation theory. The reader may consult \cite{ZS60} or the short notes of Stevensson \cite{Ste17} for more information. Let \( \Kcal/ \Cbb \) be a finitely generated field extension (e.g. \( \Kcal \) is the function field of a complex variety). A complex variety \( X \) is said to be a \textit{model} of \( \Kcal \) if \( \Cbb(X) = \Kcal \). 

Recall the following basic notions. 

\begin{defn}
Let \( \nu \) be a valuation on \( \Kcal/ \Cbb \). 
\begin{enumerate}
\item The \emph{valuation ring} \( R_\nu \) of \( \nu \) is defined as \( R_{\nu} := \set{f \in \Kcal, \nu(f) \geq 0} \). This is a local ring with maximal ideal \( \mathfrak{m}_{\nu} = \set{f \in \Kcal, \nu(f) > 0} \).  \item The field \( \kappa_{\nu} := R_{\nu} / \mfrak_{\nu} \) is said to be the \emph{residue field} of \( \nu \).  
\item The abelian subgroup \( \Gamma_{\nu} := \nu(\Kcal^{*}) \subset \Rbb \) is called the \emph{value group} of  \( \nu \). 
\item The \emph{transcendence degree} of \( \nu \) is \( \text{tr.deg}(\nu) := \text{tr.deg}(\kappa_{\nu} / \Cbb) \). 
\item The \emph{rational rank} of \( \nu \) is \( \text{rt.rk}(\nu) := \dim_{\Qbb}(\Gamma_{\nu} \otimes \Qbb) \).
\end{enumerate}
\end{defn} 

\begin{thm}[Zariski-Abhyankar]
If \( \nu\) is a valuation on \( \Kcal/ \Cbb \), then  
\[ \text{tr.deg}(\nu) + \text{rt.rk}(\nu) \leq \text{tr.deg}(\Kcal/ \Cbb). \] 
\end{thm}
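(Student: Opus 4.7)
The plan is to construct $\text{tr.deg}(\nu) + \text{rt.rk}(\nu)$ explicit elements of $\Kcal$ and show they are algebraically independent over $\Cbb$. Set $r := \text{rt.rk}(\nu)$ and $d := \text{tr.deg}(\nu)$. By the very definition of rational rank, I can pick $f_1, \dots, f_r \in \Kcal^{*}$ such that $\nu(f_1), \dots, \nu(f_r)$ are $\Qbb$-linearly independent in $\Gamma_{\nu} \otimes \Qbb \subset \Rbb$. By the definition of transcendence degree, I can pick $g_1, \dots, g_d \in R_{\nu}$ whose residues $\bar{g}_1, \dots, \bar{g}_d \in \kappa_{\nu}$ are algebraically independent over $\Cbb$; in particular each $\bar{g}_j$ is a nonzero element of $\kappa_{\nu}$, so $\nu(g_j) = 0$.

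The only substantial step is proving that $f_1, \dots, f_r, g_1, \dots, g_d$ are algebraically independent over $\Cbb$ in $\Kcal$. I argue by contradiction: suppose there is a nonzero polynomial $P \in \Cbb[X_1, \dots, X_r, Y_1, \dots, Y_d]$ satisfying $P(f, g) = 0$, and group it by multi-exponents in the $X$-variables,
\[ P(X,Y) = \sum_{\alpha} X^{\alpha} Q_{\alpha}(Y), \]
with only finitely many $Q_{\alpha} \in \Cbb[Y]$ nonzero. Since the $\bar{g}_j$ are algebraically independent, for each $\alpha$ with $Q_{\alpha} \neq 0$ one has $\overline{Q_{\alpha}(g)} = Q_{\alpha}(\bar g) \neq 0$ in $\kappa_{\nu}$, hence $Q_{\alpha}(g) \in R_{\nu} \setminus \mfrak_{\nu}$ and $\nu(Q_{\alpha}(g)) = 0$. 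Combined with $\nu(\Cbb^{*}) = 0$, this gives
\[ \nu\bigl(f^{\alpha} Q_{\alpha}(g)\bigr) = \sum_{i=1}^{r} \alpha_i \, \nu(f_i). \]

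The crucial point, and the step on which the whole argument hinges, is the following. By $\Qbb$-linear independence of $\nu(f_1), \dots, \nu(f_r)$ in $\Rbb$, the real numbers $\sum_i \alpha_i \nu(f_i)$ are pairwise distinct as $\alpha$ ranges over the finitely many multi-indices with $Q_{\alpha} \neq 0$. The minimum is therefore achieved at a \emph{unique} $\alpha_0$, and the ultrametric inequality forces
\[ \nu(P(f,g)) = \sum_{i=1}^{r} (\alpha_0)_i \, \nu(f_i) < +\infty, \]
contradicting $P(f,g) = 0$. This gives $r + d$ algebraically independent elements in $\Kcal$ over $\Cbb$, which is the desired inequality. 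The main obstacle is precisely the uniqueness of this minimum: without $\Qbb$-linear independence, distinct monomials $f^{\alpha} Q_{\alpha}(g)$ could share the same valuation and the ultrametric inequality would not preclude cancellation, so $P(f,g) = 0$ would become possible.
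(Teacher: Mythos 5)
Your proof is correct, and it is the classical argument for Abhyankar's inequality: the paper itself states this theorem without proof, deferring to its cited references (\cite{ZS60}, \cite{Ste17}), where exactly this argument appears --- choose $f_i$ with $\Qbb$-independent values and $g_j$ with algebraically independent residues, then use the unique-minimum consequence of the ultrametric inequality to rule out a polynomial relation. The only cosmetic improvement would be to phrase the selection with \emph{arbitrary finite} families of such $f_i$ and $g_j$ (rather than presuming $\text{rt.rk}(\nu)$ and $\text{tr.deg}(\nu)$ finite at the outset) and conclude by taking suprema, since finiteness is only known a posteriori from the inequality itself; your key step, that $\Qbb$-linear independence of the $\nu(f_i)$ makes the values $\sum_i \alpha_i \nu(f_i)$ pairwise distinct and hence prevents cancellation, is exactly right.
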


\begin{defn}
A valuation \( \nu \) on \( \Kcal/ \Cbb \) is said to be \emph{Abhyankar} if \( \text{tr.deg}(\nu) + \text{rt.rk}(\nu) = \text{tr.deg}(\Kcal/ \Cbb) \). 
\end{defn}

\begin{defn}
Let \( X \) be a model of \( \Kcal/ \Cbb \). If there is a (generally non-closed) point \( x \in X \) and a local inclusion \( \Ocal_{X,x} \subset R_{\nu} \) of local rings, then the valuation \( \nu \) is said to be \emph{centered} on \( X \), and \( x \) is called the \emph{center} of \( \nu \) on \( X \), denoted by \( c_X(\nu) \). 
\end{defn}

By the valuative criterion for separatedness, if the center of \( \nu \) on a model exists then it is unique, and the valuative criterion of properness guarantees the existence of a center on a proper model. We often identify the center \( c_X(\nu) \) of a valuation with its closure \( \ol{c_X(\nu)} \) inside of the model \( X \) on which the center exists.

\begin{defn}
A valuation \( \nu \) on \( \Kcal / \Cbb\) is said to be \emph{quasimonomial} if there exist 
\begin{enumerate}
    \item a smooth model \( X \) of \( \Kcal/ \Cbb \),
    \item a (generally non-closed) point \( x \in X \),
    \item a regular system of parameters \( y = (y_1, \dots, y_d) \) of the local ring \( \Ocal_{X,x} \) at \( x \), such that \( \nu_{1}, \dots, \nu_{d} \) generate \( \nu(\Kcal^{*}) \cup \set{0} = \Gamma_{\nu} \) as an abelian group.  
\end{enumerate}
\end{defn}

One can in fact take \( x \) to be the \textit{center} of the valuation \( \nu \) on some proper model. 

\begin{thm} \cite[Proposition 2.8]{ELS03} \label{theorem_quasimonomial_abhyankar}
The valuation \( \nu \) is quasimonomial if and only if it is Abhyankar, i.e. 
\[ \text{tr.deg}(\nu) + \text{rt.rk}(\nu) = \text{tr.deg}(\Kcal/ \Cbb). \] 
\end{thm}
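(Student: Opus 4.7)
I would prove the equivalence in two steps: the forward direction is a short arithmetic, while the reverse requires constructing a smooth model on which the valuation becomes monomial in regular parameters.

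\emph{Forward direction.} Assume $\nu$ is quasimonomial with data $(X, x, y_1, \ldots, y_d)$, where $d = \dim \Ocal_{X,x}$. By hypothesis $\Gamma_\nu$ is generated by $\nu(y_1), \ldots, \nu(y_d)$, hence $\mathrm{rt.rk}(\nu) \leq d$. The dominance $\Ocal_{X,x} \subset R_\nu$ induces a field extension $\kappa(x) \hookrightarrow \kappa_\nu$, whence
\[ \mathrm{tr.deg}(\nu) \geq \mathrm{tr.deg}(\kappa(x)/\Cbb) = \mathrm{tr.deg}(\Kcal/\Cbb) - d. \]
Adding the two inequalities and comparing with the Zariski--Abhyankar inequality forces both to be equalities, so $\nu$ is Abhyankar.

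\emph{Reverse direction.} Set $r := \mathrm{rt.rk}(\nu)$ and $t := \mathrm{tr.deg}(\nu)$, so $r + t = \mathrm{tr.deg}(\Kcal/\Cbb)$. Starting from any projective normal model $X_0$ of $\Kcal/\Cbb$, the valuative criterion of properness gives a unique center $\eta_0 \in X_0$ of $\nu$. Applying Zariski--Abhyankar to the local ring $\Ocal_{X_0, \eta_0}$, the codimension of $\eta_0$ is at least $r$. I would then perform a finite sequence of blow-ups with carefully chosen smooth centers to saturate the inequality to $\mathrm{codim}_{X_0}(\eta_0) = r$ and $\mathrm{tr.deg}(\kappa(\eta_0)/\Cbb) = t$: at each stage where the codimension is too large, one extracts from $R_\nu$ either a new $\Qbb$-linearly independent value or a new transcendental element of the residue field, and blows up along the subscheme it cuts out. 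After finitely many steps, both invariants stabilize to $(r, t)$. Then I would apply Hironaka's resolution of singularities to assume $X_0$ smooth at $\eta_0$ without disturbing these equalities. Finally, any regular system of parameters $y_1, \ldots, y_r$ at $\eta_0$ satisfies: the values $\nu(y_1), \ldots, \nu(y_r)$ span a subgroup of rank at most $r$ inside $\Gamma_\nu \otimes \Qbb$, and by the rank equality they must be $\Qbb$-linearly independent and generate $\Gamma_\nu$ as an abelian group. A Cohen-structure argument on $\widehat{\Ocal}_{X_0, \eta_0} \simeq \kappa(\eta_0)\ps{y_1, \ldots, y_r}$ then identifies $\nu$ with the monomial valuation $f = \sum c_\beta y^\beta \mapsto \min\{\alpha \cdot \beta : c_\beta \ne 0\}$, where $\alpha_i := \nu(y_i)$; this last identification uses that the associated graded ring of $R_\nu$ for $\nu$ is a polynomial ring over $\kappa_\nu$, which is precisely what the equality case of Abhyankar's inequality buys us.

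\textbf{Main obstacle.} The hard part is neither the arithmetic nor the desingularization, but the construction of the sequence of blow-ups that simultaneously decreases the codimension of the center to $r$ and raises the transcendence degree of its residue field to $t$, while preserving normality. Concretely, one must extract from $R_\nu$ enough geometric data (subvarieties of $X_0$ to blow up along) to realize the abstract invariants $(r, t)$ at a finite stage. Showing that the process actually terminates, rather than producing an infinite tower, is the technical heart of the argument and is where the Abhyankar hypothesis is used in an essential way.
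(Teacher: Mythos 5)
The paper offers no proof of this statement at all: it is quoted verbatim from \cite[Proposition 2.8]{ELS03}, so your attempt can only be measured against the cited argument, and against correctness. There are two genuine gaps. First, the forward direction does not close: your inequalities $\mathrm{rt.rk}(\nu) \leq d$ and $\mathrm{tr.deg}(\nu) \geq \mathrm{tr.deg}(\Kcal/\Cbb) - d$ point in opposite directions, so adding them yields only $\mathrm{tr.deg}(\nu) + \mathrm{rt.rk}(\nu) \geq \mathrm{tr.deg}(\Kcal/\Cbb) - d + \mathrm{rt.rk}(\nu)$, which is perfectly compatible with a strict Zariski--Abhyankar inequality unless you already know $\mathrm{rt.rk}(\nu) = d$ --- and the quasimonomial hypothesis only gives $\leq$. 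Concretely, take the monomial valuation at a closed point of a surface with weights $(1,1)$, i.e.\ $\mathrm{ord}_E$ for the exceptional divisor of the blow-up: here $d = 2$, $\mathrm{rt.rk}(\nu) = 1 < 2$, and $\mathrm{tr.deg}(\nu) = 1 > 0 = \mathrm{tr.deg}(\Kcal/\Cbb) - d$, so both of your inequalities are strict, yet the valuation is Abhyankar. The correct argument must compute $\mathrm{tr.deg}(\kappa_{\nu}/\kappa(x)) = d - \mathrm{rt.rk}(\nu)$ directly: residue classes of monomial quotients $y^{\beta}/y^{\beta'}$ with $\sprod{\alpha, \beta} = \sprod{\alpha, \beta'}$ furnish exactly $d - r$ algebraically independent elements of $\kappa_{\nu}$ over $\kappa(x)$, a graded-ring computation over the Cohen presentation that your two soft inequalities do not replace.

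Second, in the reverse direction the one concrete claim is false: it is not true that once $\mathrm{codim}(\eta_0) = r$, \emph{any} regular system of parameters has $\Qbb$-linearly independent values generating $\Gamma_{\nu}$. Take the monomial valuation on a surface with weights $(1, \sqrt{2})$ in coordinates $(u_1, u_2)$ and the regular parameters $y_1 = u_1$, $y_2 = u_1 + u_2$: the center is the closed point, of codimension $2 = \mathrm{rt.rk}(\nu)$, but $\nu(y_1) = \nu(y_2) = 1$ generate $\Zbb \subsetneq \Zbb + \Zbb\sqrt{2} = \Gamma_{\nu}$. So matching the numerical invariants $(r,t)$ at the center does not finish the proof; adapted parameters must be produced by the blow-up process itself (monomialization along $\nu$, e.g.\ as equations of SNC divisor components through the center whose values generate the value group), and the termination of that tower --- which you explicitly defer --- is precisely local uniformization for Abhyankar valuations (Knaf--Kuhlmann; char.\ $0$ makes it accessible but not free). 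Your sketch also omits the structural inputs that any correct proof of this direction uses: Abhyankar's theorem that equality forces $\Gamma_{\nu} \cong \Zbb^{r}$ and $\kappa_{\nu}$ finitely generated over $\Cbb$, and the fact that $\nu$ extends to the completion $\widehat{\Ocal}_{X_0, \eta_0}$ so that the final Cohen-structure identification is legitimate. As it stands, the easy direction is incorrectly argued and the hard direction is a plan whose only specific step fails.
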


\begin{prop} \label{prop_kstable_valuation_invariant}
The valuation \( \nu_{\omega} \) induced by the a \( \del \delb\)-exact complete Calabi-Yau metric \( \omega \) on a quasiprojective manifold \( M \) is quasimonomial.  

If \( M \) admits a \( G\)-spherical action, then \( \nu_{\omega} \) is moreover \( G\)-invariant and identifies with \( -\nu_{\xi} \) in the Cartan algebra of \( M \).
\end{prop}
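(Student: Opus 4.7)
My plan is to leverage the two-step Donaldson-Sun degeneration $M \rightsquigarrow W \rightsquigarrow C$ together with classical valuation theory; the spherical structure and Luna-Vust theory then streamline the second part of the statement. I treat the three claims (quasimonomiality, $G$-invariance, Cartan identification) separately.

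\textbf{$G$-invariance.} Since $\omega$ is $K$-invariant, the geodesic balls about a $K$-fixed basepoint are $K$-stable, so $d_{\omega}(k \cdot f) = d_{\omega}(f)$ for every $k \in K$ and $f \in R(M)$. Each subspace $I_k = \set{f \in R(M) : d_{\omega}(f) \leq d_k}$ is therefore a complex $K$-stable subspace, and hence $G$-stable by the usual complexification principle ($G = K^{\Cbb}$). As $R(M)$ is a locally finite $G$-module, $d_{\omega}$, and therefore $\nu_{\omega}$, is $G$-invariant on $R(M)$ and extends to a $G$-invariant valuation on $\Kcal(M)$.

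\textbf{Quasimonomial property.} By Theorem \ref{theorem_quasimonomial_abhyankar}, it suffices to verify Abhyankar's equality $\text{tr.deg}(\nu_{\omega}) + \text{rt.rk}(\nu_{\omega}) = n$. The value group of $\nu_{\omega}$ on $\Kcal(M)^{*}$ is $\mathcal{S} \cup (-\mathcal{S}) \cup \set{0}$, which coincides with the value group of $\nu_{\xi}$ on $R(W)$; its rational rank equals the rational rank $s$ of the Reeb vector $\xi \in \tfrak_c$. For the transcendence-degree side, the associated graded ring $\text{gr}_{\nu_{\omega}} R(M) \simeq R(W)$ is an $n$-dimensional integral domain, by flatness of the test configuration $M \rightsquigarrow W$ together with irreducibility of the central fiber. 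This maximality of Krull dimension forces Abhyankar equality via a standard dimension computation, so $\nu_{\omega}$ is quasimonomial.

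\textbf{Cartan identification and finiteness.} By $G$-invariance and Luna-Vust theory, $\nu_{\omega}$ is determined by its restriction to the $B$-eigenvectors $\Cbb(G/H')^{(B)}/\Cbb^{*} \simeq \Mcal$, yielding an element $\rho(\nu_{\omega}) \in \Ncal_{\Rbb}$. The Donaldson-Sun construction is $G$-equivariant, so a $B$-eigenvector $f_{\chi} \in R(M)$ of weight $\chi$ has leading term in $R(W) \simeq R(C)$ which is itself a $B$-eigenvector of the same weight $\chi$; by Remark \ref{remark_bmodule_tmodule} and the definition of $\nu_{\xi}$, we get $\nu_{\xi}(f_{\chi}) = \sprod{\chi, \xi}$. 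Therefore
\[ \nu_{\omega}(f_{\chi}) = -d_{\omega}(f_{\chi}) = -\sprod{\chi, \xi} = -\nu_{\xi}(f_{\chi}), \]
giving $\rho(\nu_{\omega}) = -\xi \in \tfrak_H = \text{lin} \Vcal$, which is the announced identification in the Cartan algebra. For the finiteness of $G$-invariant K-stable valuations on $M$, each such valuation arises this way from a $G$-spherical asymptotic cone whose weight monoid and colored fan are constrained by those of $M$; only finitely many such cones exist up to $G$-isomorphism, each carrying a unique K-stable Reeb vector.

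\textbf{Main obstacle.} The delicate point is the dimension computation in the quasimonomial step: one must justify simultaneously that $\text{gr}_{\nu_{\omega}} R(M) \simeq R(W)$ as graded rings (built into Donaldson-Sun theory) and that the Krull dimension of $R(W)$ coincides with $\dim M$, from which Abhyankar equality will follow. Once this is in place, the $G$-equivariance of the Donaldson-Sun construction — which itself relies on Step 1 and the local finiteness of $R(M)$ as a $G$-module, so that the filtration admits a $G$-isotypic decomposition — makes the Cartan identification essentially tautological.
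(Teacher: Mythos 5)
Your proposal is correct in substance, and two of its three steps follow the paper's own proof; the interesting divergence is at the \( G \)-invariance step. For quasimonomiality you take the same route as the paper: reduce to Abhyankar equality via Theorem \ref{theorem_quasimonomial_abhyankar}, then extract it from finite generation of \( \text{gr}_{\nu_{\omega}} R(M) \simeq R(W) \) together with \( \dim R(W) = \dim R(M) \). What you call a ``standard dimension computation'' is precisely the theorem of Piltant (which the paper cites through \cite[Proposition 3.1]{Tei03}), namely that for a valuation with finitely generated graded ring one has \( \dim \text{gr}_{\nu} R = \text{tr.deg}(\nu) + \text{rt.rk}(\nu) \); you identified the right mechanism and, since the paper itself only cites this result, leaving it as a black box is acceptable, though you flagged it as an obstacle rather than discharging it. The Cartan identification is the paper's computation in \( B \)-eigenvector language: leading terms preserve \( B \)-weights, and Remark \ref{remark_bmodule_tmodule} gives \( \nu_{\omega}(f_{\chi}) = -\sprod{\chi, \xi} = -\nu_{\xi}(f_{\chi}) \), exactly as in the paper. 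For \( G \)-invariance, however, the paper argues geometrically: the center \( Z \) of \( \nu_{\omega} \) in a \( G \)-equivariant projective compactification is \( K \)-invariant, hence a spherical \( K \)-space by the Restriction Lemma \ref{lemma_restriction} and \( G \)-spherical by the Equivalence Theorem \ref{theorem_equivalence}, and then \emph{every quasimonomial} valuation centered at \( Z \) is a monomial combination of \( \text{ord}_{E_i} \) along \( G \)-invariant SNC divisors on an equivariant modification, hence \( G \)-invariant — note that this argument consumes the quasimonomiality established first. You instead complexify directly: each \( I_k \) is a finite-dimensional \( K \)-stable subspace, and an identity-principle argument (\( K \) being a real form of \( G \)) upgrades \( K \)-stability to \( G \)-stability. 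This is more elementary — no compactification, no equivariant resolution, no Huckleberry--Wurzbacher theory, and no appeal to quasimonomiality — and it is close in spirit to the paper's own Remark \ref{remark_general_obstruction} sketching \( G \)-invariance on general affine \( G \)-manifolds; what the paper's route buys is that it works at the level of the field \( \Kcal(M) \) and exhibits the center as a spherical subvariety, which is of independent geometric interest. One point in your write-up needs restating: you \emph{assume} that \( R(M) \) is a locally finite \( G \)-module, but a priori only \( K \) is known to act on \( R(M) \) (a general \( g \in G \) does not preserve the metric, so \( d_{\omega}(g \cdot f) < \infty \) is not given); this is exactly what the complexification step must \emph{produce}, using finite-dimensionality and closedness of each \( I_k \) inside \( \mathcal{O}(M) \) and holomorphy of the orbit maps \( g \mapsto g \cdot f \), so it should appear as a conclusion rather than a hypothesis — with that rephrasing the argument is sound. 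Two minor remarks: your parenthetical \( \rho(\nu_{\omega}) = -\xi \in \tfrak_H = \text{lin} \Vcal \) is misleading as written, since \( -\xi \) lies in the linear part of the valuation cone of the \emph{central fiber's} open orbit \( G/H_0 \) (the Lie algebra of \( \Aut_G(C)^0 \)), not in \( \text{lin} \Vcal(G/H) \) — for \( \Cbb^3 \) the valuation sits in the interior of \( \Vcal \); and your finiteness paragraph addresses a claim of Theorem \ref{maintheorem_kstable_valuation_sphericalmanifold} that is not part of this proposition, which is harmless but extraneous.
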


\begin{proof}
By assumption \( \dim R(M) = \dim R(W) \), hence the quasimonomiality of \( \nu_{\omega} \) follows from a theorem due to Olivier Piltant (cf. \cite[Proposition 3.1]{Tei03} for an accessible reference). 

Next, remark that \( \nu_{\omega} \) is \( K\)-invariant. Indeed, since the metric \( \omega \) is \(  K\)-invariant, every \( k \in K \) defines an isometry between \( B(p,r) \) and \( B(kp,r) \) for any base point \( p \in M \), hence for any meromorphic function \( f \) on \( M \), 
\[ d_{\omega}(k.f) = \lim_{r \to +\infty} \frac{\sup_{B(p,r)} \log \abs{f(k^{-1})} }{\log r} = \lim_{r \to + \infty} \frac{\sup_{B(kp,r)} \log \abs{f} }{\log r}, \]
which is exactly \( d_{\omega}(f) \) as the growth rate at infinity does not depend on the given fixed point. It follows that \( \nu_{\omega} \) is a \(  K\)-invariant valuation. 

 Let us now show that \( \nu_{\omega} \) is \( G\)-invariant. The arguments again use \( K \)-spherical space theory. Let \( Z \) be the center of \( \nu_{\omega} \) in a \( G\)-equivariant smooth projective compactification \( \ol{M} \). In particular, \( \ol{M} \) is a spherical \( K\)-space by Equivalence Theorem \ref{theorem_equivalence}. Since \( \nu_{\omega} \) is \( K \)-invariant, \( Z \) is also a \(  K\)-invariant closed subvariety of \( \ol{M} \), hence a \( K\)-spherical space by Restriction Lemma \ref{lemma_restriction}, which is also \( G\)-spherical again by Equivalence Theorem.  

Let \( \nu' \) be any quasimonomial valuation with center \( Z \). The latter means that there is a \( G\)-equivariant proper birational modification \( Y \to \ol{M} \) with normal crossing divisors \( E_1, \dots, E_m \) such that \( \cap_{i=1}^{r \leq m} E_i \) contains the generic point \( o_Z \) of \( Z \) and \( \nu' \) is a monomial valuation on \( Y \) with center \( Z \). 

Let \( y_1, \dots, y_r \in \Ocal_{Y,o_Z}\) be a system of local parameters such that \( E_i = \set{y_i = 0}, 1 \leq i \leq r \) (by a well-known fact, such \( y_j \) can always be chosen since \( E_1, \dots, E_m \) intersect transversally). By definition, there is a \( r \)-uple \( (\alpha_1, \dots, \alpha_r) \in (\Rbb_{\geq 0}^{+})^r  \) satisfying \( \nu' = \sum_{i=1}^r \alpha_i \text{ord}_{E_i} \). Since \( E_i \) is \( G\)-invariant, \( \text{ord}_{E_i} \) is also \( G\)-invariant, hence \( \nu' \) is \( G\)-invariant. Thus every quasimonomial valuation with center \( Z \) is \( G\)-invariant, hence \( \nu_{\omega} \) is \( G \)-invariant.

The fact that the valuation \( \nu_{\omega} \) corresponds to the valuation induced by the Reeb vector \( \xi \) of the K-stable cone \( (C,\xi) \) can be seen as follows. Since the K-semistable Reeb vector of \( W \) is the same as the K-stable Reeb vector of \( C \),  it is enough to show that \( d_{\omega} = -\nu_{\omega} \) corresponds to the K-stable valuation \( \nu_{\xi} \) of \( (W,\xi) \) induced by \( \xi \).

Let \( G/H \subset M \) and \( G/H_0 \) be the open \( G\)-orbits in \( M \) and \( W \). Since \( R(M) \) and \( R(W) \) are isomorphic as \( G\)-modules by construction, their weight lattices are the same, i.e. \( \Mcal(G/H) = \Mcal(G/H_0) =: \Mcal \). Let \( f_{\infty} \in I_{k+1}^{(\alpha)}/I_k^{(\alpha)} = R(W)^{(\alpha)} \) be any nonzero element and \( f \in I_{k+1}^{(\alpha)} \) a lift. Since \( d_{\omega} \) induces \( \nu_{\xi} \), we have
\[ d_{\omega}(f) = \nu_{\xi}(f_{\infty}). \]
The equality is moreover independent of the choice of \( f \). Finally, from Remark \ref{remark_bmodule_tmodule} it follows that
\[ -\sprod{\alpha, \nu_{\omega}} = d_{\omega}(f) = \nu_{\xi}(f_{\infty}) = \sprod{\alpha_H, \nu_{\xi}} = \sprod{\alpha, \nu_{\xi}}. \]
This terminates our proof. 
\end{proof}

\begin{prop} \label{prop_asymptotic_cone_spherical}
The semistable cone \( W \) in the two-steps degeneration is a \( G\)-spherical cone. In particular, the asymptotic cone of the \( K \)-invariant Calabi-Yau metric \( (M, \omega) \) is a K-stable \( G\)-spherical affine cone \( (C,\xi) \), which is unique up to a \( G\)-equivariant isomorphism preserving \( \xi \). 
\end{prop}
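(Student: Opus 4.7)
The plan is to first establish that the semistable cone $W$ arising in the two-steps Donaldson-Sun degeneration is itself $G$-spherical, then to identify the K-stable degeneration of $W$ with the asymptotic cone $C$ via Theorem \ref{maintheorem_kstabledegeneration}.

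The sphericality of $W$ would follow from the $G$-invariance of $\nu_{\omega}$ established in Proposition \ref{prop_kstable_valuation_invariant}, via the following argument. Since $\nu_{\omega}$ is $G$-invariant, the filtration $I_k \subset R(M)$ is $G$-stable, the test configuration from $M$ to $W$ is $G \times \Cbb^{*}$-equivariant, and $W$ is naturally a $G$-variety with $R(W) = \bigoplus_k I_k/I_{k-1}$ a $G$-module. The ring $R(M)$ decomposes multiplicity-freely as $R(M) = \bigoplus_{\lambda} V_{\lambda}$ into simple $G$-modules (by the Vinberg-Kimelfeld criterion, since $M$ is $G$-spherical affine). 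For each weight $\lambda$, the intersection $V_{\lambda} \cap I_k$ is a $G$-invariant subspace of the irreducible $V_{\lambda}$, hence is either $0$ or all of $V_{\lambda}$. Letting $k_{\lambda}$ denote the smallest $k$ with $V_{\lambda} \subset I_k$, the simple module $V_{\lambda}$ embeds into a single graded piece $I_{k_{\lambda}}/I_{k_{\lambda}-1}$, giving $R(W) \simeq R(M)$ as $G$-modules, still multiplicity-free. Hence $W$ is $G$-spherical.

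With $W$ established as a K-semistable $G$-spherical cone polarized by $\xi$, I would invoke Theorem \ref{maintheorem_kstabledegeneration} to obtain a $G$-equivariant K-stable degeneration $(C', \xi)$ of $(W, \xi)$, with $C'$ a $G$-spherical cone unique up to $G$-equivariant isomorphism preserving $\xi$. Donaldson-Sun theory independently produces the analytic asymptotic cone $(C, \xi)$ as a K-stable degeneration of $(W, \xi)$, unique up to $\xi$-preserving isomorphism of polarized affine varieties by \cite{LWX}. Combining these two uniquenesses yields a $\xi$-preserving isomorphism $C \simeq C'$, through which $C$ inherits a canonical $G$-spherical structure, and the identification of $\xi$ with the K-stable valuation induced by $\nu_{\omega}$ in the Cartan algebra is then exactly the content of Proposition \ref{prop_kstable_valuation_invariant}.

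The main subtle point is showing that $\nu_{\omega}$ is constant on each simple component $V_{\lambda}$ of $R(M)$: $G$-invariance alone only gives $\nu_{\omega}(g \cdot f) = \nu_{\omega}(f)$, and a priori a degenerate linear combination of $G$-translates within $V_{\lambda}$ could acquire a strictly larger $\nu_{\omega}$-value through cancellation, which would break the clean graded decomposition. The way around this is precisely the irreducibility argument above, exploiting that $V_{\lambda} \cap I_k$ is a $G$-submodule of the irreducible $V_{\lambda}$; this is the conceptual core of transferring multiplicity-freeness from $R(M)$ to $R(W)$ and hence the crucial ingredient for the sphericality of the asymptotic cone.
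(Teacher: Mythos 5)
Your proposal is correct and takes essentially the same route as the paper: the paper deduces sphericality of \( W \) directly from the \( G \)-invariance of \( \nu_{\omega} \) (Prop.~\ref{prop_kstable_valuation_invariant}), calling this step ``immediate,'' and then invokes Prop.~\ref{prop_stable_degeneration_equals_equivariant_stable_degeneration} for the unique \( G \)-equivariant K-stable degeneration of \( (W,\xi) \) to \( (C,\xi) \). Your multiplicity-free filtration argument (that \( V_{\lambda} \cap I_k \) is \( 0 \) or \( V_{\lambda} \), so \( R(W) \simeq R(M) \) as \( G \)-modules) merely spells out the detail the paper leaves implicit, and your use of the uniqueness from \cite{LWX} to identify \( C \) with the equivariant K-stable degeneration is exactly how the paper's proof of Prop.~\ref{prop_stable_degeneration_equals_equivariant_stable_degeneration} proceeds internally.
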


\begin{proof}
Since \( M\) is a \( G\)-spherical manifold and that \( \nu_{\omega} \) is a \( G\)-invariant valuation, it is immediate that \( W \) is a \( G\)-spherical variety. Finally, by Prop. \ref{prop_stable_degeneration_equals_equivariant_stable_degeneration}, there is a unique \( G\)-equivariant degeneration of \( (W,\xi) \) to \( (C,\xi) \), hence \( C \) is \( G\)-spherical.  
\end{proof}

\begin{rmk}
It may be worth mentioning that to prove the uniqueness of the asymptotic cone, one can alternatively use the construction of the \( G\)-equivariant Hilbert scheme in \cite{AB04a} and then readopt the strategy of \cite{DS17}. We explain briefly the main steps. 

\begin{enumerate} 
\item First, since \( W_i \) and \( C \) have the same positive Hilbert function, the action of the torus \( T \) on \( W \) induces a \( T \)-action on \( C \), and  by \cite{AB04a} there is a projective \( G \times T \)-invariant Hilbert scheme \( \mathbf{H} \) parametrizing polarized affine varieties in \( \Cbb^N \) such that for \( i \) large enough, \( W_i \) and \( C \) define points \( [W_i] \) and \( [C] \) in \( \mathbf{H} \). After extracting a subsequence, one can show that \( [W_i] \) converges to \( [C] \) up to a \( K_{\xi} \) action. 
\item There is a small enough neighborhood \( \mathcal{U} \) of \( C \) in \( \Ccal_{\infty} \) such that any \( C' \in \mathcal{U} \) defines an element in \( \mathbf{H} \). The argument uses compactness of \( \mathbf{H} \). 
\item The stabilizer of \( [C] \) in \( \mathbf{H} \) is in fact \( \text{Aut}(C) \), which is reductive by a Matsushima theorem for cones, i.e. there is a maximal compact subgroup such that \( \text{Aut}(C) = K^{\Cbb} \). 
\item We can apply the equivariant slice theorem for \( ([C], K^{\Cbb}) \), and show that \( [C] \) and \( [C'] \) are in the same \( G_{\xi} \) orbit, hence isomorphic as Ricci-flat Kähler cones. We conclude by connectedness of \( \Ccal_{\infty} \). 
\end{enumerate}
\end{rmk}

\begin{rmk} \label{remark_general_obstruction}
A \( K\)-invariant good Calabi-Yau metric on any affine \( G\)-manifold induces in fact a \( G\)-invariant valuation. The arguments can run as follows. Let \( G_{\nu} \subset G \) be the subgroup stabilizing the induced valuation \( \nu \). Then using the definition of \( \nu \), one can show that \( G_{\nu} \) is in fact closed in \( G\) and contains \( K \), hence coincides with \( G\) as a whole. 

Finally, using the Alexeev-Brion Hilbert scheme, one can build a \( G\)-equivariant degeneration of the K-semistable \( G\)-cone \( W \) to the K-stable \( G\)-cone \( C \) and show that it is unique.
\end{rmk}

\section{Examples} \label{section_examples_kstable_valuations}

\subsection{Smooth affine spherical varieties}

As mentionned in the introduction, any smooth affine \( G\)-spherical variety \( M \) is isomorphic to \( G \times^H V \) where \( H \) is a reductive subgroup of \( G \) such that \( G/H \) is (affine) spherical and \( V \) is a \( H \)-module. 

Our examples will deal with two extreme cases. The first is the case \( V = 0 \), i.e. \( M \) is homogeneous, the second is when \( H = G \), or \( M \) is a spherical \( G \)-module. 
For simplicity, we only consider varieties of rank two. The description of K-stable valuations is as follows. 

\begin{prop} \label{proposition_kstable_valuations_ranktwo}
Let \( (M, \omega) \) be a complete \(  K\)-invariant Calabi-Yau smooth affine \( G\)-spherical manifold. Then the valuation \( \nu_{\omega} \) induced by \( \omega \) corresponds to either
\begin{itemize}
    \item the quasi-regular K-semistable Reeb vector of a non-horospherical asymptotic cones if \( \nu_{\omega} \in \del \Vcal \);
    \item the K-stable Reeb vector of the unique horospherical asymptotic cone of \( M \) if \( \nu_{\omega} \in \text{int}(\Vcal) \). 
\end{itemize}
\end{prop}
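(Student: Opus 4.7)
The plan is to combine Theorem \ref{maintheorem_kstable_valuation_sphericalmanifold} with the combinatorial description of equivariant degenerations in Theorem \ref{theorem_test_configurations_spherical_cone}, exploiting the rank two hypothesis. By Theorem \ref{maintheorem_kstable_valuation_sphericalmanifold}, the valuation \( \nu_{\omega} \) is \( G \)-invariant and coincides with \( -\nu_{\xi} \) in the Cartan algebra; hence it corresponds to a point of the rank two valuation cone \( \Vcal \) of the open \( G \)-orbit \( G/H \subset M \). The two-step degeneration from \( M \) to \( W \) and then to \( C \) is thus governed by the combinatorial position of \( \nu_{\omega} \) in \( \Vcal \).

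If \( \nu_{\omega} \in \text{int}(\Vcal) \), parts (1)--(2) of Theorem \ref{theorem_test_configurations_spherical_cone} imply that the K-semistable cone \( W \) is \( G \)-horospherical, in which case its valuation cone \( \Vcal_W \) is a full vector space. Part (4) of the same theorem then forces every further \( G \)-equivariant special test configuration of \( W \) to be trivial, so \( C \cong W \) as a \( G \)-spherical cone. Prop. \ref{prop_asymptotic_cone_spherical} gives uniqueness of this horospherical asymptotic cone, and \( \xi \) is its K-stable Reeb vector.

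If \( \nu_{\omega} \in \partial\Vcal \), Remark \ref{remark_central_fiber_data} yields \( \Vcal_W = \Rbb\nu_{\omega} \oplus \pi(\Vcal) \); since \( \Vcal \) is a strictly convex rank two cone, \( \Vcal_W \) is a rational half-space with exactly one-dimensional linear part \( \Rbb\nu_{\omega} \). Hence \( W \) is non-horospherical, its adapted torus has rank one, and the Reeb cone of \( W \) is a rational one-dimensional ray. The rational normalization \( A(w_{\xi}) = n \) from Prop. \ref{prop_ricci_flat_reeb_normalized} then selects a unique rational point on this ray, showing that \( \xi \) is quasi-regular.

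The main obstacle is propagating this dichotomy through the second stage of the degeneration, where one must rule out a further non-trivial equivariant degeneration producing a horospherical \( C \) when \( \nu_{\omega} \in \partial\Vcal \). This relies on a case-by-case combinatorial analysis of the two extreme families \( V=0 \) (homogeneous case) and \( H=G \) (spherical \( G \)-module case) from the structure theorem for smooth affine spherical manifolds, which exhausts all smooth affine rank two \( G \)-spherical manifolds.
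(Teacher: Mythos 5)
Your two main cases track the paper's actual proof of Prop.~\ref{proposition_kstable_valuations_ranktwo} closely: in the interior case, the degeneration along \( \nu_{\omega} \) produces a K-semistable horospherical central fiber \( W \), which is automatically K-stable since every \( G \)-equivariant special test configuration of a horospherical cone is trivial (its valuation cone is all of \( \Ncal_{\Rbb} \)), and uniqueness of the equivariant K-stable degeneration gives \( C \simeq W \); in the boundary case, the Reeb cone of \( W \) is a one-dimensional rational ray (it is the interior of the rational polyhedral cone dual to the weight monoid), forcing the Reeb vector to be quasi-regular. This is exactly the paper's mechanism. One inaccuracy: \( \Vcal \) need \emph{not} be strictly convex in rank two — for \( M = \Cbb^3 \) with open orbit \( \SO_3/\SO_2 \times \Cbb^{*} \), \( \Vcal = \Rbb_{\leq 0}\,\alpha^{\vee} \times \Rbb\chi \) is a half-plane, so \( \del \Vcal = \text{lin}\,\Vcal \) and the degeneration along \( \nu_{\omega} \in \del\Vcal \) is trivial. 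Your conclusion survives (the formula \( \Vcal_W = \Rbb\nu_{\omega} \oplus \pi(\Vcal) \) still yields a half-space with one-dimensional linear part), but the justification should not invoke strict convexity.

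The genuine defect is your final paragraph. The ``main obstacle'' you defer is not an obstacle in the paper's argument, and your proposed resolution would not work as stated. The non-horospherical cone in the first bullet is the one carrying the \emph{K-semistable} Reeb vector, i.e.\ the weighted tangent cone \( W \), and its non-horosphericity is immediate from what you already proved: horospherical would force \( \Vcal_W \) to be the whole space, whereas you showed \( \text{lin}\,\Vcal_W \) is one-dimensional inside the two-dimensional \( \Ncal_{\Rbb} \). No second-stage analysis occurs in the paper, which disposes of the boundary case in a single sentence. Moreover, your fallback — a case-by-case combinatorial analysis of the two families \( V = 0 \) and \( H = G \) — rests on a false premise: these extreme cases do \emph{not} exhaust smooth affine spherical manifolds, since in general \( M \simeq G \times_H V \) with both \( H \subsetneq G \) and \( V \neq 0 \) possible; the paper restricts to the two extremes only for its \emph{examples}, not in this proof. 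As written, your argument therefore ends with an unproven step justified by an incorrect exhaustion claim; replacing that paragraph by the observation that \( \text{lin}\,\Vcal_W \subsetneq \Ncal_{\Rbb} \) already rules out a horospherical \( W \) would close the proof along the paper's own lines.
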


\begin{proof}
By spherical theory and previous discussions, if \( \nu_{\omega} \in \text{int}(\Vcal) \), then there is  a test configuration defined by \( \nu_{\omega} \) that degenerates \( M \) to a K-semistable horospherical cone \( (W, \nu_{\omega}) \), hence K-stable. By uniqueness of \( G\)-equivariant K-stable degeneration, \( W \) and \( C \) are \( G\)-equivariantly isomorphic. 

If \( \nu_{\omega} \in \del \Vcal \), then the cone \( (W,\nu_{\omega}) \) is K-semistable, and necessarily quasi-regular since its Reeb cone is a half-line. 
\end{proof}

\subsection{K-stable valuations on indecomposable spherical spaces}

The following lemma allows us to simplify the problem of classifying K-stable valuations on affine homogeneous spaces by supposing that the open orbit is indecomposable. 

\begin{lem}
Let \( (M,\omega) \) be the affine spherical homogeneous space \( G_1/H_1 \times \dots \times G_k /H_k \), endowed with complete \( K_1 \times \dots \times K_k \)-invariant \( \del \delb\)-exact Calabi-Yau metric \( \omega \), such that each factor \( G_i / H_i \) is affine indecomposable and admits a complete \( K_i\)-invariant \( \del \delb\)-exact Calabi-Yau metric \( \omega_i \). 

The K-stable valuation \( \nu_{\omega} \) induced by \( \omega \) is then a product of K-stable valuations \( \nu_{\omega_i} \) on the factors. In particular, the asymptotic cone of \( (M,\omega) \) is the product asymptotic cone.   
\end{lem}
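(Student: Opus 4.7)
The plan is as follows. First, by Proposition~\ref{prop_kstable_valuation_invariant} applied to the $G := G_1 \times \cdots \times G_k$-spherical manifold $M$, the valuation $\nu_\omega$ is $G$-invariant. Since $G/H$ is the product of the factors $G_i/H_i$, both the character lattice and the valuation cone decompose as $\Mcal(G/H) = \bigoplus_i \Mcal(G_i/H_i)$ and $\Vcal(G/H) = \bigoplus_i \Vcal(G_i/H_i)$. We may thus write $\nu_\omega = (\nu_1, \ldots, \nu_k)$ with each $\nu_i \in \Vcal(G_i/H_i)$. Since any $G$-invariant valuation is characterized by its action on $B$-semi-invariants via the natural pairing with $\Mcal$, and $B$-eigenvectors on a product of spherical spaces factor as pure tensors of $B_i$-eigenvectors on the factors, $\nu_\omega$ acts additively on such tensors: $\nu_\omega(f_1 \otimes \cdots \otimes f_k) = \sum_i \nu_i(f_i)$.

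Next, I would propagate the product structure through the two-step degeneration. The coordinate ring $R(M)$ factors as $\bigotimes_i R(G_i/H_i)$ as a $G$-module, and by the additivity above, the filtration of $R(M)$ induced by $\nu_\omega$ is the tensor product of the filtrations induced by the $\nu_i$ on each factor. The associated graded ring $R(W) = \bigoplus_k I_{k+1}/I_k$ therefore factors as $\bigotimes_i R(W_i)$, where $W_i$ is the central fiber of the $\nu_i$-degeneration of $G_i/H_i$ described by Theorem~\ref{theorem_test_configurations_spherical_cone} and Remark~\ref{remark_central_fiber_data}. Consequently, $W = W_1 \times \cdots \times W_k$ with product Reeb vector $\xi = (\xi_1, \ldots, \xi_k)$. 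Uniqueness of the equivariant K-stable degeneration (Proposition~\ref{prop_stable_degeneration_equals_equivariant_stable_degeneration}), applied to the product cone $W$, then forces the K-stable cone $C$ to be the product $C_1 \times \cdots \times C_k$ of the K-stable degenerations of each $W_i$, which already establishes that the asymptotic cone is the product asymptotic cone.

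Finally, I would verify that each factor $(C_i, \xi_i)$ is K-stable, hence each $\nu_i$ is a K-stable valuation on $G_i/H_i$. By the criterion in Proposition~\ref{theorem_kstability_spherical_cone} it suffices to check that the combinatorial condition $\text{bar}_{DH}(\Delta_\xi) - \varpi \in \text{RelInt}((-\Vcal)^\vee)$ splits across the factors. This is indeed the case: the moment polytope $\Delta_\xi$ factors as $\prod_i \Delta_{\xi_i}$; the positive root system of $G$ (and thus the weight system of its unipotent radical) is the disjoint union of those of the $G_i$, so the Duistermaat--Heckman measure is a product measure; the anticanonical weight $\varpi$ is additive over factors; and $(-\Vcal)^\vee$ is a product of the analogous cones. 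The barycenter condition therefore reduces to the corresponding condition on each $(C_i, \xi_i)$, and invoking Proposition~\ref{prop_kstable_valuation_invariant} for each $\omega_i$ identifies $\nu_i$ with $\nu_{\omega_i}$.

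The main obstacle is to verify rigorously that the equivariant K-stable degeneration of a product of K-semistable spherical cones decomposes as the product of the K-stable degenerations of each factor. While the filtration argument formally tensors, this requires checking that the Futaki vanishing locus inside the valuation cone of $W$ respects the product decomposition; indecomposability of each $G_i/H_i$ is used here to rule out further internal splittings within a single factor and to guarantee that $\nu_i$ is really the valuation produced by the Calabi-Yau metric $\omega_i$ rather than some other K-stable valuation.
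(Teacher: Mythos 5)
Your route is genuinely different from the paper's, which disposes of the lemma in a few lines by volume minimization: since the asymptotic cone \( C \) is a \( G \)-equivariant degeneration of \( M \), it has the same weight monoid \( \Gamma = \prod_i \Gamma_i \), so the Reeb cone and the Duistermaat--Heckman volume functional both factor along the product, and \( -\nu_{\omega} \), being the \emph{unique minimizer} of the normalized volume on that Reeb cone, must be the product of the unique factor minimizers \( -\nu_{\omega_i} \). Your replacement --- splitting \( \nu_{\omega} = (\nu_1, \dots, \nu_k) \) via \( G \)-invariance and \( \Vcal = \prod_i \Vcal_i \), tensoring the valuation filtrations to get \( W = W_1 \times \dots \times W_k \), then invoking uniqueness of the equivariant K-stable degeneration (Proposition \ref{prop_stable_degeneration_equals_equivariant_stable_degeneration}) together with a splitting of the criterion of Theorem \ref{theorem_kstability_spherical_cone} --- can be made to show that each component \( \nu_i \) is a \( G_i \)-invariant K-stable valuation and that \( C \) is a product of K-stable spherical cones.

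The genuine gap is your last step. Proposition \ref{prop_kstable_valuation_invariant} applied to \( (M_i, \omega_i) \) only says that \( \nu_{\omega_i} \) is \emph{a} K-stable valuation on \( G_i/H_i \); it does not say that K-stable valuations on a factor are unique, and they are not: Proposition \ref{proposition_ranktwo_symmetric_space_valuations} exhibits three distinct K-stable valuations on symmetric spaces of restricted root system \( A_2 \) and \( BC_2/B_2 \). So ``\( \nu_i \) and \( \nu_{\omega_i} \) are both K-stable'' does not identify them, and without that identification you cannot even conclude that the factors of \( C \) are the asymptotic cones of the given metrics \( \omega_i \). The identification is exactly what the paper's minimization argument supplies: \( -\nu_{\omega} \) and the product of the \( -\nu_{\omega_i} \) minimize the same strictly convex functional on the same product Reeb cone, hence coincide. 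Two smaller points. First, your claim that \( \Delta_{\xi} \) factors as \( \prod_i \Delta_{\xi_i} \) is false: the level-\( n \) cross-section of a product cone fibers over the factor polytopes but is not their product; what is true, and suffices for the splitting of the barycenter criterion, is that \( \text{bar}_{DH}(\Delta_{\xi}) \) splits with components at levels \( n_i \), which follows from \( \vol_Y = \prod_i \vol_{Y_i} \) combined with \( d_{\xi} \log \vol_Y(\nu) = -\sprod{\text{bar}_{DH}(\Delta_{\xi}), \nu} \). Second, indecomposability of the factors does none of the work you attribute to it --- neither your degeneration argument nor the paper's proof uses it; it merely fixes the setting in which the lemma is later applied.
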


\begin{proof}
Let \( \Gamma \) be the weight monoid of \( M \) and \( C \) the asymptotic cone. Since \( C \) is a \( G \)-equivariant degeneration of \( M \), it has the same weight monoid as \( M \), hence the Reeb cone of \( C \) is the interior of \( (\Rbb_{\geq} \Gamma)^{\vee} \). But \( \Gamma \) is the product of the \( \Gamma_is \), hence the Reeb cone of \( C \) is the product of the Reeb cones of all factors' asymptotic cones. 

The Duistermaat-Heckman volume functional \( \vol_{DH} \) then writes as the product of the volume functionals on each factor, and \( -\nu_{\omega} \) can be identified with the unique minimizer, which is clearly the product of the \( -\nu_{\omega_i} \). 
\end{proof}

\begin{prop} \cite[Table 2]{BD19} \cite[Theorem 4.2]{Ngh24} \label{proposition_ranktwo_symmetric_space_valuations}
Let \( \mathcal{W} \) be the restricted Weyl chamber of a rank two symmetric space, and \( \wt{\alpha}_{1,2} \) the primitive generators. 

\begin{itemize}
\item The unique K-stable valuation on decomposable symmetric spaces of rank two is the product of K-stable valuations on each rank one factor. 

\item On indecomposable symmetric spaces of rank two, there are \( 3 \) K-stable valuations on symmetric spaces of restricted root system \( A_2, BC_2/ B_2 \) which correspond to some rational multiple of \( \wt{\alpha}_{1,2} \) and the unique K-stable horospherical valuation. 

The unique K-stable valuation on symmetric spaces of restricted root system \( G_2 \) is the valuation corresponding to a unique generator of the Weyl chamber.
\end{itemize}
\end{prop}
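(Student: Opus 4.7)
The strategy is to combine Proposition~\ref{proposition_kstable_valuations_ranktwo} with the barycenter K-stability criterion of Theorem~\ref{theorem_kstability_spherical_cone}, and then appeal to the explicit rank-two computations already carried out in \cite{BD19} and \cite{Ngh24} to evaluate the relevant Duistermaat-Heckman integrals. By Proposition~\ref{proposition_kstable_valuations_ranktwo}, any $G$-invariant K-stable valuation $\nu_{\omega}$ on $M$ either lies in $\mathrm{int}(\Vcal)$, in which case it is the K-stable Reeb vector of the unique horospherical asymptotic cone, or on $\partial \Vcal$, in which case it is the quasi-regular K-semistable Reeb vector of a non-horospherical asymptotic cone. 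For a rank two symmetric space, $\Vcal$ is identified (under $\rho$) with the opposite of the restricted Weyl chamber $\mathcal{W}$, whose boundary consists of exactly two rays spanned by $\tilde\alpha_{1}, \tilde\alpha_{2}$. This leaves at most three candidate K-stable valuations to examine: one horospherical interior one, and one for each of the two boundary rays.

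For decomposable rank two symmetric spaces, the lemma preceding the proposition reduces the problem to the rank one case, where $\Vcal$ is a half-line, $\partial \Vcal = \{0\}$, and Subsubsection~\ref{subsubsection_horosymmetric_cones} supplies the unique K-stable horospherical Reeb vector (the Reeb cone is one-dimensional, so the normalization $\langle\varpi,\xi\rangle = n$ determines $\xi$ uniquely and K-stability is automatic). Taking the product of the K-stable valuations on each rank one factor then yields the unique K-stable valuation on the decomposable rank two space.

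For indecomposable rank two symmetric spaces, I would check each candidate via Theorem~\ref{theorem_kstability_spherical_cone}. The Duistermaat-Heckman measure on the moment polytope is determined by $P_{DH}(p) = \prod_{\alpha \in R^{+}} \kappa(\alpha, p)^{m_{\alpha}}$, where $R$ is the restricted root system and $m_{\alpha}$ are the multiplicities. For the horospherical candidate, one locates the unique minimizer of the weighted volume functional on the Reeb cone under the normalization $\langle \varpi, \xi \rangle = n$ and checks whether it lies inside the Reeb cone; the computations in \cite[Table 2]{BD19} show this holds for $R = A_2, B_2, BC_2$ and fails for $R = G_2$, so no horospherical K-stable valuation exists in type $G_2$. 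For each boundary candidate along $\tilde\alpha_{i}$, the degeneration produces a rank two non-horospherical spherical cone with the same moment cone as $M$, and K-semistability reduces to a one-parameter barycenter inequality along $\tilde\alpha_{i}$. This inequality is explicitly verified in \cite[Theorem 4.2]{Ngh24} to hold for both boundary rays in types $A_2, B_2, BC_2$ (yielding three K-stable valuations in total) and for exactly one of the two boundary rays in type $G_2$ (yielding a single K-stable valuation).

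The main obstacle is the $G_2$ case: the asymmetry and large multiplicities of the restricted $G_2$ root system push both the horospherical weighted-volume minimizer and one of the two boundary barycenter tests outside the required cone, leaving only one K-stable valuation, as opposed to three for the other indecomposable types. The delicate sign analysis required to identify which of the two boundary rays actually survives is the computational core of \cite[Theorem 4.2]{Ngh24}, and once these references are invoked together with the structural reduction to the three candidate valuations, no additional input is required to conclude.
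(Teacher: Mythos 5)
Your proposal is correct and follows essentially the same route as the paper: reduction via Proposition \ref{proposition_kstable_valuations_ranktwo} to three candidates (the interior horospherical Reeb vector and the two boundary rays of \( \Vcal \)), the product lemma for decomposable spaces, and barycenter/volume-minimizer checks for each candidate, with the \( G_2 \) horospherical candidate excluded because the minimizer lies outside the restricted Weyl chamber — exactly the computation the paper carries out explicitly, where the defining polynomial equations in \( t \) have all-positive coefficients and hence no positive root for \( m = 1, 2 \). Two minor corrections: the roles of your citations are swapped — \cite[Table 2]{BD19} covers the horosymmetric cones over the boundary divisors, i.e.\ the boundary-ray candidates (in type \( G_2 \): \( C(D_1^{\vee}) \) K-unstable, \( C(D_2^{\vee}) \) K-stable), while \cite[Theorem 4.2]{Ngh24} contains the horospherical interior computation — and K-stability in rank one is not automatic from the normalization \( \sprod{\varpi, \xi} = n \), since the barycenter inequality \( \sprod{\text{bar}_{DH}(Q_X^{*}), \alpha^{\vee}/2} > 0 \) must still be verified, as in the paper's \( \SL_2/N_{\SL_2}(T) \) example where it equals \( 1/3 \).
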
 

\begin{proof}
The construction and K-stability of horosymmetric cones was already done in \cite{BD19} (see also part \ref{subsubsection_horosymmetric_cones} for translation in the cone language). For the reader's convenience, we recall here the construction of the horospherical \( G_2 \)-asymptotic cones and the computation of the K-stable Reeb vector in \cite{Ngh22b} \cite{Ngh24}. 

\textit{Construction of the asymptotic cone.} 

Let \( \wh{S} \) be the set of simple roots with respect to a choice of a Borel. The involution \( \theta \) on the symmetric space induces an involution \( \wh{\theta} \) on \( \wh{S} \). Without loss of generality, we work on symmetric spaces \( G/G^{\theta} \), so that \( \Mcal \) is the lattices generated by the restricted fundamental weights. 

Let \( \alpha_1, \alpha_2 \) be the short and long restricted roots and \( \wh{\alpha}_1, \wh{\alpha}_2 \) be the lifts on \( \wh{S} \) of \( \alpha_1, \alpha_2 \) in the same connected component of the Dynkin diagram.

Let \( I := \wh{S} \backslash \set{\wh{\alpha}_1, \wh{\theta}(\wh{\alpha}_1), \wh{\alpha}_2, \wh{\theta}(\wh{\alpha}_2)} \). The open \( (G_2 \times \Cbb^{*}) \)-orbit \( (G_2/ H_0) \times \Cbb^{*} \) of the cone is uniquely determined by \( \Mcal_I = \Mcal \) (=weight lattice of the symmetric space) and \( I \) (cf. Prop. \ref{proposition_horospherical_space} and Remark \ref{remark_central_fiber_data}). Moreover, \( G/H_0 \) is a fibration over \( G/P_I \) where \( P_I = P(\varpi_{\wh{\alpha}_1}) \cap P(\varpi_{\wh{\theta}(\wh{\alpha}_1)} ) \cap P(\varpi_{\wh{\alpha}_2}) \cap P(\varpi_{\wh{\theta}(\wh{\alpha}_2)} )\).

The colors \( \Dcal \) of \( G_2 /H_0 \times \Cbb^{*} \) are in bijection with \( \wh{S} \backslash I \), and two colors of two roots in the same cycle of \( \wh{\theta} \) have the same image in \( \Mcal_I \). Let \(\wh{\alpha}_i^{\vee}, \alpha_i^{\vee}\) be the coroots and restricted coroots, \( i = 1,2 \). 

When \( m = 1 \) (e.g. \( G_2 / \SO_4 \)), since there is no simple root of \( G_2 \) fixed by \( \theta \) (i.e. all nodes in the Satake diagram are white), we have \( \theta(\wh{\alpha}) = -\wh{\alpha} \), so \( \wh{\alpha}^{\vee}_i |_{\Mcal} = 2 \alpha_i^{\vee} \). 

When \( m = 2 \) (for example \( G_2 \times G_2 / G_2 \)), \( \theta(\wh{\alpha}_i) = - \wh{\theta}(\wh{\alpha}_i) \), hence \( \theta(\wh{\alpha}_i)(\wh{\alpha}_i) = 0 \), so \( \wh{\alpha}^{\vee}_i|_{\Mcal} = \alpha_i^{\vee} \).

It follows that
\[ \rho(\Dcal) = \set{ \wh{\alpha}^{\vee}_1|_{\Mcal}, \wh{\alpha}^{\vee}_2|_{\Mcal}} = 
\begin{cases} 
\set{2 \alpha_1^{\vee}, 2 \alpha_2^{\vee}}, m = 1 \\
\set{\alpha_1^{\vee}, \alpha_2^{\vee}}, m = 2.
\end{cases}
\] 
In both cases, the colored cone of \( C \) is
\( ( \Rbb_{\geq 0} \rho(\Dcal), \Dcal) \).

\begin{figure}
\begin{tikzpicture}
\pgfmathsetmacro\ax{1}
\pgfmathsetmacro\ay{0}
\pgfmathsetmacro\bx{(-3)*sin(150)}
\pgfmathsetmacro\by{-cos(150)}
%\pgfmathsetmacro\lax{2*\ax/3 + \bx/3}
%\pgfmathsetmacro\lay{2*\ay/3 + \by/3}
%\pgfmathsetmacro\lbx{\ax/3 + 2*\bx/3}
%\pgfmathsetmacro\lby{\ay/3 + 2*\by/3}

\tikzstyle{couleur_pl}=[circle,draw=black!50,fill=blue!20,thick, inner sep = 0pt, minimum size = 2mm]

%couleur

\draw[->, thick] (0,0) -- (\ax,\ay) node[below right] {\( {\alpha}_1 \)};
\draw[->, thick] (0,0) -- (\bx, \by) node[above right] {\( {\alpha}_2 \)};
\draw[->, thick] (0,0) -- (\ax + \bx, \ay+\by);
\draw[->, thick] (0,0) -- (2*\ax +  \bx, 2*\ay + \by);
\draw[->, thick] (0,0) -- (3*\ax + 2 * \bx, 3*\ay + 2*\by);
\draw[->, thick] (0,0) -- (3*\ax + \bx, 3*\ay + \by);
%\draw[->] (0,0) -- (\ax + \bx/2, \ay + \by/2) node[above right] {\( \wt{\alpha}_1 \)};

\draw (0,0)--(2,0);
%\draw[->] (0,0)--(\ax/2 + \bx, \ay/2 + \by) node[above right]{\( \wt{\alpha}_2 \)};
%\node at ( 1.7*\lax - 1.7*\lbx, 1.7*\lay - 1.7*\lby) [couleur_pl] {};
%\draw (2*\lax - 2*\lbx, 2*\lay - 2*\lby)--(2*\lbx - 2*\lax, 2*\lby - 2*\lay);
\end{tikzpicture}
\caption{Restricted root system of \( G_2\) symmetric spaces.}
\end{figure}
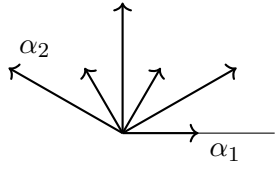

\textit{Reeb vector computation.} 

Recall that \( \kappa(\alpha_1, \alpha_1) = 1 \), \( \kappa(\alpha_2, \alpha_2) = 3 \), and both roots have the same multiplicity \( m \in \set{1,2} \). The positive roots of \( G_2 \) are
\[ \alpha_1, \alpha_2, \alpha_1 + \alpha_2, \alpha_2 + 2 \alpha_1, 2 \alpha_2 + 3 \alpha_1, \alpha_2 + 3 \alpha_1 \]
The half sum of the positive restricted roots (in the Cartan space) is just \( \varpi = 10m \alpha_1 + 6 m \alpha_2 \). 
Recall the setup in \cite{Ngh22b} to compute the Reeb vector \( \xi \). Set \( \delta = \alpha_2 - t \alpha_1, t \in \Rbb \) to be the vector orthogonal to \( \xi \) under \( \kappa \). Identify the valuation cone \( \Vcal \) of the symmetric space with the negative restricted Weyl chamber and the Reeb cone with the positive restricted Weyl chamber \( -\Vcal \). 

Let \( \nu_{\omega} \) be the valuation induced by the \( K \)-invariant Calabi-Yau metric on a \( G_2 \)-symmetric space, then \( \nu_{\omega} \in \Vcal \). By our main Theorem \ref{maintheorem_kstable_valuation_sphericalmanifold}, this is only possible if \( \xi \in -\Vcal \), i.e. iff \( t > 0 \). 

The moment polytope \( \Delta_{\xi} \) can be identified with
\[ \Delta_{\xi} := \set{ \varpi + p \delta, \lambda_{-} \leq p \leq \lambda_{+}}, \quad \lambda_{-} = - \frac{2m}{t+2}, \quad \lambda_{+} = \frac{2m}{2t+3}.\] 
Moreover, the Duistermaat-Heckman polynomial restricted to the Cartan space can be written as
\begin{align*} 
P(p) = &(2m - (2t+3)p)^m (6m + (3t+6)p)^m(8m + (t+3)p)^m \\
&(10m-tp)^m (12m - (3t+3)p)^m (18+3p)^m. 
\end{align*}
Then the Reeb vector is a K-stable polarization iff \( t \) is a solution of
\[ \int_{\lambda_{-}}^{\lambda_{+}} p P(p) dp = 0. \]
For \( m = 1 \) and \( m = 2 \), \( \xi \in - \Vcal \)  iff \( t \) is the \textit{positive} solution of the following respective polynomial equations
\[ 2376 + 9225 t + 13407 t^2 + 9357 t^3 + 3179 t^4 + 424 t^5 = 0,\]
and 
\begin{equation*}
\begin{aligned}
&20558772 + 134444448 t + 374274594 t^2 + 590688162 t^3 + 587394519 t^4 +\\
& 383740299 t^5 + 165293858 t^6 + 45384306 t^7 + 7221048 t^8 + 507988 t^9 = 0. 
\end{aligned}
\end{equation*}
Since all the coefficients are positive, there can be no positive solution. 
\end{proof}

\begin{table}
\adjustbox{width=1\textwidth}{
\centering
\begin{tabular}{|c|c|c|c|c|c|c| }
\hline
Type  & Representative &  \( R \) & Multiplicities & Satake diagram & Hermitian   \\
\hline
\( G \) & \( G_2 / \SO_4 \) & \( G_2 \) & \( 1 \) & \( \dynkin G{oo} \) & \( \text{no} \)  \\

\hline

\( G_2 \) & \( G_2 \times G_2 / G_2 \) & \( - \) & \( 2 \) & \( \begin{dynkinDiagram}[name=upper, edgelength=.75cm] G{o2}
\node (current) at ($(upper root 1)+(0,-.75cm)$) {};
\dynkin[at=(current),name=lower, edgelength=.75cm] G{o2}
\begin{pgfonlayer}{Dynkin behind}
\foreach \i in {1,...,2}
{
\draw[latex-latex]
($(upper root \i)$)
-- ($(lower root \i)$);
}
\end{pgfonlayer}
\end{dynkinDiagram} \) & \( \text{no} \) \\
\hline
\end{tabular}}
\caption{Indecomposable symmetric spaces of restricted root system \( G_2 \). The involution \( \wh{\theta} \) relate two roots connected by the two-sided arrows in the Satake diagram.}
\label{table_rank_two_g2}
\end{table}

As mentionned in the introduction, one can then wonder if there is a Calabi-Yau smoothing of the horospherical \( G_2 \)-asymptotic cone, which would be obtained as the generic fiber of a \( G_2 \)-equivariant deformation of the cone. If this is the case, one can further ask whether a geometric transition phenomenon can occur, that is to prove a crepant resolution of the cone is also Calabi-Yau. The metric would then form a mirror pair with the hypothetical Calabi-Yau smoothing of the cone. This happens for the conifold \( \set{(X,Y,Z,W) \in \Cbb^4, XZ - YW = 0} \) \cite{Gro01}
which is the unique Gorenstein toric cone of dimension \( 3 \) with an isolated \textit{terminal} singularity.

In our case, even if we don't know whether a Calabi-Yau smoothing exists, we can at least affirmatively answer that there can be no \( G_2 \)-equivariant geometric transition.  

\begin{lem} \label{lemma_nocrepantresolution}
There is no equivariant crepant resolution of the horospherical asymptotic cone of \( G_2 \)-symmetric spaces. 
\end{lem}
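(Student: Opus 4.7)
The plan is to reduce the statement to a combinatorial check on the colored cone of $C$. By the classification of spherical morphisms (Proposition~\ref{prop_spherical_morphisms}), any $G_2 \times \Cbb^*$-equivariant proper birational morphism $\pi\colon \tilde C \to C$ corresponds bijectively to a colored fan that refines the colored cone $(\Ccal_C, \Dcal_C)$. Moreover, by Proposition~\ref{prop_ricci_flat_reeb_normalized} applied to the horospherical cone $C$, each $G\times\Cbb^*$-invariant prime divisor $E$ of $\tilde C$ corresponds to a primitive lattice vector $v_E \in \Ncal$ with log discrepancy $A_C(w_E) = \sprod{\varpi, v_E}$, where $\varpi$ is the $B$-weight of the canonical section of $-K_C$. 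Crepancy of $\pi$ is therefore equivalent to the condition that every \emph{new} $G$-invariant primitive ray $v$ of the refining fan lies on the affine hyperplane
\[ H := \set{v \in \Ncal_{\Rbb}, \; \sprod{\varpi, v} = 1}. \]

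First, I would compute $\varpi$ explicitly using the horospherical description of $C$ from the proof of Proposition~\ref{proposition_ranktwo_symmetric_space_valuations}: since $C$ is the cone over a horospherical $G_2$-variety fibering over $G_2/B$ (resp.\ $(G_2\times G_2)/B$) with adapted lattice generated by the restricted roots $\alpha_1,\alpha_2$ and the $\Cbb^*$-weight $\eta$, the formula $-\varpi = \sum_{\alpha \in \Phi_{P^u}} \alpha$ of Lemma~\ref{lemma_futaki_invariant_horospherical} gives $\varpi$ as an explicit linear form on $\Ncal_{\Rbb} = \Ncal(G/H_0)_{\Rbb} \oplus \Rbb$. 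By construction $\sprod{\varpi, \rho(d_{\alpha_i})} = a_{d_{\alpha_i}}$ and $\sprod{\varpi, e_\eta} = 1$ where $e_\eta$ is the primitive generator of the $\Cbb^*$-direction, i.e.\ the $G$-invariant ray of $\Ccal_C$.

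Second, I would inspect the slice $H \cap \Ccal_C$. This is a compact rational polytope lying above the base of $\Ccal_C$ (because the $\eta$-coordinate of $\varpi$ is positive), so it contains only finitely many lattice points. I would show by direct enumeration, using the explicit values of $\rho(d_{\alpha_i}) = 2\alpha_i^\vee$ (for $m = 1$) or $\alpha_i^\vee$ (for $m = 2$) and the Cartan pairing of $G_2$, that the only lattice points of $H \cap \Ccal_C$ lie on the rays spanned by the colors $\rho(d_{\alpha_1}), \rho(d_{\alpha_2})$ and by the invariant generator $e_\eta$—that is, on the boundary of $\Ccal_C$ along rays already present in the original colored cone. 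Hence no \emph{new} crepant $G$-invariant divisor can be introduced by any refinement.

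Finally, I would observe that $(\Ccal_C, \Dcal_C)$ itself is not smooth as a colored cone (the cone $C$ has non-trivial singularities at the vertex by the canonicity statement in the discussion preceding the lemma), so any equivariant resolution must introduce at least one new ray in the relative interior of $\Ccal_C$. Combined with the previous step, such a ray cannot lie on $H$, forcing the corresponding exceptional divisor to have nonzero discrepancy. The main obstacle is the enumeration in step two: one must argue uniformly for both $m=1$ and $m=2$, and one should be careful that decolorations (where an existing color of $\Dcal_C$ is removed while keeping its ray) are also treated, since decolored colors likewise become exceptional divisors subject to the same crepancy constraint and will in general fail it for the same arithmetic reason.
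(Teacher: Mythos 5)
Your overall route coincides with the paper's: reduce crepancy to the condition that every new equivariant ray \(v\) of a refining colored fan satisfies \(\sprod{\varpi, v} = 1\), then show this has no admissible solutions inside \(\Ccal_C\). The paper parametrizes candidate rays through the Killing form, \(v = 2d/\kappa(d,d)\) with \(d = x\alpha_1 + y\alpha_2\), so crepancy becomes the quadratic \(2\kappa(\varpi,d)=\kappa(d,d)\), dispatched by a discriminant computation over positive rationals; you propose instead a finite enumeration of lattice points on the compact slice \(\set{\sprod{\varpi,\cdot}=1}\cap\Ccal_C\). These verifications are equivalent in principle, and your lattice-point granularity is in fact the sharper one: rationality of \((x,y)\) alone does not close the argument — for instance \((x,y)=(12/7,\,2/7)\) solves the displayed equation \(x^2 - x(2m+3y)+6my+3y^2=0\) for \(m=1\), with discriminant \(16/49\) — and it is the requirement that \(2d/\kappa(d,d)\) be a \emph{primitive vector of \(\Ncal\)} (here \(2d/\kappa(d,d)=\alpha_1^{\vee}+\tfrac{1}{2}\alpha_2^{\vee}\notin\Ncal\)) that eliminates such solutions. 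Your insistence on decolorations is also well taken: the fully decolored toroidal model is an equivariant resolution introducing \emph{no} new interior ray (the cone spanned by \(\alpha_1^{\vee},\alpha_2^{\vee}\) is generated by a basis of \(\Ncal\)), so non-crepancy there must be checked separately via \(\sprod{\varpi,\alpha_i^{\vee}}\neq 1\), a case the paper's argument, restricted to \(x,y>0\), passes over silently.

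The genuine gap is your combinatorial model of \(C\). You import the construction of Section \ref{subsubsection_horosymmetric_cones} — a cone over a Fano base with an extra \(\Cbb^{*}\)-weight \(\eta\), lattice \(\Ncal\oplus\Zbb\), and a \(G\)-invariant ray \(e_\eta\) normalized by \(\sprod{\varpi,e_\eta}=1\). The \(G_2\)-asymptotic cone is not of this form: it is a \emph{rank-two} horospherical cone whose colored cone is \((\Rbb_{\geq 0}\,\rho(\Dcal),\Dcal)\) with \(\rho(\Dcal)=\set{2\alpha_1^{\vee},2\alpha_2^{\vee}}\) for \(m=1\) and \(\set{\alpha_1^{\vee},\alpha_2^{\vee}}\) for \(m=2\); both extreme rays are colored, there is no \(G\)-invariant ray, and the K-stable Reeb vector is irregular, so no distinguished \(\Cbb^{*}\)-direction exists at all. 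Consequently your normalization \(\sprod{\varpi,e_\eta}=1\) has no referent, and the proposed enumeration would be carried out in a rank-three lattice that is not the lattice of \(C\). The strategy transplants without difficulty to the correct rank-two lattice, but as written the two steps carrying all the content — the lattice-point enumeration on the slice and the decoloration check — remain asserted rather than performed, and the first would be performed in the wrong space.
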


\begin{proof}
%Let \( \alpha_i^{\vee} \) be the restricted coroots of \( \alpha_i \). Since \( 2 \alpha_i, i = 1,2 \) are not positive roots, it follows that 
%\[ \alpha_1^{\vee} = \frac{2 \alpha_1}{\sprod{\alpha_1, \alpha_1}} = 2 \alpha_1, \quad \alpha_2^{\vee} = \frac{2 \alpha_2}{\sprod{\alpha_2,\alpha_2}} = \frac{2 \alpha_2}{3}. \]
%We also have
%\[ \wh{\alpha}_i^{\vee} = 2 \wh{\alpha}_1, \quad \wh{\alpha}_2^{\vee} = \frac{2 \wh{\alpha}_2}{3}. \] 
%Using the identities \( \alpha_i|_{\afrak} = 2 \wh{\alpha}_i|_{\afrak} \), it follows that
%\[ \alpha_1^{\vee}|_{\afrak} = 2 \wh{\alpha}_1^{\vee}|_{\afrak}, \quad \alpha_2^{\vee} = 2 \wh{\alpha}_2^{\vee}|_{\afrak}. \]
%In particular, the images of the colors of the \( G_2 \)-asymptotic cone inside \( \afrak \) are 
%\[ \sigma(D_1) = \alpha_1^{\vee}/2, \quad \sigma(D_2) = \alpha_2^{\vee}/2. \] 
We use the same notation as in Proposition \ref{proposition_ranktwo_symmetric_space_valuations}.  From \cite{Bri97a} and \cite[Remark 4.3]{GH15}, the anticanonical line bundle of \( C \) can be represented as
\[ -K_{C} = \sum_{\alpha \in \wh{S} \backslash I} a_{\alpha} \ol{D}_{\alpha}, \quad a_{\alpha} = \sprod{\varpi, \alpha^{\vee}}\]
Suppose that \(\pi: X \to C \) is a crepant resolution, then there is a \( G_2 \)-equivariant divisor \( D \subset X \) (corresponding to the primitive vector \( d \) in \( \Mcal \)) such that 
\[ -K_X = \sum_{\alpha \in \wh{S} \backslash I} a_{\alpha} \ol{D}_{\alpha} + D = \pi^{*} (-K_{C_0}) = \sum_{\alpha \in \wh{S} \backslash I} a_{\alpha} \ol{D}_{\alpha} + \frac{2\kappa(\varpi,d)}{ \kappa(d,d)} D, \]
hence \( 2 \kappa(\varpi,d) = \kappa(d,d) \). Let \( d = x \alpha_1 + y \alpha_2 \), with \( x,y \) being positive rationals. Then \( 2 \kappa(\varpi, d) = \kappa(d,d) \) iff
\[ 2m(x-3y)= x(x-3y) + 3y^2 \iff x^2 - x (2m + 3y) + 6my + 3y^2 = 0. \]
It is easy to check by computing the discriminant that for every positive rational \( y \), the equation in \( x \) does not have any solution. 
\end{proof}

%\begin{lem}
%The \( G_2 \)-horospherical asymptotic cones all have orbifold singularities. 
%\end{lem}

\subsection{K-stable valuations on spherical modules}

\begin{figure}
\begin{tikzpicture}
\pgfmathsetmacro\ax{2}
\pgfmathsetmacro\ay{0}
\pgfmathsetmacro\bx{2 * cos(120)}
\pgfmathsetmacro\by{2 * sin(120)}
\pgfmathsetmacro\lax{2*\ax/3 + \bx/3}
\pgfmathsetmacro\lay{2*\ay/3 + \by/3}
\pgfmathsetmacro\lbx{\ax/3 + 2*\bx/3}
\pgfmathsetmacro\lby{\ay/3 + 2*\by/3}

\tikzstyle{couleur_pl}=[circle,draw=black!50,fill=blue!20,thick, inner sep = 0pt, minimum size = 2mm]

%couleur
\fill [blue!20] (0,-2)--(0,2)--(-2,2)--(-2,-2)--cycle;
\fill [black!20] (0,0)--(\ax,\ay)--(\bx,1) -- cycle;
\node at (\ax, \ay) [couleur_pl] {};
\node at (0,0) [shape=circle, inner sep=1.5pt, fill] {};
\node at (1,0) [circle, inner sep=1.5pt, fill] {};
\node at (1,0) [below right] {\( \frac{\alpha^{\vee}}{2} \)};
\node at (0,1) [circle, inner sep=1.5pt, fill] {};
\node at (0,1) [below right] {\( \chi \)};
\node at (0,-1) [circle, inner sep=1.5pt, fill] {};
\node at (0,-1) [below right] {\( -\chi \) };
\draw[->] (0,0) -- (\ax,\ay) node[below right] {\( \rho(D_{\alpha}) = \alpha^{\vee} \)};
\draw[->] (0,0) -- (1,-1) node[circle, inner sep=1.5pt, fill] {};
\node at (1,-1) [below right] { \( -\chi + \frac{\alpha^{\vee}}{2} \)};

\draw (-2,0)--(2,0);
\draw (0,-2)--(0,2);

\end{tikzpicture}
\caption{Colored cone \( (\Ccal, \Dcal) \) of the symmetric manifold \( \Cbb^3 \) with open orbit \( \SO_3/ \SO_2 \times \Cbb^{*} \). The K-stable valuations correspond to the vectors of coordinates \( (1,-1) \) and \( (0,-1) \) in the lattice generated by \( (\alpha^{\vee}/2, \chi) \). }
\label{figure_colored_cone_c3}
\end{figure}
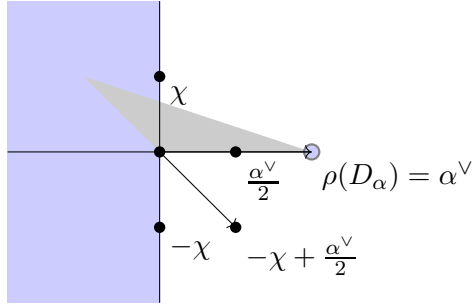

%Any rank two spherical modules has open orbit of the form \( \Cbb^{*} \times G/H \) where \( G/H \) is a rank one spherical space. In particular, \( G/H \) is either a rank one horospherical space or a rank one horosymmetric space. 

Let \( (\rho, V) \) be a regular representation of a connected linear reductive group \( G \) with the induced representation \( (\wh{\rho}, \Cbb[V]) \). Then \( (\rho,V) \) is said to be \textit{multiplicity-free} if the decomposition of \( \Cbb[V] \) into simple \( G\)-modules contains at most one copy of each simple \( G\)-module. A representation \( (\rho, V )\) is multiplicity-free iff \( V \) is a (smooth affine) \( G\)-spherical variety. 

The irreducible multiplicity-free representations were classified by Kac \cite{Kac80} (see also \cite[Theorem 1]{BR96} \cite[Theorem 1.4]{Lea98}).

\begin{thm} \cite{Kac80} \label{theorem_irreducible_sphericalmodule_classification}
The list of multiplicity-free irreducible linear actions of connected reductive linear groups \( G \) is 
\begin{itemize}
    \item[1)] \( \SL_n \), \( \Sp_{2n} \), \( \Lambda^2 \SL_n \) \( (n \; \text{odd}) \), \( \SL_m \otimes \SL_n \) \( (n \neq m \geq 2) \), \( \SL_n \otimes \Sp_4 \) \(( n > 4 )\), \( \text{Spin}_{10} \) when \( G \) is semisimple.  
    %\item[2)] \( S^2 \GL_n \), \( \Lambda^2 \GL_n \), \( \GL_n \otimes \SL_n \), \( \GL_{2,3,4} \otimes \Sp_n \) when \( G \) is non semisimple. 
    \item[2)] \( G \otimes \Cbb^{*} \) with \( G \) being 
    \[ \SL_n, \; \Sp_{2n} (n \geq 2), \;  \SO_n (n \geq 3), \; \text{Spin}_7, \; \text{Spin}_9, \; \text{Spin}_{10}, \; G_2, \; E_6, \]
    and
    \begin{align*} 
    &S^2 \SL_n (n \geq 2), \; \Lambda^2 \SL_n (n \geq 4), \; \SL_m \otimes \SL_n (m, n \geq 2), \\ &\SL_{2,3} \otimes \Sp_{2n} (n \geq 2), \;\SL_n \otimes \Sp_4 (n \geq 4).
    \end{align*}
\end{itemize}
Here:
\begin{itemize} 
\item The index under each group is the dimension of the module.
\item The representation of \( G \) corresponds to \( V(\omega_1) \) where \( \omega_1 \) is the first fundamental weight of \( G \).
\item \( G \otimes G' \) (resp. \( S^2 G \), \( \Lambda^2 G \)) denote the action of \( G \times G' \) on the tensor product \( V(\omega_1) \otimes V(\omega_1') \) (resp. of \( G \) on \( S^2 V(\omega_1) \), \( \Lambda^2 V(\omega_1) \) ). 
\end{itemize}
\end{thm}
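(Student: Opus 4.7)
The plan is to apply the Vinberg-Kimelfeld criterion: an affine \( G \)-variety is multiplicity-free as a \( G \)-module if and only if a Borel subgroup \( B \subset G \) has an open orbit on it. For a linear action on \( V \), this reduces the classification to an enumeration of those irreducible \( G \)-modules \( V = V(\lambda) \) that are spherical for the action of \( B \).

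First I would establish the numerical constraint \( \dim V \leq \dim B = (\dim G + \operatorname{rank} G)/2 \) in the semisimple case. By the Weyl dimension formula, \( \dim V(\lambda) \) grows polynomially in the coordinates of \( \lambda \) with degree equal to the number of positive roots, so this inequality forces \( \lambda \) to lie in a small explicit region of the dominant chamber. I would then enumerate by hand all \( (G, \lambda) \) with \( G \) simple, using standard tables of minimal-dimensional irreducible representations, and all semisimple product cases \( G = G_1 \times G_2 \) acting on \( V_1 \otimes V_2 \). The list of numerical candidates is short and explicit.

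For each candidate I would verify sphericity by computing the generic \( B \)-isotropy and exhibiting an open \( B \)-orbit, equivalently by showing that the graded ring \( \Cbb[V]^U \) of invariants under the unipotent radical \( U \) of \( B \) is a polynomial algebra, then identifying its \( T \)-weights. This yields the list in Part \( 1 \). For Part \( 2 \), appending a \( \Cbb^{*} \)-factor acting by scalars relaxes the problem: \( V \) only needs to be prehomogeneous for \( B \), i.e.\ \( B \) has an open orbit on \( \Pbb(V) \); equivalently the dimension bound becomes \( \dim V \leq \dim B + 1 \). The same enumeration-plus-verification scheme then produces the extended list, picking up in particular the representations \( V(\omega_1) \) of all simple groups whose projectivizations are themselves generalized flag varieties acted on with an open Borel orbit.

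The main obstacle is the case-by-case verification of sphericity for the exceptional groups (\( G_2, E_6, \text{Spin}_7, \text{Spin}_9, \text{Spin}_{10} \)) and the tensor product actions \( \SL_m \otimes \SL_n \), \( \SL_n \otimes \Sp_4 \), which demand explicit invariant-theoretic or geometric computation. For classical tensor products one may use the Cauchy-Howe decomposition to read off multiplicities directly, while the exceptional cases can be handled via the classification of generic stabilizers of irreducible linear actions due to \'Elie Cartan and refined by Andreev-Vinberg-Elashvili, which reduces sphericity to a combinatorial check on the weight lattice. One must also be careful with the borderline dimension counts: the candidate list has several near-misses where an open \( B \)-orbit fails by a single dimension, and these must be pruned via direct inspection of the highest weight module structure.
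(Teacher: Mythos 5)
This theorem is imported into the paper from \cite{Kac80} without proof, so there is no internal argument to compare against; the relevant benchmark is Kac's original classification and its treatment in the literature the paper also cites (\cite{BR96}, \cite{Lea98}). Measured against that, your outline reconstructs the standard route accurately. The Vinberg--Kimelfeld criterion is exactly the right starting point: \( \Cbb[V] \) is multiplicity-free if and only if a Borel subgroup \( B \) has a dense orbit on \( V \), which yields \( \dim V \leq \dim B = (\dim G + \operatorname{rank} G)/2 \) in the semisimple case. With the scalar \( \Cbb^{*} \)-factor the correct reformulation is that \( B \times \Cbb^{*} \) has a dense orbit on \( V \), equivalently \( B \) has a dense orbit on \( \Pbb(V) \), relaxing the bound to \( \dim V \leq \dim B + 1 \); your wording ``prehomogeneous for \( B \)'' is a slight terminology slip (prehomogeneity of \( V \) itself under \( B \) is the part 1 condition), but the mathematical content is right, and it correctly explains why, e.g., \( \SO_n \) appears only in part 2: the invariant quadric obstructs an open \( B \)-orbit on \( V \) but not on \( \Pbb(V) \). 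The Weyl dimension formula does confine the highest weight to a finite explicit candidate set, and Cauchy-type decompositions together with the structure of \( \Cbb[V]^U \) settle the classical and tensor-product cases, with generic-stabilizer methods available for the exceptional ones.

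The one substantive caveat is that what you have is a correct and complete \emph{strategy} rather than a proof: for a classification theorem of this kind essentially all of the content resides in the finite but lengthy enumeration and the case-by-case sphericity verdicts, none of which is executed. The near-misses you flag are real and are where the work concentrates --- for instance \( \Lambda^2 \SL_n \) for \( n \) even, where the Pfaffian generates nontrivial invariants in degree \( n/2 \) and destroys multiplicity-freeness absent the \( \Cbb^{*} \)-factor, so the parity condition in part 1 must be found by direct inspection exactly as you predict. As it stands, the proposal is a faithful pr\'ecis of the argument behind \cite{Kac80}, with every named tool (dimension bounds, \( \Cbb[V]^U \), generic stabilizers \`a la Andreev--Vinberg--Elashvili) being the appropriate one, but it would need the case analysis carried out in full to count as an independent verification of the list.
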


The result is extended to the reducible case independently by Benson-Ratcliff \cite{BR96} and A. Leahy \cite{Lea98}. This is done via classification of \textit{indecomposable} spherical modules, namely \( G\)-representations \( (\rho, V) \) that are not equivalent to \( (\rho_1, V_1) \) \( \oplus\) \( \rho_2, V_2) \), where \( (\rho_i, V_i) \) are multiplicity-free representations of \( G_i \) with \( G = G_1 \times G_2 \).  

\begin{lem}
The only non-horospherical multiplicity-free \( G\)-action on a module \( V \) with underlying vector space \( \Cbb^3 \) is given by \( G = \SO_3 \otimes \Cbb^{*} \), where \( \SO_3 \) acts on \( \Cbb^3 \) in the standard way. 
\end{lem}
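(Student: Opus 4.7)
The plan is to enumerate all multiplicity-free $G$-actions on a 3-dimensional vector space and test each for horosphericality of the open $G$-orbit. By the Benson-Ratcliff/Leahy extension of Theorem \ref{theorem_irreducible_sphericalmodule_classification}, any multiplicity-free module decomposes as $V = V_1 \oplus \cdots \oplus V_k$ with $G = G_1 \times \cdots \times G_k$ and each $(G_i,V_i)$ indecomposable spherical. Following the partition of $\dim V = 3$, I would consider three cases: (a) $V$ itself indecomposable; (b) $V = V_1 \oplus V_2$ with $(\dim V_1, \dim V_2) = (2,1)$; (c) $V$ a sum of three lines.

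In case (a), inspection of Kac's list (Theorem \ref{theorem_irreducible_sphericalmodule_classification}) restricted to dimension 3 yields exactly the standard actions of $\SL_3$, $\SL_3 \otimes \Cbb^{*}$, and $\SO_3 \otimes \Cbb^{*}$ on $\Cbb^3$. One should note that $\SO_3$ by itself is absent from the list because the polynomial algebra $\Cbb[V]$ contains each spherical harmonic representation with infinite multiplicity, and these are separated only after adjoining the scaling $\Cbb^{*}$ to track the total degree. For $\SL_3$ and $\SL_3 \otimes \Cbb^{*}$ the generic vector stabilizer contains the full upper-triangular unipotent of $\SL_3$, which is the maximal unipotent subgroup of both $\SL_3$ and $\SL_3 \otimes \Cbb^{*}$, so these actions are horospherical. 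For $\SO_3 \otimes \Cbb^{*}$ acting on $\Cbb^3$ with its standard quadratic form, a generic vector is non-isotropic, and its stabilizer is a $1$-dimensional extension of $\SO_2$ by a finite component group, containing no nontrivial unipotent and in particular not the $1$-dimensional maximal unipotent of $\SO_3 \otimes \Cbb^{*}$; this is the unique non-horospherical example.

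In cases (b) and (c) horosphericality is immediate: in (b), the $2$-dimensional factor is a standard representation of $\SL_2$ or $\GL_2 = \SL_2 \otimes \Cbb^{*}$, and its generic stabilizer contains the $1$-dimensional upper-triangular unipotent of $\SL_2$, which constitutes the full maximal unipotent of the product group; in (c), $G$ is a torus acting faithfully, and every faithful torus action is tautologically horospherical. Combining the three cases yields the claim. The main point requiring care is case (a): one must not overlook $\SO_3 \otimes \Cbb^{*}$ by consulting only Kac's semisimple list, and then verify that its generic isotropy is essentially a torus, which is what rules out horosphericality.
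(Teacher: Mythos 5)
Your proof is correct, and it follows the same route as the paper: reduce to the Benson--Ratcliff/Leahy classification, inspect Kac's list in dimension three, and single out \( \SO_3 \otimes \Cbb^{*} \) as the unique non-horospherical action. The differences lie in the verifications, and two are worth recording. First, your explicit treatment of the decomposable cases (b) and (c) is actually more complete than the paper's: the paper deduces from the fact that indecomposable modules with two irreducible factors have both factors of dimension at least \( 2 \) that any spherical module on \( \Cbb^3 \) is ``indecomposable with only one factor,'' which as stated overlooks the decomposable possibilities --- \( (\SL_2 \otimes \Cbb^{*}) \times \Cbb^{*} \) acting on \( \Cbb^2 \oplus \Cbb \) and a torus acting on three lines --- and your cases (b), (c) dispose of exactly these (they are horospherical, so the lemma is unaffected, but they do need to be checked). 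Second, your claim that Kac's list in dimension three yields ``exactly'' three actions is slightly too quick: the list also contains \( (S^2 \SL_2 \otimes \Cbb^{*}, S^2 V(\omega)) \) (and \( \Lambda^2 \SL_3 \), the dual standard representation), and the paper devotes the second half of its proof to identifying \( S^2 \SL_2 \otimes \Cbb^{*} \) with \( \SO_3 \otimes \Cbb^{*} \) via the observation that \( Z(\SL_2) = \set{\pm 1} \) acts trivially on \( S^2 V(\omega) \simeq V(2\omega) \) and \( \mathrm{PSL}_2 \simeq \SO_3 \); this identification should be stated in your case (a), though it does not change the outcome. Finally, where the paper certifies horosphericality of the \( \SL_3 \) actions by citing the highest-weight-orbit construction of Prop.~\ref{proposition_horospherical_construction} and leaves the non-horosphericality of \( \SO_3 \otimes \Cbb^{*} \) implicit in the symmetric-space structure of \( \Cbb^3 \) recalled earlier in the paper, you argue directly with generic isotropy groups: your computation that the stabilizer of a non-isotropic vector is a finite extension of \( \SO_2 \), hence contains no nontrivial unipotent element, is correct and makes the lemma self-contained.
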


\begin{proof}
The classification in \cite[Theorem 2]{BR96}, \cite[Theorem 2.5]{Lea98} shows that any indecomposable module must either have one factor (hence belongs to Kac's classification in Theorem \ref{theorem_irreducible_sphericalmodule_classification}), or two factors \( V_i \) each of dimension at least \( 2 \). It follows that any spherical module \( V \) with underlying vector space \( \Cbb^3 \) is indecomposable with only one factor. 

From the list in Theorem \ref{theorem_irreducible_sphericalmodule_classification}, the possible multiplicity-free representations \( (\rho,V) \) with underlying vector space \( \Cbb^3 \) are  \[ (\SL_3, V(\lambda)), (\SL_3 \otimes \Cbb^{*}, V(\lambda)), (\SO_3 \otimes \Cbb^{*}, V(2\omega)), (S^2 \SL_2 \otimes \Cbb^{*}, S^2 V(\omega)),  \]
where \( \lambda  \), \( \omega \) are the fundamental weights of \( \SL_3 \), \( \SL_2 \). 
The first two are horospherical (cf. Prop. \ref{proposition_horospherical_construction}), while the last two are isomorphic via
\[ (S^2 \SL_2, S^2 V(\omega)) \simeq  (\text{PSL}_2, V(2 \omega) \simeq (\SO_3, V(2 \omega)), \]
since \( Z(\SL_2) = \set{\pm 1} \) fixes \( S^2 V (\omega) \simeq V(2\omega) \) and \(\text{PSL}_2 \simeq \SO_3 \). 
\end{proof}

\begin{prop} \label{proposition_calabiyaumetrics_c3}
The K-stable valuations of \( \SO_3(\Rbb) \times \Sbb^1 \)-invariant Calabi-Yau metrics on \( \Cbb^3 \) are
\begin{itemize}
    \item the trivial valuation on the linear part of \( \Vcal \),
    \item the product of the K-stable valuations on the factors \( \SO_3/ \SO_2 \times \Cbb^{*} \). 
\end{itemize}
The former induces a trivial equivariant degeneration, while the latter lies in the interior of \( \Vcal \) and induces a degeneration of \( \Cbb^3\) to the horospherical cone \( A_1 \times \Cbb^{*} \) where \( A_1 \) is the Stenzel asymptotic cone of \( \SO_3/ \SO_2 \) (cf. Example \ref{example_horospherical}). 
\end{prop}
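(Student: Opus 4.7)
The plan is to combine Theorem \ref{maintheorem_kstable_valuation_sphericalmanifold} (K-stable valuations on spherical manifolds are $G$-invariant) with Proposition \ref{proposition_kstable_valuations_ranktwo} applied to $\Cbb^3$ viewed as a rank-two affine spherical manifold with open orbit $\SO_3/\SO_2 \times \Cbb^{*}$, then to identify the two admissible asymptotic cones explicitly, before concluding via Székelyhidi's uniqueness theorem \cite{Sze20}. I would start by recalling that since $\Cbb^3$ is a symmetric $\SO_3 \times \Cbb^{*}$-cone, every $\SO_3(\Rbb) \times \Sbb^1$-invariant Kähler metric is automatically $\del\delb$-exact of maximal volume growth by \cite{Del20b}, so Theorem \ref{maintheorem_kstable_valuation_sphericalmanifold} yields a $G$-invariant K-stable valuation $\nu_{\omega}$ lying in the valuation cone $\Vcal = \set{x \leq 0}$ depicted in Figure \ref{figure_colored_cone_c3}. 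Crucially, one has $\partial\Vcal = \text{lin}(\Vcal)$ in this situation.

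Next, Proposition \ref{proposition_kstable_valuations_ranktwo} dichotomizes according to whether $\nu_{\omega}$ lies in $\text{lin}(\Vcal)$ or in $\text{int}(\Vcal)$. In the linear case, Theorem \ref{theorem_test_configurations_spherical_cone}(4) ensures that the associated $G$-equivariant test configuration is trivial, so the two-step Donaldson-Sun degeneration terminates immediately at $W = C = \Cbb^3$; Székelyhidi's uniqueness theorem then forces $\omega$ to be the standard flat metric up to scaling and biholomorphism. In the interior case, $\nu_{\omega}$ induces a K-semistable horospherical degeneration of $\Cbb^3$ to some $W$, which then degenerates further to a unique K-stable horospherical cone $C$ by Theorem \ref{maintheorem_kstabledegeneration}. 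To identify $C$, I would invoke the existence of the LCRS family of complete Calabi-Yau metrics on $\Cbb^3$ with asymptotic cone $A_1 \times \Cbb$ (cf. \cite{Li19}, \cite{CR21}, \cite{Sze19}): this cone is horospherical by Example \ref{example_horospherical} and K-stable by Donaldson-Sun theory, so uniqueness in Theorem \ref{maintheorem_kstabledegeneration} forces $C \simeq A_1 \times \Cbb$, and the corresponding K-stable valuation then decomposes as the product of the K-stable valuation on the factor $\SO_3/\SO_2$ (giving the Stenzel cone $A_1$) and the trivial one on the $\Cbb^{*}$-factor (giving $\Cbb$).

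A second application of Székelyhidi's uniqueness theorem, this time relative to the asymptotic cone $A_1 \times \Cbb$, yields precisely the LCRS family as the unique family realizing that asymptotic cone, accounting for the ``only two distinct families'' part of the statement. The main obstacle in this strategy is the identification of the interior K-stable horospherical cone: rather than running the Duistermaat-Heckman barycenter calculation of Theorem \ref{maintheorem_kstability} with the combinatorial data of Figure \ref{figure_colored_cone_c3}, I short-circuit it by exhibiting the LCRS metric as a convenient witness and then invoke uniqueness of the K-stable equivariant degeneration to exclude any alternative horospherical asymptotic cone.
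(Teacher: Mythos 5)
Your proposal is correct and, in its skeleton, coincides with the paper's proof: both start from the observation that \( K \)-invariance forces \( \del \delb \)-exactness and maximal volume growth (\cite{Del20b}), pass through Theorem \ref{maintheorem_kstable_valuation_sphericalmanifold} and the dichotomy of Proposition \ref{proposition_kstable_valuations_ranktwo} (and indeed \( \del \Vcal = \text{lin}\, \Vcal \) here, so the dichotomy is exhaustive), and both ultimately call on the LCRS/Li metric. The one step you handle differently is the identification of the interior central fiber: the paper does this algebraically, using that the horospherical central fiber is independent of the choice of \( \nu \in \text{int}\, \Vcal \) (the weight monoid is preserved by Theorem \ref{theorem_test_configurations_spherical_cone}, and a horospherical cone is determined by its weight monoid), and exhibiting the explicit equivariant test configuration \( f = z_1^2 + z_2^2 + z_3^2 : \Cbb^4 \to \Cbb \) with central fiber \( A_1 \times \Cbb \); you instead use the LCRS metrics as an analytic witness. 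That route works, but the attribution needs care: the uniqueness in Theorem \ref{maintheorem_kstabledegeneration} compares K-stable degenerations of one \emph{fixed} K-semistable cone, so to match the cones produced by two \emph{different} metrics you are implicitly relying on the independence of the horospherical central fiber just mentioned --- i.e. on the mechanism inside Proposition \ref{proposition_kstable_valuations_ranktwo} --- rather than on Theorem \ref{maintheorem_kstabledegeneration} itself. The paper also needs the witness metric in the end, but only to pin down the valuation vector, via the computation \( \xi = \alpha^{\vee}/2 \) on \( A_1 \) from \cite{Ngh24}, yielding \( \nu_{\omega} = (-\alpha^{\vee}/2, -\chi) \); on this last point your phrase ``the trivial one on the \( \Cbb^{*} \)-factor'' is a slip, since the K-stable valuation on the flat \( \Cbb \)-factor is the standard order of vanishing at infinity contributing \( -\chi \), not zero (a Reeb vector must act with strictly positive weight on that factor), and what is trivial there is only the induced degeneration of the factor. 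Finally, your flat-metric conclusion in the linear case belongs to Corollary \ref{main_corollary_calabi_yau_c3} rather than to the proposition, and there the standard tool is volume rigidity of the flat cone rather than the uniqueness theorem of \cite{Sze20}; this is harmless surplus, not a gap.
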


\begin{proof}
Since asymptotic cones are central fibers of equivariant degenerations, one can identify the weight lattice of the cone with the open orbit \( \SO_3/ \SO_3 \times \Cbb^{*} \) of \( \Cbb^3 \) (cf. Remark \ref{remark_central_fiber_data}), which is generated by \( \set{\alpha^{\vee}/2, \chi} \) where \( \alpha \) is the positive (restricted) root of \( \SO_3 \) and \( \chi \) the weight of the \( \Cbb^{*} \)-action on \( \Cbb^3 \) (cf. Figure \ref{figure_colored_cone_c3}). The valuation cone is then
\[ \Vcal = \Rbb_{\leq} \alpha^{\vee} \times \Rbb \chi. \]

From Proposition \ref{proposition_kstable_valuations_ranktwo}, the K-stable valuations of \( \Cbb^3 \) are either in the linear part (with trivial central fiber) or uniquely in \( \text{Int} \Vcal \) (with horospherical cone as central fiber). Since the horospherical central fiber does not depend on the choice of \( \nu \in \text{Int} \Vcal \), it must be \( \SO_3 \times \Cbb^{*}\)-isomorphic to the cone \( A_1 \times \Cbb \). Indeed, an explicit equivariant test configuration can be given by
\[ f = z_1^2 + z_2^2 + z_3^2 : \Cbb^4_{z_0, z_1, z_2, z_3} \to \Cbb, \]
with central fiber \( A_1 \times \Cbb \) \( = \) \( f^{-1}(0) \). Here we view \( \Cbb^4 \) as the spherical module \( \Cbb^3 \times \Cbb \) with an action of \( (\SO_3 \times \Cbb^{*}) \times \Cbb^{*} \), where \( \SO_3 \) acts in the standard way. 

Let \( \omega \) be Li's metric on \( \Cbb^3 \) with corresponding K-stable valuation \( \nu_{\omega} \), asymptotic to \( A_1 \times \Cbb \) (endowed with the horospherical product conical Calabi-Yau metric). From explicit computation in \cite{Ngh24}, the metric on \( A_1 \) has Reeb vector \( \xi = \alpha^{\vee}/2 \). 

The K-stable valuation of the metric on \( A_1 \times \Cbb \) is then \( \nu_{\xi} = ( \alpha^{\vee}/2, \chi) \), hence \( \nu_{\omega} \) corresponds to the vector \( (-\alpha^{\vee}/2, -\chi) \) by Theorem \ref{maintheorem_kstable_valuation_sphericalmanifold}.  
\end{proof}

If we consider any spherical module \( V \) with open orbit of the form \( R_1 \times \Cbb^{*} \) where \( R_1 = G/H \) is any rank one symmetric space, then reasoning as above and using Székelyhidi's uniqueness theorem, one can show that the only Calabi-Yau metrics with the \( G \times \Cbb^{*}\)-symmetry on \( V\) are the standard Calabi-Yau metric and the Li-Conlon-Rochon-Székelyhidi metrics. 

In general, there may exist more of non-horospherical multiplicity-free symmetries of linear reductive groups on \( V \), and one can get a complete list of such actions using \cite{BR96} \cite{Lea98}. However, to get a full classification of metrics with corresponding symmetry, 
the difficulty lies in proving a uniqueness theorem with asymptotic cones \textit{not} of the type \( C \times \Cbb\) with \( C\) having an isolated singularity.

\bibliographystyle{alpha}
\bibliography{biblio}
\end{document}